\documentclass[11pt]{amsart}

\input{LPPBuseProcMacros}

\allowdisplaybreaks[1] 
\author[C.~Janjigian]{Christopher Janjigian}
\address{Christopher Janjigian\\ Purdue University\\  Department of Mathematics \\ 150 N University St\\ West Lafayette, IN 47901\\ USA.}
\email{cjanjigi@purdue.edu}
\urladdr{http://www.math.purdue.edu/~cjanjigi}
\thanks{C.~Janjigian was partially supported by National Science Foundation grant DMS-2125961 and Simons Foundation grant MPS-TSM-00012155.}

\author[F.~Rassoul-Agha]{Firas Rassoul-Agha}
\address{Firas Rassoul-Agha\\ University of Utah\\  Department of Mathematics\\ 155S 1400E\\   Salt Lake City, UT 84112\\ USA.}
\email{firas@math.utah.edu}
\urladdr{http://www.math.utah.edu/~firas}
\thanks{F.\ Rassoul-Agha was partially supported by National Science Foundation grants DMS-2054630 and DMS-2450951 and Simons Foundation grant MPS-TSM-00013661.}

\author[T.~Sepp\"al\"ainen]{Timo Sepp\"al\"ainen}
\address{Timo Sepp\"al\"ainen\\ University of Wisconsin--Madison\\  Department of Mathematics\\ 480 Lincoln Drive \\  Madison, WI 53706\\ USA.}
\email{seppalai@math.wisc.edu}
\urladdr{http://www.math.wisc.edu/~seppalai}
\thanks{T.~Sepp\"al\"ainen was partially supported by National Science Foundation grants DMS-2152362 and DMS-2448375, and by the Wisconsin Alumni Research Foundation.}

\title[Rooted Gibbs-DLR Measures in Planar Directed Polymers]{Rooted Gibbs-DLR Measures in Planar Directed Polymers}
\begin{document}

\begin{abstract} 
We study rooted Gibbs-DLR measures in the directed polymer model on $\bbZ^2$ with an ergodic disorder distribution which satisfies an additional mild hypothesis. We prove that the set of extremal rooted Gibbs-DLR measures is closed and totally ordered, characterize extremality in terms of path coalescence, and show that each fully supported extremal rooted Gibbs measure canonically generates a globally consistent and coalescing family of extremal rooted Gibbs measures indexed by all lattice sites. These families are, moreover, totally ordered.

Building on this structure, we prove strong existence and strong uniqueness of the associated Busemann process, together with an $L^1$ continuity theorem for the shift-covariant Busemann cocycles which is joint in the inverse temperature, the tilt parameter, and the random environment. This yields, as a corollary, in-probability continuity of the generated extremal Gibbs measures corresponding to directions of differentiability under bounded i.i.d.\ perturbations of the weights. In positive temperature, it shows in-probability convergence of  the generated extremal Gibbs measures. At zero temperature, it also yields quenched subsequential large deviation principles for the corresponding positive-temperature rooted Gibbs-DLR measures on path space. The rate functions are determined by a zero-temperature Busemann cocycle and vanish precisely on the infinite geodesics generated by that cocycle.
\end{abstract}

\maketitle

\tableofcontents

\section{Introduction}
A basic question in mathematical statistical mechanics is to understand the infinite-volume Gibbs-Dobrushin-Lanford-Ruelle (Gibbs-DLR) measures associated to a given specification. In disordered systems, this question can be especially rich because the random environment removes the deterministic symmetries that often organize Gibbs states in homogeneous models and instead can give rise to genuinely random, environment-dependent structure. This paper focuses on directed polymers, which originated as effective models of domain walls in random media \cite{Hus-Hen-85}. See \cite{Com-17} for a recent pedagogical introduction. Directed polymers are among the simplest models that exhibit the rich quenched structure characteristic of disordered statistical mechanics, while retaining enough structure from their simple path geometry to make the infinite-volume Gibbs-DLR boundary somewhat tractable. Despite this relative tractability, many open problems remain.

In the setting of directed polymers, there are two natural types of infinite-volume Gibbs-DLR measures. The type that has perhaps attracted the most interest previously are Gibbs-DLR measures on bi-infinite up-right paths, which are the analogue of bi-infinite geodesics in first- and last-passage percolation. In first-passage percolation, such bi-infinite paths define interfaces that are dual to non-trivial ground states in the disordered Ising ferromagnet \cite{New-97}. A heuristic argument due to Newman, recorded in \cite[Section 5]{Auf-Dam-Han-17}, predicts non-existence of bi-infinite geodesics and Gibbs-DLR measures whenever the path transverse fluctuation exponent strictly exceeds $1/2$. This prediction has been made rigorous in several exactly solvable or integrable models \cite{Bas-Hof-Sly-22,Bal-Bus-Sep-20,Gro-Jan-Ras-25-jsp,Bus-Sep-22-ejp}, and, for first-passage percolation in general dimension, under strong unproven hypotheses on the model \cite{Ale-23}. It is unclear to us if the transverse fluctuation exponent being greater than $1/2$ should hold for all of the models we study in the present work.

This paper focuses on a second type of Gibbs-DLR measures, which always exist by compactness arguments. These are measures on semi-infinite up-right paths rooted at a fixed lattice site. Such measures arise naturally as Doob $h$-transforms of the finite volume measures and can also be viewed as encoding harmonic functions (also known as eternal solutions) of the associated discretization of the KPZ equation. See \cite{Bak-Cat-Kha-14, Bak-13, Bak-16, Bak-Li-19, Car-Sou-17, Jan-Ras-Sep-23-1F1S-} for work related to this point of view concerning random and stochastic PDEs.  These rooted Gibbs-DLR measures have a rich structure similar to the structure of semi-infinite geodesic rays in first- and last-passage percolation. 

In planar settings, when the limiting free energy is sufficiently regular, all extremal rooted Gibbs measures are supported on paths that satisfy a strong law of large numbers. Results of \cite{Jan-Ras-20-aop} show that for every value of the limiting direction  there exists at least one such measure, constructed from the so-called Busemann process (which we discuss momentarily), and that uniqueness holds for typical directions, with non-uniqueness confined to an exceptional set of random directions where two measures constructed from the Busemann process differ. If the Busemann process furnishes two measures with the same direction, then they stochastically bracket all other such measures. Whether or not such measures exist was left open in \cite{Jan-Ras-20-aop}, but it was later shown in \cite{Bat-Fan-Sep-25} that this directional non-uniqueness does occur in the exactly solvable log-gamma polymer. In that setting, the set of non-uniqueness directions is random and dense. It remains open in positive temperature whether any extremal Gibbs measures besides  those furnished by the Busemann process can exist, but the expectation is no, in line with the picture proved in the exactly solvable exponential last-passage percolation model and for the renormalization fixed point of the KPZ universality class, the directed landscape. See \cite{Bus-Sep-Sor-24, Bus-25-, Cou-11,Jan-Ras-Sep-23}. See also \cite{Bas-Mj-26} for recent related results in the setting of first-passage percolation on hyperbolic groups.

This paper focuses on exploring the structure of the set of rooted Gibbs-DLR measures in a fixed realization of the random environment, without invoking any exactly solvable inputs or unproven hypotheses.  We show first that for each root, the set of extremal rooted Gibbs-DLR measures is closed and totally ordered under stochastic monotonicity. We also characterize extremality in terms of path coalescence and prove that each fully supported extremal rooted Gibbs measure canonically generates a globally consistent and totally ordered family of extremal Gibbs measures, one rooted at each lattice site.  In this sense, the rooted Gibbs structure inherits the same kind of planar order and coalescence phenomena that is expected to underlie the geometry of semi-infinite geodesics in zero temperature. See \cite{Jan-Ras-Sep-23} for the current state of the art in zero temperature. 

Building on this structure, we turn to the tilt-indexed Busemann process. Its marginals are shift covariantly defined cocycles, equivalently the transition probabilities of shift covariantly selected Gibbs-DLR measures. Earlier work \cite{Jan-Ras-20-aop, Gro-Jan-Ras-25-tams} constructed such objects on extended probability spaces; in the planar setting, \cite{Jan-Ras-20-aop} used monotonicity to organize them into a process. What remains is to identify when this process is canonically defined on the original environment and whether it is unique. In this project, we first show that the Gibbs-DLR measures generated by these shift-covariant cocycles are extremal in planar directed polymers. We then use the closed total order on extremal rooted Gibbs-DLR measures to prove strong existence and strong uniqueness of the Busemann process on the canonical weight space. This means that we show that every probability space that supports the weight field also supports a canonical realization of the Busemann process and any two realizations of this process agree almost surely.

Focusing on shift covariantly defined Gibbs-DLR measures can be thought of as analogous to studying shift-invariant Gibbs-DLR measures in models without disorder. This philosophy dates back at least to the pioneering work of Aizenman and Wehr \cite{Aiz-Weh-90} and their introduction of \textit{metastates}. It is believed that in KPZ-class directed polymer models, the Busemann process should generate all extremal Gibbs measures, though this remains open even in the exactly solvable setting in positive temperature. We remark that while the Busemann process is shift covariantly defined, in the exactly solvable log-gamma model, it is known to generate certain extremal Gibbs measures, corresponding to random tilts, which are provably not shift-covariant; see \cite{Bat-Fan-Sep-25}. 
In brief, the expectation is that while shift covariantly defined Gibbs-DLR measures are not expected to be exhaustive, they are believed to be dense.

One of the motivations for proving strong existence and uniqueness is that it provides the inputs needed to access certain stability questions, which were among our main goals in pursuing this project.  Specifically, we prove an $L^1$ continuity theorem for the Busemann process that is joint in the inverse temperature, the weights, and the environment. A corollary shows, for example, that the Busemann cocycles corresponding to directions of differentiability, along with their associated Gibbs-DLR measures, are stable in probability under bounded perturbations of the weights. It was very far from obvious to us that such a statement should be true. A second application concerns regularity of the Gibbs-DLR measures as the inverse temperature varies and in particular in the zero-temperature limit. We use strong existence and uniqueness to prove convergence of rooted Gibbs measures to infinite geodesics in the zero temperature limit. We quantify this with a new (subsequential) large deviation principle (LDP): along deterministic subsequences with inverse temperature tending to infinity, the corresponding finite-temperature rooted Gibbs-DLR measures on the path space satisfy quenched large deviation principles whose rate functions are determined by the zero-temperature Busemann cocycle and vanish precisely on infinite geodesics.

\subsection{Connections to some prior works}
Work on the structure of rooted Gibbs-DLR measures in directed lattice polymers traces back at least to a seminal paper of Comets and Yoshida \cite{Com-Yos-06}, which proved diffusivity of certain rooted Gibbs-DLR polymer measures in weak disorder in dimensions three and higher. The next significant contribution appeared in \cite{Geo-etal-15}, specialized to the exactly solvable planar log-gamma polymer introduced in \cite{Sep-12-corr}. Systematic study of rooted Gibbs-DLR measures in general weight planar polymer models using the DLR formalism and the associated convex analytic questions which are a main focus of the present work first appeared in \cite{Jan-Ras-20-aop}.

\textit{Weak existence} of the planar Busemann process under an i.i.d.~weight hypothesis appeared previously in \cite{Jan-Ras-20-aop}. In this setting, weak existence is used in the usual sense of stochastic analysis, to mean existence of a probability space supporting the Busemann process. This argument was partially inspired by earlier work of Damron and Hanson \cite{Dam-Han-14} on lattice first-passage percolation. See also \cite{Gar-Mar-05, Gou-07, Hof-08} for a chain of related works. We refer the reader to the introduction of \cite{Gro-Jan-Ras-25-tams} for a more complete accounting of the history. \cite{Gro-Jan-Ras-25-tams} subsequently generalized weak existence to a very wide class of random walks in ergodic random potentials generalizing directed polymers, random walks in random environments, and first- and last-passage percolation in all dimensions, with arbitrary admissible steps. The weak existence result in \cite{Gro-Jan-Ras-25-tams} combined with monotonicity furnishes the weak existence input to our proof of strong existence and strong uniqueness.

Weak uniqueness of the Busemann cocycles, meaning uniqueness of the distribution of a Busemann cocycle with a given mean, was proven in \cite{Jan-Ras-20-jsp} under an i.i.d.~weight hypothesis by adapting an argument due to Chang from the queueing literature \cite{Cha-94}. See also the uniqueness argument in \cite{Fan-Sep-20} for another application of that type. The results of \cite{Jan-Ras-20-jsp} are not used in the present work and are fully subsumed by the main results of this paper.

Although not phrased in this language, the first strong uniqueness proof of the type we apply here, without extra hypotheses on the limit shape/free energy, in a planar growth model appears in the work of Ahlberg and Hoffman \cite{Ahl-Hof-16-} on first-passage percolation. A vast prior literature, tracing back to a seminal paper of Newman \cite{New-95}, relied on strong shape hypotheses to realize Busemann cocycles uniquely on the canonical space. The ideas of \cite{Ahl-Hof-16-} were later adapted to planar directed last-passage percolation by the authors in \cite{Jan-Ras-Sep-25-strong-}.

Bates, Fan, and the third author \cite{Bat-Fan-Sep-25} investigated the direction-indexed  Busemann process in the general planar directed polymer model with i.i.d.\ weights, based on its weak existence proved in \cite{Jan-Ras-20-aop}, and in the solvable log-gamma polymer where the strong existence of the Busemann process can be derived from its special properties. In the general case the  direction-indexed Busemann process is not known to equal the full tilt-indexed Busemann process, as explained in Remark \ref{rmk:dir_Bus}. The present paper strengthens some results of \cite{Bat-Fan-Sep-25} and simplifies some proofs in the general case, where \cite{Bat-Fan-Sep-25} had to prove partial ergodicity properties. One significant benefit of the present strong existence result is that the Busemann process inherits properties of the weight field, so in particular is mixing under the i.i.d.\ weight hypothesis of \cite{Bat-Fan-Sep-25}.   Remark 2.5 of \cite{Bat-Fan-Sep-25} is now moot: no regularity assumption on the shape function is required for the conclusion that the Busemann process is a function of the weights. 
Connections to earlier results in \cite{Bat-Fan-Sep-25} are explained  in Remarks \ref{rmk:Dspace} and \ref{rmk:BFS5} below. 

One of our results, the characterization of extremality in terms of existence of a coalescing coupling, is largely a consequence of Goldstein's \cite{Gol-79} celebrated theorem concerning maximal couplings. We highlight that this argument in no way relies on planarity and so this can be expected to hold in general lattice random walks in random environments at positive temperature. It is a significant open question whether the geodesics and Gibbs measures generated by shift-covariant recovering cocycles (as described below) should coalesce in high dimensions. In polymer language, this now can be seen to correspond to asking whether shift covariantly selected Gibbs-DLR measures are supported on extremal Gibbs measures or if they are generically mixtures. 
Using a version of the Licea-Newman \cite{Lic-New-96} argument as adapted in \cite{Jan-Ras-20-aop}, we show that in planar polymers, shift covariantly selected Gibbs measures are always extreme.

\subsection{Methodology and main proof ideas}
We now sketch the main ideas in the proofs. Our pointwise results on Gibbs-DLR measures hold whenever the weights are taken to be ergodic, provided that a certain random walk constructed from weight differences diverges; see Condition \ref{cond:standing}. Once we discuss the Busemann cocycles and process, extra regularity hypotheses are needed. Every result in the paper holds unconditionally if the weights are taken to be i.i.d., non-degenerate, and to have $p>2$ moments. Most of the results hold significantly more generally, under our Condition \ref{cond:moment-mixing}, which is essentially a trade-off between mixing and moment hypotheses. Some of our final applications concerning regularity of the Busemann cocycles jointly in the inverse temperature and weight field would require either restricting to i.i.d.\ weights throughout throughout or extending one of our results from our previous work \cite{Jan-Ras-Sep-25-strong-} when applied in the zero temperature limit. See the discussion below.

The first fixed-environment result is a new characterization of extremality. We show that a fully supported rooted Gibbs-DLR measure is extremal exactly when the path measures it induces in the quadrant ahead of its root admit a coalescing coupling. This uses tail triviality together with Goldstein's coupling criterion \cite[Theorem 2.1]{Gol-79}: equality on the tail $\sigma$-algebra is equivalent to existence of a coupling under which two paths eventually agree. We next show that the set of extremal rooted Gibbs-DLR measures is totally ordered under stochastic monotonicity using a coupling argument. This is analogous to the total order on geodesics in last-passage percolation. 

These characterizations are key tools in the proof that the set of extremal rooted Gibbs-DLR measures is closed; in general, one can only expect that this set is $G_\delta$. This is a significant difference from zero temperature, where the analogous closure is immediate from the definition of the geodesic property. Starting from a convergent sequence of extremal rooted Gibbs-DLR measures, we construct a monotone coupling which can be used to verify a second new characterization of extremality phrased in terms of certain conditional expectations of partition functions. 

Starting with an extremal rooted Gibbs-DLR measure, a martingale convergence argument produces a family of Gibbs-DLR measures, one rooted at each lattice site. This construction is analogous to the way a classical Busemann function built from a single infinite geodesic induces infinite geodesics from all other points. We then prove that each of these Gibbs measures has trivial tail $\sigma$-algebra. Goldstein's theorem then yields a coalescing coupling of the entire family, implying that every member is extremal. Consequently, the rooted total order extends to a global total order on rooted Gibbs-DLR measures, independent of the choice of root.

Closure allows us to view the set of extremal Gibbs measures as a random totally ordered compact space. This makes measurable selections of largest elements in closed lower sets possible. We use such selections to build a shift-covariant quantile function for a shift-covariant extremal Gibbs measure, viewed as a random variable in the totally ordered compact space of extremal Gibbs measures, with respect to which we can use inverse CDF sampling. This idea was motivated by a construction in \cite{Ahl-Hof-16-} and previously used in last-passage percolation in \cite{Jan-Ras-Sep-25-strong-}. Strong existence then comes from a coupling argument where the key point is that the aforementioned coalescence forces the inverse CDF sample to not depend on the independent uniform used in the sampling process. Strong uniqueness comes quickly from the total order. 

With strong existence in hand in positive temperature we use a coupling argument combined with classical results concerning weak-strong convergence (see \cite{Jac-Mem-81} for a survey) to bootstrap from tightness to $L^1$ convergence of the Busemann functions. This result is partially conditional on an extension of one of our results from \cite{Jan-Ras-Sep-25-strong-} which we expect to hold, but all of the results in the paper hold unconditionally if the weights are taken to be i.i.d.~throughout; see Remark \ref{rem:unnecessary-condition}. In-probability convergence of extremal Gibbs-DLR measures comes as a corollary of the $L^1$ regularity of the Busemann process. In the zero temperature limit, the convergence of Busemann cocycles is equivalent to computing a large deviation rate function for certain cylinder probabilities. Diagonalizing, these can be shown to give a full LDP under which zeroes of the rate function are precisely infinite geodesics.

\subsection{Organization}
The paper is organized as follows. Section \ref{sec:notation} collects our notation and our assumptions on the probability space. Section \ref{sec:results} introduces the polymer model, rooted Gibbs-DLR measures, and then states our main results. Section \ref{sec:structure} develops the structure theory of the set of Gibbs-DLR measures and its extreme points, including closedness, total ordering, coalescence, and the extension of rooted extremal Gibbs measures to the full lattice. Section \ref{sec:strong} proves strong existence and strong uniqueness of the Busemann process. Sections \ref{sec:L1-cont} and \ref{sec:zero-temp} treat the $L^1$ continuity theorem and the zero-temperature asymptotics of rooted Gibbs-DLR measures. Appendix \ref{app:meas} collects the measurability arguments, and Appendix \ref{app:aux} records some auxiliary results.

\section{Notation and setting}\label{sec:notation}
\subsection{Notation}
Some probability measures and expectations will be distinguished throughout the paper, but a generic expectation under a probability measure $\mu$ is denoted by $\bfE^\mu$.

For $a\in\R$, we write $\bbR_{\geq a}=[a,\infty)$ and $\bbR_{>a}=(a,\infty)$. Then the open first quadrant is $\bbR_{>0}^2 = (0,\infty)^2$. For $k \in \bbZ$, the integers greater than or equal to $k$ are denoted $\bbZ_{\geq k}=\{k,k+1,k+2,\dots\}$.
  Integer intervals are expressed as 
$\lzb m,n\rzb=\{k\in\Z: m\le k\le n\}$. 
Given points in the plane with integer coordinates, $x,y\in\bbZ^2$, we say that $x \leq y$ if $x \cdot e_i \leq y \cdot e_i$ for both $i\in\{1,2\}$. We denote the closed line segment connecting $x$ and $y$ by $[x,y]$. Segments with endpoints removed are denoted by reversing the orientation brackets; for example, $]x,y] = [x,y]\backslash\{x\}$. Given integers $m,n\in\bbZ$, we write $m \vee n$ for the maximum and $m \wedge n$ for the minimum. We similarly denote by $x \vee y$ and $x \wedge y$ the coordinate-wise maximum and minimum of sites $x,y\in\bbZ^2$.

$\cM_1(\sS,\mathscr{B}(\sS))$ denotes the space of Borel probability measures on a metric space $\sS$, endowed with its standard weak topology. At times we denote by $d_{\cM_1}$ a metric generating this topology.

The set of extreme points of a convex set $K$ is denoted by $\ext K.$

\subsubsection{Paths and path spaces}
A path $\pi_{m:n}=(\pi_i)_{i=m}^n$ of points in $\Z^2$ is  \emph{up-right} or \emph{admissible} if it only takes steps in $\{e_1,e_2\}$, meaning $\pi_{i+1} - \pi_{i} \in \{e_1, e_2\}$ for all integers $i \in \lzb m, n-1\rzb$.  This definition extends to semi-infinite and bi-infinite up-right paths. Up-right paths are indexed by antidiagonal levels, that is, 
$\pi_i\cdot(\evec_1+\evec_2)=i$ for all $i$ in the relevant range. 
When $x\in\bbZ^2$ is a generic point, we will at times implicitly denote by $k$ its level  $x\cdot(e_1+e_2)=k$.
 
For $x \leq y$, $\Path{x}{y}$ denotes the collection of up-right paths between $x$ and $y$. Throughout the paper, all paths are taken to be up-right. We therefore omit the ``up-right'' qualifier at times. If $x\in\bbZ^2$, $k=x\cdot(e_1+e_2)$, and $n\ge k$, define
\[\pathsp_{x:n}=\bigcup_{\substack{y \geq x \\  y\cdot(e_1+e_2)=n}}\Path{x}{y}.\]

When $u\le v$ are points on a path $\gamma$,   $\gamma_{u:v}$ is the segment of $\gamma$ from $u$ to $v$, including the endpoints $u$ and $v$. Then $\gamma_{\gamma_m:\gamma_n}$ is abbreviated by $\gamma_{m:n}$. If $m$ is an index and $u\in\gamma$, then mixtures $\gamma_{m:u}$ and $\gamma_{u:m}$ are also entirely unambiguous. 

Fix $k\in\bbZ$. Let $\pathsp_{k:\infty}$ denote the space of semi-infinite up-right paths $\gamma_{k:\infty}$ on $\bbZ^2$ indexed by $\{k,k+1, k+2, \dots\}$= $\Z_{\ge k}$. Equip $\pathsp_{k:\infty}$ with the product-discrete topology, metrized for example by
\[
d_{\pathsp_{k:\infty}}(\gamma,\pi)=\sum_{i=k}^\infty 2^{-(i-k+1)}\one_{\{\gamma_i\ne \pi_i\}} \in [0,1].
\]
Under this metric, $\pathsp_{k:\infty}$ is Polish. For $k\in\Z$ and $x\in\Z^2$ with $x\cdot(e_1+e_2)=k$, $\pathsp_x$ denotes the space of semi-infinite up-right paths $\gamma=\gamma_{k:\infty}$ on $\Z^2$ that start at $\gamma_k=x$. $\pathsp_x$ is a compact metric subspace of $\pathsp_{k:\infty}$. 

Our arguments will at times work with concatenations of paths. Given two paths $\pi_{\ell:m}$ and $\gamma_{m:n}$ with $\pi_m=\gamma_m$, we denote by $\pi \concat \gamma$ the concatenation.

We suppress this point in the sequel, but one should view all of the spaces of paths that we study as closed subspaces of the Polish space  $\pathsp_{-\infty:\infty}$ of $\bbZ^2$-valued paths indexed by times $k\in\bbZ$. This can be achieved by fixing coordinates outside of the relevant index set via a deterministic extension rule. Each such space then carries the subspace topology and the trace Borel $\sigma$-algebra. This is helpful for viewing objects like the $\sigma$-algebras we introduce momentarily as living on the same space.

We denote the canonical process on the path spaces by $X$ with appropriate subscripts. For $m\in\bbZ$, call
\[
\pathsa_{m:\infty}=\sigma(X_m,X_{m+1},\dots) \qquad \text{ and }\qquad \ptail = \bigcap_{m\in\bbZ} \pathsa_{m:\infty} = \bigcap_{m>k} \pathsa_{m:\infty} \text{ for any }k\in\bbZ.\]
Finite path $\sigma$-algebras are defined similarly: if $k<n$, $\pathsa_{k:n}=\sigma(X_k,\dots,X_n)$.

On a product space of two paths, we set
\[
\pathsa^{(2)}_{m:\infty}=\sigma((X^1_m,X^2_m),(X^1_{m+1},X^2_{m+1}),\dots),
\qquad
\ptail^{(2)} = \bigcap_{m\in\bbZ}\pathsa^{(2)}_{m:\infty} =\bigcap_{m>k}\pathsa^{(2)}_{m:\infty}\text{ for any }k\in\bbZ.
\]
On a countable product space of paths, we set
\[
\pathsa^{(\infty)}_{m:\infty}
=
\sigma\bigl((X^i_m)_{i\geq1},(X^i_{m+1})_{i\geq1},\dots\bigr),
\qquad
\ptail^{(\infty)}
=
\bigcap_{m\in\bbZ}\pathsa^{(\infty)}_{m:\infty}
=
\bigcap_{m>k}\pathsa^{(\infty)}_{m:\infty}\text{ for any }k\in\bbZ.
\]

\subsubsection{Shift maps}
For $y\in\bbZ^2$ define the shift map
\be
\theta_y:\pathsp_{k:\infty}\longrightarrow \pathsp_{k+y\cdot(e_1+e_2):\infty}
\qquad\text{by}\qquad
(\theta_y\gamma)_i=\gamma_{i-y\cdot(e_1+e_2)}+y,
\qquad i\ge k+y\cdot(e_1+e_2).
\label{eq:pathshift}
\ee
If $x\cdot(e_1+e_2)=k$, then $\theta_y$ restricts to a map $\theta_y:\pathsp_x\to \pathsp_{x+y}$.

For $\mu\in\sM_1(\pathsp_{k:\infty},\pathsa_{k:\infty})$ define the pushforward 
\be
(\theta_y)_\#\,\mu=\mu\circ\theta_y^{-1}\in\sM_1(\pathsp_{k+y\cdot(e_1+e_2):\infty},\pathsa_{k+y\cdot(e_1+e_2):\infty}).
\label{eq:pathshift-pushforward}
\ee
A measure $\mu$ is \emph{rooted at $x\in \bbZ^2$} if it is supported on $\pathsp_x$, equivalently 
$\mu(X_k=x)=1$. In this case $(\theta_y)_\#\mu$ is rooted at $x+y$.

\subsubsection{Path orders}
We use the level-wise path partial order for paths with a common root: for two up-right paths $\gamma,\pi\in\pathsp_x$ with $k=x\cdot(e_1+e_2)$, 
\[
\gamma \preceq \pi \quad \Longleftrightarrow \quad
\gamma_i\cdot e_1 \le \pi_i\cdot e_1 \ \text{ for all } i\ge k,
\]
and for finite segments $\gamma_{k:m},\pi_{k:m}$ we define $\gamma_{k:m}\preceq \pi_{k:m}$ analogously. We say that $\gamma \precneq \pi$ if $\gamma \preceq \pi$ and $\gamma \neq \pi$. We say that $\gamma \prec \pi$ if $\gamma \preceq \pi$ and there exists an index $N \geq k$ so that $\gamma_{k:N}=\pi_{k:N}$ and for $n > N,$ $\gamma_n \cdot e_1 < \pi_n \cdot e_1$. 

\subsubsection{Orders on measures}
The path orders defined above induce orders on measures via duality. For $k \in \bbZ$, we say that two measures $\Pi^1,\Pi^2 \in \sM_1(\pathsp_{k:\infty},\pathsa_{k:\infty})$ are stochastically ordered, denoted by $\Pi^1 \preceq \Pi^2$, if there exists a coupling $\widetilde{\Pi} \in \sM_1(\pathsp_{k:\infty}\times \pathsp_{k:\infty}, \pathsa_{k:\infty}^{(2)})$ with the property that $\widetilde{\Pi}(X^1 \preceq X^2)=1$
 and for which $\widetilde{\Pi}(X^1\in \aabullet) = \Pi^1(\aabullet)$ and $\widetilde{\Pi}(X^2 \in \aabullet) = \Pi^2(\aabullet)$. This is the ordinary stochastic monotonicity on a Polish space equipped with a closed partial order: by Strassen's theorem, see \cite{Str-65}, this definition is equivalent to the distributional order induced by taking expectations of increasing functions with respect to the path order.  We say that $\Pi^1 \precneq \Pi^2$ if $\Pi^1 \preceq \Pi^2$ and $\Pi^1 \neq \Pi^2$. 

We next introduce a refinement of the usual total order just discussed. We say that measures $\Pi^1,\Pi^2 \in \sM_1(\pathsp_{x:\infty},\pathsa_{k:\infty})$
are strongly ordered, denoted $\Pi^1 \prec \Pi^2$, if there exists a coupling $\wt{\Pi}$ of $\Pi^1$ and $\Pi^2$ as above for which we have $\widetilde{\Pi}(X^1 \prec X^2)=1$. $\Pi^1 \prec \Pi^2$ implies $\Pi^1 \precneq \Pi^2$, but the reverse does not necessarily hold.


Given two path measures with any roots, we say that $\Pi^1$ and $\Pi^2$ admit a coalescing coupling, denoted $\Pi^1 \coal \Pi^2$, if there exists a coupling $\widetilde{\Pi}$ of $\Pi^1$ and $\Pi^2$ under which 
\[\wt{\Pi}(\exists N\in\mathbb{Z}\,:\,X^1_{N:\infty}=X^2_{N:\infty})=1.\]

\subsection{Probability space hypotheses}\label{sec:probsp}
\subsubsection{Setup}\label{sec:setup} We work with a generic quintuple $(\Omhat,\kShat,\Phat,\That,\w)$ consisting of a Polish probability space $(\Omhat,\kShat,\Phat)$, a group $\That=\{\That_x:x\in\Z^2\}$ of continuous bijections of $\Omhat$, and a measurable field of real-valued weights $\w=\w(\what)=\{\w_x(\what):x\in\bbZ^2\}$. In particular, $\That_0$ is the identity map and $\That_x\That_y=\That_{x+y}$ for all $x,y\in\Z^2$. $\Ehat$ denotes expectation under $\Phat$ and $\cIhat$ the $\sigma$-algebra of $\That$-invariant events on $\Omhat$. When there is no risk of confusion, we suppress $\what$ and $\w$ from the notation.

We assume throughout that $\Phat$ is invariant under each $\That_x$ and  that  $\w$ is $\That$-covariant, i.e.,
\begin{align}
\w_x(\That_z\what)=\w_{x+z}(\what) \label{eq:Thatcov}
\end{align}
for all $x,z\in\Z^2$, $\Phat$-almost surely. 

Denote the canonical space of the weights by $(\Omega,\sF) = (\bbR^{\bbZ^2},\mathscr{B}(\bbR^{\bbZ^2}))$. Let $\kS$ denote the $\sigma$-algebra on $(\Omhat,\kShat)$ generated by $\w=\{\w_x:x\in\Z^2\}$. We  use the symbol $\w=(\w_x)_{x\tsp\in\tsp\Z^2}$ to denote  both the $\Omhat\to\Omega$ mapping $\what\mapsto\w(\what)=(\w_x(\what))_{x\tsp\in\tsp\Z^2}$ and a generic element of $\Omega$. We denote by $\P(\acdot)=\Phat\{\what: \w(\what)\in\acdot\}$ the probability measure induced by pushing forward $\Phat$ by the map $\what \mapsto \w(\what)$.

The shifts $T=\{T_x : x \in \Z^2\}$ on $\Omega$ are defined by 
\begin{align}\label{T-cov}
  (T_z\w)_x=\w_{x+z}.
\end{align}
The  shifts on the two spaces are related via the following identity, valid for $x,z\in\bbZ^2$ and $\Phat$-almost all $\what \in \Omhat$:
\begin{align}\label{w-cov}
(T_z[\w(\what)])_x\overset{\eqref{T-cov}}=[\w(\what)]_{z+x}\overset{\rm(def.)}=\w_{z+x}(\what)\overset{\eqref{eq:Thatcov}}=\w_x(\That_z\what).
\end{align}
We need some ergodicity of the environment for our arguments, as recorded in the following condition, assumed throughout the paper.
\begin{condition}\label{cond:ergodic}
We assume that $(\Omega,\sF,\bbP)$ is ergodic under the group of shifts $T=\{T_x : x \in \Z^2\}$.
\end{condition}

For $z\in\bbZ^2$ and $j\in\{1,2\}$, define
\[
S_n^{z,j}=\sum_{i=0}^{n-1}\bigl(\w_{z+ie_j}-\w_{z-e_{3-j}+(i+1)e_j}\bigr),\qquad n\ge 1.
\]
$S_n^{z,j}$ is a one-dimensional random walk with stationary (not necessarily independent) centered increments, constructed by taking differences of weights on adjacent rows $(j=1)$ or columns $(j=2)$. Most of the paper holds on a single event of full probability coming from the following condition, which we assume throughout the entire paper. 

\begin{condition}\label{cond:standing} For each fixed $z\in\bbZ^2$ and $j\in\{1,2\}$,
\be
\Phat\Bigl(\varliminf_{n\to\infty} S_n^{z,j}=-\infty\Bigr)=1.\label{eq:hyp-walk-unbounded}
\ee
\end{condition}
This mild condition holds for example if $S_n^{z,j}$ satisfies a central limit theorem for some $z\in\bbZ^2$ and each $j\in\{1,2\}$. Note also that this condition implies the weights are non-degenerate:
\be
\Phat(\w_0 = \Ehat[\w_0])\neq 1. \label{eq:non-degenerate}
\ee

The pointwise results in the paper either hold deterministically for all $\w\in\Omega$ or on the shift-invariant full-probability event 
\be
\displaystyle\Omega_0 = \bigcap_{z\in\bbZ^2}\bigcap_{j\in\{1,2\}}\{\varliminf_{n\to\infty} S_n^{z,j}=-\infty\}.\label{eq:Omega0-definition}
\ee
Results which involve the limiting free energy or the properties of cocycles will require intersecting with additional full probability events.

\subsubsection{Moment and mixing hypotheses}
When we discuss properties of the limiting free energy (introduced below) and the Busemann process, we will need some additional mixing and moment conditions, which will depend on whether the inverse temperature parameter $\beta\in (0,\infty]$ is finite or infinite. 

\begin{condition}\label{cond:moment-mixing}
For $\beta\in (0,\infty]$, $(\Omhat,\kShat,\Phat,\That,\w)$ satisfies the hypotheses of Section \ref{sec:setup} and
\begin{enumerate}[label={\rm(\alph*)}, ref={\rm\alph*}]
\item\label{cond:moment-mixing-<infty} If $\beta<\infty$, one of the following conditions holds: 
\begin{enumerate}[label={\rm(\roman*)}, ref={\rm\roman*}]
\item \label{cond:moment-mixing-finite-range}
$\{\w_x:x\in\Z^2\}$ have finite range of dependence under $\Phat$ and $\Ehat[|\w_0|^p]<\infty$ for some $p>2$.
\item\label{cond:moment-mixing-ergodic} $\Ehat[|\w_0|]<\infty$, there exists a constant $c>0$ so that $\Phat(\w_0 \leq c)=1$, and $\w$ satisfies for each $j\in\{1,2\}$, $\Phat$-almost surely,
\be
\varlimsup_{\epsilon \searrow 0}\varlimsup_{n\to\infty}\frac{1}{n}\max_{x\in[-n,n]^2} \sum_{0 \leq k \leq \epsilon n} |\w_{x + k e_j}| = 0.\label{eq:class-L}
\ee
\end{enumerate}
\item \label{cond:moment-mixing-iid} If $\beta=\infty$, $\{\w_x:x\in\Z^2\}$ are i.i.d.\ under $\Phat$, and  $\Ehat[|\w_0|^p]<\infty$ for some $p>2$.
\end{enumerate}
\end{condition}

Conditions like \eqref{eq:class-L} are known in the literature as class $\sL$ conditions. Such conditions come from a trade-off between mixing and moment hypotheses on the weights. For example, if $\Phat$ is exponentially mixing under each $\That_z$ for $z\in\bbZ^2\backslash\{0\}$ and if $\Ehat[|\w_0|^p]<\infty$ for some $p>2$, then \eqref{eq:class-L} holds. If $\Phat$ is merely ergodic and $\w_0$ is bounded, then \eqref{eq:class-L} holds. See \cite[Lemma A.4]{Ras-Sep-Yil-13} for more sufficient conditions for membership in class $\sL$. 

\begin{remark}
The reason for the asymmetry of our hypotheses in positive and zero temperature is that finiteness of almost sure Busemann limits coming from an arbitrary fully supported extremal Gibbs measure (defined below) comes as a consequence of backward martingale convergence in positive temperature. The proof of the analogous property for non-degenerate infinite geodesics, recorded as Theorem B.1 in our previous paper \cite{Jan-Ras-Sep-25-strong-}, implicitly used the main result of \cite{Mar-04}, which required an i.i.d. weight assumption.
\end{remark}

\section{Models and main results}\label{sec:results}

\subsection{Directed polymers and last-passage percolation} 
\label{sec:FE_shape}
For sites $x \leq y$ and inverse temperature $\beta\in(0,\infty)$ the point-to-point polymer partition function and free energy are given by
\[
\PF{x}{y}^{\beta,\w} = \sum_{\pi \in \Path{x}{y}} e^{\beta \sum_{v\in\pi \backslash\{y\}}\w_v} \qquad \text{ and }\qquad \FE{x}{y}^{\beta,\w} = \frac{1}{\beta} \log \PF{x}{y}^\beta.
\]
If $x \leq y$ fails, we take the convention that $\PF{x}{y}^{\beta,\w}=0$ and $\FE{x}{y}^{\beta,\w}=-\infty$.  Last-passage percolation can be viewed as the directed polymer at zero temperature. The last-passage time is
\[
\FE{x}{y}^{\infty,\w} = \lim_{\beta\to\infty}\FE{x}{y}^{\beta,\w} = \max_{\pi\in\Path{x}{y}}\biggl\{\sum_{v\in \pi\backslash\{y\}}\w_v\biggr\}.
\]
Maximizers of this variational problem are known as \textit{geodesics}. Semi-infinite or bi-infinite paths are said to be geodesics if each finite subsegment is a geodesic between its endpoints.

For $x \leq y$ and $\beta \in (0,\infty)$, the point-to-point quenched polymer is a measure on $\Path{x}{y}$ defined via
\begin{align}
\poly{x}{y}^{\beta,\w}(\pi) = \frac{e^{\beta \sum_{v \in \pi\backslash\{y\}} \w_v}}{\PF{x}{y}^{\beta,\w}}. \label{eq:polydef}
\end{align}

We denote the closed line segment in $\bbR^2$ connecting the two basis vectors by $\Uset = [e_2,e_1]$. 

\begin{lemma}\label{lem:shape}
{\rm\cite[Theorem 2.7]{Jan-Nur-Ras-22}} 
For $\beta\in(0,\infty]$, assume that Condition \ref{cond:moment-mixing} holds. There exists a deterministic function $\fe^\beta:\bbR_{>0}^2\to\bbR$ called the \textit{limiting free energy} $(\beta<\infty)$ or the \textit{limit shape} $(\beta = \infty)$ such that with $\bbP$-probability one, for each $\delta \in (0,1/2)$
\be\label{sh-th}
\lim_{n\to\infty}\max_{\substack{x\,\in\,n \Uset\tspb\cap\tspb \bbZ_{\geq 0}^2 \\[1pt] x\cdot e_1 \,\in\, [\delta n, (1-\delta) n]}}\frac{\abs{\FE{0}{x}^{\beta,\w}-\fe^\beta(x)}}{n}=0.
\ee
Moreover, $\fe^\beta$ is concave and positively homogeneous of degree one.
\end{lemma}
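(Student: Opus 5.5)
The lemma is quoted from \cite[Theorem 2.7]{Jan-Nur-Ras-22}, so the plan is to check that Condition \ref{cond:moment-mixing} places us in the hypotheses of that result, and to recall the standard proof structure.

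\emph{Step 1: existence of the point-to-point limit.} Fix a rational direction $\xi\in\bbR^2_{>0}$ and an integer $m$ with $m\xi\in\bbZ^2$. Positivity of the summands in $\PF{x}{y}^{\beta}$ gives superadditivity:
\[
\FE{x}{z}^{\beta}\ge \FE{x}{y}^{\beta}+\FE{y}{z}^{\beta}\quad\text{for } x\le y\le z .
\]
This holds also at $\beta=\infty$, where it is immediate from concatenating geodesics. Kingman's subadditive ergodic theorem, applied under the shift $T_{m\xi}$, then gives almost sure convergence of $\FE{0}{nm\xi}^{\beta}/n$. We need integrability, namely $\Ehat|\FE{0}{x}^\beta|\le C|x|_1$. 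The lower bound follows from a single path. The upper bound follows from $\FE{0}{x}^\beta\le \FE{0}{x}^\infty+\beta^{-1}|x|_1\log 2$, together with the standard LPP bound, which is finite under $p>2$ moments with finite range of dependence or i.i.d. weights. In case (ii), $\w_0\le c$ makes the upper bound trivial.

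Since $T_{m\xi}$ need not be ergodic, the limit is a priori only invariant under $T_{m\xi}$. It is made deterministic by comparing different starting points, $\FE{0}{nm\xi}^\beta\ge \FE{0}{z}^\beta+\FE{z}{nm\xi}^\beta$, and invoking ergodicity under the full group (Condition \ref{cond:ergodic}). The limits along the lattice directions $\xi$ then define $\fe^\beta$ on rational directions. Superadditivity makes it superadditive and homogeneous, hence concave. It extends to $\bbR^2_{>0}$ as a concave, positively homogeneous function.

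\emph{Step 2: uniformity.} The main obstacle is upgrading directional limits to the uniform statement \eqref{sh-th} over the cone $x\cdot e_1\in[\delta n,(1-\delta)n]$. The plan is to use a finite net of rational directions and interpolate. For $x$ near $n\xi$, bound $\FE{0}{x}^\beta$ above and below by $\FE{0}{y}^\beta$ for nearby net points $y$, plus the free energy of a short connecting piece of length at most $\epsilon n$.

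The lower bound uses superadditivity together with a straight-line path through the connecting region. Controlling the connecting piece uniformly over all starting points in $[-n,n]^2$ is exactly what \eqref{eq:class-L} provides in case (ii). In case (i) and at $\beta=\infty$, the same control comes from a Borel--Cantelli argument using the $p>2$ moment, via the class $\sL$ criteria of \cite[Lemma A.4]{Ras-Sep-Yil-13}.

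The upper bound goes the other way. Write $\FE{0}{x}\le \FE{0}{y'}-\FE{x}{y'}$ for a net point $y'\ge x$, and bound $-\FE{x}{y'}$ by the weights along a single path from $x$ to $y'$. This again requires the uniform class $\sL$ control. Finally, letting the net mesh go to zero and using continuity of $\fe^\beta$ on the compact set $\{x\cdot e_1\in[\delta,1-\delta]\}\cap\Uset$ finishes the proof.
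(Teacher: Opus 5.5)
Your sketch is correct. The paper gives no argument of its own here and simply imports the statement from \cite[Theorem 2.7]{Jan-Nur-Ras-22}, and your outline is the standard route to shape theorems of this kind: Kingman along rational rays, ergodicity under the full shift group to make the limit deterministic, then uniformity on the cone $x\cdot e_1\in[\delta n,(1-\delta)n]$ by interpolating between finitely many rays with class-$\sL$ control of the short connecting segments. The details you leave implicit are routine: passing from the multiples $nm\xi$ to all lattice points near $n\xi$, and the linear last-passage moment bound for finite-range weights, which follows for instance by splitting the lattice into sublattices on which the weights are i.i.d.
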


\begin{remark}\label{rk:extension}
    In general, we always have the inequality 
	\be\fe^{\beta}(e_i) \geq \Ehat[\w_0].\label{eq:fe-ineq}\ee
    The main result of \cite{Mar-04} says if the weights are i.i.d.\ and satisfy $\bbE[|\w_0|^p]<\infty$ for some $p>2$ (in fact, under a slightly weaker moment hypothesis), equality holds in \eqref{eq:fe-ineq} and $\delta$ can be taken to be $0$ in \eqref{sh-th}. \cite[Theorem 3.2]{Ras-Sep-14} gives the same equality under the same condition  if $\beta<\infty$. 
     See also Lemma C.1 in \cite{Jan-Ras-18-arxiv}. 
     Then, Theorem 2.15 in \cite{Gro-Jan-Ras-25-tams} gives again \eqref{sh-th} with $\delta=0$. 
	
    By Theorem 10.3 in \cite{Roc-70}, $\fe^\beta$ can be extended continuously from $\bbR^2_{>0}$ to $\bbR_{\geq 0}^2$. This extension is equal to the upper semi-continuous regularization.  See the second paragraph of Section \ref{app:convex}. We will implicitly work with this extension in what follows. Note that models satisfying Condition \ref{cond:moment-mixing}\eqref{cond:moment-mixing-<infty} for which $\fe^\beta(e_i) \neq \Ehat[\w_0]$ for $i\in\{1,2\}$ exist. 
\end{remark}

 Homogeneity implies that $\fe^\beta$ is determined by its restriction to $\Uset$. The super-differential of $\fe^\beta$ at $\xi\in\R_{\geq 0}^2\backslash\{0\}$ is
\be\label{eq:superdef}
\partial \fe^\beta(\xi)=\bigl\{h\in\R^2:\fe^\beta(\zeta)-\fe^\beta(\xi)\le h\cdot(\zeta-\xi)\text{ for all }\zeta\in\R_{\geq 0}^2\bigr\}.
\ee
By homogeneity, $\partial \fe^\beta(\xi)=\partial \fe^\beta(c\xi)$ for any $c>0$. Thus $\partial \fe^\beta(\bbullet)$ is also determined by points on $\Uset$, and we write
\[
\partial \fe^\beta(\Uset)=\{h\in\R^2:\text{there exists }\xi\in\Uset\text{ with }h\in\partial \fe^\beta(\xi)\}.
\]
Concavity implies the existence of real-valued one-sided derivatives at relative interior points $\xi\in\ri\Uset=]e_2,e_1[$:
\begin{align*}
\nabla \fe^\beta(\xi \pm) \cdot e_1 &= \lim_{\e \searrow 0} \frac{\fe^\beta(\xi \pm \e e_1) - \fe^\beta(\xi)}{\pm \e}\quad\text{and}\\
\nabla \fe^\beta(\xi \pm) \cdot e_2 &= \lim_{\e \searrow 0} \frac{\fe^\beta(\xi \mp \e e_2) - \fe^\beta(\xi)}{\mp \e}.
\end{align*}
Differentiability of $\fe^\beta$ at $\xi\in\ri\Uset$ is equivalent to $\nabla \fe^\beta(\xi+)=\nabla \fe^\beta(\xi-)$, and more generally $\partial \fe^\beta(\xi) = [\nabla \fe^\beta(\xi+),\nabla \fe^\beta(\xi-)]$ and consequently $\ext \partial \fe^{\beta}(\xi)=\{\nabla \fe^\beta(\xi+),\nabla \fe^\beta(\xi-)\}$.

In general, $\partial \fe^\beta(\Uset)$ forms a one dimensional curve. In the case of i.i.d.~last-passage percolation, this comes from combining equation (4.14) in \cite{Geo-Ras-Sep-17-ptrf-1} with Lemma 4.6(c) in \cite{Jan-Ras-20-aop}; the result is also true more generally. The connection to level sets of the free energy is illustrated in Figure \ref{fig:shape}, which previously appeared in \cite{Jan-Ras-Sep-25-strong-}. 

\begin{figure}
\includegraphics[scale=.5]{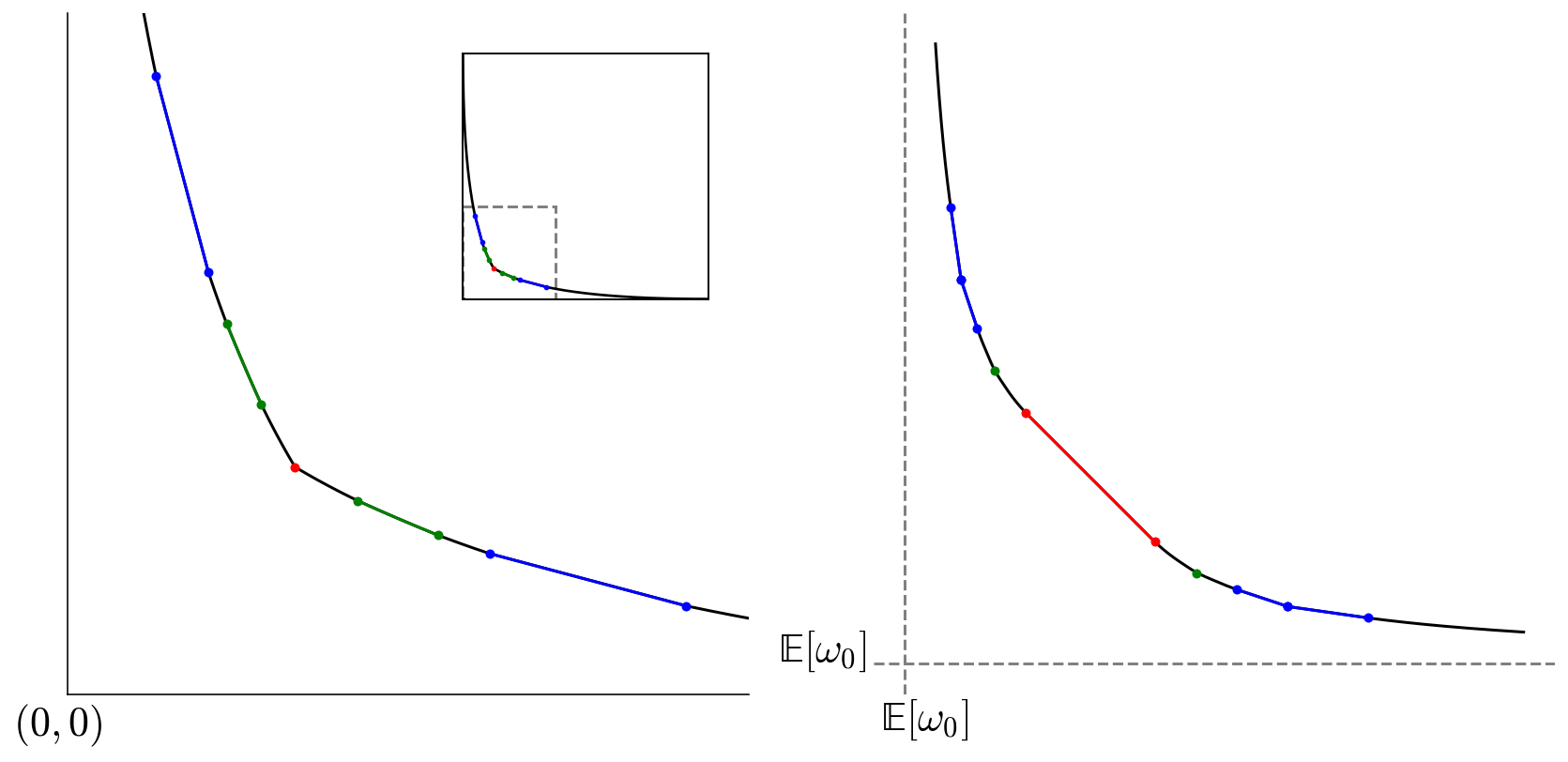}
\caption{\small A level set $\{x\in\R_{\geq 0}^2:\fe^\beta(x)=1\}$ of a possible limiting free energy (left) and its associated 
super-differential curve $\partial \fe^\beta(\Uset)$ (right). The full level set of the free energy is depicted in the inset image on the left and the dashed lines indicate the enlarged portion. 
This free energy has four linear segments and a cusp on the diagonal.  It is differentiable at the endpoints of the two inner segments (green)  and non-differentiable at five points: the four endpoints of the outer segments (blue) and the cusp on the diagonal (red).}\label{fig:shape}
\end{figure}

\subsection{Rooted Gibbs-DLR measures}
\begin{definition}
For $\beta\in(0,\infty)$ and $x \in\bbZ^2$ with $x\cdot(e_1+e_2)=k$, a measure $\Pi$ on $(\pathsp_{x},\pathsa_{k:\infty})$ is a Gibbs-DLR measure rooted at x at inverse temperature $\beta$ if
\be
\Pi(x_{k:n}) = \poly{x}{x_n}^{\beta,\w}(x_{k:n})\Pi(x_n)\label{eq:Gibbs}
\ee
for all $n\ge k$ and admissible paths $x_{k:n}$ with $x_k=x.$ The set of Gibbs-DLR measures rooted at $x$ at inverse temperature $\beta$ is denoted $\DLR_x^{\beta,\w}$.
\end{definition}    
It is immediate that the Gibbs property makes a Gibbs-DLR measure a Markov chain. It also follows immediately from the definitions that the set of Gibbs-DLR measures enjoys the following shift covariance: for all $\w\in\Omega$, $\beta\in(0,\infty)$,
and $x,y\in\bbZ^2$, a measure $\Pi \in \sM_1(\pathsp_x,\pathsa_{k:\infty})$ satisfies
\be
(\theta_y)_{\#}\,\Pi\in \DLR_{x+y}^{\beta,\w} \text{ if and only if }\Pi\in \DLR_x^{\beta,T_y\w}.\label{eq:DLR-eq-covariant}
\ee
Consequently,
\be
\DLR_{x+y}^{\beta,\w}\;=\;(\theta_y)_\#\,\DLR_x^{\beta,T_y\w}\label{eq:DLR-set-covariant}
\ee

\begin{definition}
$\Pi\in\sM_1(\pathsp_x,\pathsa_{k:\infty})$ is said to be \textit{fully supported} if for all admissible paths $x_{k:n}$ with $x_k=x$, $\Pi(x_{k:n})>0$.
\end{definition}

Because of the path structure, $\DLR_x^{\beta,\w}$ always contains two \textit{trivial} rooted Gibbs-DLR measures at $x$. These are the measures that  assign probability one to one of the paths $x + e_1 \bbZ_{\geq 0}$ and $x + e_2 \bbZ_{\geq 0}$. The following result establishes the main property we need on the event $\Omega_0$ from \eqref{eq:Omega0-definition}, namely that the only Gibbs measures which are not fully supported are convex combinations of trivial Gibbs measures. A slightly weaker version of this result for each fixed $\beta$ previously appeared in \cite[Lemma 3.4]{Jan-Ras-20-aop} under an i.i.d.\ hypothesis. The proof in our setting is essentially the same as there and appears at the start of Section \ref{sec:structure}.

\begin{lemma}\label{lem:Omega0}
The following holds for all $\w \in \Omega_0$, $x\in\bbZ^2$, and $\beta\in(0,\infty)$. If $\Pi\in\DLR_x^{\beta,\w}$, $v\ge x$, $j\in\{1,2\}$, and $(v-x)\cdot e_{3-j}>0$, then
\[
\Pi\bigl(X_{v\cdot(e_1+e_2)+m}=v+me_j\text{ for all }m\ge 0\bigr)=0.
\]
Consequently, if $\Pi \in \DLR_x^{\beta,\w}$ is not fully supported, then $\Pi$ is a convex combination of the two trivial Gibbs measures. That is, there exists $\alpha \in [0,1]$ with $\Pi = \alpha\tspb \delta_{x+e_1\bbZ_{\geq 0}} + (1-\alpha)\tspb \delta_{x + e_2\bbZ_{\geq 0}}$.
\end{lemma}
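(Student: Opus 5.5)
Fix $\Pi\in\DLR_x^{\beta,\w}$, $v\ge x$ and $j$; by symmetry take $j=1$, so $(v-x)\cdot e_2>0$ and $v-e_2\ge x$. Let $\ell=v\cdot(e_1+e_2)$ and let $A_v$ be the event that the path passes through $v$ and then moves forever in direction $e_1$. We will compare the path that stays on the row of $v$ with paths that stay on the row just below it. This comparison is what produces the random walk $S^{z,1}$ of Condition \ref{cond:standing}.

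For $n\ge 1$, consider two kinds of paths from $x$ to level $\ell+n$. The first kind passes through $v$ and then takes $n$ steps $e_1$, ending at $y_n=v+ne_1$. The second kind agrees with a first-kind path up to $v-e_2$, then moves along the lower row $v-e_2+ie_1$ for $0\le i\le n+1$, and finally steps up to reach $v+(n+1)e_1-e_2+e_2$. These endpoints do not coincide with $y_n$, so we will compare through the Gibbs property at a common level instead.

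A cleaner route is to use only the event $B_n=\{X_{\ell+m}=v+me_1,\ 0\le m\le n\}$. The events $B_n$ decrease to $A_v$. The Gibbs property \eqref{eq:Gibbs} at level $\ell+n$ says that, conditional on $X_{\ell+n}=y_n$, the path is distributed as $\poly{x}{y_n}$. Hence
\[
\Pi(B_n)=\Pi(X_{\ell+n}=y_n)\,\frac{\PF{x}{v}\,e^{\beta\sum_{i=0}^{n-1}\w_{v+ie_1}}}{\PF{x}{y_n}}.
\]
Now bound $\PF{x}{y_n}$ from below by two disjoint families of paths. One family is the paths through $v$ followed by the row segment. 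The other is the paths through $v-e_2$, then along the lower row to $v-e_2+ne_1$, then a single step $e_2$. This gives
\[
\PF{x}{y_n}\ge \PF{x}{v}\,e^{\beta\sum_{i=0}^{n-1}\w_{v+ie_1}}+\PF{x}{v-e_2}\,e^{\beta\sum_{i=0}^{n}\w_{v-e_2+ie_1}}.
\]
Therefore
\[
\Pi(B_n)\le \Bigl(1+\tfrac{\PF{x}{v-e_2}}{\PF{x}{v}}e^{\beta\w_{v-e_2+ne_1}}\,e^{-\beta S^{v,1}_n}\Bigr)^{-1}.
\]
Here the telescoping is exactly $\sum_{i<n}(\w_{v+ie_1}-\w_{v-e_2+(i+1)e_1})=S_n^{v,1}$, after shifting the index of the lower-row sum. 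The prefactor $\PF{x}{v-e_2}/\PF{x}{v}$ is a positive constant, and $\w_{v-e_2+ne_1}$ is the remaining lower-row term. Since $\w\in\Omega_0$, we have $\varliminf S_n^{v,1}=-\infty$. Along the subsequence where $S_n^{v,1}\to-\infty$, the bound gives $\Pi(B_n)\to0$ provided $\w_{v-e_2+ne_1}$ is not very negative there.

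That stray weight is the main obstacle. The fix is to absorb the last lower-row weight differently: end the lower-row path at $v-e_2+(n-1)e_1$ and go up to $v+(n-1)e_1$, then along the row to $y_n$. The lower-row sum then runs over $i\le n-1$ and the upper sum over $i<n-1$, plus the shared weight $\w_{v+(n-1)e_1}$, which cancels in the ratio. With this choice the ratio is exactly $e^{-\beta S^{v,1}_{n-1}}$ times the constant, up to index bookkeeping. Since $\Pi(A_v)\le\inf_n\Pi(B_n)=0$, the first claim follows.

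For the consequence, suppose $\Pi$ is not fully supported. Then some admissible $x_{k:n}$ has $\Pi(x_{k:n})=0$. By \eqref{eq:Gibbs}, and since $\poly{x}{x_n}(x_{k:n})>0$, this gives $\Pi(X_n=x_n)=0$. Any such endpoint with $(x_n-x)\cdot e_1>0$ and $(x_n-x)\cdot e_2>0$ is ruled out. The reason is that the measure would then be forced to avoid a region, and its paths would have to escape along a row or column from an off-axis vertex. Concretely, a path of $\Pi$ avoiding $x_n$ must, from the last level before $n$, stay strictly to one side. Iterating \eqref{eq:Gibbs} (every point of an avoided antidiagonal segment gets mass $0$, since $\PF{}{}>0$ pushes positivity backwards) shows the support is contained in paths that eventually follow a row or column from an off-axis $v$, or the two axis rays.

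By the first part, and a countable union over $v$ and $j$, only the two axis rays $x+e_1\bbZ_{\ge0}$ and $x+e_2\bbZ_{\ge0}$ remain. Take $\alpha$ to be the mass of the first ray.
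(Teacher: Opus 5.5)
Your first part is the paper's argument: bound the ray's mass by comparing with paths that run along the adjacent row, which produces $e^{-\beta S^{v,1}_n}$. The ``obstacle'' you then work around comes from a bookkeeping slip. In your lower bound the lower-row sum is $\sum_{i=0}^{n}\w_{v-e_2+ie_1}$. Its terms $i=1,\dots,n$ pair with the upper-row terms $i=0,\dots,n-1$ to give $-S^{v,1}_n$, so the unpaired weight is $\w_{v-e_2}$, not $\w_{v-e_2+ne_1}$. The correct bound is
\[
\Pi(B_n)\le\Bigl(1+\frac{\PF{x}{v-e_2}^{\beta,\w}}{\PF{x}{v}^{\beta,\w}}\,e^{\beta\w_{v-e_2}}\,e^{-\beta S^{v,1}_n}\Bigr)^{-1}.
\]
Its prefactor is a fixed positive constant, so $\inf_n\Pi(B_n)=0$ follows at once from $\varliminf_n S^{v,1}_n=-\infty$. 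The display you wrote with $\w_{v-e_2+ne_1}$ is not a valid inequality in general. Your rerouted family also works (it gives the same constant with $S^{v,1}_{n-1}$), but it is unnecessary. The paper does the same computation using a single fixed path to $v-e_1$, with leftover constant $e^{-\beta c(\w)-\beta\w_{v-e_1}}$.

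The second part, as written, does not establish the reduction to coordinate rays. Two of its claims fail:
\begin{itemize}
\item The assertion that an endpoint with both coordinates of $x_n-x$ positive ``is ruled out'' is false: the trivial measures give zero mass to every such vertex.
\item Avoiding the single vertex $x_n$ constrains the path only at level $n$, so it cannot force the path to ``stay strictly to one side.''
\end{itemize}
The step you need is that zero mass propagates to a whole quadrant. If $\Pi(X_n=y)=0$ and $z\ge y$ lies at level $m>n$, then \eqref{eq:Gibbs} gives $0=\Pi(X_n=y,X_m=z)=\Pi(X_m=z)\,\poly{x}{z}^{\beta,\w}(X_n=y)$. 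Since $\poly{x}{z}^{\beta,\w}(X_n=y)>0$, $\Pi$ never visits $y+\bbZ_{\geq0}^2$.

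A path with infinitely many $e_1$-steps and infinitely many $e_2$-steps eventually enters that quadrant. Hence $\Pi$-almost every path eventually follows some ray $v+\bbZ_{\geq0}e_j$. Your first part, applied over the countably many pairs $(v,j)$, kills the rays with $(v-x)\cdot e_{3-j}>0$. A ray from $v=x+ae_j$ forces the whole path to be $x+\bbZ_{\geq0}e_j$, since the only path from $x$ to $v$ is straight. Your remark that positivity ``pushes backwards'' is the right mechanism, but it has to be applied to this quadrant (whose level-$m$ slices are your antidiagonal segments) and then combined with the escape argument above.
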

\begin{remark}\label{rem:degen} 
Lemma \ref{lem:Omega0} plays a central role in this paper. In last-passage percolation, it is possible to construct an ergodic environment in which non-trivial semi-infinite geodesics exist that become trapped on a (random) row or column of the lattice. A simple example comes by applying Lemma 5.4 of \cite{Emr-Jan-Sep-25} to the model where, in the notation of that paper, $b_j = 1$ and $a_i$ is an i.i.d.\ sequence taking the values $1$ or $2$ with equal probability. In that model, the critical direction is $e_2$ and the $e_2$ labeled geodesic from any site of the lattice behaves non-trivially until it first encounters a column with $a_i =1$, at which point it proceeds vertically. This environment fails Condition \ref{cond:standing}. We expect that one can prove that the same type of example failing the conclusions of Lemma \ref{lem:Omega0} exists in positive temperature by applying the methods of \cite{Emr-Jan-Sep-25} to the solvable inhomogeneous generalization of the log-gamma polymer. 
\end{remark}

It is immediate to see that for each $\w \in \Omega$, the set of measures $\Pi$ satisfying \eqref{eq:Gibbs} is closed and convex in the compact metric space $\sM_1(\pathsp_x,\pathsa_{k:\infty})$. Choquet's theorem allows us to write each Gibbs measure as an integral convex mixture of extremal Gibbs measures, so it will suffice to focus our attention on the extreme points.

\begin{definition}
$\Pi \in \DLR_x^{\beta,\w}$ is extreme if for all $\Pi^{1},\Pi^{2}\in \DLR_x^{\beta,\w}$ and $\alpha \in (0,1)$, $\Pi = \alpha \Pi^{1} + (1-\alpha) \Pi^{2}$ implies that $\Pi^{1}=\Pi^{2}=\Pi$.
The set of extreme points of $\DLR_x^{\beta,\w}$ is denoted by $\ext \DLR_x^{\beta,\w}$. 
\end{definition}
By Lemma \ref{lem:Omega0}, all non-trivial extremal Gibbs measures are fully supported. Existence of such measures under Condition \ref{cond:moment-mixing} will be discussed later and follows from one of the main results of \cite{Gro-Jan-Ras-25-tams}. If the weights are taken to be i.i.d., this was previously shown in \cite{Jan-Ras-20-aop}.

Recall the partial order given by stochastic monotonicity which comes from the path structure. As described in the notation section, we denote this order by $\preceq$: for $\Pi^1,\Pi^2\in\sM_1(\pathsp_x,\pathsa_{k:\infty})$, 
\[
\Pi^1 \preceq \Pi^2 \iff \Pi^1(A) \leq \Pi^2(A)
\]
for every increasing event $A\in\pathsa_{k:n}$, for all $n>k$. By Strassen's theorem \cite{Str-65}, this is equivalent to the coupling definition of the order given in the notation section. Our first new result is that the set of extremal Gibbs measures is both closed and totally ordered.

\begin{theorem}\label{thm:cltot}
For each $\w \in \Omega_0$, each $x\in\bbZ^2$, and each $\beta\in(0,\infty)$, $\ext \DLR_x^{\beta,\w}$ is closed in $\sM_1(\pathsp_x,\pathsa_{k:\infty})$ and totally ordered under $\preceq$.
\end{theorem}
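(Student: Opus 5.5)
The plan is to work with the tail characterization of extremality and use planarity through monotone couplings. By Lemma \ref{lem:Omega0}, every element of $\DLR_x^{\beta,\w}$ that is not fully supported is a mixture of $\delta_{x+e_1\bbZ_{\ge0}}$ and $\delta_{x+e_2\bbZ_{\ge0}}$. These two trivial measures are extreme and are the maximum and minimum of $\sM_1(\pathsp_x,\pathsa_{k:\infty})$ under $\preceq$. So it suffices to treat fully supported measures. First I would record the standard fact for specifications of this Markov type: $\Pi$ is extreme iff $\ptail$ is $\Pi$-trivial. The reason is that by \eqref{eq:Gibbs}, conditioning $\Pi$ on $\pathsa_{n:\infty}$ only changes the law of $X_n$, so conditioning on a tail event of positive probability gives again a Gibbs measure.

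\emph{Total order.} Take fully supported extreme $\Pi^1,\Pi^2$ and let $\wt\Pi=\Pi^1\otimes\Pi^2$. I first want to show that the events $A_n=\{X^1_n\cdot e_1\le X^2_n\cdot e_1\}$ satisfy $\wt\Pi(A_n)\to1$ or $\wt\Pi(A_n^c)\to1$ (possibly after swapping the labels). The route is to show that $\limsup A_n$ and $\limsup A_n^c$ are essentially tail-measurable for the pair, and that the pair tail is trivial. For the latter I would use the coalescence idea the paper advertises: by Goldstein's theorem, tail triviality of each $\Pi^i$ gives, for any two starting laws at a fixed level, couplings that eventually agree. That should transfer triviality to the product. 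If instead both limsup events had positive probability, conditioning $\Pi^1$ on a suitable tail event would split it as a nontrivial mixture, contradicting extremality.

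Given that $\wt\Pi(A_n)\to1$, I condition on $(X^1_n,X^2_n)$. Under \eqref{eq:Gibbs} the pasts are independent point-to-point polymers $\poly{x}{X^i_n}^{\beta,\w}$. For ordered endpoints $y\preceq z$ on the same level there is a monotone coupling of $\poly{x}{y}$ and $\poly{x}{z}$; this is standard, for instance by coupling sequentially backwards and using the planar comparison $Z_{x,u}Z_{x,v'}\ge Z_{x,u'}Z_{x,v}$ for crossing pairs. Coupling level by level, the laws of $X^1_{k:m}$ and $X^2_{k:m}$ are ordered up to an error $\wt\Pi(A_n^c)\to0$. Since $\pathsa_{k:m}$ is finite, this gives $\Pi^1\preceq\Pi^2$ on $\pathsa_{k:m}$ for every $m$, hence $\Pi^1\preceq\Pi^2$.

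\emph{Closedness} is the step I expect to be the main obstacle. Let $\Pi^j\in\ext\DLR_x^{\beta,\w}$ converge to $\Pi$, which lies in $\DLR_x^{\beta,\w}$ because that set is closed. By the total order, every sequence has a monotone subsequence, so I may assume $\Pi^j$ is, say, increasing. The trivial cases are handled by Lemma \ref{lem:Omega0}, so assume everything is fully supported.

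Next I would prove a second characterization of extremality. A fully supported $\Pi$ is extreme iff for every $y\ge x$ the ratios $\PF{y}{X_N}^{\beta,\w}/\PF{x}{X_N}^{\beta,\w}$ converge $\Pi$-a.s. to a deterministic limit $h(y)$. One direction is backward martingale convergence plus tail triviality; the converse holds because such $h$ determines the transition kernel $\Pi(X_{n+1}=y+e_i\mid X_n=y)$, which pins down the measure. Planarity makes these ratios monotone in the endpoint along a level. Then using the increasing coupling of the $\Pi^j$ (Strassen plus Kolmogorov extension) together with the limit path, I would sandwich the ratio along the $\Pi$-path between the deterministic limits $h^j$ and $h^{j'}$. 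The $h^j$ are monotone in $j$, so they converge to some $h$. The delicate point is to show that the liminf and limsup of the ratio under $\Pi$ both equal this $h$, with no gap. For this I would exploit that $h^j$ also determines the transitions of $\Pi^j$, and these converge to those of $\Pi$. That gives a deterministic limit for the ratio under $\Pi$, so $\Pi$ is extreme.
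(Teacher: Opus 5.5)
\textbf{Total order.} Your pair-tail argument does not give the dichotomy you need. Triviality of the tail of $\Pi^1\otimes\Pi^2$ is indeed available from Goldstein's theorem and the Markov property. But it only says that $\limsup A_n$ and $\limsup A_n^c$ each have probability $0$ or $1$. Their union is the whole space, so the case left open is that both have probability one: the independent paths swap order infinitely often. In that case neither $\widetilde\Pi(A_n)\to1$ nor $\widetilde\Pi(A_n^c)\to1$ need hold. Moreover, there is then no tail event of intermediate probability to condition on, so extremality gives no contradiction. As a sanity check, run your argument with $\Pi^1=\Pi^2=\Pi$. Since $\widetilde\Pi(A_n^c)=\tfrac12\bigl(1-\sum_z\Pi(X_n=z)^2\bigr)$, your dichotomy would yield $\sum_z\Pi(X_n=z)^2\to1$ from extremality alone. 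That is a strong localization statement extremality has no reason to imply, so the inference is invalid.

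The missing idea is a coupling in which crossing is impossible. The paper replaces each past $X^i_{k:n}$ by the path from $x$ to $X^i_n$ in a single random spanning tree of backward polymer steps (Lemmas \ref{lem:tree-marginals} and \ref{nice-coupling}). Two tree paths agree up to a split and are strictly ordered afterwards, so any weak limit as $n\to\infty$ is carried by $\{X^1\prec X^2\}\cup\{X^1=X^2\}\cup\{X^1\succ X^2\}$ (Lemma \ref{lem:limit-ordered}). These three events agree a.s.\ with joint tail events. Conditioning on one of positive probability keeps both marginals Gibbs, because the past given the joint future is $\poly{x}{X^i_m}^{\beta,\w}$ (Lemma \ref{lem:concat-cond}, Corollary \ref{cor:tail-dlr}). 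Extremality of each $\Pi^i$ separately then shows the conditioned law is still a coupling of $\Pi^1,\Pi^2$, now ordered. No triviality of the joint tail is needed. Your second step (monotone coupling of point-to-point polymers with ordered endpoints) is fine once the dichotomy is known.

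\textbf{Closedness.} This follows the paper's route: monotone coupling, limit path, planar monotonicity of $\PF{y+e_i}{z}^{\beta,\w}/\PF{y}{z}^{\beta,\w}$ in $z$ along a level, and convergence of transitions. However, the step you flag as delicate is not settled by convergence of transitions. For a monotone sequence every $\Pi^j$ lies on one side of $\Pi$, so no two-sided sandwich exists. Set $M(y)=\bfE^\Pi[\PF{y}{X_m}^{\beta,\w}/\PF{x}{X_m}^{\beta,\w}\mid\ptail]$. For increasing $\Pi^j$ you only get $e^{\beta\w_y}M(y+e_1)\ge\pi^\Pi_{y,y+e_1}M(y)$ and $e^{\beta\w_y}M(y+e_2)\le\pi^\Pi_{y,y+e_2}M(y)$. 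Since $M(y)=e^{\beta\w_y}(M(y+e_1)+M(y+e_2))$ and $\pi^\Pi_{y,y+e_1}+\pi^\Pi_{y,y+e_2}=1$, these two inequalities are the same statement and are compatible with a random limit.

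What closes the gap is taking $\Pi$-expectations. By Proposition \ref{prop:DLR-transition} both sides have equal means, so equality holds a.s., and Lemma \ref{lem:conditional-transition-sufficient} gives extremality. That lemma (deterministic tail conditional expectations imply tail triviality) is also the correct justification for the converse in your characterization; knowing the kernel does not by itself give extremality.

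Finally, full support of $\Pi$ does not exclude an atom on a root ray, e.g.\ $\tfrac12\delta_{x+e_1\bbZ_{\geq0}}+\tfrac12\Pi'$ with $\Pi'$ fully supported. The paper rules this out in Lemma \ref{lem:monotone-trivial-atom}. That lemma is what guarantees the paths eventually enter every quadrant $y+e_1+e_2+\bbZ_{\geq0}^2$, where the planar comparison is applied.
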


The proof of this result is broken up into two claims below, recorded as Propositions \ref{prop:DLR-ordered} and \ref{prop:DLR-closed}.

\begin{remark}
In general, the set of extreme points of a metrizable compact convex set is $G_{\delta}$ \cite[Proposition 1.3]{Phe-01}. We do not know of an \textit{a priori} reason to expect $\ext \DLR_x^{\beta,\w}$ to be closed except by way of analogy to the structure of geodesics in first- and last-passage percolation (which is how we \emph{guessed} this property). That same analogy may suggest that $\ext \DLR_x^{\beta,\w}$ continues to be closed in polymers on non-planar graphs (in particular, in higher dimensions), but our proof uses planarity in an essential way. An alternative proof which avoids planarity would be of interest.
\end{remark}

\subsubsection{Coalescing couplings and extremality}
We next turn to a characterization of extremality of fully supported   Gibbs measures, beginning with some definitions.

\begin{definition}\label{def:coalcoup-pair}
Fix finitely many sites $x^1,\dots, x^m\in\bbZ^2$ and abbreviate $k_i = x^i\cdot(e_1+e_2)$. We say that a family $\{\Pi^1,\dots \Pi^m\}$, $\Pi^i\in\sM_1(\pathsp_{x^i},\pathsa_{k_i:\infty})$, $i\in\lzb 1, m \rzb$, admits a coalescing coupling if there exists a measure \[\wt \Pi \in \sM_1\left(\prod_{i=1}^m\pathsp_{x^i}, \prod_{i=1}^m\pathsa_{k_i:\infty}\right)\] 
with the following properties, where we denote by $X^i$ the $i^{\mathrm{th}}$ entry of the coordinate process:
\begin{enumerate}[label={\rm(\alph*)}, ref={\rm\alph*}]
\item {\rm(}Marginals{\rm)} For each $i\in \lzb 1, m \rzb$, $\wt \Pi(X^i \in \aabullet) = \Pi^i(\aabullet).$
\item {\rm(}Coalescence{\rm)} \ For each $i\neq j$, $\wt \Pi(\text{there exists }n \in\bbZ \text{ such that } X^i_{n:\infty}=X^j_{n:\infty})=1$.
\end{enumerate}
The definition for countably infinitely many measures is analogous, with $m$ in the expressions above replaced by $\infty$.
\end{definition}

By a straightforward disintegration-of-measure argument combined with the Kolmogorov extension theorem, existence of a coalescing coupling of any countable  collection of measures is equivalent to existence of coalescing couplings of each pair of those measures.

We next recall a basic fact from the extended arXiv version \cite{Jan-Ras-18-arxiv} of article \cite{Jan-Ras-20-aop}.  In words, this says that a fully supported rooted Gibbs-DLR measure
generates fully supported rooted Gibbs-DLR measures from each site in the quadrant ahead of the root. 

\begin{lemma}{\rm\cite[Lemma 2.9(a)]{Jan-Ras-18-arxiv}} \label{lem:cond-DLR}
Let $\w \in \Omega$, $\beta>0$, and $y \geq x$ in $\bbZ^2$. Suppose $\Pi\in \DLR_{x}^{\beta,\w}$ is fully supported. Define a measure $\Pi_y$ on $(\pathsp_y,\pathsa_{\ell:\infty})$, where $\ell=y\cdot(e_1+e_2)$, via 
\be
\Pi_y(A) = \Pi(A \viiva y) = \frac{\Pi(A,X_\ell=y)}{\Pi(X_\ell=y)},\label{eq:induced-DLR-def}
\ee
for $A \in \pathsa_{\ell:\infty}$. Then $\Pi_x = \Pi$ and $\Pi_y\in \DLR_y^{\beta,\w}$ is fully supported. 
\end{lemma}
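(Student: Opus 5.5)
We must prove Lemma \ref{lem:cond-DLR}: $\Pi_y$ is well defined, equals $\Pi$ when $y=x$, lies in $\DLR_y^{\beta,\w}$, and is fully supported. The argument is a direct computation with cylinder probabilities, using the Gibbs identity \eqref{eq:Gibbs} and the multiplicative structure of the point-to-point polymer weights.

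\textbf{Well-definedness and the case $y=x$.} Since $y\ge x$, the set $\Path{x}{y}$ is nonempty. Full support gives $\Pi(X_\ell=y)\ge \Pi(x_{k:\ell})>0$ for any $x_{k:\ell}\in\Path{x}{y}$, so \eqref{eq:induced-DLR-def} defines a probability measure on $\pathsa_{\ell:\infty}$. Events $\{X_\ell=y\}\cap A$ with $A \in \pathsa_{\ell:\infty}$ identify $\Pi_y$ with a measure on $\pathsp_y$. When $y=x$ we have $\ell=k$ and $\Pi(X_k=x)=1$, so $\Pi_x=\Pi$.

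\textbf{Marginal identity.} Fix $n\ge \ell$ and an admissible path $y_{\ell:n}$ with $y_\ell=y$. Decompose $\{X_{\ell:n}=y_{\ell:n}\}$ according to the segment $x_{k:\ell}\in\Path{x}{y}$ and apply \eqref{eq:Gibbs} to the concatenation $x_{k:\ell}\concat y_{\ell:n}$:
\[
\Pi(X_{\ell:n}=y_{\ell:n})=\sum_{x_{k:\ell}\in\Path{x}{y}}\frac{e^{\beta\sum_{v\in x_{k:\ell}\backslash\{y\}}\w_v}\,e^{\beta\sum_{v\in y_{\ell:n}\backslash\{y_n\}}\w_v}}{\PF{x}{y_n}^{\beta,\w}}\,\Pi(X_n=y_n).
\]
The first factors sum to $\PF{x}{y}^{\beta,\w}$, and the second equals $\PF{y}{y_n}^{\beta,\w}\poly{y}{y_n}^{\beta,\w}(y_{\ell:n})$. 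Hence
\[
\Pi(X_{\ell:n}=y_{\ell:n})=\poly{y}{y_n}^{\beta,\w}(y_{\ell:n})\,\frac{\PF{x}{y}^{\beta,\w}\PF{y}{y_n}^{\beta,\w}}{\PF{x}{y_n}^{\beta,\w}}\,\Pi(X_n=y_n).
\]
This is the one step requiring care: the weight convention excludes the terminal vertex, so the concatenation weight factorizes exactly with $y$ counted once.

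\textbf{Gibbs property.} Summing the marginal identity over all $y_{\ell:n}\in\Path{y}{y_n}$, and using that $\poly{y}{y_n}^{\beta,\w}$ is a probability measure, gives
\[
\Pi(X_\ell=y,\,X_n=y_n)=\frac{\PF{x}{y}^{\beta,\w}\PF{y}{y_n}^{\beta,\w}}{\PF{x}{y_n}^{\beta,\w}}\,\Pi(X_n=y_n).
\]
Substituting this back into the marginal identity and dividing by $\Pi(X_\ell=y)$ yields
\[
\Pi_y(y_{\ell:n})=\poly{y}{y_n}^{\beta,\w}(y_{\ell:n})\,\Pi_y(X_n=y_n),
\]
which is \eqref{eq:Gibbs} for $\Pi_y$. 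Hence $\Pi_y\in\DLR_y^{\beta,\w}$.

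\textbf{Full support.} For any admissible $y_{\ell:n}$ from $y$, choose any $x_{k:\ell}\in\Path{x}{y}$. Then
\[
\Pi_y(y_{\ell:n})\ge \frac{\Pi(x_{k:\ell}\concat y_{\ell:n})}{\Pi(X_\ell=y)}>0
\]
by full support of $\Pi$. There is no real obstacle beyond the bookkeeping in the marginal identity.
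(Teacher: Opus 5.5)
Your proof is correct: decomposing over the initial segment in $\Path{x}{y}$, applying \eqref{eq:Gibbs} to the concatenation, and factoring the polymer weight at $y$ gives the Gibbs identity for $\Pi_y$, and a single term of the same decomposition gives full support. The paper does not reprove this lemma but cites it from \cite[Lemma 2.9(a)]{Jan-Ras-18-arxiv}, so your direct cylinder computation is a suitable self-contained verification. One could equivalently read it off the Markov/cocycle representation in Theorem \ref{thm:DLR-cocycle}, since $\Pi_y$ is the chain with transitions \eqref{eq:DLR-transition} started at $y$.
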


\begin{remark}In this section and in the proofs of results in this section, we cite several results from \cite{Jan-Ras-20-aop}, which has an i.i.d.\ assumption on the environment. The results we cite here are deterministic statements which do not depend on that hypothesis. We will not repeat this point again in later citations to that paper.
\end{remark}

With the previous result in mind, we can introduce the notion of two rooted Gibbs-DLR measures being \textit{consistent} if the measures they induce agree on the intersections of the quadrants ahead of their roots.
\begin{definition}\label{def:consistent}
For $\w\in\Omega$ and $\beta\in(0,\infty)$, we say two fully supported rooted Gibbs-DLR measures $\Pi^1\in \DLR_x^{\beta,\w}$ and $\Pi^2\in\DLR_y^{\beta,\w}$, $x,y\in\bbZ^2$, are consistent if $\Pi^1_{x \vee y} = \Pi^2_{x \vee y}$.  

An arbitrary family of fully supported rooted Gibbs-DLR measures is said to be consistent if every pair is consistent.
\end{definition}

As will appear in the sequel, it is immediate from Lemma \ref{lem:cond-DLR} that if $\Pi^1$ and $\Pi^2$ are as above and are consistent, then for all $z \geq x \vee y$, $\Pi^1_z = \Pi^2_z$.

As an immediate consequence of the Gibbs property, the next lemma shows that the family of measures generated by a single fully supported Gibbs measure is consistent in the sense defined above.

\begin{lemma}\label{lem:quad-consist}
For each $\w \in \Omega$, $\beta>0$, and $z \geq y \geq x$ in $\bbZ^2$, if $\Pi\in \DLR_{x}^{\beta,\w}$ is fully supported, then
\[
\Pi_z(A) = \Pi_y(A \viiva z)
\]
for all $A \in \pathsa_{m:\infty},$ where $m=z\cdot(e_1+e_2)$. Consequently, the family $\{\Pi_y : y \geq x\}$ is consistent.  
\end{lemma}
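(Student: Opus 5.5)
The plan is to compute $\Pi_z$ and $\Pi_y(\cdot\viiva z)$ directly from the definition \eqref{eq:induced-DLR-def}, using only the Markov property of $\Pi$ that follows from the Gibbs property \eqref{eq:Gibbs}. Fix $z\ge y\ge x$, with levels $k=x\cdot(e_1+e_2)$, $\ell=y\cdot(e_1+e_2)$, $m=z\cdot(e_1+e_2)$, and take $A\in\pathsa_{m:\infty}$.

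First, $\Pi_y(X_m=z)>0$. Indeed, by Lemma \ref{lem:cond-DLR}, $\Pi_y$ is fully supported. Concretely, $\Pi_y(X_m=z)=\Pi(X_\ell=y,X_m=z)/\Pi(X_\ell=y)$, and the numerator is at least $\Pi(x_{k:m})>0$ for any admissible path $x_{k:m}$ from $x$ through $y$ to $z$. Hence the conditional probability $\Pi_y(A\viiva z)$ is well defined, and
\[
\Pi_y(A\viiva z)=\frac{\Pi(A,\,X_m=z,\,X_\ell=y)}{\Pi(X_\ell=y,\,X_m=z)}.
\]

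The key step is to show that under $\Pi$ the events $\{X_\ell=y\}$ and $A$ are conditionally independent given $X_m=z$. I would derive this from \eqref{eq:Gibbs}. For a cylinder event $A=\{X_{m:n}=x_{m:n}\}$ with $x_m=z$, summing \eqref{eq:Gibbs} over admissible paths $x_{k:m}$ gives
\[
\Pi(X_\ell=y,\,X_{m:n}=x_{m:n})=\Pi(X_n=x_n)\sum_{x_{k:m}\ni y}\poly{x}{x_n}^{\beta,\w}(x_{k:n}).
\]
The weight factorizes as $e^{\beta\sum_{v\in x_{k:m}\setminus\{z\}}\w_v}\,e^{\beta\sum_{v\in x_{m:n}\setminus\{x_n\}}\w_v}/\PF{x}{x_n}^{\beta,\w}$, so the sum over $x_{k:m}$ through $y$ yields $\PF{x}{y}^{\beta,\w}\PF{y}{z}^{\beta,\w}$ times a factor depending only on $x_{m:n}$. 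The same computation without the constraint at $y$ yields $\PF{x}{z}^{\beta,\w}$ times the identical factor.

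Taking ratios, $\Pi(X_\ell=y\viiva X_m=z, X_{m:n}=x_{m:n})=\PF{x}{y}^{\beta,\w}\PF{y}{z}^{\beta,\w}/\PF{x}{z}^{\beta,\w}$, which does not depend on $x_{m:n}$. This gives the conditional independence on cylinders, and it extends to all of $\pathsa_{m:\infty}$ by the $\pi$-$\lambda$ theorem. Therefore
\[
\Pi_y(A\viiva z)=\frac{\Pi(A,X_m=z)\,c}{\Pi(X_m=z)\,c}=\Pi_z(A),
\]
where $c$ denotes that common conditional probability.

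For consistency, take $y,y'\ge x$ and set $w=y\vee y'$, which satisfies $w\ge y$ and $w\ge y'$. Applying the identity just proved to the measure $\Pi$ rooted at $x$ gives $(\Pi_y)_w=\Pi_y(\cdot\viiva w)=\Pi_w$, and likewise $(\Pi_{y'})_w=\Pi_w$. Here $(\Pi_y)_w$ is the measure induced via \eqref{eq:induced-DLR-def} from $\Pi_y$, which is fully supported by Lemma \ref{lem:cond-DLR}. This is exactly Definition \ref{def:consistent}.

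The only mildly delicate point is the extension from cylinders to the full tail-type $\sigma$-algebra $\pathsa_{m:\infty}$. This is routine, since cylinders starting at $z$ form a $\pi$-system generating the trace $\sigma$-algebra on $\{X_m=z\}$.
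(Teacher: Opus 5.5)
Your proof is correct and is essentially the paper's argument. Both rewrite $\Pi_y(A\viiva z)$, via the definition of $\Pi_y$, as $\Pi(A\viiva X_\ell=y,\,X_m=z)$, and then use the Gibbs (Markov) property to drop the conditioning on $X_\ell=y$, which gives $\Pi(A\viiva z)=\Pi_z(A)$. The paper compresses the Gibbs step into one line, whereas you verify it explicitly through the factorization $\Pi(X_\ell=y\viiva X_{m:n}=x_{m:n})=\PF{x}{y}^{\beta,\w}\PF{y}{z}^{\beta,\w}/\PF{x}{z}^{\beta,\w}$ followed by a $\pi$-$\lambda$ extension, and you also spell out the consistency deduction that the paper leaves implicit.
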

\begin{proof}
Abbreviate $\ell=y\cdot(e_1+e_2)$ and fix $A$ as in the statement. Then first by the definition \eqref{eq:induced-DLR-def} of $\Pi_y$ and then by the Gibbs property,
\[
\Pi_y(A \viiva z) = \frac{\Pi_y(A, X_{m}=z)}{\Pi_y(z)} = \Pi(A \viiva z) = \Pi_z(A). \qedhere
\]
\end{proof}

\begin{definition}\label{def:coalcoup}
For $\beta>0$, we say that a fully supported measure $\Pi\in \DLR_x^{\beta,\w}$ \textit{admits a coalescing self-coupling} if the family $\{\Pi_y : y \geq x\}$ defined via \eqref{eq:induced-DLR-def} admits a coalescing coupling.
\end{definition}
Our characterization in Theorem \ref{thm:coalext} below says that a fully supported rooted Gibbs-DLR measure admits a coalescing self-coupling if and only if it is extremal. As the proof in Section \ref{sec:coal-ext} will show, this is a consequence of a celebrated result of Goldstein \cite{Gol-79} on maximal couplings. That result says that two discrete time stochastic processes taking values in a Polish space agree on the path tail $\sigma$-algebra if and only if it is possible to couple them so that they coalesce; see Theorem 2.1 of \cite{Gol-79}. 

\begin{theorem}\label{thm:coalext}
Suppose that for $\w\in \Omega$ and $\beta>0$, $\Pi \in \DLR_x^{\beta,\w}$ is fully supported. Then $\Pi$ admits a coalescing self-coupling if and only if $\Pi \in \ext \DLR_x^{\beta,\w}.$ 
\end{theorem}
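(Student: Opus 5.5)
The plan is to route both directions through the tail $\sigma$-algebra $\ptail$, using two standard facts: extremality in $\DLR_x^{\beta,\w}$ is equivalent to triviality of $\Pi$ on $\ptail$, and Goldstein's theorem \cite[Theorem 2.1]{Gol-79} says two processes admit a coalescing coupling exactly when their laws agree on the tail.

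\textbf{Step 1: extremality is tail triviality.} I will prove that a Gibbs measure $\Pi\in\DLR_x^{\beta,\w}$ is extreme if and only if $\Pi(B)\in\{0,1\}$ for every $B\in\ptail$.

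If $B\in\ptail$ has $0<\Pi(B)<1$, I will show $\Pi(\cdot\viiva B)$ is again Gibbs. For a path $x_{k:n}$, write $\Pi(x_{k:n},B)=\Pi(x_{k:n})\Pi(B\viiva x_{k:n})$. By the Markov property and $B\in\pathsa_{n:\infty}$, the factor $\Pi(B\viiva x_{k:n})$ depends only on $x_n$. Then the Gibbs property \eqref{eq:Gibbs} passes to $\Pi(\cdot\viiva B)$ and to $\Pi(\cdot\viiva B^c)$. This gives a nontrivial decomposition of $\Pi$, so $\Pi$ is not extreme.

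Conversely, suppose $\Pi=\alpha\Pi^1+(1-\alpha)\Pi^2$ with $\Pi^i$ Gibbs. Then $\Pi^1\ll\Pi$, and the Gibbs property forces the density $d\Pi^1/d\Pi$ restricted to $\pathsa_{k:n}$ to be a function of $X_n$ alone. Reversing time, these densities form a backward martingale, whose limit is $\ptail$-measurable and equals $d\Pi^1/d\Pi$. Tail triviality makes this density constant, so $\Pi^1=\Pi$.

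\textbf{Step 2: tail triviality gives coalescence.} Assume $\Pi$ is extreme and fix $y,z\ge x$. By Lemma~\ref{lem:cond-DLR}, $\Pi_y$ and $\Pi_z$ have the forms $\Pi(\cdot\viiva X_{\ell}=y)$ and $\Pi(\cdot\viiva X_{m}=z)$. Extend each to a process indexed by all of $\bbZ$ via the deterministic extension convention, which does not affect tail events.

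For $B\in\ptail$, tail triviality of $\Pi$ gives $\Pi(B\viiva X_\ell=y)=\Pi(B)$, since conditioning on a positive-probability event cannot move a $0$/$1$ probability; positivity holds because $\Pi$ is fully supported. Hence $\Pi_y$ and $\Pi_z$ agree on $\ptail$, and Goldstein's theorem yields a coupling with $X^1_{n:\infty}=X^2_{n:\infty}$ for some $n$. The pairwise couplings then assemble into a coupling of the countable family $\{\Pi_y:y\ge x\}$, by the disintegration and Kolmogorov remark following Definition~\ref{def:coalcoup-pair}.

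\textbf{Step 3: coalescence gives extremality.} Assume a coalescing coupling exists and let $B\in\ptail$. In the coupling $X^y$ and $X^z$ eventually coincide, so $\Pi_y(B)=\Pi_z(B)=:c$ for all $y,z\ge x$.

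By the Markov property, $\Pi(B\viiva \pathsa_{k:n})=\Pi(B\viiva X_n)=\Pi_{X_n}(B)=c$. Martingale convergence sends the left side to $\one_B$, so $c\in\{0,1\}$. Thus $\Pi$ is tail trivial, and extremality follows from Step 1.

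\textbf{Main obstacle.} The delicate point is Step 1, namely checking that conditioning on a tail event preserves the Gibbs property. It relies on the Markov structure and on $\Pi(B\viiva x_{k:n})$ depending only on the endpoint $x_n$. The measurability conventions for applying Goldstein's theorem to processes with different starting levels are routine given the embedding into $\pathsp_{-\infty:\infty}$.
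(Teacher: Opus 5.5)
Your proof is correct and follows essentially the paper's route. Both directions go through tail triviality, which the paper cites as Lemma~\ref{lem:DLR-triv} rather than reproving, and through Goldstein's theorem; your Step~3 is the paper's martingale argument. The only variation is in Step~2: you verify agreement on $\ptail$ directly from tail triviality, while the paper shows $\|\Pi_y|_{\pathsa_{n:\infty}}-\Pi|_{\pathsa_{n:\infty}}\|_{\textup{TV}}\to0$ via the backward martingale $\Pi(y\viiva\pathsa_{n:\infty})/\Pi(y)$ and then glues the resulting couplings along the $\Pi$-coordinate.

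One small correction in Step~1: the densities $\Pi^1(X_n)/\Pi(X_n)=d\Pi^1/d\Pi|_{\pathsa_{k:n}}$ form a bounded \emph{forward} martingale with respect to $(\pathsa_{k:n})_n$, not a backward one. Their almost sure limit $d\Pi^1/d\Pi$ is $\ptail$-measurable simply because each term is $\sigma(X_n)$-measurable, so your conclusion stands.
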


\begin{remark}\label{rem:extreme}
As suggested by the discussion prior to the statement of Theorem \ref{thm:coalext}, this result generalizes considerably. In particular, the proof will apply to rooted Gibbs measures of random walks in random potentials in all dimensions, as studied for example in \cite{Gro-Jan-Ras-25-tams}. 
\end{remark}

Combining the previous result with Lemmas \ref{lem:cond-DLR} and \ref{lem:quad-consist}, we have the following corollary.

\begin{corollary}\label{cor:quadrant-extreme}
Suppose that for $\w\in \Omega$ and $\beta>0$, $\Pi \in \ext \DLR_x^{\beta,\w}$ is fully supported. Then for all $y \geq x$, $\Pi_y$ is fully supported and $\Pi_y \in \ext \DLR_y^{\beta,\w}$.
\end{corollary}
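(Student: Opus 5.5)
The corollary follows by chaining Lemma \ref{lem:cond-DLR}, Lemma \ref{lem:quad-consist}, and Theorem \ref{thm:coalext} in both directions.

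First, Lemma \ref{lem:cond-DLR} already gives that $\Pi_y\in\DLR_y^{\beta,\w}$ is fully supported for every $y\ge x$. So only extremality needs proof. For that I will apply the ``if'' direction of Theorem \ref{thm:coalext} to $\Pi_y$, which reduces the problem to showing that $\Pi_y$ admits a coalescing self-coupling. Concretely, the family $\{(\Pi_y)_z : z\ge y\}$ must admit a coalescing coupling in the sense of Definition \ref{def:coalcoup-pair}.

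Since $\Pi$ is fully supported and extremal, the ``only if'' direction of Theorem \ref{thm:coalext} says that the family $\{\Pi_z : z\ge x\}$ admits a coalescing coupling $\wt\Pi$. Lemma \ref{lem:quad-consist} identifies the measures generated by $\Pi_y$ with those generated by $\Pi$: for $z\ge y\ge x$ and $A\in\pathsa_{m:\infty}$ with $m=z\cdot(e_1+e_2)$,
\[
(\Pi_y)_z(A)=\Pi_y(A\viiva z)=\Pi_z(A).
\]
Hence $\{(\Pi_y)_z : z\ge y\}=\{\Pi_z: z\ge y\}$, which is a subfamily of $\{\Pi_z:z\ge x\}$. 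Restricting $\wt\Pi$, i.e. taking the marginal on the coordinates indexed by $z\ge y$, gives a coupling of this subfamily. It has the correct marginals and inherits pairwise coalescence, because coalescence is a condition on each pair separately. So $\Pi_y$ admits a coalescing self-coupling, and by Theorem \ref{thm:coalext} we get $\Pi_y\in\ext\DLR_y^{\beta,\w}$.

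There is no substantive obstacle here. The only point that needs care is matching the objects exactly: $(\Pi_y)_y=\Pi_y$, which is the content of $\Pi_x=\Pi$ in Lemma \ref{lem:cond-DLR}, and the conditioning in the definition of $(\Pi_y)_z$ is well defined because $\Pi_y$ is fully supported.
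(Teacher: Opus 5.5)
Your proposal is correct and follows the paper's route. The paper states this corollary as an immediate combination of Theorem \ref{thm:coalext} with Lemmas \ref{lem:cond-DLR} and \ref{lem:quad-consist}, which is exactly your chain: extremality gives a coalescing coupling of $\{\Pi_z:z\ge x\}$, Lemma \ref{lem:quad-consist} identifies $(\Pi_y)_z=\Pi_z$, and restricting that coupling to the coordinates $z\ge y$ gives the coalescing self-coupling of $\Pi_y$.
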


\subsection{Equivalence classes of extremal Gibbs measures and the global total order}
We next extend the construction of consistent measures in a quadrant in Lemma \ref{lem:quad-consist} to a construction of consistent measures indexed by all lattice sites. A main step in this construction is an application of the martingale convergence theorem. This may be viewed as the probabilistic analogue of the zero-temperature geometric fact that passage time differences, computed with respect to a sequence of points tending to infinity along a semi-infinite geodesic, are monotone. There, the resulting limit, known as a Busemann function, generates semi-infinite geodesics from every initial point.

We recall the following characterization of a rooted Gibbs-DLR measure in terms of a cocycle on a quadrant, i.e., a function $\Bus(u,v)$ defined for $u,v \geq x$ which satisfies for $u,v,w \geq x$, $\Bus(u,v) + \Bus(v,w) = \Bus(u,w)$. 

\begin{theorem}{\rm\cite[Theorem 5.2]{Jan-Ras-20-aop}}\label{thm:DLR-cocycle}
For $\beta\in(0,\infty)$, $\w\in\Omega$, and $x\in\bbZ^2$, a fully supported measure
$\Pi \in \sM_1(\pathsp_x,\pathsa_{k:\infty})$ satisfies $\Pi \in \DLR_x^{\beta,\w}$ if and only if the real-valued cocycle
$\{\Bus^{\Pi}(u,v) : u,v\geq x\}$ defined by
\be
e^{-\beta\Bus^{\Pi}(u,v)}
=
\frac{\Pi(v)\,\PF{x}{u}^{\beta,\w}}{\Pi(u)\,\PF{x}{v}^{\beta,\w}}
\label{eq:Bus-cocycle-def}
\ee
satisfies, for all admissible paths $x_{k:n}$ with $x_k=x$,
\be
\Pi(x_{k:n}) = e^{\beta \sum_{m=k}^{n-1} \w_{x_m} - \beta \Bus^{\Pi}(x,x_n)}.\label{eq:Bus-cocycle-measure}
\ee
In particular, the transition probabilities of the Markov chain $\Pi$ are given by 
\be
\pi^{\Pi}_{y,y+e_i} = e^{\beta \w_{y} - \beta \Bus^{\Pi}(y,y+e_i)},\quad y\in\Z^2_{\ge x},\ i\in\{1,2\}.\label{eq:DLR-transition}
\ee
Consequently, if $\{\Bus(u,v):u,v\in\Z^2\}$ is a real-valued cocycle satisfying
\[\sum_{i=1}^2 e^{\beta \w_y - \beta \Bus(y,y+e_i)}=1\quad\forall y\in\Z^2,\]
then for all $x\in\Z^2$, $\Pi^{B,\w}_x$, defined for up-right paths $x_{k:n}$ with $x_k=x$ by 
\[\Pi^{\Bus,\w}(x_{k:n}) = e^{\beta \sum_{m=k}^{n-1} \w_{x_m} - \beta \Bus(x,x_n)},\]
is a fully supported element of $\DLR_x^{\beta,\w}$ and the family is consistent.
\end{theorem}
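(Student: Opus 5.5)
The statement is Theorem \ref{thm:DLR-cocycle}, cited from \cite{Jan-Ras-20-aop}. The plan is a direct computation from the Gibbs property \eqref{eq:Gibbs}, the definition \eqref{eq:polydef} of $\poly{x}{y}^{\beta,\w}$, and the telescoping structure of cocycles. First note that $\Bus^\Pi$ defined by \eqref{eq:Bus-cocycle-def} is automatically a real-valued cocycle on $\Z^2_{\ge x}$. Indeed, full support gives $\Pi(u)>0$ for all $u\ge x$, and $\PF{x}{u}^{\beta,\w}>0$. The right-hand side of \eqref{eq:Bus-cocycle-def} has the form $f(v)/f(u)$ with $f(u)=\Pi(u)/\PF{x}{u}^{\beta,\w}$, so the cocycle identity holds. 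Since $\Pi(x)=1$ and $\PF{x}{x}^{\beta,\w}=1$, we also have $e^{-\beta\Bus^\Pi(x,v)}=\Pi(v)/\PF{x}{v}^{\beta,\w}$.

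For the forward direction, take an admissible path $x_{k:n}$ from $x$. By \eqref{eq:Gibbs} and \eqref{eq:polydef},
\[
\Pi(x_{k:n})=\frac{e^{\beta\sum_{m=k}^{n-1}\w_{x_m}}}{\PF{x}{x_n}^{\beta,\w}}\,\Pi(x_n)=e^{\beta\sum_{m=k}^{n-1}\w_{x_m}-\beta\Bus^\Pi(x,x_n)},
\]
which is \eqref{eq:Bus-cocycle-measure}.

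For the converse, assume \eqref{eq:Bus-cocycle-measure}. Sum it over all paths in $\Path{x}{x_n}$. The left side sums to $\Pi(x_n)$ and the right side to $\PF{x}{x_n}^{\beta,\w}e^{-\beta\Bus^\Pi(x,x_n)}$. Dividing \eqref{eq:Bus-cocycle-measure} by this identity gives $\Pi(x_{k:n})/\Pi(x_n)=\poly{x}{x_n}^{\beta,\w}(x_{k:n})$, which is \eqref{eq:Gibbs}.

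For the transition probabilities, use the Markov property. The ratio $\Pi(x_{k:n+1})/\Pi(x_{k:n})$ with $x_{n+1}=x_n+e_i$ is, by \eqref{eq:Bus-cocycle-measure} and the cocycle property,
\[
e^{\beta\w_{x_n}-\beta\Bus^\Pi(x,x_{n+1})+\beta\Bus^\Pi(x,x_n)}=e^{\beta\w_{x_n}-\beta\Bus^\Pi(x_n,x_n+e_i)}.
\]
This depends only on $x_n$, which yields \eqref{eq:DLR-transition}.

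For the final assertion, define $\Pi_x^{\Bus,\w}$ on cylinders by the stated formula. By the cocycle property this is the law of the Markov chain from $x$ with transition probabilities $e^{\beta\w_y-\beta\Bus(y,y+e_i)}$. These are positive, and they sum to one by hypothesis, so the cylinder weights are consistent and Kolmogorov extension gives a measure. It is fully supported since every cylinder has positive weight. Its associated cocycle \eqref{eq:Bus-cocycle-def} coincides with $\Bus$ restricted to $\Z^2_{\ge x}$. To check this, sum the formula over paths to get $\Pi(v)=\PF{x}{v}^{\beta,\w}e^{-\beta\Bus(x,v)}$, then apply the cocycle property. The converse direction above therefore gives membership in $\DLR_x^{\beta,\w}$.

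For consistency, fix $x,y$ and $z\ge x\vee y$, and condition $\Pi^{\Bus,\w}_x$ on $X_{z\cdot(e_1+e_2)}=z$. For a path through $z$ from level $\ell=z\cdot(e_1+e_2)$ to $n$, the cocycle property gives
\[
\Pi^{\Bus,\w}_x(x_{k:n})=\Pi^{\Bus,\w}_x(x_{k:\ell})\,e^{\beta\sum_{m=\ell}^{n-1}\w_{x_m}-\beta\Bus(z,x_n)}.
\]
Summing over the initial segments shows $(\Pi^{\Bus,\w}_x)_z=\Pi^{\Bus,\w}_z$. The same holds for $y$, so the family is consistent.

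None of the steps is a real obstacle. The only point needing care is the well-definedness of the Kolmogorov extension, which follows from the normalization hypothesis.
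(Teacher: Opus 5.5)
Your proof is correct and complete. The paper does not reprove this result; it cites \cite[Theorem 5.2]{Jan-Ras-20-aop} and elsewhere calls it a straightforward computation, and your argument is exactly that computation. One small simplification: since $\Pi(x)=\PF{x}{x}^{\beta,\w}=1$, \eqref{eq:Bus-cocycle-measure} is literally \eqref{eq:Gibbs} rewritten, so the summation step in your converse is not needed.
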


Using Corollary \ref{cor:quadrant-extreme} along with the total ordering of extremal Gibbs measures 
with a common root (Theorem \ref{thm:cltot})
allows us to define a total order on \textit{all} extremal Gibbs measures, from all roots, simultaneously.  The key tool will be the following fundamental backward martingale associated to a fully supported Gibbs measure. 
\begin{lemma} {\rm\cite[Lemma 5.6]{Jan-Ras-20-aop}}\label{lem:DLR-mg}
For all $\w\in\Omega$, $\beta\in(0,\infty)$, all $x,y\in\bbZ^2$, and all fully supported $\Pi \in \DLR_x^{\beta,\w}$, abbreviating $m = (x \vee y) \cdot(e_1+e_2)$, we have that  $(\PF{y}{X_n}^{\beta,\w}/\PF{x}{X_n}^{\beta,\w} : n\geq m)$ is a $\Pi$-backward martingale with respect to the backward filtration $(\pathsa_{n:\infty} :  n \geq m)$.
\end{lemma}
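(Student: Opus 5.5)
The claim is that for fully supported $\Pi\in\DLR_x^{\beta,\w}$ and any $y$, with $m=(x\vee y)\cdot(e_1+e_2)$, the process $M_n=\PF{y}{X_n}^{\beta,\w}/\PF{x}{X_n}^{\beta,\w}$, $n\ge m$, is a backward martingale for $(\pathsa_{n:\infty})_{n\ge m}$. Two things must be checked: adaptedness with integrability, and the identity $\bfE^\Pi[M_n\mid\pathsa_{n+1:\infty}]=M_{n+1}$.

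Adaptedness is immediate, since $M_n$ is a function of $X_n$ alone. For $n\ge m$ we have $X_n\ge x$ $\Pi$-a.s., so $\PF{x}{X_n}>0$ and $M_n$ is finite and nonnegative. If $X_n\not\ge y$, then $M_n=0$ by convention, and this case will be handled by the same formula below. Because $M_n\ge0$, the conditional expectations are well defined in $[0,\infty]$. Integrability will then follow from the martingale identity itself: it gives $\bfE^\Pi M_n=\bfE^\Pi M_N$ for $N\ge n$, and $\bfE^\Pi M_m=\sum_{z}\Pi(X_m=z)\PF{y}{z}/\PF{x}{z}<\infty$, since only finitely many $z$ on level $m$ are $\ge x$ and $\ge y$, namely those between $x\vee y$ and the antidiagonal endpoints.

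For the martingale identity, I will use the Markov property. By the Gibbs property \eqref{eq:Gibbs}, or equivalently Theorem \ref{thm:DLR-cocycle}, $\Pi$ is a Markov chain, so conditioning on $\pathsa_{n+1:\infty}$ reduces to conditioning on $X_{n+1}$. The backward (time-reversed) transition probability is
\[
\Pi(X_n=u\mid X_{n+1}=z)=\frac{\Pi(X_n=u,X_{n+1}=z)}{\Pi(X_{n+1}=z)}.
\]
To evaluate it, sum \eqref{eq:Gibbs} over paths through $u$ and $z$. This shows that the law of $X_{k:n+1}$ given $X_{n+1}=z$ is the point-to-point polymer $\poly{x}{z}$. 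Hence
\[
\Pi(X_n=u\mid X_{n+1}=z)=\frac{\PF{x}{u}e^{\beta\w_u}}{\PF{x}{z}}, \qquad u\in\{z-e_1,z-e_2\},\ u\ge x .
\]
Here I use the recursion $\PF{x}{z}=\sum_{i}\PF{x}{z-e_i}e^{\beta\w_{z-e_i}}$, with the convention that $\PF{x}{u}=0$ if $u\not\ge x$.

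The conditional expectation is therefore
\[
\sum_{i=1}^2\frac{\PF{x}{z-e_i}e^{\beta\w_{z-e_i}}}{\PF{x}{z}}\cdot\frac{\PF{y}{z-e_i}}{\PF{x}{z-e_i}}
=\frac{1}{\PF{x}{z}}\sum_{i}\PF{y}{z-e_i}e^{\beta\w_{z-e_i}}=\frac{\PF{y}{z}}{\PF{x}{z}},
\]
where the last step is the same recursion for $\PF{y}{\cdot}$. This is exactly $M_{n+1}$.

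The main point needing care is the boundary cases. Terms with $z-e_i\not\ge x$ have zero backward probability. Terms with $z-e_i\not\ge y$ contribute $0$ on both sides. Both recursions remain valid for $z\ge x\vee y$ at level $\ge m+1$, including the degenerate case $z=y$, which can only occur at level $m$ and is therefore never the conditioning point $X_{n+1}$. Full support is what makes $\Pi(X_{n+1}=z)>0$, so the conditioning is well defined. Beyond these cases, everything is routine.
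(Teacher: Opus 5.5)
Your reverse-transition computation is the right mechanism and is complete when $y\ge x$. The boundary bookkeeping, however, misses exactly the case that matters, and for $y\not\ge x$ both your argument and the identity it targets fail. The cancellation
\[
\frac{\PF{x}{u}e^{\beta\w_u}}{\PF{x}{z}}\cdot\frac{\PF{y}{u}}{\PF{x}{u}}=\frac{\PF{y}{u}e^{\beta\w_u}}{\PF{x}{z}},\qquad u=z-e_i,
\]
requires $\PF{x}{u}>0$. Suppose $z=X_{n+1}$ lies on a boundary ray of $x+\bbZ_{\geq0}^2$, say $z\cdot e_i=x\cdot e_i$. Then $u=z-e_i\not\ge x$ has zero backward probability and contributes nothing to the conditional expectation. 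But if also $y\cdot e_i<x\cdot e_i$, then $u\ge y$, because $z$ is at level at least $m+1$. So the positive term $\PF{y}{u}e^{\beta\w_u}$ still appears in the recursion for $\PF{y}{z}$. Your remark that such terms ``have zero backward probability'' handles only one side of the identity. What the computation actually gives is
\[
\bfE^\Pi\bigl[M_n\mid\pathsa_{n+1:\infty}\bigr]=M_{n+1}-\sum_{i=1}^2\one_{\{X_{n+1}\cdot e_i=x\cdot e_i\}}\,\frac{\PF{y}{X_{n+1}-e_i}\,e^{\beta\w_{X_{n+1}-e_i}}}{\PF{x}{X_{n+1}}}.
\]
By full support, the correction term is strictly positive with positive $\Pi$-probability whenever $y\not\ge x$.

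This is a defect of the statement as written, not only of your write-up. Take $x=0$ and $y=-e_1$, so $m=0$. Then $M_0=e^{\beta\w_{-e_1}}$ is deterministic. On $\{X_1=e_2\}$, which has positive probability, $M_1=e^{\beta\w_{-e_1}}\bigl(1+e^{\beta(\w_{-e_1+e_2}-\w_0)}\bigr)\neq M_0$. Hence $\bfE^\Pi[M_0\mid\pathsa_{1:\infty}]=M_0\neq M_1$, and even $\bfE^\Pi[M_1]>M_0$.

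Your proof therefore establishes the lemma precisely for $y\ge x$. In that case $x\vee y=y$, and any predecessor $u\not\ge x$ of $z$ also satisfies $u\not\ge y$, so the missing terms vanish on both sides. This covers all applications with $y\in x+\bbZ_{\geq0}^2$. For general $y$ one only gets $\bfE^\Pi[M_n\mid\pathsa_{n+1:\infty}]\le M_{n+1}$. Any use of the lemma with $y\not\ge x$, such as in the proof of Lemma \ref{lem:Busgeo}, needs a separate argument, for example decomposing $\PF{y}{X_n}$ according to where the path enters $x+\bbZ_{\geq0}^2$.
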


With this martingale in mind, the following proposition is our key tool for showing that a fully supported extremal rooted Gibbs-DLR measure generates an extremal Gibbs-DLR measure from every vertex on the lattice, and these form  a consistent family. 

\begin{proposition}\label{prop:global-ext}
For $\w\in\Omega_0$, $\beta\in(0,\infty)$, and $x\in\bbZ^2$, if $\Pi\in \ext \DLR_x^{\beta,\w}$ is fully supported, then for all $y,z\in\bbZ^2$,  the Busemann limit below exists $\Pi$-almost surely and is finite: 
\be
\Busgeo_\Pi(\w,y,z) = \lim_{n\to\infty} \frac{1}{\beta}\bigl(\log \PF{y}{X_n}^{\beta,\w} - \log \PF{z}{X_n}^{\beta,\w}\bigr) =\lim_{n\to\infty} \frac{1}{\beta}\Bigl(\log \bfE^\Pi\Bigl[\frac{\PF{y}{X_n}^{\beta,\w}}{\PF{x}{X_n}^{\beta,\w}}\Bigr]-\log \bfE^\Pi\Bigl[\frac{\PF{z}{X_n}^{\beta,\w}}{\PF{x}{X_n}^{\beta,\w}}\Bigr]\Bigr).\label{eq:global-ext-Busemann}
\ee

\begin{enumerate}[label={\rm(\roman*)}, ref={\rm\roman*}]
\item \label{prop:global-ext:cocrev} The field $\{\Busgeo_{\Pi}(y,z) : y,z\in\bbZ^2\}$ satisfies:
\begin{enumerate}[label={\rm(\alph*)}, ref={\rm\alph*}]
\item {\rm(}Cocycle{\rm)} For all $w,y,z\in\bbZ^2$ $\Busgeo_\Pi(w,y) + \Busgeo_{\Pi}(y,z) = \Busgeo_{\Pi}(w,z)$.
\item {\rm(}Recovery{\rm)} For all $y \in \bbZ^2$,
\be
\sum_{i=1}^2e^{\beta \w_{y}- \beta \Busgeo_{\Pi}(y,y+e_i)}=1.\label{eq:Busgeo-recovery}
\ee
\end{enumerate}
\item \label{prop:global-ext:consistent-Gibbs} {\rm(}Consistent extremal Gibbs measures{\rm)} For each $z \in \bbZ^2$, abbreviate $\ell=z\cdot(e_1+e_2)$ and define $\cDLR_z^{\Pi} \in \sM_1(\pathsp_z, \pathsa_{\ell:\infty})$ by setting, for an admissible path $x_{\ell:n}$ with $n \geq \ell$ and $x_\ell=z$,
\be
\cDLR_z^{\Pi}(x_{\ell:n}) = e^{\beta\sum_{k=\ell}^{n-1}\w_{x_k} - \beta \Busgeo_\Pi(z,x_n)}.\label{eq:cDLR-def}
\ee
Then $\cDLR_z^{\Pi}\in\ext \DLR_z^{\beta,\w}$, $\cDLR_z^{\Pi}$ is fully supported, and the family $\{\cDLR_z^{\Pi} : z\in\bbZ^2\}$ is consistent.
\item \label{prop:global-ext:agree} {\rm(}Agreement with $\Pi${\rm)} For $z \geq x$, $\cDLR_z^{\Pi} = \Pi_z$.
\item \label{prop:global-ext:Busagree} {\rm(}Agreement of Busemann limits{\rm)} For all $z \in \bbZ^2$, $\Busgeo_{\Pi} = \Busgeo_{\cDLR_z^{\Pi}}.$
\end{enumerate}
\end{proposition}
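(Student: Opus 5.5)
The first step is to establish the Busemann limit \eqref{eq:global-ext-Busemann}. By Lemma \ref{lem:DLR-mg}, for each fixed $y$ the process $M^y_n=\PF{y}{X_n}^{\beta,\w}/\PF{x}{X_n}^{\beta,\w}$, $n\ge m$, is a nonnegative $\Pi$-backward martingale. The backward martingale convergence theorem then gives $\Pi$-a.s.\ and $L^1$ convergence to $\bfE^\Pi[M^y_m\mid\ptail]$.

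Extremality of $\Pi$ gives triviality of $\ptail$ under $\Pi$, by the standard DLR correspondence (conditioning on a nontrivial tail event preserves \eqref{eq:Gibbs}). Hence the limit is the deterministic constant $c_y=\bfE^\Pi[M^y_n]$, which does not depend on $n$. Taking logarithms and differences in $y,z$ yields both expressions in \eqref{eq:global-ext-Busemann}, provided $c_y\in(0,\infty)$. Finiteness is clear since $c_y=\bfE^\Pi[M^y_m]<\infty$.

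\emph{Positivity of $c_y$ is the main obstacle.} For $y\le x$ we have $\PF{y}{X_n}\ge e^{\beta(\cdots)}\PF{x}{X_n}$ along a fixed path from $y$ to $x$, so $c_y>0$. In general the ratio $M^y_n$ could tend to $0$ if $X_n$ escapes the cone ahead of $y$, for instance when $y\cdot e_1>x\cdot e_1$ and $X_n\cdot e_1$ stays bounded. The plan is to reduce to $y=x+ae_1$ or $x+ae_2$ via the cocycle identity and monotonicity. It then suffices to show that, under $\Pi$, $X_n\cdot e_1\to\infty$ and $X_n\cdot e_2\to\infty$ almost surely. This follows from Lemma \ref{lem:Omega0} on $\Omega_0$: full support excludes the trivial measures, and the path can never become eventually straight, so both coordinates diverge.

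Once both coordinates are large, I would show $\liminf M^y_n>0$ with positive probability. One way is to compare $\PF{y}{X_n}$ with $\PF{x}{X_n}$ through a path from $x$ hitting the level of $y$. Alternatively, one can argue directly that $c_y=0$ would force $M^y_n=0$ a.s., which is impossible once $X_n\ge y$ eventually, because then $M^y_n>0$. Indeed, the a.s.\ limit equals the constant $c_y$, and $c_y=\bfE^\Pi[M^y_n]\ge \bfE^\Pi[M^y_n\one\{X_n\ge y\}]>0$ for $n$ large. So full support plus divergence of both coordinates suffices.

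The remaining parts then follow in order.

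\begin{enumerate}[label={\rm(\roman*)}]
\item The cocycle property is immediate from the form of the limit. Recovery follows by writing $\PF{y}{X_n}=e^{\beta\w_y}\bigl(\PF{y+e_1}{X_n}+\PF{y+e_2}{X_n}\bigr)$ for $n$ large, dividing by $\PF{y}{X_n}$, and passing to the limit.
\item Theorem \ref{thm:DLR-cocycle} applied to $\Busgeo_\Pi$ gives that each $\cDLR_z^\Pi$ is a fully supported Gibbs measure and that the family is consistent.
\item For $z\ge x$, identify $\Busgeo_\Pi(z,v)$ with $\Bus^{\Pi_z}(z,v)$ from \eqref{eq:Bus-cocycle-def}. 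The martingale identity $\bfE^\Pi[M^y_n]=c_y$, evaluated at $n=y\cdot(e_1+e_2)$ for $y\ge x$, gives $c_y=\Pi(X=y)^{-1}\cdot\PF{y}{y}/\PF{x}{y}$ (up to the weight convention). This matches \eqref{eq:Bus-cocycle-def}, so $\cDLR_z^\Pi=\Pi_z$. Extremality of $\cDLR_z^\Pi$ for $z\ge x$ then follows from Corollary \ref{cor:quadrant-extreme}.
\item For general $z$, pick $w\ge x\vee z$. Since $\cDLR_z^\Pi$ and $\Pi$ induce the same measures from all sites $\ge w$ by consistency, and those are extremal, the tail of $\cDLR_z^\Pi$ is trivial. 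This uses that the tail of a Markov chain depends only on its law after a fixed site, combined with full support to glue through the finitely many sites at the level of $w$. Hence $\cDLR_z^\Pi$ is extremal, and Theorem \ref{thm:coalext} can also be invoked.
\item Agreement of Busemann limits follows because the limits in \eqref{eq:global-ext-Busemann} depend only on the tail behavior of $X_n$. The two measures agree on paths after level $w$ up to mixing over finitely many starting sites, so the a.s.\ limits coincide.
\end{enumerate}
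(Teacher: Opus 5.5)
Up to and including extremality of $\cDLR_z^\Pi$ for $z\ge x$, you follow the paper's route: the tail-trivial backward-martingale limit, positivity of $c_y$ from $\bfE^\Pi[M^y_n\one_{\{X_n\ge y\}}]>0$, recovery, Theorem \ref{thm:DLR-cocycle}, and Corollary \ref{cor:quadrant-extreme}. There is one slip in identifying $\cDLR_z^\Pi=\Pi_z$. At $n=y\cdot(e_1+e_2)$ one gets $c_y=\Pi(X_n=y)/\PF{x}{y}$, not $\Pi(X=y)^{-1}\PF{y}{y}/\PF{x}{y}$. Only the former makes $c_v/c_u$ equal the right side of \eqref{eq:Bus-cocycle-def}.

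\textbf{The gap: extremality of $\cDLR_z^\Pi$ when $z\not\ge x$.} You decompose a tail event over the sites at the level of $w$ reachable from $z$, but not all such sites $u$ satisfy $u\ge x$. If, say, $z\cdot e_2<x\cdot e_2$, then at every level $L$ the site $z+(L-z\cdot(e_1+e_2))e_1$ is reachable from $z$ and is not $\ge x$. You have no information about $\cDLR_u^\Pi$ for such $u$. Your argument silently assumes that $\cDLR_z^\Pi$-almost surely the path eventually enters $v+\bbZ_{\ge0}^2$, where $v=x\vee z$. That is precisely the nontrivial point.

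Given eventual entry, Lemma \ref{lem:forward-extreme} finishes the job. A tail event of positive mass meets $\{X_n\in v+\bbZ_{\ge0}^2\}$ for some $n$, hence by the Gibbs property meets the event of passing through $v$, and $\cDLR_v^\Pi=\Pi_v$ is extremal.

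On $\Omega_0$, eventual entry amounts to excluding coordinate rays. Lemma \ref{lem:Omega0} excludes all rays not rooted at $z$, but not the rays $z+e_j\bbZ_{\ge0}$.
\begin{itemize}
\item Full support does not exclude them: mixing a fully supported Gibbs measure with a trivial one gives a fully supported measure.
\item The argument that works for $\Pi$ is unavailable. It uses that such a ray is a tail event and the tail is trivial, but tail triviality of $\cDLR_z^\Pi$ is what you are proving.
\item Appealing to Theorem \ref{thm:coalext} is circular for the same reason.
\end{itemize}
The missing idea is to look one step back. If $\cDLR_z^\Pi$ charged $z+e_j\bbZ_{\ge0}$, then by consistency and full support $\cDLR_{z-e_{3-j}}^\Pi$ would charge the same ray. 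That ray is not rooted at $z-e_{3-j}$, contradicting Lemma \ref{lem:Omega0}.

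Your argument for agreement of the Busemann limits has the same defect. In any case $\Busgeo_{\cDLR_z^\Pi}$ is only defined once $\cDLR_z^\Pi$ is known to be extremal. With extremality in hand, let $C$ be the tail event on which the pathwise limit equals the deterministic value computed under $\Pi$. Then $\cDLR_v^\Pi(C)=\Pi_v(C)=1$. Consistency and full support give $\cDLR_z^\Pi(C)\ge \cDLR_z^\Pi(v)\,\cDLR_v^\Pi(C)>0$, and tail triviality then gives $\cDLR_z^\Pi(C)=1$.
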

\begin{remark}
By Proposition \ref{prop:DLR-transition}, the expression in the last limit in \eqref{eq:global-ext-Busemann} is eventually constant as a function of $n$.
\end{remark}
\begin{remark}
As with Theorem \ref{thm:coalext}, the proof of Proposition \ref{prop:global-ext} is dimension independent and applies to the general positive-temperature RWRP model studied in \cite{Gro-Jan-Ras-25-tams}; see Remark \ref{rem:extreme}.
\end{remark}

Fully supported rooted extremal Gibbs measures are partitioned into equivalence classes by declaring $\Pi\in\ext \DLR_x$ and $\Pi'\in\ext \DLR_y$ to be equivalent whenever $\Busgeo_{\Pi}=\Busgeo_{\Pi'}$. Proposition \ref{prop:global-ext} then shows that, for every $x\in\Z^2$, the set of fully supported extremal Gibbs measures rooted at $x$ contains exactly one representative from each equivalence class. Our next few results describe the structure of these equivalence classes. 

\begin{theorem}\label{thm:global-weakorder}
For $\w\in\Omega_0$, $\beta\in (0,\infty)$, and $x,y\in\bbZ^2$ suppose that $\Pi^1\in \ext \DLR_x^{\beta,\w}$ and $\Pi^2\in \ext\DLR_y^{\beta,\w}$ are both fully supported. Then the following are equivalent:
\begin{enumerate}[label={\rm(\roman*)}, ref={\rm\roman*}]
\item\label{thm:global-weakorder:Pi} $\Pi^1_{x \vee y} \preceq \Pi^2_{x \vee y}$.
\item \label{thm:global-weakorder:cDLR-exist} There exists $z \in \bbZ^2$ for which $\cDLR_z^{\Pi^1} \preceq \cDLR_z^{\Pi^2}$.
\item\label{thm:global-weakorder:Busgeo} We have
\begin{align}\label{A1<A2}
\forall z \in \bbZ^2\,:\ \ 
\Busgeo_{\Pi^1}(z,z+e_1) \geq \Busgeo_{\Pi^2}(z,z+e_1) \quad \text{and} \quad \Busgeo_{\Pi^1}(z,z+e_2) \leq \Busgeo_{\Pi^2}(z,z+e_2).
\end{align}
\item \label{thm:global-weakorder:cDLR-all} For all $z \in \bbZ^2$, $\cDLR_z^{\Pi^1} \preceq \cDLR_z^{\Pi^2}$.
\end{enumerate}
\end{theorem}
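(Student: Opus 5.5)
Write $w=x\vee y$ and $\Pi^i_w$ for the measures induced via \eqref{eq:induced-DLR-def}. I will prove the cycle (i)$\Rightarrow$(ii)$\Rightarrow$(iii)$\Rightarrow$(iv)$\Rightarrow$(i). Two inputs from Proposition \ref{prop:global-ext} do most of the bookkeeping. First, by part (iii), $\cDLR_z^{\Pi^i}=\Pi^i_z$ for $z$ ahead of the root. Second, the families $\{\cDLR_z^{\Pi^i}\}$ are consistent and each $\cDLR_z^{\Pi^i}$ is extremal and fully supported. The implication (i)$\Rightarrow$(ii) is then immediate with $z=w$, since $\Pi^i_w=\cDLR_w^{\Pi^i}$. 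The implication (iv)$\Rightarrow$(i) is equally direct: take $z=w$.

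For (iii)$\Rightarrow$(iv), fix $z$. By \eqref{eq:DLR-transition} and \eqref{eq:cDLR-def}, the Markov chain $\cDLR_z^{\Pi^i}$ steps from $u$ to $u+e_1$ with probability $e^{\beta\w_u-\beta\Busgeo_{\Pi^i}(u,u+e_1)}$. Condition \eqref{A1<A2} therefore says that at every site the $\Pi^2$ chain takes an $e_1$ step with at least the probability of the $\Pi^1$ chain. I will build the standard monotone coupling. Run both chains from $z$ with common uniforms. When the two paths sit on the same vertex, the $e_1$-step of $X^1$ forces an $e_1$-step of $X^2$. When $X^1_n\cdot e_1<X^2_n\cdot e_1$ on the same antidiagonal, the first coordinate of $X^1$ can grow by at most one and so can never pass that of $X^2$. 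By induction, $X^1\preceq X^2$ level by level, and Strassen gives the order.

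The main step is (ii)$\Rightarrow$(iii). Suppose $\cDLR_{z_0}^{\Pi^1}\preceq\cDLR_{z_0}^{\Pi^2}$. The difficulty is that stochastic order at one root does not obviously give the pointwise Busemann inequality at every $z$, in particular at $z$ not ahead of $z_0$. My plan has two parts.

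\emph{Sites $u\ge z_0$.} I use Theorem \ref{thm:cltot}, Corollary \ref{cor:quadrant-extreme}, and consistency to get $\cDLR_u^{\Pi^1}$ and $\cDLR_u^{\Pi^2}$ in $\ext\DLR_u$, which is totally ordered. Suppose the order at $u$ were reversed, with the two measures distinct. Pick a $\preceq$-monotone coupling at $z_0$. With positive probability both paths pass through $u$ (full support, plus an order argument on the induced coupling through $u$). Conditioning on this event should then contradict strict reversal at $u$. This is the step I expect to be most delicate. An alternative I would try is to argue directly with the Busemann limits \eqref{eq:global-ext-Busemann}: couple $X^1\preceq X^2$ from $z_0$, and use planarity/crossing of paths together with the ratio $\PF{u}{\cdot}/\PF{u+e_1}{\cdot}$, which is monotone in the endpoint along an antidiagonal (a standard planar comparison lemma). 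This yields $\Busgeo_{\Pi^1}(u,u+e_1)\ge\Busgeo_{\Pi^2}(u,u+e_1)$, first for $u\ge z_0$, and then for all $u$, since the monotone-endpoint argument does not need $u\ge z_0$ once the paths $X^i_n$ go to infinity ordered.

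\emph{Remaining sites.} As a fallback for $z\not\ge z_0$, I would extend the inequalities using the cocycle and recovery relations \eqref{eq:Busgeo-recovery}. The idea is to propagate backward, since recovery expresses the pair at $z$ in terms of a one-parameter monotone relation. Recovery fixes $\Busgeo(z,z+e_2)$ as a decreasing function of $\Busgeo(z,z+e_1)$, so one inequality gives the other. Cocycle closure around a unit square then determines $\Busgeo(z,z+e_1)$ from values at $z+e_2$ and $z+e_1$ monotonically, which allows an induction down the antidiagonals.
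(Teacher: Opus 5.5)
Your ``alternative'' for (ii)$\Rightarrow$(iii) is exactly the paper's argument, and it gives \eqref{A1<A2} at every $z\in\bbZ^2$ at once. The steps are:
\begin{enumerate}
\item couple $\cDLR_{z_0}^{\Pi^1}\preceq\cDLR_{z_0}^{\Pi^2}$ with $X^1\preceq X^2$;
\item use Lemma~\ref{lem:Omega0} to get $z+e_1+e_2\le X^j_n$ eventually, for each fixed $z$ (on $\Omega_0$ these fully supported extremal measures charge no coordinate ray);
\item apply the planar ratio comparison, Lemma B.2 of \cite{Jan-Ras-20-aop};
\item let $n\to\infty$.
\end{enumerate}
You should state two identifications explicitly. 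The almost sure limits along $X^j$ are $\Busgeo_{\cDLR_{z_0}^{\Pi^j}}$, by Lemma~\ref{lem:Busgeo} applied to the extremal, fully supported $\cDLR_{z_0}^{\Pi^j}$. These equal $\Busgeo_{\Pi^j}$ by Proposition~\ref{prop:global-ext}\eqref{prop:global-ext:Busagree}. Your remaining implications match the paper; the common-uniform coupling in (iii)$\Rightarrow$(iv) just writes out what the paper treats as immediate.

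Commit to that route, because the other two branches of your plan do not work as written.

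\emph{The conditioning argument for $u\ge z_0$.} A monotone coupling supplied by Strassen need not be Markov. Nothing guarantees that both paths visit $u$ with positive probability. Even when they do, conditioning on the joint event $\{X^1_\ell=X^2_\ell=u\}$ does not return the marginals $\cDLR_u^{\Pi^1}$ and $\cDLR_u^{\Pi^2}$. A Markovian monotone coupling would return them, but its existence is essentially (iii), so the argument is circular.

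\emph{The backward fallback.} This one never gets started. The square-closure step at $z$ needs the inequalities at both $z+e_1$ and $z+e_2$. If $z\cdot e_1<z_0\cdot e_1$, then $z+e_2\notin z_0+\bbZ_{\geq 0}^2$. If $z\cdot e_2<z_0\cdot e_2$, then $z+e_1\notin z_0+\bbZ_{\geq 0}^2$. So starting from the quadrant, the induction never adds a single new site. Propagating down antidiagonals would require knowing the inequalities on an entire antidiagonal, which a quadrant does not provide.
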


The next theorem records a more refined characterization of measures being strictly ordered than what follows immediately from the previous result. A particular consequence is that two fully supported extremal Gibbs-DLR measures are strictly ordered if and only if all of their transition probabilities differ. A version of this consequence for the direction-indexed Busemann process previously appeared as Theorem 3.2 in \cite{Bat-Fan-Sep-25}. With Theorem \ref{thm:global-weakorder} in mind, the non-trivial implication that needs to be proven in  the next theorem is that \eqref{thm:global-strictorder:cDLR-exist}
 implies \eqref{thm:global-strictorder:Busgeo}. We include the full equivalence for ease of reference.
\begin{theorem}\label{thm:global-strictorder}
For $\w\in\Omega_0$, $\beta\in (0,\infty)$, and $x,y\in\bbZ^2$ suppose that $\Pi^1\in \ext \DLR_x^{\beta,\w}$ and $\Pi^2\in \ext\DLR_y^{\beta,\w}$ are both fully supported. Then the following are equivalent:
\begin{enumerate}[label={\rm(\roman*)}, ref={\rm\roman*}]
\item \label{thm:global-strictorder:Pi} $\Pi^1_{x \vee y} \precneq \Pi^2_{x \vee y}$.
\item \label{thm:global-strictorder:cDLR-exist} There exists $z \in \bbZ^2$ for which $\cDLR_z^{\Pi^1} \precneq \cDLR_z^{\Pi^2}$.
\item \label{thm:global-strictorder:Busgeo} For all $z \in \bbZ^2$,
\[
\Busgeo_{\Pi^1}(z,z+e_1) > \Busgeo_{\Pi^2}(z,z+e_1) \quad \text{and} \quad \Busgeo_{\Pi^1}(z,z+e_2) < \Busgeo_{\Pi^2}(z,z+e_2).
\]
\item \label{thm:global-strictorder:cDLR-all} For all $z \in \bbZ^2$, $\cDLR_z^{\Pi^1} \precneq \cDLR_z^{\Pi^2}$.
\end{enumerate}
\end{theorem}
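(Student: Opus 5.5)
The approach is to reduce everything to Theorem \ref{thm:global-weakorder} and Proposition \ref{prop:global-ext}. The only genuinely new implication is \eqref{thm:global-strictorder:cDLR-exist}$\Rightarrow$\eqref{thm:global-strictorder:Busgeo}. For that step, the plan is to show that equality of a single Busemann increment at one site propagates to equality of the whole Busemann cocycle.

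\emph{Easy implications.} By Proposition \ref{prop:global-ext}\eqref{prop:global-ext:agree}, $\Pi^1_{x\vee y}=\cDLR^{\Pi^1}_{x\vee y}$ and $\Pi^2_{x\vee y}=\cDLR^{\Pi^2}_{x\vee y}$. Hence \eqref{thm:global-strictorder:Pi}$\Rightarrow$\eqref{thm:global-strictorder:cDLR-exist} and \eqref{thm:global-strictorder:cDLR-all}$\Rightarrow$\eqref{thm:global-strictorder:Pi}.

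For \eqref{thm:global-strictorder:Busgeo}$\Rightarrow$\eqref{thm:global-strictorder:cDLR-all}, the weak version of \eqref{thm:global-strictorder:Busgeo} is \eqref{A1<A2}. Theorem \ref{thm:global-weakorder} then gives $\cDLR^{\Pi^1}_z\preceq\cDLR^{\Pi^2}_z$ for all $z$. Moreover, \eqref{eq:cDLR-def} shows that the one-step transition probabilities $e^{\beta\w_z-\beta\Busgeo_{\Pi^i}(z,z+e_1)}$ of the two measures at the root differ, so the measures are not equal.

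\emph{Key implication.} Assume \eqref{thm:global-strictorder:cDLR-exist} holds at some site $z_0$. Theorem \ref{thm:global-weakorder} already yields the weak inequalities \eqref{A1<A2} at every site. Suppose, for contradiction, that one of them is an equality at some $z$, say $\Busgeo_{\Pi^1}(z,z+e_1)=\Busgeo_{\Pi^2}(z,z+e_1)$. The recovery identity \eqref{eq:Busgeo-recovery} then forces $\Busgeo_{\Pi^1}(z,z+e_2)=\Busgeo_{\Pi^2}(z,z+e_2)$; the case where the $e_2$-increments agree is symmetric. Write $w=z+e_1+e_2$. For $i\in\{1,2\}$, the cocycle property around the unit square gives
\[
\Busgeo_{\Pi^i}(z,z+e_1)+\Busgeo_{\Pi^i}(z+e_1,w)=\Busgeo_{\Pi^i}(z,z+e_2)+\Busgeo_{\Pi^i}(z+e_2,w).
\]
Subtracting the $i=2$ identity from the $i=1$ identity and using equality at $z$, we obtain
\[
\Busgeo_{\Pi^1}(z+e_1,w)-\Busgeo_{\Pi^2}(z+e_1,w)=\Busgeo_{\Pi^1}(z+e_2,w)-\Busgeo_{\Pi^2}(z+e_2,w).
\]
By \eqref{A1<A2}, the left side is an $e_2$-increment difference and so is $\le 0$. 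The right side is an $e_1$-increment difference and so is $\ge 0$. Both therefore vanish. Applying recovery again, both increments agree at $z+e_1$ and at $z+e_2$. By induction, all nearest-neighbour increments of the two cocycles agree on the quadrant $\{v\ge z\}$.

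Consequently $\cDLR^{\Pi^1}_z=\cDLR^{\Pi^2}_z$ by \eqref{eq:cDLR-def}. Proposition \ref{prop:global-ext}\eqref{prop:global-ext:Busagree} then gives
\[
\Busgeo_{\Pi^1}=\Busgeo_{\cDLR^{\Pi^1}_z}=\Busgeo_{\cDLR^{\Pi^2}_z}=\Busgeo_{\Pi^2}
\]
on all of $\bbZ^2$. In particular $\cDLR^{\Pi^1}_{z_0}=\cDLR^{\Pi^2}_{z_0}$, which contradicts \eqref{thm:global-strictorder:cDLR-exist}.

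\emph{Main obstacle.} The step requiring most care is the forward propagation around the square. It needs the weak inequalities \eqref{A1<A2} at the neighbouring sites $z+e_1$ and $z+e_2$, and these come only from Theorem \ref{thm:global-weakorder}. I expect no backward propagation to be necessary, because Proposition \ref{prop:global-ext}\eqref{prop:global-ext:Busagree} recovers the global cocycle from the measure rooted at $z$ alone.
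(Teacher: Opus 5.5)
Your proposal is correct and follows essentially the same argument as the paper: obtain the weak inequalities from Theorem \ref{thm:global-weakorder}, then propagate a non-strict site forward around unit squares using recovery and the cocycle property, which forces $\cDLR_z^{\Pi^1}=\cDLR_z^{\Pi^2}$. The only difference is in how you extend that equality to all of $\bbZ^2$. You use Proposition \ref{prop:global-ext}\eqref{prop:global-ext:Busagree}, whereas the paper applies the equivalence \eqref{thm:global-weakorder:cDLR-exist}$\Leftrightarrow$\eqref{thm:global-weakorder:cDLR-all} of Theorem \ref{thm:global-weakorder} twice; both routes are valid.
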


Relation \eqref{A1<A2} defines a partial order on functions $\Z^2\times\Z^2\to\R$ and, in particular, on cocycles.

\begin{definition}\label{def:cocycle-order}
Given two cocycles $A^1$ and $A^2$, we say that
\begin{enumerate}[label={\rm(\roman*)}, ref={\rm\roman*}]
\item $A^1 \preceq A^2$ if for all $z\in\bbZ^2$, 
\[
A^1(z,z+e_1) \geq A^2(z,z+e_1)\quad\text{and}\quad A^1(z,z+e_2) \leq A^2(z,z+e_2)\]
\item $A^1 \prec A^2$ if for all $z\in\bbZ^2$, 
\[A^1(z,z+e_1) > A^2(z,z+e_1)\quad\text{and}\quad A^1(z,z+e_2) < A^2(z,z+e_2).\]
\end{enumerate}
$A^1 = A^2$ is equivalent to  $A^1 \preceq A^2$ and $A^2 \preceq A^1$. We write $A^1 \precneq A^2$ if $A^1 \preceq A^2$ and $A^1 \neq A^2$.
\end{definition}

A consequence of Theorems \ref{thm:global-weakorder} and \ref{thm:global-strictorder} is that if $\Pi^1$ and $\Pi^2$ are fully supported rooted extremal Gibbs measures, $\Busgeo_{\Pi^1} \precneq \Busgeo_{\Pi^2}$ is equivalent to $\Busgeo_{\Pi^1} \prec \Busgeo_{\Pi^2}$. This is not the case in zero temperature models. Combining this observation with Proposition \ref{prop:global-ext}, Theorem \ref{thm:global-weakorder}, and Theorem \ref{thm:global-strictorder}, we have the following trichotomy, which one can view as analogous to the asymptotic total order on geodesics in zero temperature, studied in \cite{Jan-Ras-Sep-25-strong-}. 

\begin{corollary}\label{cor:as-order-trichotomy}
For $\w \in \Omega_0$, $\beta\in(0,\infty)$, $x,y\in\bbZ^2$, and fully supported $\Pi^x\in\ext\DLR_x^{\beta,\w}$ and $\Pi^y\in\ext\DLR_y^{\beta,\w}$, exactly one of the following occurs:
\[
\Busgeo_{\Pi^x}\prec \Busgeo_{\Pi^y}\qquad \text{ or }\qquad \Busgeo_{\Pi^x}= \Busgeo_{\Pi^y}\qquad \text{ or }\qquad\Busgeo_{\Pi^y}\prec \Busgeo_{\Pi^x}.
\]
\end{corollary}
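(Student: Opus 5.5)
The plan is to reduce everything to the common root $w = x\vee y$ and apply the results stated just before the corollary. By Corollary \ref{cor:quadrant-extreme}, $\Pi^x_w$ and $\Pi^y_w$ are fully supported elements of $\ext\DLR_w^{\beta,\w}$. Theorem \ref{thm:cltot} says this set is totally ordered under $\preceq$. So at least one of $\Pi^x_w\preceq\Pi^y_w$ or $\Pi^y_w\preceq\Pi^x_w$ holds.

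First I treat the case $\Pi^x_w=\Pi^y_w$. By Proposition \ref{prop:global-ext}\eqref{prop:global-ext:agree}, $\cDLR_w^{\Pi^x}=\Pi^x_w$ and $\cDLR_w^{\Pi^y}=\Pi^y_w$, and these are equal. Then Proposition \ref{prop:global-ext}\eqref{prop:global-ext:Busagree} gives
\[
\Busgeo_{\Pi^x}=\Busgeo_{\cDLR_w^{\Pi^x}}=\Busgeo_{\cDLR_w^{\Pi^y}}=\Busgeo_{\Pi^y}.
\]
Otherwise the ordering is strict, say $\Pi^x_w\precneq\Pi^y_w$. Then Theorem \ref{thm:global-strictorder} (implication \eqref{thm:global-strictorder:Pi}$\Rightarrow$\eqref{thm:global-strictorder:Busgeo}) yields $\Busgeo_{\Pi^x}\prec\Busgeo_{\Pi^y}$. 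The case $\Pi^y_w\precneq\Pi^x_w$ is symmetric and gives $\Busgeo_{\Pi^y}\prec\Busgeo_{\Pi^x}$. This shows that at least one of the three alternatives occurs.

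For exclusivity I use Definition \ref{def:cocycle-order}. The relation $A^1\prec A^2$ requires the strict inequality $A^1(z,z+e_1)>A^2(z,z+e_1)$ for every $z$. This is incompatible with $A^1=A^2$. It is also incompatible with $A^2\prec A^1$, which would require $A^2(z,z+e_1)>A^1(z,z+e_1)$ at the same $z$. So at most one alternative holds, since strict inequalities at a single edge already contradict each other.

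There is no serious obstacle, since all the substantive work is already in Theorems \ref{thm:cltot} and \ref{thm:global-strictorder} and Proposition \ref{prop:global-ext}. The only point to check is that the equality case really produces equal Busemann fields, not merely equal measures at one root. That is exactly what part \eqref{prop:global-ext:Busagree} of Proposition \ref{prop:global-ext} supplies: the Busemann limit of $\cDLR_w^{\Pi}$ is determined by the measure itself.
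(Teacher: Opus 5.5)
Your proposal is correct and matches the paper's argument: the total order of $\ext\DLR_{x\vee y}^{\beta,\w}$ from Theorem \ref{thm:cltot}, applied to the extremal measures $\Pi^x_{x\vee y},\Pi^y_{x\vee y}$ from Corollary \ref{cor:quadrant-extreme}, is transferred to the Busemann cocycles via Theorems \ref{thm:global-weakorder} and \ref{thm:global-strictorder} and Proposition \ref{prop:global-ext}. The paper phrases the strict case as the observation that $\Busgeo_{\Pi^1}\precneq\Busgeo_{\Pi^2}$ is equivalent to $\Busgeo_{\Pi^1}\prec\Busgeo_{\Pi^2}$, and exclusivity follows directly from Definition \ref{def:cocycle-order}, as you note.
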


The next proposition records the fact that equality in the trichotomy is equivalent to coalescence and consistency.
\begin{proposition}\label{prop:coal-cond}
For $\w\in\Omega_0$, $\beta\in(0,\infty)$, and $V \subset \bbZ^2$, let $\Pi^x\in\ext\DLR_x^{\beta,\w}$ be fully supported for each $x\in V$. 
The following three conditions are equivalent:
\begin{enumerate}[label={\rm(\roman*)}, ref={\rm\roman*}, series = coal-cond]
\item \label{prop:coal-cond:cons} The family $\{\Pi^x : x\in V\}$ is consistent.
\item \label{prop:coal-cond:coal} The family $\{\Pi^x : x\in V\}$ admits a coalescing coupling.
\item \label{prop:coal-cond:Busgeo} For all $x,y\in V$, $\Busgeo_{\Pi^x}= \Busgeo_{\Pi^y}$.
\end{enumerate}
\end{proposition}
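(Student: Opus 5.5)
We will prove the cycle of implications (iii)$\Rightarrow$(i)$\Rightarrow$(ii)$\Rightarrow$(iii), working pairwise. Every condition is a statement about pairs: consistency is defined pairwise, equality of Busemann limits is pairwise, and the remark after Definition \ref{def:coalcoup-pair} reduces a coalescing coupling of a countable family to coalescing couplings of each pair. So fix $x,y\in V$ and write $\Pi^1=\Pi^x$, $\Pi^2=\Pi^y$, $w=x\vee y$.

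\emph{(iii)$\Rightarrow$(i).} Suppose $\Busgeo_{\Pi^1}=\Busgeo_{\Pi^2}$. By \eqref{eq:cDLR-def}, the measures $\cDLR_w^{\Pi^1}$ and $\cDLR_w^{\Pi^2}$ are given by the same formula, so they coincide. Proposition \ref{prop:global-ext}\eqref{prop:global-ext:agree} applies since $w\ge x$ and $w\ge y$, and gives $\Pi^1_w=\cDLR_w^{\Pi^1}=\cDLR_w^{\Pi^2}=\Pi^2_w$. This is the definition of consistency.

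\emph{(i)$\Rightarrow$(ii).} If $\Pi^1_w=\Pi^2_w=:Q$, we construct the coupling directly. Sample $X^1\sim\Pi^1$ and $X^2\sim\Pi^2$ independently. By Corollary \ref{cor:quadrant-extreme}, $Q$ is fully supported and extremal, so Theorem \ref{thm:coalext} gives a coalescing coupling of the family $\{Q_v:v\ge w\}$. We need the paths to reach the quadrant $\{v\ge w\}$, so that $X^1$ passes through some $v_1\ge w$ and $X^2$ through some $v_2\ge w$. This holds because $\Pi^i$ is fully supported and extremal, hence not trivial, and Lemma \ref{lem:Omega0} says the path almost surely does not stay forever on a row or column. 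Note that $\omega\in\Omega_0$ is needed here.

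To glue the paths, let $\tau_i$ be the first level at which $X^i$ lies in the quadrant, and let $v_i=X^i_{\tau_i}$. By the Markov property of a Gibbs measure, and Lemma \ref{lem:quad-consist} applied to $\Pi^i$ and its quadrant family, the conditional law of $X^i$ after $\tau_i$ given the past is $\Pi^i_{v_i}=Q_{v_i}$. The entrance point is a stopping time, and the induced measures are consistent along the quadrant by Lemma \ref{lem:quad-consist}. Therefore, conditional on $(v_1,v_2)$, we resample the futures jointly from the pairwise coalescing coupling of $(Q_{v_1},Q_{v_2})$ and concatenate them with the pasts. The marginals are preserved, and the two paths coalesce.

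\emph{(ii)$\Rightarrow$(iii).} Take a coalescing coupling $\wt\Pi$ of $\Pi^1$ and $\Pi^2$. By Proposition \ref{prop:global-ext}, $\Busgeo_{\Pi^i}(u,v)$ is the $\Pi^i$-a.s. limit of $\beta^{-1}\bigl(\log Z_{u,X_n}-\log Z_{v,X_n}\bigr)$, and this limit is deterministic for the fixed $\omega$. Under $\wt\Pi$, the limit holds a.s. along $X^1$ and along $X^2$. Eventually $X^1_n=X^2_n$, so the two limits agree on a positive-probability event, and in fact almost surely. Since both limits are constants, $\Busgeo_{\Pi^1}(u,v)=\Busgeo_{\Pi^2}(u,v)$ for all $u,v$.

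The main obstacle is the gluing step in (i)$\Rightarrow$(ii). We must verify carefully, via the strong Markov property at the random entrance time together with Lemma \ref{lem:cond-DLR} and Lemma \ref{lem:quad-consist}, that the post-entrance law is exactly $Q_{v_i}$. We also need the measurable choice of pairwise couplings over the countably many pairs $(v_1,v_2)$, which is routine.
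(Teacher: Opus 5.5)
Your proof is correct, and it departs from the paper's only in (i)$\Rightarrow$(ii).

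For (iii)$\Rightarrow$(i) your argument is the paper's, essentially verbatim. For (ii)$\Rightarrow$(iii) the paper argues that a coalescing coupling forces $\Pi^x$ and $\Pi^y$ to agree on the tail $\sigma$-algebra, and that the Busemann limit is tail measurable. You instead compare the two almost sure limits directly along the eventually identical paths, which is the same idea without the detour through the tail.

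For (i)$\Rightarrow$(ii) the routes genuinely differ.
\begin{itemize}
\item The paper takes two lines. Since $\Pi^x$ is tail trivial and $\Pi^x(x\vee y)>0$, every tail event $A$ satisfies $\Pi^x(A)=\Pi^x_{x\vee y}(A)=\Pi^y_{x\vee y}(A)=\Pi^y(A)$. Goldstein's theorem \cite{Gol-79} then gives the coalescing coupling directly for the two processes rooted at different sites.
\item You reuse Theorem~\ref{thm:coalext} for the common quadrant measure $Q=\Pi^x_{x\vee y}$. You then transport its coalescing self-coupling to $\Pi^x$ and $\Pi^y$ by splitting each path at its first entrance into $x\vee y+\mathbb{Z}^2_{\ge0}$, using the strong Markov property together with $\Pi^i_v=Q_v$ for $v\ge x\vee y$.
\end{itemize}

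Your construction is more explicit about how the coupling is built and invokes Goldstein only through Theorem~\ref{thm:coalext}. The cost is a verification the paper's route never needs: each path must enter the quadrant almost surely.

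Your justification of that entrance step is slightly loose. Lemma~\ref{lem:Omega0} only excludes coordinate rays that do not pass through the root. The two rays through the root must be excluded separately. For a path rooted at $x$, following $x+e_j\mathbb{Z}_{\ge0}$ is the same as $X_n\cdot e_{3-j}=x\cdot e_{3-j}$ for all large $n$, which is a tail event. By extremality it has probability $0$ or $1$, and probability $1$ contradicts full support. Note that ``not trivial'' alone would not suffice here: a fully supported non-extremal mixture can charge a root ray.
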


\subsection{Cocycle strong existence and uniqueness} \label{sec:coc-existunique}
We next discuss existence and uniqueness of certain kinds of fully supported rooted extremal Gibbs measures. This will require working under Condition \ref{cond:moment-mixing}.

We begin by discussing the primary tools used to construct such measures, known in the literature under various names, including generalized Busemann functions, stationary correctors, and shift-covariant recovering $L^1$ cocycles.

\begin{definition}\label{def:recovcoc}
Fix $\beta\in(0,\infty]$.
A measurable function $\Bhat:\Omhat\times\bbZ^2\times\bbZ^2\to\bbR$ is called a shift-covariant $\beta$-recovering $L^1(\Omhat,\kShat,\Phat)$ cocycle if it satisfies the following properties:
\begin{enumerate}[label={\rm(\alph*)}, ref={\rm\alph*}]
\item \textup{(Shift covariance)} $\Phat$-almost surely, for each $x,y,z\in\bbZ^2$,
\[
\Bhat(x+z,y+z) = \Bhat(x,y)\circ \That_z
\]
\item \textup{($\beta$-recovery)} $\Phat$-almost surely, for all $x \in \bbZ^2$,
\begin{align}
1&=\textstyle\sum_{i=1}^2 e^{\beta \w_x(\what)-\beta \Bhat(\what,x,x+e_i)}
\qquad\text{if }\beta\in(0,\infty), \label{eq:pos-temp-recovery} 
\\[2pt] 
\text{and}\quad \w_x(\what)&=\Bhat(\what,x,x+e_1)\wedge \Bhat(\what,x,x+e_2)
\quad\text{if }\beta=\infty. \label{eq:infty-recovery}
\end{align}
\item \textup{($L^1(\Omhat,\kShat,\Phat)$)} For all $x,y\in\bbZ^2$, $\Ehat[|\Bhat(x,y)|]<\infty$.
\item \textup{(Cocycle)} $\Phat$-almost surely, for all $x,y,z\in\bbZ^2$ $\Bhat(x,y) + \Bhat(y,z) = \Bhat(x,z)$.
\end{enumerate} 

The collection of all such cocycles is denoted by $\cKhat^\beta$. $\cK^\beta$ denotes the space of shift-covariant $\beta$-recovering $L^1(\Omega,\sF,\bbP)$ cocycles, i.e., those on the canonical space.
\end{definition}

For $\beta\in(0,\infty]$ and $\Bhat\in\cKhat^\beta$ 
define the random 2-vector $\hhB(\Bhat)=\hhB(\Bhat,\what)\in\R^2$ via
\begin{align}\label{h-def}
	\hhB(\Bhat)\cdot e_i=-\Ehat[\Bhat(0,e_i)\,|\,\cIhat],\quad i\in\{1,2\}.
\end{align}

By \cite[Theorem 4.4]{Jan-Ras-20-aop} (see \cite[Theorem B.3]{Jan-Ras-18-arxiv} for the details), for $\Phat$-a.e.\ $\what$, 
\begin{align}\label{B-shape}
\lim_{n\to\infty}n^{-1}\max_{\abs{x}_1\le n}\abs{\Bhat(\what, 0,x)+\hhB(\Bhat, \what)\cdot x}=0.
\end{align}
Because we assume Condition \ref{cond:ergodic}, any $B\in\cK^\beta$ has $\bbP(\hhB(B)=\E[\hhB(B)])=1.$

We have the following, which is the same as Lemma 4.5 of \cite{Jan-Ras-20-aop}. The proof is almost identical, but adapted slightly to avoid an unnecessary application of the main result of \cite{Mar-04}, which requires an i.i.d.~weight hypothesis. The proof is in Appendix \ref{app:convex}.

\begin{lemma}\label{lem:h-superdiff}
Fix $\beta\in(0,\infty]$ and suppose that Condition \ref{cond:moment-mixing} holds. Let $\Bhat\in \cKhat^\beta$ be given. Then
\begin{enumerate}[label={\rm(\alph*)}, ref={\rm\alph*}]
\item\label{lem:h-superdiff-general} 
$\Phat$-almost surely, $-\hhB(\Bhat) \in \partial \fe^{\beta}(\Uset)$.
\item \label{lem:h-superdiff-mean} If there exists $\xi\in \Uset$ for which $-\Ehat[\hhB(\Bhat)]\in \partial \fe^{\beta}(\xi)$, then $\Phat$-almost surely, $-\hhB(\Bhat)\in \partial \fe^{\beta}(\xi)$.
\item \label{lem:h-superdiff-extreme} If there exists $\xi\in \Uset$ for which $-\Ehat[\hhB(\Bhat)]\in \ext \partial \fe^{\beta}(\xi)$, then $\Phat$-almost surely, $\hhB(\Bhat)=\Ehat[\hhB(\Bhat)]$.
\end{enumerate}
\end{lemma}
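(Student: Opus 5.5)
Write $h=\hhB(\Bhat)$, and let $h_0=\Ehat[h]$ denote its mean. The plan is to compare the cocycle $\Bhat$ against the free energy along rays, and then use concavity.

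\emph{Part (a).} Fix $\what$ in the full-probability event on which \eqref{B-shape} and the shape theorem \eqref{sh-th} both hold (the latter for $\w(\what)$). The first step is the pathwise bound $\Bhat(x,y)\ge \FE{x}{y}^{\beta}$ for $x\le y$.
\begin{itemize}
\item For $\beta<\infty$, recovery \eqref{eq:pos-temp-recovery} says that $e^{-\beta\Bhat(x,\cdot)}$ solves the polymer recursion with a nonnegative boundary term. Iterating it over all paths from $x$ to $y$ gives $e^{-\beta\Bhat(x,y)}\PF{x}{y}^\beta\le 1$. In fact $\sum_{\pi\in\pathsp_{x:n}}e^{\beta\sum\w-\beta\Bhat(x,\pi_n)}=1$, since it is a probability measure on paths.
\item For $\beta=\infty$, \eqref{eq:infty-recovery} gives $\Bhat(u,u+e_i)\ge \w_u$. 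Summing along a geodesic yields the bound.
\end{itemize}
Dividing by $n$ with $y=x+\lfloor n\zeta\rfloor$ and using \eqref{B-shape} and \eqref{sh-th} gives
\[
-h\cdot\zeta\ge \fe^\beta(\zeta)\qquad\text{for all }\zeta\in\ri\Uset,
\]
and this extends to $\Uset$ by continuity.

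The reverse inequality is needed at some $\xi$. From the identity $\sum_{\pi\in\pathsp_{0:n}}e^{\beta\sum\w-\beta\Bhat(0,\pi_n)}=1$, a Laplace-type argument gives
\[
0=\max_{\zeta\in\Uset}\bigl(\fe^\beta(\zeta)+h\cdot\zeta\bigr),
\]
provided the shape theorem controls the endpoints near $e_1,e_2$. This is the main obstacle: \eqref{sh-th} only holds with $\delta>0$. I would handle the boundary strips separately, bounding the contribution of endpoints with $x\cdot e_1<\delta n$ by $\fe^\beta(e_i)$-type estimates plus an error that vanishes as $\delta\to0$. One ingredient is the class-$\sL$ condition \eqref{eq:class-L}, or the i.i.d.\ moment hypothesis, to bound sums along near-axis paths. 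Another is the inequality \eqref{eq:fe-ineq} together with the continuous extension of $\fe^\beta$, so that $\fe^\beta$ near the axes is approached correctly.

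For $\beta=\infty$ the analogue is simpler. Following the $\Bhat$-geodesic (always choosing the minimizing step) gives equality $\Bhat(0,\pi_n)=\sum\w$ along a path, so $\limsup\bigl(\FE{0}{\pi_n}-\Bhat(0,\pi_n)\bigr)/n\ge0$ along a subsequence of directions. The same boundary issue arises here, now handled by the i.i.d.\ hypothesis.

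Given the reverse inequality, let $\xi$ attain the maximum. Then $\fe^\beta(\zeta)-\fe^\beta(\xi)\le -h\cdot(\zeta-\xi)$, using homogeneity to pass from $\Uset$ to $\R^2_{\ge0}$. Hence $-h\in\partial\fe^\beta(\xi)$.

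\emph{Part (b).} By (a), almost surely $-h\cdot\zeta\ge\fe^\beta(\zeta)$ for all $\zeta$, with equality at some random $\zeta$. Taking expectations, $-h_0\cdot\xi=\fe^\beta(\xi)$ by the hypothesis and homogeneity. So the nonnegative random variable $-h\cdot\xi-\fe^\beta(\xi)$ has mean zero, hence equals $0$ almost surely. Then $-h$ supports $\fe^\beta$ at $\xi$, i.e.\ $-h\in\partial\fe^\beta(\xi)$ almost surely.

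\emph{Part (c).} By (b), $-h\in\partial\fe^\beta(\xi)$ almost surely. This set is a segment (a closed convex set), and its mean $-h_0$ is an extreme point. A random variable supported in a convex set whose mean is an extreme point must be almost surely equal to its mean. To see this, take a linear functional exposing the endpoint $-h_0$ of the segment $[\nabla\fe^\beta(\xi+),\nabla\fe^\beta(\xi-)]$ and apply the same mean-zero argument as in (b). Hence $h=h_0$ almost surely.
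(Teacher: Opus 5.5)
Your parts (b) and (c) are correct, and so is the half $\sup_{\zeta\in\Uset}\{\fe^\beta(\zeta)+\hhB(\Bhat)\cdot\zeta\}\le 0$ of part (a). They match the paper, which phrases (b)--(c) through the exposed-face structure in Lemma \ref{lm:hext}. Two small remarks on these parts:
\begin{itemize}
\item Your (b) and (c) use only this easy half.
\item For $\xi\in\{e_1,e_2\}$, $\partial\fe^\beta(\xi)$ may be a half-line rather than a segment, but its endpoint is still exposed, so your argument goes through.
\end{itemize}
For $\beta=\infty$, your geodesic argument for the reverse inequality is valid if you invoke Martin's $\delta=0$ shape theorem under the i.i.d.\ hypothesis (Remark \ref{rk:extension}). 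The paper deliberately avoids that input.

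The gap is the reverse inequality $\sup_{\zeta\in\Uset}\{\fe^\beta(\zeta)+\hhB(\Bhat)\cdot\zeta\}\ge 0$ for $\beta<\infty$. You correctly flag it as the main obstacle but do not resolve it, and the ingredients you list do not do the job. What is needed is an \emph{upper} bound on $\FE{0}{x}^{\beta}$ for $x$ at level $n$ within $\delta n$ of an axis, and outside the i.i.d.\ case no shape theorem is available there.
\begin{itemize}
\item Inequality \eqref{eq:fe-ineq} is a lower bound on $\fe^\beta(e_i)$, so it points the wrong way.
\item The class $\sL$ condition \eqref{eq:class-L} controls sums of $|\w|$ only along axis-parallel segments of length at most $\epsilon n$, not along near-axis paths of length $n$.
\end{itemize}

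The missing idea is to push near-axis endpoints into the region where \eqref{sh-th} holds by extending paths. For $x\cdot e_i\le\delta n$, set $y=x+\lceil\delta n\rceil e_i$. Then $\PF{0}{y}^{\beta}\ge\PF{0}{x}^{\beta}\exp\{\beta\sum_{0\le k<\lceil\delta n\rceil}\w_{x+ke_i}\}$, hence
\[
\FE{0}{x}^{\beta}-\Bhat(0,x)\le \FE{0}{y}^{\beta}-\Bhat(0,x)+\sum_{0\le k\le\lceil\delta n\rceil}|\w_{x+ke_i}|.
\]
Each term on the right is then controlled:
\begin{itemize}
\item The point $y/(n+\lceil\delta n\rceil)$ stays a fixed distance from $e_1,e_2$, so \eqref{sh-th} applies to $\FE{0}{y}^{\beta}$.
\item \eqref{B-shape} holds uniformly over $|x|_1\le n$, including near the axes, so it handles $\Bhat(0,x)$.
\item By \eqref{eq:class-L}, the segment sum is $o(n)$ after sending $n\to\infty$ and then $\delta\searrow 0$.
\item Uniform continuity of the extended $\fe^\beta$ on compacts gives $\fe^\beta(y/n)\le\fe^\beta(x/n)+o_\delta(1)$.
\end{itemize}
Thus each of the $n+1$ terms in $\sum_{x}e^{\beta(\FE{0}{x}^{\beta}-\Bhat(0,x))}=1$ is at most $\exp\{\beta n(\sup_{\zeta\in\Uset}\{\fe^\beta(\zeta)+\hhB(\Bhat)\cdot\zeta\}+o(1))\}$, which forces the supremum to be nonnegative. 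The paper uses the same argument at $\beta=\infty$, which is how it avoids Martin's theorem.
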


The above lemma says that any shift-covariant recovering $L^1(\Phat)$ cocycle must have a conditional mean vector which lies in the negative of the super-differential almost surely. It also identifies when such a vector is confined to the super-differential of a fixed direction and gives a sufficient condition for the random vector $\hhB(\Bhat)$ to be almost surely constant.  

Our next two results say that any $\Bhat\in\cKhat^\beta$ which is defined on some extended space and which has an almost surely constant conditional mean vector can be realized uniquely as an element on the canonical space (i.e.\ as an element of $\cK^\beta$). In the language of stochastic analysis, these are statements of strong existence  and strong uniqueness of shift-covariant recovering $L^1(\Phat)$ cocycles with deterministic conditional mean vectors. This is optimal, because on extended probability spaces, one can take convex mixtures of cocycle distributions to produce new cocycles with random conditional mean vectors. See Example 3.7 in \cite{Jan-Ras-Sep-25-strong-}.  Theorem \ref{thm:decomp} gives the reverse direction: if the tilt of a cocycle is genuinely random, then the cocycle is a mixture of ones in $\cK^\beta$. 

\begin{theorem}\label{thm:Bhat-B}
Fix $\beta\in(0,\infty)$ and  suppose that Condition \ref{cond:moment-mixing}\eqref{cond:moment-mixing-<infty} holds. Let $\Bhat\in\cKhat^\beta$ and  $h=\Ehat[\hhB(\Bhat)]$. Assume that $\Phat\{\what:\hhB(\Bhat,\what)=h\}=1$. Then there exists $\Bus\in\cK^\beta$ such that $\Bhat(\what)=\Bus(\w(\what))$, $\Phat$-almost surely. 
\end{theorem}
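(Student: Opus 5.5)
The strategy is to move from the cocycle $\Bhat$ to the extremal Gibbs measures it generates. Their closed total order (Theorem \ref{thm:cltot}) depends only on $\w$. We then use an inverse-CDF (quantile) coupling and show that the conditional law of the generated measure given the weights is a point mass.

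\emph{Step 1: $\Bhat$ generates consistent extremal Gibbs measures.} Fix $\what$ in a full-measure event on which $\w(\what)\in\Omega_0$ and the cocycle and recovery identities hold. By the last part of Theorem \ref{thm:DLR-cocycle}, $\Bhat(\what)$ defines a consistent family $\{\Pi^{\Bhat}_z(\what):z\in\bbZ^2\}$ of fully supported Gibbs-DLR measures. Since $\Bhat$ is finite, every transition probability \eqref{eq:DLR-transition} is strictly positive, so none of these is trivial.

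I would next show that $\Pi^{\Bhat}_0$ is extremal, via the planar Licea--Newman argument adapted as in \cite{Jan-Ras-20-aop}.
\begin{itemize}
\item Shift covariance and ergodicity make the event that two paths sampled from $\Pi^{\Bhat}$ at distinct roots fail to coalesce have a translation-invariant density.
\item Planarity then produces infinitely many disjoint non-coalescing trajectories, which contradicts the $L^1$ and recovery structure (the Burton--Keane-type counting step).
\end{itemize}
This gives a coalescing coupling, hence extremality by Theorem \ref{thm:coalext} and Proposition \ref{prop:coal-cond}. The same propositions give $\Busgeo_{\Pi^{\Bhat}_0}=\Bhat$, which follows from consistency together with Proposition \ref{prop:global-ext}\eqref{prop:global-ext:Busagree}. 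So $\Bhat$ is recovered from the random element $\Pi^{\Bhat}_0(\what)$ of the compact, totally ordered set $\cE(\w)=\ext\DLR_0^{\beta,\w}$, and $\cE(\w)$ depends only on $\w$.

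\emph{Step 2: quantile construction.} Let $\kappa_{\w}$ be a regular conditional law of $\Pi^{\Bhat}_0$ given $\kS$; it is a probability measure on the closed totally ordered set $\cE(\w)$. For $u\in(0,1)$, define $Q(\w,u)$ as the largest element $\Pi\in\cE(\w)$ with $\kappa_{\w}\{\Pi'\in\cE(\w):\Pi'\prec\Pi\}\le u$, realized as the largest element of a closed lower set. Closedness of $\cE(\w)$ is what makes this maximum exist; a measurable-selection argument makes $Q$ jointly measurable.

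Shift covariance of $\Bhat$ together with \eqref{eq:DLR-set-covariant} gives shift covariance of $\kappa$, and hence of $Q$. Proposition \ref{prop:global-ext} then upgrades each $Q(\w,u)$ to a full cocycle $\Bus^u(\w)=\Busgeo_{Q(\w,u)}$. With $U$ an independent uniform random variable, the pair $(\w,\Bus^U(\w))$ has the same joint law as $(\w,\Bhat)$. Consequently $\Bus^U$ is a shift-covariant recovering $L^1$ cocycle, and its tilt equals $h$ almost surely, uniformly in $U$. The essential property is monotonicity: $u\le u'$ implies $Q(\w,u)\preceq Q(\w,u')$, and therefore $\Bus^u\preceq\Bus^{u'}$ in the sense of Definition \ref{def:cocycle-order}, by Theorem \ref{thm:global-weakorder}.

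\emph{Step 3: the quantile does not depend on $u$.} Take independent uniforms $U<U'$ (after reordering). Then $\Bus^U(0,e_1)\ge\Bus^{U'}(0,e_1)$ pointwise. Both have conditional mean $-h\cdot e_1$ given the invariant $\sigma$-algebra of the extended space, so the difference is a nonnegative variable with zero mean and therefore vanishes almost surely. The same argument applies to $e_2$, and by shift covariance it holds at every $z$. Hence $\Bus^U=\Bus^{U'}$ almost surely, and by Corollary \ref{cor:as-order-trichotomy} the corresponding measures coincide.

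It follows that $Q(\w,u)$ is almost surely constant in $u$, so $\kappa_{\w}$ is a point mass and $\Bhat=\Bus(\w)$ with $\Bus=\Bus^{1/2}\in\cK^\beta$. The $L^1$ property and shift covariance of $\Bus$ transfer from $\Bhat$ through the equality in law.

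The main obstacle I expect is Step 1's extremality (the Licea--Newman coalescence under only ergodicity plus Condition \ref{cond:moment-mixing}), together with the measurability and shift covariance of the quantile selection in Step 2. The monotone-coupling argument in Step 3 is comparatively soft.
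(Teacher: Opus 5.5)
Your outline follows the paper's route: Step 1 is Lemma \ref{lem:Bhat-ext} with \eqref{eq:Bus=Busgeo}, Step 2 is Proposition \ref{prop:xBhatproc} with Lemma \ref{lem:uniform-quantile}, and Step 3 is Lemma \ref{lem:quantile-constant}. However, it omits the step that makes Step 3 work.

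\textbf{Where the gap is.} It sits in the sentence ``Consequently $\Bus^U$ is a shift-covariant recovering $L^1$ cocycle.'' Cocycle and recovery come from Proposition \ref{prop:global-ext}, and integrability for a.e.\ $u$ follows from the law identity and Fubini. Shift covariance at a \emph{fixed} level, $\Bus^u(\w,x+z,y+z)=\Bus^u(T_z\w,x,y)$, is a different matter.
\begin{itemize}
\item It relates $\w$ and $T_z\w$ at the same $u$. The equality in law of $(\w,\Bus^U(\w))$ and $(\w,\Bhat)$ cannot detect this, because that equality survives replacing $U$ by $\phi(\w,U)$ for any $\w$-dependent measure-preserving $\phi$.
\item Nor does \eqref{eq:DLR-set-covariant} supply it. Since $\kappa_\w$ is the conditional law of the root-$0$ measure, unwinding the shift only gives $\Bus^u(T_z\w,x,y)=\Busgeo_{Q_z(\w,u)}(\w,x+z,y+z)$. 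Here $Q_z(\w,u)$ is the level-$u$ quantile of the conditional law $\kappa^z_\w$ of $\Pi^{\Bhat}_z$ on $\ext\DLR_z^{\beta,\w}$.
\end{itemize}
You still need $\Busgeo_{Q_z(\w,u)}=\Busgeo_{Q_0(\w,u)}$, i.e.\ the quantile selections at different roots and the same level must coalesce.

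\textbf{How the paper closes it.} This is exactly what Lemmas \ref{lem:quantile-equal} and \ref{lm:coal} prove.
\begin{itemize}
\item On the fiber of $\w$, the family $\{\Pi^{\Bhat}_x\}$ is coalescing. By Proposition \ref{prop:coal-cond} and Theorem \ref{thm:global-weakorder}, the events $\{\Pi^{\Bhat}_x\preceq\mu\}$ and $\{\Pi^{\Bhat}_y\preceq\gamma\}$ therefore coincide whenever $\mu\coal\gamma$.
\item Hence $\kappa^y_\w$ is the image of $\kappa^x_\w$ under the order isomorphism $\mu\mapsto\cDLR^{\mu}_y$ between the fully supported elements of $\ext\DLR_x^{\beta,\w}$ and $\ext\DLR_y^{\beta,\w}$.
\item The quantiles then correspond, once you also check that $Q_x(\w,u)$ is fully supported for every $u\in(0,1)$.
\end{itemize}
Equivalently, you could take the quantile directly on the root-independent set of cocycles $\{\Busgeo_\Pi\}$, whose order in Definition \ref{def:cocycle-order} is shift invariant. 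The translation between the two pictures again uses Theorem \ref{thm:global-weakorder} and Proposition \ref{prop:global-ext}.

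\textbf{Why this is not a formality.} Step 3 claims that $\Bus^U(0,e_j)$ and $\Bus^{U'}(0,e_j)$ both have conditional mean $-h\cdot e_j$ given the invariant $\sigma$-algebra of $\Omega\times[0,1]^2$, which is essentially $\sigma(U,U')$. That amounts to $\Bus^u(0,e_j)$ having mean $-h\cdot e_j$ for a.e.\ fixed $u$. This comes from applying the ergodic theorem and \eqref{B-shape} to $k\mapsto\Bus^u((k-1)e_j,ke_j)$, which must be stationary under the $e_j$-shift for fixed $u$. Without that stationarity, monotonicity in $u$ plus the correct joint law with $\w$ cannot force constancy: the ordinary quantile function of any nondegenerate law is monotone and has the correct marginal. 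Once the cross-root coalescence is in place, the rest of your argument matches the paper's.
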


In the next result, for $h,h'\in-\partial\fe^\beta(\Uset)$, $h \precneq h'$ means $h\cdot e_1 < h'\cdot e_1$ and $h \cdot e_2 > h'\cdot e_2$. 

\begin{theorem}\label{thm:uniqueness}
Fix $\beta\in(0,\infty)$ and  suppose that Condition \ref{cond:moment-mixing}\eqref{cond:moment-mixing-<infty} holds. Let $\Bus^1,\Bus^2\in\cK^{\beta}$. Then exactly one of the following three happens: 
\begin{enumerate}  [label={\rm(\alph*)}, ref={\rm\alph*}] 
\item $\hhB(\Bus^1)\precneq\hhB(\Bus^2)$ and $\P(\Bus^1\prec\Bus^2)=1$, 
\item $\hhB(\Bus^1)\succneq\hhB(\Bus^2)$ and $\P(\Bus^1\succ\Bus^2)=1$, or 
\item $\hhB(\Bus^1)=\hhB(\Bus^2)$ and $\P(\Bus^1=\Bus^2)=1$.
\end{enumerate}
\end{theorem}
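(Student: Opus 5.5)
The plan is to reduce the trichotomy to the pointwise trichotomy of Corollary \ref{cor:as-order-trichotomy}, applied to the Gibbs families generated by $\Bus^1$ and $\Bus^2$. Ergodicity then upgrades it to an almost-sure trichotomy, and the shape theorem \eqref{B-shape} relates it to the tilts $\hhB(\Bus^i)$.

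\emph{Step 1: generated measures.} For $\w$ in a full-measure, shift-invariant subset of $\Omega_0$ on which cocycle, recovery and \eqref{B-shape} hold for both cocycles, Theorem \ref{thm:DLR-cocycle} gives consistent families $\{\Pi^{\Bus^i,\w}_x : x\in\bbZ^2\}$ of fully supported measures in $\DLR_x^{\beta,\w}$.

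\emph{Step 2: identify the cocycle with its Busemann limit.} I will show that $\Busgeo_{\Pi^{\Bus^i,\w}_x}=\Bus^i$ using the second expression in \eqref{eq:global-ext-Busemann}. For $y\ge x$ and $n$ large, \eqref{eq:Bus-cocycle-measure} together with the cocycle property gives
\[
\bfE^{\Pi^{\Bus,\w}_x}\Bigl[\frac{\PF{y}{X_n}^{\beta,\w}}{\PF{x}{X_n}^{\beta,\w}}\Bigr]
=\sum_{v}e^{-\beta\Bus(x,v)}\PF{y}{v}^{\beta,\w}
=e^{-\beta\Bus(x,y)}\sum_v\Pi^{\Bus,\w}_y(X_n=v)=e^{-\beta\Bus(x,y)} ,
\]
and general $y,z$ then follow from the cocycle property.

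\emph{Step 3: extremality.} Proposition \ref{prop:global-ext} and Corollary \ref{cor:as-order-trichotomy} require extremality, so the key input is that $\Pi^{\Bus^i,\w}_x\in\ext\DLR_x^{\beta,\w}$ for a.e.\ $\w$. This is the main obstacle, and I would handle it with a Licea--Newman/Burton--Keane argument. By Theorem \ref{thm:coalext}, it suffices to produce a coalescing self-coupling. I would couple the paths from all sites by running the Markov chains with transitions \eqref{eq:DLR-transition}, driven by a common field of i.i.d.\ uniforms attached to lattice sites. This coupling is shift covariant, and paths that meet move together afterwards. Suppose two paths from distinct sites failed to coalesce with positive probability. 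Shift covariance and ergodicity would then yield, with positive probability, three mutually non-coalescing paths that are disjoint beyond some level, with the middle one shielded. A counting argument in a large box would then give a contradiction: the number of such ``trifurcation'' configurations grows like the area, while planarity bounds it by the boundary. Planarity and Lemma \ref{lem:Omega0}, which excludes paths trapped on a row or column, are used here.

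\emph{Step 4: conclude.} With extremality in hand, Corollary \ref{cor:as-order-trichotomy} gives, for a.e.\ $\w$, exactly one of $\Bus^1\prec\Bus^2$, $\Bus^1=\Bus^2$, $\Bus^2\prec\Bus^1$. Each of these events is $T$-invariant by shift covariance, so by Condition \ref{cond:ergodic} exactly one has probability one. If $\Bus^1\prec\Bus^2$ a.s., then $\Bus^1(0,e_1)>\Bus^2(0,e_1)$ and $\Bus^1(0,e_2)<\Bus^2(0,e_2)$ pointwise a.s. Taking expectations, which are finite by the $L^1$ property, and recalling that $\hhB$ is a.s.\ deterministic under ergodicity, gives strict inequalities $\hhB(\Bus^1)\cdot e_1<\hhB(\Bus^2)\cdot e_1$ and $\hhB(\Bus^1)\cdot e_2>\hhB(\Bus^2)\cdot e_2$, i.e.\ $\hhB(\Bus^1)\precneq\hhB(\Bus^2)$. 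The case $\Bus^2\prec\Bus^1$ is symmetric, and equality of the cocycles gives equality of the tilts. Since the three tilt relations are mutually exclusive, exactly one of (a), (b), (c) holds. In particular, equal tilts rule out the strict cases and force $\P(\Bus^1=\Bus^2)=1$.
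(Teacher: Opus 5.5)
Your proposal is correct and follows the paper's route: extremality of the cocycle-generated measures via Licea--Newman coalescence plus Theorem \ref{thm:coalext} (the paper's Lemma \ref{lem:Bhat-ext}, which imports exactly this argument from \cite[Theorem A.2]{Jan-Ras-20-aop}, with the local modification performed on the auxiliary uniforms), then the identification \eqref{eq:Bus=Busgeo}, the pointwise trichotomy of Corollary \ref{cor:as-order-trichotomy} upgraded by ergodicity (Lemma \ref{lem:Bus-monotone-as}), and finally a comparison of means. One justification in Step 2 needs repair: the cocycle property alone does not carry $\Busgeo_{\Pi^{\Bus,\w}_x}=\Bus$ from $\bbZ^2_{\ge x}$ to all of $\bbZ^2$, so either use, as the paper does, that $\Busgeo_{\Pi^{\Bus,\w}_x}$ does not depend on the root $x$ (by consistency and Proposition \ref{prop:coal-cond}), or observe directly that for general $y$ your sum equals $e^{-\beta\Bus(x,y)}\,\Pi^{\Bus,\w}_y(X_n\ge x\vee y)$, which tends to $e^{-\beta\Bus(x,y)}$ because the path eventually enters every quadrant (Lemma \ref{lem:Omega0} plus consistency).
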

The analogues of these theorems when $\beta=\infty$ under Condition \ref{cond:moment-mixing}\eqref{cond:moment-mixing-iid} are Theorems 3.1 and 3.4 in \cite{Jan-Ras-Sep-25-strong-}, where an additional finite energy/coalescence condition is required on the cocycle. This issue is discussed in further detail in Section \ref{sec:L1}. 

For fixed $\beta\in(0,\infty)$, define the set of mean vectors of cocycles in $\cK^\beta$, i.e.\ shift-covariant $\beta$-recovering $L^1(\P)$ cocycles
\[
\sH^\beta=\{\hhB(B):B\in\cK^\beta\}.
\]
We call elements of $\sH^\beta$ \textit{tilts}. The following generalizes \cite[Proposition 3.5]{Jan-Ras-Sep-25-strong-} to the case $\beta<\infty$.

\begin{proposition}\label{prop:sH-extreme}
Fix $\beta\in(0,\infty)$ and suppose Condition \ref{cond:moment-mixing}\eqref{cond:moment-mixing-<infty} holds. Then
\be\label{sH_sub}
\{-\nabla \fe^{\beta}(\xi\sigg): \xi\in\ri\Uset,\ \sigg\in\{-,+\}\}\subset \sH^\beta \subset -\partial \fe^\beta(\Uset).
\ee
\end{proposition}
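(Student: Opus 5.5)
The second inclusion in \eqref{sH_sub} is the easy half. For $\Bus\in\cK^\beta$, take the extended space to be the canonical one, $(\Omhat,\kShat,\Phat,\That)=(\Omega,\sF,\P,T)$. Lemma \ref{lem:h-superdiff}\eqref{lem:h-superdiff-general} then gives $-\hhB(\Bus)\in\partial\fe^\beta(\Uset)$ $\P$-almost surely. Under Condition \ref{cond:ergodic}, $\hhB(\Bus)$ equals its mean almost surely, so the deterministic tilt satisfies $\hhB(\Bus)\in-\partial\fe^\beta(\Uset)$. Hence $\sH^\beta\subset-\partial\fe^\beta(\Uset)$.

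For the first inclusion, fix $\xi\in\ri\Uset$ and $\sigg\in\{-,+\}$, and set $h=-\nabla\fe^\beta(\xi\sigg)$. The plan has four steps.

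\emph{Step 1 (weak existence).} Invoke the weak existence theorem of \cite{Gro-Jan-Ras-25-tams}, whose hypotheses hold under Condition \ref{cond:moment-mixing}\eqref{cond:moment-mixing-<infty}. This should produce an extended quintuple $(\Omhat,\kShat,\Phat,\That,\w)$ whose weight marginal is $\P$, together with $\Bhat\in\cKhat^\beta$ satisfying $\Ehat[\hhB(\Bhat)]=h$.

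\emph{Step 2 (deterministic tilt).} Since $\nabla\fe^\beta(\xi\sigg)\in\ext\partial\fe^\beta(\xi)$, Lemma \ref{lem:h-superdiff}\eqref{lem:h-superdiff-extreme} gives $\hhB(\Bhat)=h$ $\Phat$-almost surely.

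\emph{Step 3 (strong existence).} Theorem \ref{thm:Bhat-B} now supplies $\Bus\in\cK^\beta$ with $\Bhat(\what)=\Bus(\w(\what))$ $\Phat$-almost surely.

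\emph{Step 4 (identify the tilt).} Because $\Bhat=\Bus\circ\w$ and $\P$ is the pushforward of $\Phat$ under $\w$, we get $\E[\Bus(0,e_i)]=\Ehat[\Bhat(0,e_i)]$. By ergodicity, $\hhB(\Bus)=\E[\hhB(\Bus)]=h$, so $h\in\sH^\beta$.

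The main obstacle is Step 1. The weak existence result of \cite{Gro-Jan-Ras-25-tams} may be stated differently, for example for every $h\in-\partial\fe^\beta(\Uset)$, or only for tilts at differentiability directions. If it is not directly in the form needed, I would try the following in order.

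First, approximate. Choose differentiability points $\xi_n\to\xi$ from the side $\sigg$; there $\partial\fe^\beta(\xi_n)=\{\nabla\fe^\beta(\xi_n)\}$, and by concavity $\nabla\fe^\beta(\xi_n)\to\nabla\fe^\beta(\xi\sigg)$. Build cocycles $\Bus_n\in\cK^\beta$ at these points by Steps 2 to 4.

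Second, pass to the limit. By Theorem \ref{thm:uniqueness} the $\Bus_n$ are monotone in $n$. Recovery bounds each $e^{-\beta\Bus_n(x,x+e_i)}$ by $e^{-\beta\w_x}$, which gives tightness. The $L^1$ means converge, so the monotone limit $\Bus$ exists and is finite. It is shift-covariant and recovering, and by monotone convergence its mean is $-\nabla\fe^\beta(\xi\sigg)$.

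Third, verify uniform integrability of the monotone sequence to justify $L^1$ convergence. It follows from monotonicity, since the sequence is sandwiched between two integrable members.
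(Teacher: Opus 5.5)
Your proposal is correct and follows the paper's proof. The second inclusion is Lemma~\ref{lem:h-superdiff}\eqref{lem:h-superdiff-general}, with ergodicity making the tilt deterministic. The first inclusion is the same chain the paper uses: weak existence from Theorem~4.5 of \cite{Gro-Jan-Ras-25-tams}, then Lemma~\ref{lem:h-superdiff}\eqref{lem:h-superdiff-extreme} to make $\hhB(\Bhat)$ deterministic, then Theorem~\ref{thm:Bhat-B}. The paper invokes that weak existence result directly in the form of your Step~1, so your approximation fallback is not needed.
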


\begin{remark}\label{rk:H=grads}
When $\fe^\beta$ is differentiable, the inclusions in \eqref{sH_sub} are equalities. This holds in the exactly solvable log-gamma polymer; whether this is the case more generally is an open problem.
\end{remark}

\begin{proof}[Proof of Proposition \ref{prop:sH-extreme}]
The second inclusion comes from Lemma \ref{lem:h-superdiff}\eqref{lem:h-superdiff-general}. For the first inclusion, fix $\xi\in\ri\Uset$ and $\sigg\in\{-,+\}$, and let $h=-\nabla \fe^{\beta}(\xi\sigg)$. By Theorem 4.5 of \cite{Gro-Jan-Ras-25-tams}, there exists a probability space equipped with shifts and weights, $(\Omhat,\kShat,\Phat,\That,\w)$, satisfying the hypotheses of Section \ref{sec:probsp} on which there exists a cocycle 
$\Bhat\in\cKhat^\beta$ which satisfies $\Ehat[\hhB(\Bhat)]=h$. Since $-h\in\ext\partial \fe^{\beta}(\xi)$, Lemma \ref{lem:h-superdiff}\eqref{lem:h-superdiff-extreme} gives $\hhB(\Bhat, \what)=h$, $\Phat$-almost surely. Theorem \ref{thm:Bhat-B} now yields a cocycle $B\in\cK^\beta$ with $\hhB(B)=h$. Thus $h\in\sH^\beta$.
\end{proof}

By Theorems \ref{thm:Bhat-B} and \ref{thm:uniqueness}, for each $h\in\sH^\beta$ there exists a unique $\Bus\in\cK^\beta$ with $\hhB(\Bus)=h$. We denote this cocycle by $\Bus^{\beta,h}$. 

\subsection{Strong existence and uniqueness of the Busemann process}
The connection between the shift-covariant recovering cocycles discussed in Section \ref{sec:coc-existunique} and the extremal Gibbs measures which are the main focus of the paper comes from the following lemma. 

\begin{lemma}\label{lem:Bhat-ext}
Fix $\beta\in(0,\infty)$ and suppose that $\Bhat\in\cKhat^{\beta}$. There exists a shift-invariant Borel event $\Omhatext$ with $\Phat(\Omhatext)=1$ with the property that for all $\what \in \Omhatext$, the shift covariance, $\beta$-recovery, and cocycle properties in Definition \ref{def:recovcoc} hold for $\Bhat$ and
the consistent family
$\{\Pi_x^{\Bhat,\what} : x\in\bbZ^2\}$ of fully supported Gibbs-DLR measures defined for $x\in\bbZ^2$, $k=x\cdot(e_1+e_2)$, $m \geq k$ and admissible paths $x_{k:m}$ with $x_k=x$ by
\[
\Pi_x^{\Bhat,\what}(x_{k:m}) =\exp\biggl\{\beta\sum_{r=k}^{m-1}\w_{x_r}(\what)-\beta\Bhat(\what,x,x_m)\biggr\}, 
\]
admits a coalescing coupling. Consequently, for all $x\in\bbZ^2$, $\Pi_x^{\Bhat,\what} \in \ext \DLR_x^{\beta,\w(\what)}$.
\end{lemma}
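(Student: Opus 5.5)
Throughout, let $\Omhatext$ be the intersection of the full-measure events on which shift covariance, $\beta$-recovery and the cocycle property hold, of the preimage under $\w$ of $\Omega_0$, and of a shift-invariant event on which the sample-path coalescence property built below holds; all of these can be made shift-invariant by intersecting over shifts. On this event Theorem \ref{thm:DLR-cocycle} already gives that $\{\Pi_x^{\Bhat,\what}\}$ is a consistent family of fully supported Gibbs-DLR measures. By Proposition \ref{prop:coal-cond} (equivalently, by the remark after Definition \ref{def:coalcoup-pair}), it therefore suffices to produce a coalescing coupling of each pair $\Pi_x^{\Bhat,\what},\Pi_y^{\Bhat,\what}$. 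Once this is done, the family restricted to $\{y\ge x\}$ is the family $\{(\Pi_x)_y\}$ by consistency, so Theorem \ref{thm:coalext} gives $\Pi_x^{\Bhat,\what}\in\ext\DLR_x^{\beta,\w(\what)}$.

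To build the coupling, I will use a random-walk coupling driven by common noise, Licea--Newman style, as adapted in \cite{Jan-Ras-20-aop}. Enlarge the space by i.i.d.\ uniforms $\{U_z:z\in\bbZ^2\}$ independent of $\what$. At each site $z$, step $e_1$ if $U_z<e^{\beta\w_z-\beta\Bhat(z,z+e_1)}$ and $e_2$ otherwise. This gives a coalescing family of walks $\{X^x\}$, one from each $x$, whose marginals are $\Pi_x^{\Bhat,\what}$. Because the walks use the same noise, two walks stay together once they meet, and planarity keeps them ordered before meeting.

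It remains to show that for $\Phat\otimes\text{Unif}$-a.e.\ realization all walks meet. The joint law is shift-invariant and ergodic in the extended environment. By planarity and ordering, it suffices to show that walks from $x$ and $x+e_1$ coalesce a.s. (and likewise $x,x+e_2$), because any two roots can be joined by such steps after moving to a common antidiagonal. Suppose instead that with positive probability there exist non-coalescing walks. I will follow the Licea--Newman argument. By ergodicity and shift invariance, with positive density there are sites on an antidiagonal from which three or more mutually disjoint walks emanate. A finite-energy modification of the uniforms, which is absolutely continuous because the $U$'s are independent and each transition probability is strictly positive, shows this. Burton--Keane mass transport then gives a contradiction: each such "trifurcation" yields a distinct boundary exit point in a large box, giving order $n^2$ disjoint paths exiting a box with $O(n)$ boundary points.

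The main obstacle is the finite-energy step. Strictly positive transition probabilities mean that modifying finitely many uniforms has positive probability, but the environment $\what$ is fixed and the density arguments must be made quenched. I would handle this by working under the annealed measure $\Phat\otimes\text{Unif}$, where shift-invariance holds jointly, and then deducing quenched a.s.\ coalescence by Fubini. Positivity of transitions comes from recovery with finite $\Bhat$.

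Non-coalescence with exactly two (not three) disjoint infinite walks also needs care. For that case, I will use the $L^1$ property together with \eqref{B-shape}: the set of non-coalescing "two-walk" sites has positive density, and the planar ordering then lets me force a third disjoint walk with positive probability by rerouting at a single site.

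Finally, coalescence on a full-measure set in $\what$ is a shift-invariant property, so intersecting it into $\Omhatext$ completes the argument.
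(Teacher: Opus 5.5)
Your skeleton matches the paper's proof. Consistency comes from Theorem \ref{thm:DLR-cocycle}, the coalescing coupling is driven by auxiliary i.i.d.\ uniforms via a Licea--Newman argument, and extremality follows from Theorem \ref{thm:coalext} applied to $\{\Pi_y:y\ge x\}=\{(\Pi_x)_y\}$. The paper does not reprove the coalescence; it imports it from \cite[Theorem A.2]{Jan-Ras-20-aop}. Drop the appeal to Proposition \ref{prop:coal-cond}: it presupposes extremality, and your common-noise construction couples the whole family at once anyway. Where you re-derive the coalescence step, however, the sketch has three concrete defects.

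\emph{Ergodicity.} Only $\bbP$ is assumed ergodic (Condition \ref{cond:ergodic}). $\Phat$ is merely $\That$-invariant and can be non-ergodic (cf.\ Theorem \ref{thm:decomp}), so $\Phat\otimes\text{Unif}$ need not be ergodic. This is repairable: Burton--Keane counting uses only stationarity, or you can pass to ergodic components since the target event is shift-invariant.

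\emph{What gets counted.} In the common-noise forest every site has a single outgoing arrow, so no site has three disjoint walks emanating from it. Sites near which three distinct trees pass also do not inject into boundary points, because many such sites see the same trees; counting alone does not exclude infinitely many bi-infinite trees (think of vertical lanes). Positive density of such configurations comes for free once there are infinitely many trees. The job of the finite-energy modification of the uniforms is to turn them into injective objects. One option is a tree whose backward cluster is finite, whose forward ray is then a private exit through the top of the box. Another is a genuine trifurcation, made by merging inside a box two trees with infinite backward clusters.

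\emph{The ``exactly two'' case.} Your treatment fails as stated. A finite modification of the uniforms can never create a new non-coalescing class, since every walk follows the original arrows once it leaves the modified region. So no rerouting ``forces a third disjoint walk,'' and neither the $L^1$ property nor \eqref{B-shape} helps. The correct input is again stationarity. The positions on an antidiagonal where adjacent walks fail to coalesce form a random subset of $\bbZ$ that is stationary under the $e_1-e_2$ shift. Such a set is almost surely empty or infinite, so two trees force infinitely many.

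As the paper stresses, nothing beyond stationarity and strict positivity of the transition probabilities is used; you correctly get that positivity from recovery with finite $\Bhat$. Condition \ref{cond:moment-mixing} plays no role here.
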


The non-trivial portion of the claim of this lemma is the coalescence, which appeared previously in \cite{Jan-Ras-20-aop}. That paper adapts the Licea-Newman \cite{Lic-New-96} coalescence argument (which is itself a type of Burton-Keane \cite{Bur-Kea-89} lack of space argument) to the setting of weakly elliptic random walks in stationary random environments with $\{e_1,e_2\}$ steps. In particular, we do not need to assume Condition \ref{cond:moment-mixing}\eqref{cond:moment-mixing-<infty} here. An expert may also observe that there is no finite energy condition assumed in the present paper, as is usually required for such arguments. This assumption is unnecessary in our setting because the extra randomness coming from positive temperature allows one to perform the entire modification argument with respect to an auxiliary family of i.i.d.\ Uniform[0,1] random variables.

Theorem \ref{thm:uniqueness} implies that the collection of cocycles which are indexed by tilts is totally ordered.

\begin{corollary}\label{cor:Bus-process-order}
Fix $\beta\in(0,\infty)$ and suppose that Condition \ref{cond:moment-mixing}\eqref{cond:moment-mixing-<infty} holds. For any $h,h'\in\sH^\beta$, either we have $h\preceq h'$ and $\bbP(\Bus^{\beta,h}\preceq\Bus^{\beta,h'})=1$ or we have $h'\preceq h$ and $\bbP(\Bus^{\beta,h'}\preceq\Bus^{\beta,h})=1$. In particular, $\preceq$ is a total order on $\sH^\beta$ and $\sH^\beta\ni h\mapsto\Bus^{\beta,h}$ is nondecreasing. 
\end{corollary}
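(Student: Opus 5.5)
The corollary follows directly from Theorem \ref{thm:uniqueness}. Fix $h,h'\in\sH^\beta$. By definition of $\sH^\beta$ and the discussion after Proposition \ref{prop:sH-extreme}, the cocycles $\Bus^{\beta,h},\Bus^{\beta,h'}\in\cK^\beta$ are well defined with $\hhB(\Bus^{\beta,h})=h$ and $\hhB(\Bus^{\beta,h'})=h'$; these are deterministic vectors by Condition \ref{cond:ergodic}. We apply Theorem \ref{thm:uniqueness} with $\Bus^1=\Bus^{\beta,h}$ and $\Bus^2=\Bus^{\beta,h'}$, which gives exactly one of three cases.

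In case (a), $h\precneq h'$ and $\P(\Bus^{\beta,h}\prec\Bus^{\beta,h'})=1$. By Definition \ref{def:cocycle-order}, the strict order $\prec$ on cocycles implies $\preceq$, since strict inequalities at every edge imply the weak ones. Hence $h\preceq h'$ and $\P(\Bus^{\beta,h}\preceq\Bus^{\beta,h'})=1$.

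Case (b) gives the symmetric conclusion with $h$ and $h'$ interchanged.

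In case (c), $h=h'$ and $\P(\Bus^{\beta,h}=\Bus^{\beta,h'})=1$. Both alternatives then hold trivially, because equality of cocycles is equivalent to $\preceq$ in both directions.

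This proves the dichotomy. For the order statements, note that the relation $\preceq$ on tilts ($h\cdot e_1\le h'\cdot e_1$ and $h\cdot e_2\ge h'\cdot e_2$) is reflexive and transitive, and antisymmetric since it compares the two coordinates. It is therefore a partial order on $\mathbb{R}^2$. The dichotomy shows that any two elements of $\sH^\beta$ are comparable, so $\preceq$ is a total order on $\sH^\beta$.

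For monotonicity of $h\mapsto\Bus^{\beta,h}$, suppose $h\preceq h'$. If $h=h'$, the two cocycles agree almost surely by case (c). If $h\ne h'$, then $h\precneq h'$. This excludes cases (b) and (c) of Theorem \ref{thm:uniqueness}: case (b) would force $h'\precneq h$, which contradicts antisymmetry, and case (c) would force $h=h'$. So case (a) holds and $\P(\Bus^{\beta,h}\preceq\Bus^{\beta,h'})=1$.

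The only step requiring any care is this last one, where we need antisymmetry of $\preceq$ on tilts and the fact that the three alternatives of Theorem \ref{thm:uniqueness} are mutually exclusive. Everything else is immediate from Definition \ref{def:cocycle-order}.
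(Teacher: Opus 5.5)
Your proposal is correct and matches the paper, which obtains the corollary directly from the trichotomy of Theorem \ref{thm:uniqueness}: strict order of cocycles implies weak order, and equal tilts give equal cocycles. One small caveat is that in the paper $h\precneq h'$ on tilts means strict inequality in both coordinates, so the step ``$h\ne h'$ implies $h\precneq h'$'' is not definitional; your exclusion of cases (b) and (c) does not depend on it, so the argument stands.
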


Let $\cHdense^\beta$ be a countable dense subset of $\sH^\beta$ containing every left- and right-isolated point of $\sH^\beta$, where a point is called left- or right-isolated if it cannot be approximated from the corresponding side in the order of Corollary \ref{cor:Bus-process-order}. Using the monotonicity in Corollary \ref{cor:Bus-process-order} and the cocycle property satisfied by $\Bus^{\beta,h}$, $h\in\cHdense^\beta$, define the process
\begin{align}\label{Busproc-def}
&\Bus^{\beta,h-}(x,y)=\lim_{\cHdense^\beta\tsp\ni\tsp h'\nearrow h}\Bus^{\beta,h'}(x,y)\quad\text{and}\quad
\Bus^{\beta,h+}(x,y)=\lim_{\cHdense^\beta\tsp\ni\tsp h'\searrow h}\Bus^{\beta,h'}(x,y),
\end{align}
for $x,y\in\bbZ^2$ and $h\in\sH^\beta$. Then for $\bbP$-almost every $\w$, for any $h\in\sH^\beta$ and $\sigg\in\{-,+\}$, $\Bus^{\beta,h\sig}$ is a $\beta$-recovering cocycle. The following lemma says that for a fixed $h\in\sH^\beta$, the above definitions recover $\Bus^{\beta,h}$.

\begin{lemma}\label{lem:Bus-process-extend}
Fix $\beta\in(0,\infty)$ and suppose that Condition \ref{cond:moment-mixing}\eqref{cond:moment-mixing-<infty} holds. Let $h\in\sH^\beta$. Then $\bbP$-almost surely, for all $x,y\in\bbZ^2$, $\Bus^{\beta,h-}(x,y)=\Bus^{\beta,h+}(x,y)=\Bus^{\beta,h}(x,y)$. In particular, this holds for $\bbP$-almost every $\w$, simultaneously for all $h\in\cHdense^\beta$.
\end{lemma}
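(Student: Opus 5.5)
We need to show that for fixed $h\in\sH^\beta$, the one-sided limits $\Bus^{\beta,h\pm}$ coincide almost surely with $\Bus^{\beta,h}$. The plan is to show that each limit is itself an element of $\cK^\beta$ with tilt $h$, and then apply the uniqueness in Theorem \ref{thm:uniqueness}. We treat $\Bus^{\beta,h+}$; the other side is symmetric.

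\emph{Degenerate case.} If $h$ is right-isolated in $\sH^\beta$, then $h$ has no approximants from the right in $\cHdense^\beta$. We interpret the right limit, in the way the definition must be read, as the value at $h$ itself; note $h\in\cHdense^\beta$ because $\cHdense^\beta$ contains every right-isolated point. So $\Bus^{\beta,h+}=\Bus^{\beta,h}$ trivially.

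\emph{Main case.} Otherwise choose $h_n\in\cHdense^\beta$ with $h_n\searrow h$.
\begin{enumerate}
\item By Corollary \ref{cor:Bus-process-order}, almost surely $\Bus^{\beta,h}\preceq\Bus^{\beta,h_{n+1}}\preceq\Bus^{\beta,h_n}$. Hence the nearest-neighbor increments $\Bus^{\beta,h_n}(z,z+e_i)$ are monotone in $n$ and bounded on one side by $\Bus^{\beta,h}(z,z+e_i)$.
\item The bound on the other side comes from recovery. Because $\Bus^{\beta,h_n}(z,z+e_1)$ decreases in $\preceq$-order toward $\Bus^{\beta,h}$, the term $e^{\beta\w_z-\beta\Bus(z,z+e_1)}$ stays at most its value for $h$. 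Recovery then forces $e^{\beta\w_z-\beta\Bus^{\beta,h_n}(z,z+e_2)}\ge 1-e^{\beta\w_z-\beta\Bus^{\beta,h}(z,z+e_1)}>0$, which bounds the $e_2$ increments; the $e_1$ increments are handled the same way.
\item The limits therefore exist, are finite, and are sandwiched between $\Bus^{\beta,h}$ and $\Bus^{\beta,h_1}$. The limit inherits shift covariance, the cocycle property and $\beta$-recovery, all of which pass through pointwise limits. It is also in $L^1$ by domination between two $L^1$ cocycles.
\item By monotone (or dominated) convergence, $\E[\Bus^{\beta,h_n}(0,e_i)]\to\E[\Bus^{\beta,h+}(0,e_i)]$. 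By ergodicity (Condition \ref{cond:ergodic}) the conditional means are the constants $h_n\cdot e_i$, which converge to $h\cdot e_i$. So $\Bus^{\beta,h+}\in\cK^\beta$ with $\hhB(\Bus^{\beta,h+})=h$.
\end{enumerate}
Theorem \ref{thm:uniqueness}(c) now gives $\Bus^{\beta,h+}=\Bus^{\beta,h}$ almost surely.

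\emph{Simultaneity.} The final sentence of the lemma follows by intersecting the countably many full-probability events over $h\in\cHdense^\beta$.

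The main obstacle is step 2: one needs the nondegenerate bound to ensure the limit is finite and recovering, rather than collapsing to a trivial (non-fully-supported) cocycle. The sandwich between the fixed cocycles $\Bus^{\beta,h}$ and $\Bus^{\beta,h_1}$ handles this directly, so no appeal to Lemma \ref{lem:Omega0} should be needed.
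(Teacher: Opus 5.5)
Your proof is correct, but its last step differs from the paper's.

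\textbf{The paper's route.} The paper uses the same sandwich from Corollary \ref{cor:Bus-process-order}. It takes non-strict $h'\preceq h\preceq h''$ in $\cHdense^\beta$, which silently covers your isolated-point case, and obtains $\Bus^{\beta,h-}\preceq\Bus^{\beta,h}\preceq\Bus^{\beta,h+}$ almost surely. Monotone convergence then gives $\hhB(\Bus^{\beta,h\pm})=h$. From there it concludes directly: the nonnegative differences $\Bus^{\beta,h}(x,x+e_1)-\Bus^{\beta,h+}(x,x+e_1)$ and $\Bus^{\beta,h+}(x,x+e_2)-\Bus^{\beta,h}(x,x+e_2)$ have mean zero, hence vanish, and the cocycle property finishes. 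This is just the elementary core of the proof of Theorem \ref{thm:uniqueness}. It needs only integrability of the limit, not recovery or shift covariance.

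\textbf{Your route.} You instead verify that $\Bus^{\beta,h+}\in\cK^\beta$ with tilt $h$ and invoke Theorem \ref{thm:uniqueness}(c). This costs more checking. It also brings in the full trichotomy (Lemma \ref{lem:Bus-monotone-as}, resting on extremality via Lemma \ref{lem:Bhat-ext}), which is redundant here because Corollary \ref{cor:Bus-process-order} already supplies the ordering. In return, you record explicitly that $\Bus^{\beta,h\pm}\in\cK^\beta$. The paper only asserts this in passing before the lemma and uses it later in Theorem \ref{thm:Bus-process}.

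\textbf{Two small corrections.}
\begin{enumerate}
\item Your step 2 has its inequality reversed. $\Bus^{\beta,h}\preceq\Bus^{\beta,h_n}$ means $\Bus^{\beta,h_n}(z,z+e_1)\le\Bus^{\beta,h}(z,z+e_1)$, so the $e_1$ recovery term is at least (not at most) its value at $h$. Recovery then gives $e^{\beta\w_z-\beta\Bus^{\beta,h_n}(z,z+e_2)}\le 1-e^{\beta\w_z-\beta\Bus^{\beta,h}(z,z+e_1)}$, which only reproduces the bound from step 1. No recovery argument is needed here. The other-side bound is monotonicity in $n$, i.e.\ the sandwich with $\Bus^{\beta,h_1}$ that you state in step 3, so step 2 can simply be deleted.
\item By \eqref{h-def} and ergodicity, $\bbE[\Bus^{\beta,h_n}(0,e_i)]=-h_n\cdot e_i$, not $h_n\cdot e_i$. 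The sign does not affect the argument.
\end{enumerate}
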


\begin{proof}
By Corollary \ref{cor:Bus-process-order}, $\Bus^{\beta,h'}\preceq\Bus^{\beta,h}\preceq\Bus^{\beta,h''}$, $\P$-almost surely, for any $h',h''\in\cHdense^\beta$ such that $h'\preceq h\preceq h''$.  On the one hand, this gives 
$\P\{\Bus^{\beta,h-}\preceq\Bus^{\beta,h}\preceq\Bus^{\beta,h+}\}=1$. 
On the other hand, monotone convergence gives $\hhB(\Bus^{\beta,h\sig})=h=\hhB(\Bus^{\beta,h})$, for both $\sigg\in\{-,+\}$. This implies that $\Bus^{\beta,h\sig}(x,x+e_i)=\Bus^{\beta,h}(x,x+e_i)$, $\bbP$-almost surely and for all $x\in\Z^2$ and $i\in\{1,2\}$. The claim follows from this and the cocycle property.  
\end{proof}

Our next result records the strong existence and uniqueness of the Busemann process on the canonical space $(\Omega,\sF,\bbP)$. The analogous theorem when $\beta=\infty$ under Condition \ref{cond:moment-mixing}\eqref{cond:moment-mixing-iid} is Proposition 3.10 in \cite{Jan-Ras-Sep-25-strong-}.

Recall the random variables $\Bus^{\beta,h}$, $\beta\in(0,\infty)$, $h\in\sH^\beta$, defined after the proof of Proposition \ref{prop:sH-extreme}.

\begin{theorem}\label{thm:Bus-process}
Fix $\beta\in(0,\infty)$ and suppose that Condition \ref{cond:moment-mixing}\eqref{cond:moment-mixing-<infty} holds. Then the process
\begin{align}\label{Bproc}
\bigl(\Bus^{\beta,h\sig}(x,y):x,y\in\bbZ^2,\ h\in\sH^\beta,\ \sigg\in\{-,+\}\bigr),
\end{align}
defined in \eqref{Busproc-def} on $(\Omega,\sF,\bbP)$, has the following properties:
\begin{enumerate}[label={\rm(\alph*)}, ref={\rm\alph*}]
\item\label{thm:Bus-process:same} For each $h\in\sH^\beta$, \begin{align}\label{Bbar=B}
\bbP\{\Bus^{\beta,h-}=\Bus^{\beta,h+}=\Bus^{\beta,h}\}=1.
\end{align}
In general, whenever $\Bus^{\beta,h-}(\w)=\Bus^{\beta,h+}(\w)$, we omit the sign and write $\Bus^{\beta,h}(\w)$ for the common cocycle. By \eqref{Bbar=B}, this agrees almost surely with the previously defined random variable $\Bus^{\beta,h}$.
\item\label{thm:Bus-process:Bus} For each $h\in\sH^\beta$, $\Bus^{\beta,h}\in\cK^\beta$ and $\hhB(\Bus^{\beta,h})=h$.
\item\label{thm:Bus-process:mean} For each $h\in\sH^\beta$ and $i\in\{1,2\}$, $\bbE[\Bus^{\beta,h}(0,e_i)]=-h\cdot e_i$.
\item\label{thm:Bus-process:monotone} For $\bbP$-almost every $\w$, all $x\in\bbZ^2$, and every $h\precneq h'$ in $\sH^\beta$,  
\[
\Bus^{\beta,h-}(x,x+e_1)\ge \Bus^{\beta,h+}(x,x+e_1)> \Bus^{\beta,h'-}(x,x+e_1)\ge \Bus^{\beta,h'+}(x,x+e_1)
\]
and
\[
\Bus^{\beta,h-}(x,x+e_2)\le \Bus^{\beta,h+}(x,x+e_2)< \Bus^{\beta,h'-}(x,x+e_2)\le \Bus^{\beta,h'+}(x,x+e_2).
\]
\item\label{thm:Bus-process:cont} For $\bbP$-almost every $\w$, all $h\in\sH^\beta$, and all $x,y\in\bbZ^2$,
\[
\Bus^{\beta,h-}(x,y)=\lim_{\substack{\sH^\beta\tsp\ni\tsp h'\\ h'\nearrow h}}\Bus^{\beta,h'\pm}(x,y),
\qquad
\Bus^{\beta,h+}(x,y)=\lim_{\substack{\sH^\beta\tsp\ni\tsp h'\\ h'\searrow h}}\Bus^{\beta,h'\pm}(x,y).
\]
\end{enumerate}

Moreover, this process is unique: if $(\widetilde B^{h\sig}(x,y):x,y\in\Z^2,h\in\sH^\beta,\sigg\in\{-,+\})$ is another process on $(\Omega,\sF,\P)$ that satisfies the continuity in \eqref{thm:Bus-process:cont} and such that for a dense set of $h\in\sH^\beta$, $\widetilde B^{h-},\widetilde B^{h+}\in\cK^\beta$ and $\E[\hhB(B^{h-})]=\E[\hhB(B^{h+})]=h$, then, almost surely, this process is equal to the process \eqref{Bproc}. 
\end{theorem}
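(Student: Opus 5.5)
Most of the properties follow from Theorems \ref{thm:Bhat-B} and \ref{thm:uniqueness}, Corollary \ref{cor:Bus-process-order}, Lemma \ref{lem:Bus-process-extend}, and the dichotomy in Theorem \ref{thm:global-strictorder}. The one real issue is handling uncountably many $h$ with a single null set.

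First fix the full-probability event. It is the intersection of $\Omega_0$ with the countably many events on which $\Bus^{\beta,h}$, $h\in\cHdense^\beta$, is shift covariant, recovering, and a cocycle, and on which Corollary \ref{cor:Bus-process-order} gives $\Bus^{\beta,h}\preceq\Bus^{\beta,h'}$ for $h\preceq h'$ in $\cHdense^\beta$. On this event the limits in \eqref{Busproc-def} exist by monotonicity. The limits are finite because they are sandwiched between members of $\cHdense^\beta$ when $h$ is not an endpoint of $\sH^\beta$. If $h$ is an endpoint that is isolated on one side, it lies in $\cHdense^\beta$ by construction, and we use $\Bus^{\beta,h}$ itself. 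Recovery and the cocycle property pass to the limit.

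Parts (a) and (b) are Lemma \ref{lem:Bus-process-extend} together with the definition of $\Bus^{\beta,h}$ coming from Theorems \ref{thm:Bhat-B} and \ref{thm:uniqueness}. Part (c) follows from $\hhB(\Bus)\cdot e_i=-\bbE[\Bus(0,e_i)\mid\cI]$ and ergodicity (Condition \ref{cond:ergodic}). Part (e) is a monotone-limit interchange: for $h'\nearrow h$ we sandwich $\Bus^{\beta,h'\pm}$ between $\Bus^{\beta,h''}$ with $h''\in\cHdense^\beta$, $h''\preceq h'$ approaching $h$, and $\Bus^{\beta,h-}$; the same works on the right.

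The main step is the strict inequality in (d), which must hold simultaneously for all $h\precneq h'$. I would pick $h''\in\cHdense^\beta$ with $h\preceq h''\preceq h'$. If $h$ or $h'$ is isolated on the relevant side, it already lies in $\cHdense^\beta$. Otherwise density lets us choose $h_1,h_2\in\cHdense^\beta$ with $h\precneq h_1\precneq h_2\precneq h'$. Then $\Bus^{\beta,h+}\preceq\Bus^{\beta,h_1}$ and $\Bus^{\beta,h_2}\preceq\Bus^{\beta,h'-}$ by monotone limits. Theorem \ref{thm:uniqueness} applied to the countable family gives $\Bus^{\beta,h_1}\prec\Bus^{\beta,h_2}$ almost surely, and this produces strict inequality at every edge. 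If no strictly intermediate dense points exist, $h$ and $h'$ are adjacent, so each is isolated on the facing side and both lie in $\cHdense^\beta$; Theorem \ref{thm:uniqueness} then applies to them directly. The weak inequalities $\Bus^{\beta,h-}\preceq\Bus^{\beta,h+}$ are immediate from monotonicity.

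For uniqueness, let $\widetilde B$ be another process satisfying the hypotheses, and let $D$ be the dense set of tilts for which $\widetilde B^{h\pm}\in\cK^\beta$ with mean $h$. Since $\hhB$ is almost surely constant under ergodicity, Theorem \ref{thm:uniqueness} gives $\widetilde B^{h\pm}=\Bus^{\beta,h}$ almost surely for each $h\in D$. Choose a countable subset of $D$ that is dense and contains the isolated points. Isolated points must belong to any dense set, so they lie in $D$, and on their isolated side (e) reduces to the trivial limit. On a single full-probability event the two processes then agree on this countable set. The continuity (e), which both processes satisfy, extends the agreement to all $h\in\sH^\beta$ and both signs. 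The only care needed is that approximation from each side is available through the countable set, which the isolated-point bookkeeping above guarantees.
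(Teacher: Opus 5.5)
Your proposal is correct and follows essentially the same route as the paper's proof. Parts (a)--(c) come from Lemma~\ref{lem:Bus-process-extend} and ergodicity, and the strict inequalities in (d) come from reducing to two tilts in $\cHdense^\beta$ (either $h,h'$ themselves when they are isolated on the facing sides, or two intermediate tilts) and applying Theorem~\ref{thm:uniqueness} to a countable family. Part (e) comes from density of $\cHdense^\beta$, and uniqueness from agreement on a countable dense set of tilts, extended through the continuity in (e).

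One small correction to the uniqueness step: a tilt isolated on only one side need not belong to every dense set; only tilts isolated on both sides must. This is harmless. At such an $h$, the continuity in (e) on the isolated side already forces $\widetilde B^{h-}=\widetilde B^{h+}$, and the non-isolated side is reached through your countable subset of $D$.
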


\begin{remark}\label{rmk:dir_Bus}
If the weights are i.i.d.\ then the Busemann process can be connected to directional limits of free energy differences as follows. Under this condition, Theorem 4.14 in \cite{Jan-Ras-20-aop} implies that the following holds $\bbP$-almost surely: let $x\in\bbZ^2$ and let $x_n$ be any sequence of sites in $\bbZ^2$ with $x_n/n\to\xi\in\,]e_2,e_1[$; call $\underline{\xi}$ and $\overline{\xi}$  the left-most and right-most endpoints of maximal linear segments of $\fe^\beta$ containing $\xi$, ordered so that $\underline{\xi}\cdot e_1\le \overline{\xi}\cdot e_1$. Then
\begin{align*}
-\Bus^{\beta,(-\nabla\fe^\beta(\underline{\xi}-))-}(x,x+e_1)
&\le \varliminf_{n\to\infty}\frac1\beta\log\frac{\PF{x+e_1}{x_n}^{\beta,\w}}{\PF{x}{x_n}^{\beta,\w}} \le \varlimsup_{n\to\infty}\frac1\beta\log\frac{\PF{x+e_1}{x_n}^{\beta,\w}}{\PF{x}{x_n}^{\beta,\w}} \\
&\le -\Bus^{\beta,(-\nabla\fe^\beta(\overline{\xi}+))+}(x,x+e_1),
\end{align*}
and
\begin{align*}
-\Bus^{\beta,(-\nabla\fe^\beta(\overline{\xi}+))+}(x,x+e_2)
&\le \varliminf_{n\to\infty}\frac1\beta\log\frac{\PF{x+e_2}{x_n}^{\beta,\w}}{\PF{x}{x_n}^{\beta,\w}} \le \varlimsup_{n\to\infty}\frac1\beta\log\frac{\PF{x+e_2}{x_n}^{\beta,\w}}{\PF{x}{x_n}^{\beta,\w}} \\
&\le -\Bus^{\beta,(-\nabla\fe^\beta(\underline{\xi}-))-}(x,x+e_2).  
\end{align*}
The direction-indexed Busemann process   
$\bigl(\Bus^{\beta,(-\nabla\fe^\beta(\xi\sig))\sig} (x,y):x,y\in\bbZ^2,\, \xi\in\,]e_2, e_1[\tspb,\,  \sigg\in\{-,+\} \bigr)$, appearing in the bounds, does not equal the full tilt-indexed process of Theorem \ref{thm:Bus-process} if the first inclusion in \eqref{sH_sub} is strict. As implied by Remark \ref{rk:H=grads}, the two processes are the same in the exactly solvable log-gamma polymer.
\end{remark}

We next record the fact that the Busemann process generates a family of extremal Gibbs-DLR measures.

\begin{corollary}\label{cor:Bus-process-DLR}
Fix $\beta\in(0,\infty)$ and suppose that Condition \ref{cond:moment-mixing}\eqref{cond:moment-mixing-<infty} holds. There exists a shift-invariant event $\Omega'\subset\Omega_0$ with $\bbP(\Omega')=1$ such that for every $\w\in\Omega'$, the following hold.
\begin{enumerate}[label={\rm(\alph*)}, ref={\rm\alph*}] \itemsep=2pt
\item\label{cor:Bus-process-DLR:extreme} For each $h\in\sH^\beta$, $\sigg\in\{+,-\}$, and $x\in\bbZ^2$, the measure $\Pi_x^{\beta,h\sig,\w}$ defined for $k=x\cdot(e_1+e_2)$, $m\ge k$, and admissible $x_{k:m}$ with $x_k=x$ by
\be
\Pi_x^{\beta,h\sig,\w}(x_{k:m})
=
\exp\biggl\{\beta\sum_{r=k}^{m-1}\w_{x_r}-\beta\Bus^{\beta,h\sig}(\w, x,x_m)\biggr\}\label{eq:Bus-process-DLR-definition}
\ee
is a fully supported element of $\ext\DLR_x^{\beta,\w}$.

\item\label{cor:Bus-process-DLR:consistent} For each $h\in\sH^\beta$ and $\sigg\in\{+,-\}$, the family $\{\Pi_x^{\beta,h\sig,\w}:x\in\bbZ^2\}$ is consistent and satisfies
\be
\Busgeo_{\Pi_x^{\beta,h\sig,\w}}=\Bus^{\beta,h\sig}(\w)
\qquad\text{for all }x\in\bbZ^2.\label{APi=B}
\ee

\item\label{cor:Bus-process-DLR:monotone} If $h\precneq h'$ in $\sH^\beta$, then for all $x\in\bbZ^2$, 
\be
\Pi_x^{\beta,h-,\w}\preceq \Pi_x^{\beta,h+,\w}\prec \Pi_x^{\beta,h'-,\w}\preceq \Pi_x^{\beta,h'+,\w}.
\ee
\end{enumerate}
\end{corollary}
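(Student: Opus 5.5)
The plan is to take $\Omega'$ to be the intersection of $\Omega_0$ with two full-probability events. The first is the event on which the Busemann process \eqref{Bproc} has the properties of Theorem \ref{thm:Bus-process}. The second is the countable intersection over $h\in\cHdense^\beta$ of the events $\Omhatext$ from Lemma \ref{lem:Bhat-ext}, each applied with $\Omhat=\Omega$ and $\Bhat=\Bus^{\beta,h}\in\cK^\beta$. Each of these events is shift invariant, or can be replaced by the shift-invariant intersection of its translates. Because all ingredients are countable, $\bbP(\Omega')=1$. On $\Omega'$, every $\Bus^{\beta,h\sig}$ is a $\beta$-recovering cocycle: it is a monotone limit of recovering cocycles, and both recovery and the cocycle property pass to pointwise limits. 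Theorem \ref{thm:DLR-cocycle} then shows that $\Pi_x^{\beta,h\sig,\w}$ is fully supported in $\DLR_x^{\beta,\w}$ and that the family over $x$ is consistent.

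For extremality in part (a), the case $h\in\cHdense^\beta$ follows from Lemma \ref{lem:Bhat-ext}. For general $h$ and $\sigg$, I would write $\Pi_x^{\beta,h\sig,\w}$ as a weak limit of $\Pi_x^{\beta,h',\w}$ with $h'\in\cHdense^\beta$ and $h'\to h$ from the appropriate side. The cylinder probabilities \eqref{eq:Bus-process-DLR-definition} depend continuously on finitely many Busemann values, so this convergence holds. Closedness of $\ext\DLR_x^{\beta,\w}$ in Theorem \ref{thm:cltot} then gives $\Pi_x^{\beta,h\sig,\w}\in\ext\DLR_x^{\beta,\w}$. This limiting step is the most delicate one. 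The point is that extremality is not closed in general, and Theorem \ref{thm:cltot} is exactly the input that makes it work. A fallback is the following: for a fixed $h$ one could use Lemma \ref{lem:Bus-process-extend} directly, but simultaneity over uncountably many $h$ forces the closure argument.

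For part (b), consistency comes from Theorem \ref{thm:DLR-cocycle}. To get \eqref{APi=B}, fix $x$ and write $\Pi=\Pi_x^{\beta,h\sig,\w}$. By \eqref{eq:Bus-cocycle-def} applied inside the quadrant, $\Bus^{\Pi}$ coincides with $\Bus^{\beta,h\sig}$ on $\{u,v\ge x\}$. By \eqref{eq:DLR-transition}, $\Pi_z$ and $\cDLR_z^\Pi$ both have transitions determined by this cocycle, and Proposition \ref{prop:global-ext}\eqref{prop:global-ext:agree} gives $\cDLR^\Pi_z=\Pi_z=\Pi_z^{\beta,h\sig,\w}$ for $z\ge x$. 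Comparing transition probabilities through \eqref{eq:DLR-transition} and \eqref{eq:cDLR-def} shows that $\Busgeo_\Pi(z,z+e_i)=\Bus^{\beta,h\sig}(z,z+e_i)$ for all $z\ge x$. To extend this to all of $\bbZ^2$, I would note that $\{\cDLR^\Pi_z\}$ and $\{\Pi_z^{\beta,h\sig,\w}\}$ are two consistent families of fully supported extremal measures that agree at $z=x$. Proposition \ref{prop:coal-cond} gives $\Busgeo_{\Pi^{\beta,h\sig,\w}_z}=\Busgeo_\Pi$ for every $z$, and Proposition \ref{prop:global-ext}\eqref{prop:global-ext:Busagree} identifies this common cocycle with that of $\cDLR_z^\Pi$. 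Applying the quadrant identity at each root $z$ then gives $\Busgeo_\Pi(z,z+e_i)=\Bus^{\beta,h\sig}(z,z+e_i)$ for every $z$. Both sides are cocycles, so they agree everywhere.

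Part (c) follows from Theorem \ref{thm:Bus-process}\eqref{thm:Bus-process:monotone} combined with Theorems \ref{thm:global-weakorder} and \ref{thm:global-strictorder}. Via \eqref{APi=B}, the weak inequalities between $\Bus^{\beta,h-}$ and $\Bus^{\beta,h+}$ translate into $\cDLR$ order, that is, $\Pi_x^{\beta,h-,\w}\preceq\Pi_x^{\beta,h+,\w}$. The strict inequalities between $h+$ and $h'-$ give $\Busgeo$ strictly ordered at every $z$, hence $\precneq$. To upgrade $\precneq$ to the strong order $\prec$, I would construct the coupling explicitly. Run both chains with a common uniform at each step, each taking an $e_1$ step when the uniform is below its $e_1$-transition probability. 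Because the $e_1$-transition probabilities of the larger measure are strictly larger at every site, the paths stay ordered, and once they separate they can never meet again. This gives $X^1\prec X^2$ almost surely.
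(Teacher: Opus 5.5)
Your proof of parts (a) and (b), and of the relations $\preceq$ and $\precneq$ in part (c), follows the paper's proof essentially step for step. The genuine gap is in the last step, the upgrade from $\precneq$ to the strong order $\prec$. There you claim that in the common-uniform coupling the paths ``once they separate \dots\ can never meet again.'' That claim is false.

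Write $\Pi^1=\Pi_x^{\beta,h+,\w}$ and $\Pi^2=\Pi_x^{\beta,h'-,\w}$. The strict inequality $\pi^{\Pi^1}_{z,z+e_1}<\pi^{\Pi^2}_{z,z+e_1}$ only compares the two chains at a \emph{common} site $z$.
\begin{itemize}
\item From $z$, the paths split (path 1 to $z+e_2$, path 2 to $z+e_1$) with probability $\pi^{\Pi^2}_{z,z+e_1}-\pi^{\Pi^1}_{z,z+e_1}>0$.
\item On the next step both reach $z+e_1+e_2$ whenever the common uniform lies in $(\pi^{\Pi^2}_{z+e_1,z+2e_1},\,\pi^{\Pi^1}_{z+e_2,z+e_1+e_2}]$.
\item These are transition probabilities at two different sites, and nothing forces $\pi^{\Pi^1}_{z+e_2,z+e_1+e_2}\le\pi^{\Pi^2}_{z+e_1,z+2e_1}$.
\end{itemize}
The defect is not specific to your construction. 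In any coupling where each path, given the joint past, steps according to its own transition probabilities, the re-merging probability from $(z+e_2,z+e_1)$ is at least $(\pi^{\Pi^1}_{z+e_2,z+e_1+e_2}-\pi^{\Pi^2}_{z+e_1,z+2e_1})^+$. So your coupling certifies only $X^1\preceq X^2$, or at best eventual separation as in the coupling recalled in Remark \ref{rmk:BFS5}. It does not give equality on an initial segment followed by permanent strict separation, which is what $X^1\prec X^2$ means. Note also that your construction never uses extremality, which is a sign something is missing.

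The missing ingredient is Corollary \ref{cor:ordered-tree-limit}. By part (a), $\Pi^1$ and $\Pi^2$ are fully supported elements of $\ext\DLR_x^{\beta,\w}$, and $\Pi^1\precneq\Pi^2$. Hence any weak limit point $\wt\Pi^{1,2}$ of the tree couplings $\wt\Pi^{1,2,n}$ of Lemma \ref{lem:concat-cond} satisfies $\wt\Pi^{1,2}(X^1\prec X^2)=1$. This coupling is not Markovian.
\begin{itemize}
\item Paths in the tree $\cT_x$ branch once and never rejoin, which gives the trichotomy in Lemma \ref{lem:limit-ordered}.
\item Extremality guarantees that conditioning on the tail event $A_\prec$ leaves the marginals unchanged (Corollary \ref{cor:tail-dlr} and the proof of Proposition \ref{prop:DLR-ordered}).
\end{itemize}
If you replace your common-uniform argument with this corollary, part (c) is complete.
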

\begin{remark}
    In the case of i.i.d.~exponential weights, it is shown in Theorem 3.11 of \cite{Jan-Ras-Sep-23} that the Busemann process generates all non-trivial semi-infinite geodesics. The same result was subsequently shown for the directed landscape (the renormalization fixed point of the KPZ universality class) in \cite{Bus-Sep-Sor-24,Bus-25-}. Based on these exactly solvable examples, it is natural to conjecture that for models which lie within the KPZ class (and perhaps more generally), the collection of Gibbs-DLR measures in Corollary \ref{cor:Bus-process-DLR} exhausts the non-trivial elements of $\ext \DLR_x^\w$. Partial progress toward proving this for the exactly solvable log-gamma polymer was made in \cite{Jan-Ras-20-aop, Bat-Fan-Sep-25}, with the missing input being the analogue of a ``No 3 geodesics'' result of the type proven in \cite{Cou-11, Bus-25-}. A precise statement of the missing input is that, almost surely, for any $\xi\in\ri\Uset$, there exist at most two distinct extremal Gibbs-DLR measures, rooted at the origin and supported on paths which satisfy a law of large numbers with asymptotic direction $\xi$.
\end{remark}

Theorem \ref{thm:Bus-process}\eqref{thm:Bus-process:cont} implies in particular that the Busemann process is jointly measurable in $(\w,h)$, which allows us to compose with random variables. Combining Theorems \ref{thm:Bhat-B}, \ref{thm:uniqueness}, and \ref{thm:Bus-process} with the ergodic decomposition theorem yields the following decomposition of the elements of $\cKhat^\beta$ on any extended probability space satisfying Condition \ref{cond:moment-mixing}\eqref{cond:moment-mixing-<infty}: any shift-covariant $\beta$-recovering $L^1$ cocycle is a mixture of the tilt-indexed Busemann process, determined by the law of the tilt vector of the cocycle, with no mass assigned to discontinuity tilts.  The analogous statement, with an additional finite energy/coalescence hypothesis, for $\beta=\infty$ under Condition \ref{cond:moment-mixing}\eqref{cond:moment-mixing-iid} is Theorem 3.14 in \cite{Jan-Ras-Sep-25-strong-}. 

\begin{theorem}\label{thm:decomp}
Fix $\beta\in(0,\infty)$ and suppose Condition \ref{cond:moment-mixing}\eqref{cond:moment-mixing-<infty} holds. Let $\Bhat\in\cKhat^\beta$. Then for $\Phat$-almost every $\what$,
\[
\hhB(\Bhat,\what)\in\sH^\beta
\qquad\text{and}\qquad
\Bhat(\what)=\Bus^{\beta,\hhB(\Bhat,\what)-}(\w(\what))=\Bus^{\beta,\hhB(\Bhat,\what)+}(\w(\what)).
\]
\end{theorem}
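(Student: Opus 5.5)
The plan is to reduce to the ergodic case via the ergodic decomposition of $\Phat$ with respect to $\That$, and then apply Theorems \ref{thm:Bhat-B}, \ref{thm:uniqueness} and \ref{thm:Bus-process} on each ergodic component.

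\emph{Step 1: the decomposition.} Disintegrate $\Phat$ over the invariant $\sigma$-algebra $\cIhat$. This gives a regular conditional measure $\Phat_{\what}$ for which $\Phat$-almost every component is $\That$-invariant and ergodic. Under $\Phat_{\what}$, the random variable $\hhB(\Bhat)$ is almost surely equal to a constant $h(\what)$. This is because $\hhB(\Bhat)$ is $\cIhat$-measurable, and on a component the conditional expectation given $\cIhat$ reduces to the ordinary expectation.

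\emph{Step 2: the hypotheses transfer to almost every component.} We need three things on $\Phat_{\what}$.
\begin{itemize}
\item $\Bhat$ is still a shift-covariant recovering $L^1$ cocycle. The almost sure properties hold under almost every component. Integrability holds because $\Ehat[\Ehat_{\what}|\Bhat(0,e_i)|]<\infty$.
\item The law of $\w$ under the component is the canonical $\bbP$. Since $\bbP$ is ergodic (Condition \ref{cond:ergodic}), it is an extreme point of the shift-invariant measures. The push-forward of $\Phat$ under $\w$ is a mixture of the push-forwards of the components, each shift-invariant, so almost every component pushes forward to $\bbP$.
\item Condition \ref{cond:moment-mixing}\eqref{cond:moment-mixing-<infty} holds for the component. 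It is a statement about the law of $\w$, which is $\bbP$ by the previous point.
\end{itemize}

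\emph{Step 3: apply the earlier theorems on each component.} The component is Polish and carries the same shifts, and the tilt $\hhB(\Bhat)$ is almost surely constant there. Theorem \ref{thm:Bhat-B} therefore gives $B\in\cK^\beta$ with $\Bhat=B(\w)$ almost surely under $\Phat_{\what}$, and $\hhB(B)=h(\what)$. Hence $h(\what)\in\sH^\beta$. Theorem \ref{thm:uniqueness} then identifies $B=\Bus^{\beta,h(\what)}$ $\bbP$-almost surely.

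\emph{Step 4: the main obstacle.} Theorem \ref{thm:Bus-process}\eqref{thm:Bus-process:same} gives $\Bus^{\beta,h}=\Bus^{\beta,h\pm}$ almost surely only for each \emph{fixed} $h$. Here $h$ depends on $\what$, so we need a common null set and must rule out landing on a discontinuity tilt. I would handle this as follows.
\begin{itemize}
\item Joint measurability of $(\w,h)\mapsto\Bus^{\beta,h\pm}(\w)$ follows from Theorem \ref{thm:Bus-process}\eqref{thm:Bus-process:cont}.
\item By Fubini over the mixing measure on components, it suffices to show
\[
\bbP\bigl\{\Bus^{\beta,h-}=\Bus^{\beta,h+}=\Bus^{\beta,h}\bigr\}=1
\]
for each fixed $h$. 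This is \eqref{Bbar=B}. Since $h(\what)$ is constant on each component, this fixed-$h$ statement is applied once per component, with $h=h(\what)$.
\item On component $\what$, this yields $\Bhat=\Bus^{\beta,h(\what)-}(\w)=\Bus^{\beta,h(\what)+}(\w)$ almost surely.
\end{itemize}
Integrating over components then gives the statement $\Phat$-almost surely, using $h(\what)=\hhB(\Bhat,\what)$ $\Phat$-almost surely.

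The care needed is in verifying the measurability of the good set
\[
\{(\w,h):\Bus^{\beta,h-}(\w)=\Bus^{\beta,h+}(\w)\},
\]
and in checking that the Polish and continuity requirements of Section \ref{sec:setup} survive the disintegration. The shifts are unchanged and the components live on the same Polish space, so the latter is routine.
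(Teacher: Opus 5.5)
Your proposal is correct and follows the same route as the paper. You settle the ergodic case, where $\hhB(\Bhat)$ is a deterministic $h$, using Theorems \ref{thm:Bhat-B} and \ref{thm:uniqueness} together with Theorem \ref{thm:Bus-process}\eqref{thm:Bus-process:same} at that fixed $h$, then pass to a general $\That$-invariant $\Phat$ through the ergodic decomposition. Your check that almost every ergodic component still has $\w$-marginal $\bbP$ (by extremality of the ergodic measure $\bbP$), so that \eqref{Bbar=B} applies on each component, makes explicit a point the paper's one-line appeal to the ergodic decomposition leaves implicit.
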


\smallskip

\subsection{$L^1$ continuity}\label{sec:L1}
We next turn to  $L^1$ continuity of the Busemann cocycles, viewed as a function of the weight field, the inverse temperature, and the tilt. For the following results, we will need to keep track of the dependence of various objects on the law of the weight field. Let $\nu$  be  a Borel probability measure on $\R^{\bbZ^2}$ and $\beta\in(0,\infty]$ such that the canonical space equipped with $\nu$ (i.e., $(\Omega,\sF,\nu,T,\w,\beta)$) satisfies Condition \ref{cond:moment-mixing} at inverse temperature $\beta$. We write $\fe^{\beta,\nu}$ for the corresponding limiting free energy, and let $\cK^{\beta,\nu}$ be the space of shift-covariant $\beta$-recovering $L^1(\Omega,\sF,\nu)$ cocycles on the canonical space. For $\beta\in(0,\infty]$, set
\be\label{eq:sH-beta-nu}
\sH^{\beta,\nu}=\{\hhB(B):B\in\cK^{\beta,\nu}\}.
\ee
When $\beta<\infty$, Theorems \ref{thm:Bhat-B} and \ref{thm:uniqueness} imply that for each $h\in\sH^{\beta,\nu}$, up to sets of measure zero, there is exactly one $B\in\cK^{\beta,\nu}$ with $\hhB(B)=h$. When $\beta=\infty$, Theorems 3.1 and 3.4 in \cite{Jan-Ras-Sep-25-strong-} combine to give nearly the same result, except that the corresponding uniqueness only holds within the class of cocycles satisfying a finite energy condition, which can be taken without loss of generality in that setting to be forward measurability. Here forward measurable means that $B(x,y)$ is measurable with respect to the weights in $\Z_{\geq u}^2$ whenever $\{x,y\}\subset \Z_{\geq u}^2$. In either case, whenever this canonical representative is defined, we denote an arbitrary Borel version on $(\Omega,\sF,\nu)$ by $\Bus^{\beta,h,\nu}$. 

Call
\be\label{eq:sH-all-nu}
\sH_{(0,\infty]}^\nu = \{(\beta,h): \beta\in(0,\infty], h\in\sH^{\beta,\nu}\}
\ee
and
\be\label{eq:sH-all-ext-nu}
\sH_{(0,\infty]}^{\nu,\textup{ext}}
=
\bigl\{(\beta,h)\in \sH_{(0,\infty]}^\nu:-h\in \{\nabla \fe^{\beta,\nu}(\xi\sigg):\xi\in\ri\Uset,\ \sigg\in\{-,+\}\}\bigr\}.
\ee
Recall also that if for some $\beta\in(0,\infty]$,  $\fe^{\beta,\nu}$ is differentiable at $\xi \in \ri \Uset,$ then $\partial \fe^{\beta,\nu}(\xi) = \{\nabla \fe^{\beta,\nu}(\xi)\}$. Consequently, if $\fe^{\beta,\nu}$ is differentiable on $\ri \Uset$ for all $\beta$, then $\sH_{(0,\infty]}^{\nu,\textup{ext}}=\sH_{(0,\infty]}^\nu$.

In the next theorem we work on a single extended probability space equipped with countably many different weight fields which have been coupled together. In some circumstances, we will require the following condition if the limit is at $\beta=\infty$.

\begin{condition}\label{cond:zero-temp-strong}
Suppose that $(\Omega,\sF,\nu,T,\w,\infty)$ satisfies Condition \ref{cond:moment-mixing}\eqref{cond:moment-mixing-iid}. 
Furthermore, assume that for each $h\in\sH^{\infty,\nu}$ there is a Borel representative $\Bus^{\infty,h,\nu}$ of the unique element of $\cK^{\infty,\nu}$ with $\hhB(\Bus^{\infty,h,\nu})=h$, $\nu$-almost surely. Precisely: whenever $(\Omhat,\kShat,\Phat,\That,W)$ is an extended probability space satisfying the hypotheses of Section \ref{sec:probsp}, with $W$ having law $\nu$, and $\Bhat$ is a shift-covariant $\infty$-recovering $L^1(\Phat)$ cocycle for $W$ such that $\Phat\{\what:\hhB(\Bhat,\what)=h\}=1$, then
\[
\Bhat(\what,x,y)=\Bus^{\infty,h,\nu}(W(\what),x,y)
\]
for all $x,y\in\bbZ^2$, $\Phat$-almost surely.
\end{condition}

This condition differs from the strong uniqueness proven in Theorem 3.4 of \cite{Jan-Ras-Sep-25-strong-} in that we do not have a finite energy or coalescence hypothesis on the cocycle in this new condition. 

\begin{theorem}\label{thm:L1-continuity-coupled}
Let $(\Omhat,\kShat,\Phat,\That)$ be a Polish probability space with a group of continuous automorphisms $\That = \{\That_z : z\in\bbZ^2\}$. For each $n\in\bbN\cup\{\infty\}$, let $\w^n=\{\w_x^n:x\in\bbZ^2\}$ be a field on $\Omhat$ and $\beta_n\in(0,\infty]$ an inverse temperature such that $(\Omhat,\kShat,\Phat,\That,\w^n,\beta_n)$ satisfies Condition \ref{cond:moment-mixing}, and let $\nu_n$ be the law of $\{\w_x^n : x\in\bbZ^2\}$ on $(\Omega,\sF)$ under $\Phat$. Suppose also that for each $n\in\bbN\cup\{\infty\}$ with $\beta_n=\infty$, the law $\nu_n$ satisfies Condition \ref{cond:zero-temp-strong}.
 Assume the following:
\begin{enumerate}[label={\rm(\alph*)}, ref={\rm\alph*}]\itemsep=2pt
\item\label{thm:L1-continuity-coupled:weights-L1}
$\Ehat[|\w_0^n-\w_0^\infty|]\to0$.
\item\label{thm:L1-continuity-coupled.params}
$(\beta_n,h_n)\in\sH_{(0,\infty]}^{\nu_n}$ for all $n$, $\ddd\lim_{n\to\infty}(\beta_n,h_n)=(\beta_\infty,h_\infty)$, and $(\beta_\infty,h_\infty)\in\sH_{(0,\infty]}^{\nu_\infty,\textup{ext}}$.
\end{enumerate}
Then for all $x,y\in\bbZ^2$,
\[
\lim_{n\to\infty}\Ehat\bigl[\bigl|\Bus^{\beta_n,h_n,\nu_n}(\w^n,x,y)-\Bus^{\beta_\infty,h_\infty,\nu_\infty}(\w^\infty,x,y)\bigr|\bigr]=0.
\]
\end{theorem}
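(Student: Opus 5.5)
We will argue via tightness, identification of every subsequential limit, and an upgrade from convergence in law to $L^1$ convergence. Write $B_n=\Bus^{\beta_n,h_n,\nu_n}(\w^n)$ and $B_\infty=\Bus^{\beta_\infty,h_\infty,\nu_\infty}(\w^\infty)$. By the cocycle property it suffices to treat $(x,y)=(z,z+e_i)$. Shift covariance and stationarity of $\Phat$ reduce this further to $(0,e_i)$ for the $L^1$ estimate, though we keep the whole field in view for the identification step.

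\textbf{Tightness and uniform integrability.} Recovery gives one-sided bounds: for $\beta<\infty$, \eqref{eq:pos-temp-recovery} forces $B_n(z,z+e_i)\ge \w^n_z$, and for $\beta=\infty$, \eqref{eq:infty-recovery} gives the same inequality. Since $\w^n_z\to\w^\infty_z$ in $L^1$, the negative parts $(B_n(z,z+e_i))^-$ are uniformly integrable. The means $\Ehat[B_n(0,e_i)]=-h_n\cdot e_i$ converge by Theorem \ref{thm:Bus-process}\eqref{thm:Bus-process:mean} (or its $\beta=\infty$ analogue), so the sequences are bounded in $L^1$. To obtain uniform integrability of the positive parts, we will use the recovery identity itself: $e^{-\beta B(0,e_1)}+e^{-\beta B(0,e_2)}=e^{-\beta\w_0}$, so at least one of the two increments is at most $\w_0+\beta^{-1}\log 2$. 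Combined with the cocycle closure around a unit square and stationarity, this should control the positive parts. If that argument falls short, we will fall back on a weak--strong compactness argument: take a weak $L^1$ limit and compare means.

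\textbf{Identification of limits.} Consider the joint laws of $(\w^n,\w^\infty,B_n)$ on the product of $\Omhat$ with $\R^{\bbZ^2\times\bbZ^2}$. Take a subsequential limit in the sense of weak--strong (stable) convergence, i.e.\ Young measures over $(\Omhat,\Phat)$ with the $\Omhat$ marginal fixed, following \cite{Jac-Mem-81}. This produces an extended space $\Omhat\times\R^{\bbZ^2\times\bbZ^2}$ carrying a limiting cocycle $\widehat B$ together with the original $\what$. Shift covariance, the cocycle property, and recovery pass to the limit, the last because $\beta$-recovery is continuous jointly in $(\beta,\w,B)$, including the $\beta\to\infty$ limit of the log-sum-exp. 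Uniform integrability gives $\widehat B\in\cKhat^{\beta_\infty}$ with $\Ehat[\hhB(\widehat B)]=h_\infty$. Since $-h_\infty\in\ext\partial\fe^{\beta_\infty,\nu_\infty}(\xi)$, Lemma \ref{lem:h-superdiff}\eqref{lem:h-superdiff-extreme} makes $\hhB(\widehat B)=h_\infty$ almost surely. Here the extremality assumption in \eqref{thm:L1-continuity-coupled.params} is essential. Strong existence and uniqueness then finish the identification: Theorems \ref{thm:Bhat-B} and \ref{thm:uniqueness} for $\beta_\infty<\infty$, and Condition \ref{cond:zero-temp-strong} for $\beta_\infty=\infty$. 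Together they give $\widehat B=B_\infty(\w^\infty(\what))$, a function of the base point only.

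\textbf{Upgrade to $L^1$ convergence, the main obstacle.} The key fact is that if a stable limit Young measure is degenerate, $\delta_{F(\what)}$, then convergence in probability holds, and uniform integrability then yields $L^1$ convergence. So the entire difficulty lies in the identification step, namely in showing that the limit cocycle is measurable with respect to $\w^\infty$ rather than an extended object. I expect the hard work to be verifying that the hypotheses of Section \ref{sec:probsp} hold on the Young-measure extension. This means checking shift invariance under the diagonal action $\That\times$ (cocycle shift), Polishness, and the moment condition needed for Lemma \ref{lem:h-superdiff}. For $\beta_n$ finite tending to $\infty$, there is an additional difficulty: the limit cocycle must satisfy \eqref{eq:infty-recovery}, which requires a careful limit of $\beta_n^{-1}\log$ of sums. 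Because every subsequence has a further subsequence converging to the same deterministic-in-$\w^\infty$ limit, the full sequence converges in $L^1$.
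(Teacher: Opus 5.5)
Your overall architecture matches the paper's: tightness of the joint laws with the $\Omhat$-coordinate kept, passing shift covariance, the cocycle property and recovery to the limit, then extremality of $-h_\infty$ plus strong uniqueness, then Jacod--M\'emin for convergence in probability. The gap is in how you obtain $\Ehat[\hhB(\widehat B)]=h_\infty$ for the limit cocycle, and this is exactly where the paper needs an extra idea.

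\textbf{The gap.} You only have uniform integrability of the negative parts, since $B_n(0,e_i)^-\le(\w_0^n)^-$. The positive parts are merely bounded in $L^1$. Your sketch does not repair this. Recovery bounds only the smaller increment $B(0,e_1)\wedge B(0,e_2)$ by $\w_0+\beta^{-1}\log 2$, and the plaquette identity relates differences of increments. Neither controls the upper tail of the larger increment. The fallback is circular, because weak $L^1$ compactness (Dunford--Pettis) already requires uniform integrability.

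Without it, the weak limit $\mu$ yields only a Fatou/Portmanteau inequality. Since $\widehat B(0,e_i)-W_0\ge0$ is continuous,
\[
\int\bigl(\widehat B(0,e_i)-W_0\bigr)\,d\mu\le\varliminf_{n}\Ehat\bigl[B_n(0,e_i)-\w^n_0\bigr]=-h_\infty\cdot e_i-\Ehat[\w^\infty_0].
\]
With $\bar h=\int\hhB(\widehat B)\,d\mu$, this gives only $\bar h\cdot e_i\ge h_\infty\cdot e_i$. Mass may escape to $+\infty$. Lemma \ref{lem:h-superdiff}\eqref{lem:h-superdiff-extreme} needs the mean to equal the extreme point exactly, so you cannot yet conclude $\hhB(\widehat B)=h_\infty$.

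\textbf{The missing idea is convex-analytic.} The inequality above still gives $\widehat B\in L^1(\mu)$. So Lemma \ref{lem:h-superdiff}\eqref{lem:h-superdiff-general} applies and $-\hhB(\widehat B)\in\partial\fe^{\beta_\infty,\nu_\infty}(\Uset)$ almost surely. Averaging the supergradient bound gives $-\bar h\cdot\zeta\ge\fe^{\beta_\infty,\nu_\infty}(\zeta)$ for all $\zeta\in\bbR_{\ge0}^2$.

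Now take $\xi\in\ri\Uset$ with $-h_\infty\in\ext\partial\fe^{\beta_\infty,\nu_\infty}(\xi)$. Then
\[
\fe^{\beta_\infty,\nu_\infty}(\xi)\le-\bar h\cdot\xi\le-h_\infty\cdot\xi=\fe^{\beta_\infty,\nu_\infty}(\xi).
\]
Both coordinates of $\xi$ are positive and $-\bar h\le-h_\infty$ coordinatewise, so $\bar h=h_\infty$; this is Lemma \ref{lm:m'=m}. Only at this point does extremality give $\hhB(\widehat B)=h_\infty$ almost surely, after which your identification step goes through.

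\textbf{The final upgrade.} This step should also avoid a priori uniform integrability. Once convergence in probability is known, uniform integrability of the negative parts gives $\Ehat[B_n(0,e_j)^-]\to\Ehat[B_\infty(0,e_j)^-]$. The means converge by hypothesis. Hence $\Ehat[|B_n(0,e_j)|]\to\Ehat[|B_\infty(0,e_j)|]$, and Scheff\'e's lemma yields $L^1$ convergence.
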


\begin{remark}\label{rem:unnecessary-condition}
Condition \ref{cond:zero-temp-strong} can be removed from Theorem \ref{thm:L1-continuity-coupled} and the rest of the paper if, whenever $\beta_n<\infty$, the field $\{\w^n_x:x\in\Z^2\}$ is i.i.d., in addition to satisfying the rest of Condition \ref{cond:moment-mixing}\eqref{cond:moment-mixing-<infty}. Under this stronger hypothesis, the spot in the proof 
of the theorem where Condition \ref{cond:zero-temp-strong} is invoked can be replaced as follows. Lemma \ref{lem:forward-measurable} shows that under this hypothesis, every $\Bus \in \cK^\beta$ is forward-measurable, up to sets of measure zero. This property is inherited by limits in probability, and so the limiting cocycle in the proof must be forward-measurable. This implies the coalescence hypothesis of Theorem 3.4 in \cite{Jan-Ras-Sep-25-strong-}, by Corollary 4.12 of \cite{Jan-Ras-Sep-25-strong-}. This allows us to invoke Theorem 3.4 in \cite{Jan-Ras-Sep-25-strong-} in place of Condition \ref{cond:zero-temp-strong} in the proof.

We do not restrict to Condition \ref{cond:moment-mixing}\eqref{cond:moment-mixing-iid} in the statement because we believe it may be the case that an improved zero-temperature strong uniqueness input which does not require assuming a finite energy (or, equivalently, coalescence)  condition on the cocycles, as required by Condition \ref{cond:zero-temp-strong}, can be proven. This will require different methods from those of \cite{Jan-Ras-Sep-25-strong-} and we plan to address this improvement in future work.  
\end{remark}

\begin{remark}
The restriction to limit points in $\sH_{(0,\infty]}^{\nu_\infty,\textup{ext}}$ corresponds to restricting to the direction-indexed part of the tilt-indexed Busemann process. This is because for $\xi \in \ri \Uset$, the one-sided gradients $\nabla\fe^{\beta,\nu}(\xi\pm)$ are the two extreme points of $\partial\fe^{\beta,\nu}(\xi)$. These points play a role in the argument via Lemma \ref{lem:h-superdiff}\eqref{lem:h-superdiff-extreme}, which says that if $\Ehat[\hhB(\Bhat)]=h$ is an extreme point of $-\partial\fe^{\beta,\nu}(\xi)$, then $\Phat(\hhB(\Bhat)=h)=1$.  The direction-indexed process has been the primary focus of study in past works like \cite{Geo-Ras-Sep-17-ptrf-1,Geo-Ras-Sep-17-ptrf-2, Jan-Ras-Sep-23}. 
\end{remark}

\begin{remark} \label{rmk:Dspace} Theorem \ref{thm:L1-continuity-coupled}   implies weak convergence of finite-dimensional distributions of the Busemann process  
 in a zero-temperature limit $\beta_n\to\infty$ when the parameters $(\beta_n, h_n, \nu_n)$ are appropriately chosen. Tightness at a process level  has been obtained only for the solvable log-gamma polymer whose zero-temperature limit is the corner growth model with exponential weights. Theorem 4.6 in \cite{Bat-Fan-Sep-25} 
 gives the weak convergence of the direction-indexed process  
 $(\Bus^{\beta,(-\nabla\fe^{\beta}(\xi))+, \tspb \nu^\beta}(x,x+e_1): \xi\in\,]e_2, e_1]\tspa)$
 as $\beta\to\infty$ on the space $D(\tspa]e_2, e_1],\R)$ of c\`adl\`ag functions indexed by $\tspa]e_2, e_1]$. 
In that theorem, the i.i.d.\ weights $\{\w^\beta_x:x\in\Z^2\}\sim\nu^\beta$  for $\beta\in(0,\infty)$ have marginal  distributions $\w^\beta_x\sim-\beta^{-1}\log G^{\beta^{-1}}_x$ where  $G^\alpha_x\sim$ Gamma$(\alpha)$, and $\w^\infty_x\sim$ Exp(1), i.e.\ the distribution of $\w_x^{\beta}$ changes with $\beta$ so that the model, as described in Section \ref{sec:FE_shape}, stays in the solvable log-gamma setting for all $\beta\in(0,\infty)$ and the weak limit $\{\w^\beta_x:x\in\Z^2\}\overset{d}\to\{\w^\infty_x:x\in\Z^2\}$ holds as $\beta\to\infty$.  In this case the shape functions are known to be differentiable and strictly concave, so the direction-indexed Busemann processes are equal to the tilt-indexed ones (Proposition \ref{prop:sH-extreme} and Remark \ref{rk:H=grads}). 
 
 The proof  in \cite{Bat-Fan-Sep-25} relies on a coupling of the exact distributions of these Busemann processes in terms of an inhomogeneous Poisson process, and hence does not readily generalize.  It is restricted to the nearest-neighbor case $(x,y)=(x,x+e_1)$ because for general pairs $(x,y)$  the joint Busemann distribution across directions does not yet have a useful explicit description. 
\end{remark}

\begin{remark}
The requirement $h_n\to h_\infty$ in Condition \eqref{thm:L1-continuity-coupled.params} of  Theorem \ref{thm:L1-continuity-coupled}   says that the super-derivative elements indexing the Busemann functions converge. Sufficient conditions for this to hold are standard in convex analysis. Lemma \ref{lem:supergrad-converge} in the appendix provides one such condition which factors into one of our applications, Corollary \ref{cor:L1-continuity-perturb} below. The requirement that we only consider extreme points in the limit is a technical artifact of our proof, which does leave a potential gap in convergence of the Busemann process if cocycles with non-extremal tilts exist. The negative of any such tilt must lie on the relative interior of a superdifferential line segment $]\nabla \fe^{\beta,\nu_{\infty}}(\xi+),\nabla \fe^{\beta,\nu_{\infty}}(\xi-)[$ for some $\xi\in \ri \Uset$. This set is empty if the limit shape is differentiable, as is expected to hold very generally.
\end{remark}

The next result is an immediate application of Theorem \ref{thm:L1-continuity-coupled}. It proves continuity in $L^1$ of Busemann cocycles as a function of the inverse temperature and the tilt in a fixed environment. In the statement below, the $\pm$ sign distinction is not needed due to part \eqref{thm:Bus-process:same} of Theorem \ref{thm:Bus-process}.  This is the first time any regularity of the Busemann process as a function of the inverse temperature in a fixed environment has been shown.

\begin{corollary}\label{cor:L1-continuity}
Assume the hypotheses of Theorem \ref{thm:L1-continuity-coupled} with $(\Omhat,\kShat,\Phat,\That,\w^n)=(\Omega,\sF,\bbP,T,\w)$ for all $n\in\bbN\cup\{\infty\}$, and with $\nu_n=\nu$, the law of $\w$ under $\bbP$. Then for all $x,y\in\bbZ^2$ and all $(\beta,h)\in\sH_{(0,\infty]}^{\nu,\textup{ext}}$,
\[
\lim_{n\to\infty}\bbE\bigl[\bigl|\Bus^{\beta_n,h_n,\nu}(x,y)-\Bus^{\beta,h,\nu}(x,y)\bigr|\bigr]=0
\]
whenever $(\beta_n,h_n)\to(\beta,h)$ in $\sH_{(0,\infty]}^\nu$.\end{corollary}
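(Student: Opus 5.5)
The plan is to specialize Theorem \ref{thm:L1-continuity-coupled} to the case in which every weight field is the canonical one. Fix $x,y\in\bbZ^2$, $(\beta,h)\in\sH_{(0,\infty]}^{\nu,\textup{ext}}$, and a sequence $(\beta_n,h_n)\to(\beta,h)$ in $\sH_{(0,\infty]}^\nu$. On $(\Omhat,\kShat,\Phat,\That)=(\Omega,\sF,\bbP,T)$ we set $\w^n=\w$ for every $n\in\bbN\cup\{\infty\}$. We also set $(\beta_\infty,h_\infty)=(\beta,h)$. Then each $\nu_n$ equals $\nu$.

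We check the hypotheses of Theorem \ref{thm:L1-continuity-coupled} in turn.
\begin{itemize}
\item Condition \ref{cond:moment-mixing} must hold at each $\beta_n$. This is part of the standing assumptions carried over by ``the hypotheses of Theorem \ref{thm:L1-continuity-coupled}''. Concretely, $\nu$ satisfies part \eqref{cond:moment-mixing-<infty} of that condition whenever some $\beta_n<\infty$, and part \eqref{cond:moment-mixing-iid} whenever some $\beta_n=\infty$ or $\beta=\infty$.
\item Condition \ref{cond:zero-temp-strong} is needed for every index with $\beta_n=\infty$, including the limit index if $\beta=\infty$. This is likewise assumed; alternatively, Remark \ref{rem:unnecessary-condition} covers it in the i.i.d.\ setting.
\item Hypothesis \eqref{thm:L1-continuity-coupled:weights-L1} holds trivially, since $\w^n-\w^\infty\equiv0$.
\item Hypothesis \eqref{thm:L1-continuity-coupled.params} holds because $(\beta_n,h_n)\in\sH^{\nu}_{(0,\infty]}=\sH^{\nu_n}_{(0,\infty]}$, the parameters converge to $(\beta,h)$, and $(\beta,h)\in\sH^{\nu,\textup{ext}}_{(0,\infty]}$.
\end{itemize}

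With these hypotheses verified, Theorem \ref{thm:L1-continuity-coupled} gives
\[
\bbE\bigl[\bigl|\Bus^{\beta_n,h_n,\nu}(\w,x,y)-\Bus^{\beta,h,\nu}(\w,x,y)\bigr|\bigr]\to0,
\]
which is the claim.

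It remains to address the sign convention. For finite $\beta_n$, $\Bus^{\beta_n,h_n,\nu}$ is the unique element of $\cK^{\beta_n,\nu}$ with tilt $h_n$, by Theorems \ref{thm:Bhat-B} and \ref{thm:uniqueness}. By Theorem \ref{thm:Bus-process}\eqref{thm:Bus-process:same}, it agrees $\bbP$-almost surely with both $\Bus^{\beta_n,h_n\pm}$. Hence the choice of version, or of sign, does not affect the expectation.

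There is no genuine obstacle here. The only point that needs care is making sure the hypotheses on $\nu$ are required at every index, including $n=\infty$ when $\beta=\infty$. This is exactly what the phrase ``assume the hypotheses of Theorem \ref{thm:L1-continuity-coupled}'' supplies.
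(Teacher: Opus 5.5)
Your proposal is correct and follows the paper's route: the paper presents this corollary as an immediate application of Theorem \ref{thm:L1-continuity-coupled} with $\w^n\equiv\w$. In that setting hypothesis \eqref{thm:L1-continuity-coupled:weights-L1} is trivial and \eqref{thm:L1-continuity-coupled.params} is exactly the assumed convergence in $\sH^\nu_{(0,\infty]}$ to an extremal limit. Your remark on versions and signs matches the paper's note that the $\pm$ distinction is unnecessary, by Theorem \ref{thm:Bus-process}\eqref{thm:Bus-process:same}.
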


The next corollary proves continuity of the Busemann cocycles under certain bounded perturbations of the weights, at least in directions of differentiability. As we will see momentarily, this implies continuity in probability of the generated infinite volume Gibbs-DLR measures. 
\begin{corollary}\label{cor:L1-continuity-perturb}
Fix $\beta\in(0,\infty]$. Let $(\Omhat,\kShat,\Phat,\That)$ be a Polish probability space with a group of continuous automorphisms, and let it carry shift-covariant weight fields $\w$ and $\eta$, with $|\eta_0|\le M$ $\Phat$-almost surely for some $M>0$. Let $\epsilon_n\to0$, set $\w^n=\w+\epsilon_n\eta$ and $\w^\infty=\w$, and, for $n\in\bbN\cup\{\infty\}$, let $\nu_n$ be the law of $\w^n$ on $(\Omega,\sF)$ under $\Phat$. Assume that $(\Omhat,\kShat,\Phat,\That,\w^n,\beta)$ satisfies Condition \ref{cond:moment-mixing} for every $n\in\bbN\cup\{\infty\}$ and, if $\beta=\infty$, that each $\nu_n$ also satisfies Condition \ref{cond:zero-temp-strong}.

Fix $\xi\in\ri\Uset$ at which $\fe^{\beta,\nu_\infty}$ is differentiable, set $h_\infty=-\nabla \fe^{\beta,\nu_\infty}(\xi)$, and choose any $h_n$ such that
\[
h_n\in\sH^{\beta,\nu_n}\cap\bigl(-\partial\fe^{\beta,\nu_n}(\xi)\bigr)
\qquad\text{for each }n\in\bbN.
\]
Then for all $x,y\in\bbZ^2$,
\[
\lim_{n\to\infty}\Ehat\bigl[\bigl|\Bus^{\beta,h_n,\nu_n}(\w^n,x,y)-\Bus^{\beta,h_\infty,\nu_\infty}(\w^\infty,x,y)\bigr|\bigr]=0.
\]
\end{corollary}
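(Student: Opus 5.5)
The plan is to reduce Corollary \ref{cor:L1-continuity-perturb} to Theorem \ref{thm:L1-continuity-coupled}, applied with $\beta_n=\beta$ for all $n$. The fields, the shifts and Condition \ref{cond:zero-temp-strong} are already in the form that theorem requires. So only its hypotheses \eqref{thm:L1-continuity-coupled:weights-L1} and \eqref{thm:L1-continuity-coupled.params} need checking.

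Hypothesis \eqref{thm:L1-continuity-coupled:weights-L1} is immediate:
\[\Ehat[|\w^n_0-\w^\infty_0|]=\epsilon_n\Ehat[|\eta_0|]\le \epsilon_n M\to0.\]

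For \eqref{thm:L1-continuity-coupled.params}:
\begin{itemize}
\item By assumption $h_n\in\sH^{\beta,\nu_n}$, so $(\beta,h_n)\in\sH_{(0,\infty]}^{\nu_n}$.
\item The limit point is admissible. Since $\fe^{\beta,\nu_\infty}$ is differentiable at $\xi\in\ri\Uset$, we have $h_\infty=-\nabla\fe^{\beta,\nu_\infty}(\xi+)=-\nabla\fe^{\beta,\nu_\infty}(\xi-)$. Proposition \ref{prop:sH-extreme} then gives $h_\infty\in\sH^{\beta,\nu_\infty}$, and by \eqref{eq:sH-all-ext-nu} we get $(\beta,h_\infty)\in\sH_{(0,\infty]}^{\nu_\infty,\textup{ext}}$. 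For $\beta=\infty$, the same inclusion comes from the zero-temperature analogue, Proposition 3.5 of \cite{Jan-Ras-Sep-25-strong-}.
\item It remains to show $h_n\to h_\infty$.
\end{itemize}

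The step $h_n\to h_\infty$ is the main one, and it is pure convex analysis. First I would show that $\fe^{\beta,\nu_n}\to\fe^{\beta,\nu_\infty}$ uniformly on compact subsets of $\bbR^2_{>0}$. The weights along any path of length $N$ change by at most $\epsilon_n M N$ in total. Hence for $\beta<\infty$ we have $|\FE{0}{x}^{\beta,\w^n}-\FE{0}{x}^{\beta,\w}|\le \epsilon_n M|x|_1$ deterministically, and the same bound holds for last-passage times when $\beta=\infty$. Lemma \ref{lem:shape} then gives $|\fe^{\beta,\nu_n}(\zeta)-\fe^{\beta,\nu_\infty}(\zeta)|\le\epsilon_nM|\zeta|_1$ at interior points $\zeta$. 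Concave functions converging pointwise on an open set converge locally uniformly there.

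Second, I would invoke the standard fact that locally uniform convergence of concave functions implies convergence of supergradients at points of differentiability of the limit. This is what Lemma \ref{lem:supergrad-converge} in the appendix is meant for: if $-h_n\in\partial\fe^{\beta,\nu_n}(\xi)$ and $\fe^{\beta,\nu_\infty}$ is differentiable at $\xi$, then $-h_n\to\nabla\fe^{\beta,\nu_\infty}(\xi)$.

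To see why it holds, start from the supergradient inequality
\[\fe^{\beta,\nu_n}(\zeta)-\fe^{\beta,\nu_n}(\xi)\le -h_n\cdot(\zeta-\xi)\]
for $\zeta$ near $\xi$. This bounds the sequence $h_n$ by the uniform convergence. Any limit point $-h$ then satisfies the supergradient inequality for $\fe^{\beta,\nu_\infty}$ on a neighborhood of $\xi$, and by concavity globally. Differentiability then forces $-h=\nabla\fe^{\beta,\nu_\infty}(\xi)$.

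The only subtlety is that $\partial\fe$ is defined through comparisons over all of $\bbR^2_{\ge0}$. This is harmless because concavity makes a local supergradient global. With all hypotheses verified, Theorem \ref{thm:L1-continuity-coupled} yields the claimed $L^1$ convergence for all $x,y\in\bbZ^2$.
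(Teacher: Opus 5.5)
Your proposal is correct and follows essentially the same route as the paper. You verify hypothesis (a) via $\Ehat[|\w^n_0-\w^\infty_0|]\le|\epsilon_n|M$, deduce $\sup_{\Uset}|\fe^{\beta,\nu_n}-\fe^{\beta,\nu_\infty}|\le|\epsilon_n|M$ from the pathwise energy bound and the shape theorem, obtain $h_n\to h_\infty$ from Lemma \ref{lem:supergrad-converge}, and place $(\beta,h_\infty)$ in $\sH_{(0,\infty]}^{\nu_\infty,\textup{ext}}$ via Proposition \ref{prop:sH-extreme} (or \cite[Proposition 3.5]{Jan-Ras-Sep-25-strong-} when $\beta=\infty$) before applying Theorem \ref{thm:L1-continuity-coupled}; the only cosmetic fix is that $\epsilon_n$ need not be positive, so your bounds should read $|\epsilon_n|M$.
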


\subsection{Convergence of Gibbs measures and large deviations}
An immediate consequence of Theorem \ref{thm:L1-continuity-coupled} is that the extremal Gibbs measures generated by the Busemann cocycles are continuous in probability, provided that we restrict attention to positive temperature.

\begin{corollary}\label{cor:Gibbs-measure-continuity}
Assume the hypotheses of Theorem \ref{thm:L1-continuity-coupled} and assume that $\beta_n\in(0,\infty)$ for all $n\in\bbN\cup\{\infty\}$. For $n\in\bbN\cup\{\infty\}$ and $x\in\bbZ^2$, let $\Pi_x^{\beta_n,h_n,\nu_n,\w^n}$ be the Gibbs-DLR measure generated by $\Bus^{\beta_n,h_n,\nu_n}(\w^n)$, defined for $k=x\cdot(e_1+e_2)$, $m\ge k$, and admissible $x_{k:m}$ with $x_k=x$ by
\be
\Pi_x^{\beta_n,h_n,\nu_n,\w^n}(x_{k:m})
=
\exp\biggl\{\beta_n\sum_{r=k}^{m-1}\w^n_{x_r}-\beta_n\Bus^{\beta_n,h_n,\nu_n}(\w^n,x,x_m)\biggr\}.\label{Pindef}
\ee
Then $\Pi_x^{\beta_n,h_n,\nu_n,\w^n}\to\Pi_x^{\beta_\infty,h_\infty,\nu_\infty,\w^\infty}$
in $\sM_1(\pathsp_x,\pathsa_{k:\infty})$, in $\Phat$-probability.
\end{corollary}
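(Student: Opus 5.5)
The plan is to reduce convergence of path measures to convergence of finitely many cylinder probabilities, each of which is a continuous function of finitely many weights and Busemann values; the $L^1$ convergence in Theorem \ref{thm:L1-continuity-coupled} then gives in-probability convergence of these probabilities.

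Since $\pathsp_x$ is compact with the product-discrete topology, the cylinder sets $\{X_{k:m}=x_{k:m}\}$ for admissible $x_{k:m}$ with $x_k=x$ form a countable convergence-determining class. Concretely, we use the metric
\[
d(\mu,\nu)=\sum_{m\ge k}2^{-(m-k+1)}\sum_{x_{k:m}}\bigl|\mu(x_{k:m})-\nu(x_{k:m})\bigr|,
\]
where the inner sum runs over the finitely many admissible paths $x_{k:m}$ of length $m-k$ started at $x$. This metric is bounded by $2$ and generates the weak topology on $\sM_1(\pathsp_x,\pathsa_{k:\infty})$. It therefore suffices to show that, for each fixed admissible $x_{k:m}$, $\Pi_x^{\beta_n,h_n,\nu_n,\w^n}(x_{k:m})\to\Pi_x^{\beta_\infty,h_\infty,\nu_\infty,\w^\infty}(x_{k:m})$ in $\Phat$-probability. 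Convergence of $d$ in probability then follows by truncating the outer sum at a large level $M$: the tail is at most $2^{-(M-k)+1}$, and on the truncated part a union bound over finitely many cylinders applies.

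For a fixed cylinder, the probability in \eqref{Pindef} is $F_n=\exp\{\beta_n\sum_{r=k}^{m-1}\w^n_{x_r}-\beta_n\Bus^{\beta_n,h_n,\nu_n}(\w^n,x,x_m)\}$. We have $\w^n_{x_r}\to\w^\infty_{x_r}$ in $L^1(\Phat)$. This follows from hypothesis \eqref{thm:L1-continuity-coupled:weights-L1}, since shift covariance and $\That$-invariance of $\Phat$ give $\Ehat|\w^n_{z}-\w^\infty_{z}|=\Ehat|\w^n_0-\w^\infty_0|$. Theorem \ref{thm:L1-continuity-coupled} gives $\Bus^{\beta_n,h_n,\nu_n}(\w^n,x,x_m)\to\Bus^{\beta_\infty,h_\infty,\nu_\infty}(\w^\infty,x,x_m)$ in $L^1(\Phat)$. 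Since $\beta_n\to\beta_\infty\in(0,\infty)$, the exponent converges in probability. The map $\exp$ is continuous, so $F_n$ converges in probability to the corresponding quantity for $n=\infty$.

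One point needs care. The Busemann cocycles are only defined up to null sets, and \eqref{Pindef} must define a genuine probability measure for each $n$. This holds off a $\Phat$-null set by the cocycle and $\beta_n$-recovery properties (Theorem \ref{thm:DLR-cocycle}), and countably many null sets can be discarded. I expect no real obstacle: the only substantive input is Theorem \ref{thm:L1-continuity-coupled}, and the rest is the routine reduction from weak convergence to finitely many cylinders.
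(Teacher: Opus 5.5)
Your proposal is correct and follows essentially the same route as the paper, which also deduces in-probability convergence of each cylinder probability in \eqref{Pindef}. That convergence comes from the $L^1$ convergence of the weights (via shift covariance) together with the $L^1$ convergence of $\Bus^{\beta_n,h_n,\nu_n}(\w^n,x,x_m)$ given by Theorem \ref{thm:L1-continuity-coupled}, and the paper then concludes because the countably many finite cylinders rooted at $x$ are convergence determining. Your explicit metric and truncation argument simply write out that last step in more detail than the paper does.
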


In general, geodesics with given roots and directions can fail to be unique, so we cannot claim that Gibbs measures converge to a particular Busemann geodesic in the zero temperature limit. Applying Corollary \ref{cor:L1-continuity} to the zero temperature limit implies large deviation estimates under which paths at which the rate function vanishes are exactly the limiting infinite geodesics.

We say that a path is a $\Bus^{\infty,h,\nu}(\w)$-geodesic if each of its steps attains equality in the $\beta=\infty$ recovery relation \eqref{eq:infty-recovery} for $\Bus^{\infty,h,\nu}(\w)$.  That such paths are geodesics is a straightforward computation; see e.g.\ \cite[Lemma 4.1(a)]{Geo-Ras-Sep-17-ptrf-2}. 

\begin{corollary}\label{cor:cylinder-rate}
Assume the hypotheses of Theorem \ref{thm:L1-continuity-coupled}, with $\beta_n<\infty$ for all $n\in\bbN$, $\beta_\infty=\infty$, and $(\infty,h_\infty)\in\sH_{(0,\infty]}^{\nu_\infty,\textup{ext}}$. For $x\in\bbZ^2$ and an admissible path $x_{k:m}$ with $x_k=x$, define
\be
I_x^{h_\infty, \nu_\infty,\w^\infty}(x_{k:m})
=
\sum_{j=k}^{m-1}\bigl(\Bus^{\infty,h_\infty,\nu_\infty}(\w^\infty,x_j,x_{j+1})-\w^\infty_{x_j}\bigr),\label{eq:cylinder-rate-sum}
\ee
Then
\[
-\frac{1}{\beta_n}\log \Pi_x^{\beta_n,h_n,\nu_n,\w^n}(x_{k:m})
\to
I_x^{h_\infty, \nu_{\infty},\w^\infty}(x_{k:m})
\]
in $L^1(\Phat)$ as $n
\to\infty$. Moreover, $I_x^{h_\infty,\nu_\infty,\w^\infty}(x_{k:m})\ge0$, and equality holds if and only if $x_{k:m}$ is a $\Bus^{\infty,h_\infty,\nu_\infty}(\w^\infty)$-geodesic.
\end{corollary}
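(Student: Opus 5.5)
The plan is to compute $-\beta_n^{-1}\log \Pi_x^{\beta_n,h_n,\nu_n,\w^n}(x_{k:m})$ exactly from the definition \eqref{Pindef}. The $\beta_n$ factors cancel and we get
\[
-\frac{1}{\beta_n}\log \Pi_x^{\beta_n,h_n,\nu_n,\w^n}(x_{k:m})
=\Bus^{\beta_n,h_n,\nu_n}(\w^n,x,x_m)-\sum_{r=k}^{m-1}\w^n_{x_r}
=\sum_{j=k}^{m-1}\bigl(\Bus^{\beta_n,h_n,\nu_n}(\w^n,x_j,x_{j+1})-\w^n_{x_j}\bigr),
\]
where the second equality uses the cocycle property. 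This is a finite sum of $m-k$ terms, so $L^1(\Phat)$ convergence reduces to convergence of each summand.

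\emph{Bus terms.} For each $j$, Theorem \ref{thm:L1-continuity-coupled} (with $x=x_j$, $y=x_{j+1}$) gives $\Ehat|\Bus^{\beta_n,h_n,\nu_n}(\w^n,x_j,x_{j+1})-\Bus^{\infty,h_\infty,\nu_\infty}(\w^\infty,x_j,x_{j+1})|\to0$. The hypotheses of the corollary are exactly those of the theorem, so this applies directly.

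\emph{Weight terms.} By shift covariance and invariance of $\Phat$, $\Ehat|\w^n_{x_j}-\w^\infty_{x_j}|=\Ehat|\w^n_0-\w^\infty_0|$, which tends to $0$ by assumption \eqref{thm:L1-continuity-coupled:weights-L1}.

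Summing the two pieces and applying the triangle inequality gives convergence to $I_x^{h_\infty,\nu_\infty,\w^\infty}(x_{k:m})$ in $L^1(\Phat)$.

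\emph{Nonnegativity and equality case.} This is deterministic on the full-measure event where the $\beta=\infty$ recovery \eqref{eq:infty-recovery} holds for $\Bus^{\infty,h_\infty,\nu_\infty}(\w^\infty)$ at every site; such an event exists because a shift-covariant recovering cocycle satisfies recovery almost surely at all countably many sites. On this event, for each step $x_{j+1}=x_j+e_i$,
\[
\Bus^{\infty,h_\infty,\nu_\infty}(x_j,x_{j+1})\ge \Bus^{\infty,h_\infty,\nu_\infty}(x_j,x_j+e_1)\wedge\Bus^{\infty,h_\infty,\nu_\infty}(x_j,x_j+e_2)=\w^\infty_{x_j}.
\]
Hence every summand in \eqref{eq:cylinder-rate-sum} is nonnegative, so $I\ge0$. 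Since the sum of nonnegative terms vanishes iff each term vanishes, $I=0$ iff every step attains the minimum in the recovery relation, which is the definition of a $\Bus^{\infty,h_\infty,\nu_\infty}(\w^\infty)$-geodesic.

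There is no real obstacle here: the substance sits in Theorem \ref{thm:L1-continuity-coupled}. The only care needed is with almost-sure qualifiers. The sign and equality statements hold $\Phat$-almost surely, on the event where recovery holds for the chosen Borel version. The identity for $-\beta_n^{-1}\log\Pi$ requires the cocycle property for each $n$, which holds almost surely on a countable intersection of events.
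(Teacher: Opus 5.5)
Your proposal is correct and follows the paper's proof essentially step for step. You rewrite $-\beta_n^{-1}\log\Pi_x^{\beta_n,h_n,\nu_n,\w^n}(x_{k:m})$ through \eqref{Pindef} and the cocycle property as a finite sum of one-step terms, handle the cocycle and weight terms with Theorem \ref{thm:L1-continuity-coupled} and hypothesis \eqref{thm:L1-continuity-coupled:weights-L1} together with shift covariance, and read off nonnegativity and the equality case from the $\beta=\infty$ recovery relation \eqref{eq:infty-recovery}.
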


Because the previous corollary holds in $L^1$, we can diagonalize over the countable family of finite path segments and pass to subsequences to obtain a deterministic subsequence on which a full quenched large deviation principle holds.

\begin{corollary}\label{cor:subseq-quenched-ldp}
Assume the hypotheses of Theorem \ref{thm:L1-continuity-coupled}, with $\beta_n<\infty$ for all $n\in\bbN$, $\beta_\infty=\infty$, and $(\infty,h_\infty)\in\sH_{(0,\infty]}^{\nu_\infty,\textup{ext}}$. Then there exist a deterministic subsequence $\{n_\ell\}_{\ell\ge1}$ and an event $\Omhat_1\in\kShat$ with $\Phat(\Omhat_1)=1$ such that for every $\what\in\Omhat_1$ and every $x\in\bbZ^2$, the measures $\Pi_x^{\beta_{n_\ell},h_{n_\ell},\nu_{n_\ell},\w^{n_\ell}(\what)}$ on $\pathsp_x$ satisfy a large deviation principle with normalization $\beta_{n_\ell}$ and rate function
\be\begin{aligned}
I_x^{h_\infty,\nu_\infty,\w^\infty(\what)}(x_{k:\infty})
&=
\sum_{j=k}^\infty\bigl(\Bus^{\infty,h_\infty,\nu_\infty}(\w^\infty(\what),x_j,x_{j+1})-\w^\infty_{x_j}(\what)\bigr)\\
&=
\sup_{m\ge k}\Bigl\{\Bus^{\infty,h_\infty,\nu_\infty}(\w^\infty(\what),x,x_m)-\sum_{j=k}^{m-1}\w^\infty_{x_j}(\what)\Bigr\},
\label{eq:quenched-rate-fn}
\end{aligned}\ee
where $k=x\cdot(e_1+e_2)$. Moreover,
$I_x^{h_\infty,\nu_\infty,\w^\infty(\what)}(x_{k:\infty})=0$
if and only if $x_{k:\infty}$ is a $\Bus^{\infty,h_\infty,\nu_\infty}(\w^\infty(\what))$-geodesic.
\end{corollary}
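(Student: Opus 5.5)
The plan is to obtain a deterministic subsequence along which the cylinder convergence of Corollary \ref{cor:cylinder-rate} holds almost surely, simultaneously for all roots and all finite admissible segments, and then to upgrade cylinder-level asymptotics to a full LDP on the compact space $\pathsp_x$. The set of pairs $(x,x_{k:m})$ with $x\in\bbZ^2$ and $x_{k:m}$ admissible from $x$ is countable. Corollary \ref{cor:cylinder-rate} gives $L^1(\Phat)$, hence in-probability, convergence of $-\beta_n^{-1}\log\Pi_x^{\beta_n,h_n,\nu_n,\w^n}(x_{k:m})$ to $I_x(x_{k:m})$ for each such pair. We enumerate the pairs and use a Cantor diagonal argument, extracting nested subsequences on which each convergence is almost sure. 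This yields a deterministic $\{n_\ell\}$ and a full-measure event $\Omhat_1$, which we intersect with the events on which Theorem \ref{thm:Bus-process}, the recovery relation \eqref{eq:infty-recovery} and the cocycle property hold for $\Bus^{\infty,h_\infty,\nu_\infty}(\w^\infty)$. On $\Omhat_1$, every cylinder probability is then $\exp\{-\beta_{n_\ell}(I_x(x_{k:m})+o(1))\}$.

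Next we identify the rate function. Recovery at $\beta=\infty$ gives that each summand $\Bus(x_j,x_{j+1})-\w_{x_j}$ is nonnegative. Hence the partial sums in \eqref{eq:cylinder-rate-sum} are nondecreasing in $m$, and by the cocycle property the $m$-th partial sum equals $\Bus(x,x_m)-\sum_{j<m}\w_{x_j}$. This gives both expressions in \eqref{eq:quenched-rate-fn}. As a supremum of continuous functions of finitely many coordinates, $I_x$ is lower semicontinuous on $\pathsp_x$. Since $\pathsp_x$ is compact, $I_x$ is automatically a good rate function. The zero set is identified as follows: $I_x(x_{k:\infty})=0$ if and only if every summand vanishes, which is exactly the definition of a $\Bus^{\infty,h_\infty,\nu_\infty}$-geodesic.

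Finally we prove the LDP itself, which is the main step. Cylinder sets $C=\{X_{k:m}=x_{k:m}\}$ are clopen and form a countable base of the topology on $\pathsp_x$. For the lower bound, take an open $G$ containing a path $\gamma$. Then $G$ contains the cylinder of $\gamma_{k:m}$ for some $m$. The cylinder asymptotics and monotonicity of the partial sums give
\[
\varliminf_{\ell}\beta_{n_\ell}^{-1}\log\Pi(G)\ge -I_x(\gamma_{k:m})\ge -I_x(\gamma).
\]
For the upper bound, let $F$ be closed, and fix $m$. Since there are finitely many level-$m$ cylinders, $\Pi(F)\le\sum_{\gamma_{k:m}\cap F\ne\emptyset}\Pi(\gamma_{k:m})$. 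Finitely many terms give
\[
\varlimsup_{\ell}\beta_{n_\ell}^{-1}\log\Pi(F)\le-\min_{\gamma_{k:m}:\, C(\gamma_{k:m})\cap F\neq\emptyset}I_x(\gamma_{k:m}).
\]
We then let $m\to\infty$. This is the obstacle: we need
\[
\lim_m\min_{C(\gamma_{k:m})\cap F\ne\emptyset}I_x(\gamma_{k:m})\ge\inf_F I_x.
\]
We handle it by compactness. Choose minimizing segments $\gamma^{(m)}_{k:m}$ and points $\pi^{(m)}\in F$ extending them, and pass to a convergent subsequence $\pi^{(m)}\to\pi\in F$. For fixed $j$ and large $m$, the segments agree with $\pi_{k:j}$. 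Monotonicity of the partial sums then gives $\min\ge I_x(\pi_{k:j})$. Letting $j\to\infty$ yields $I_x(\pi)\ge\inf_F I_x$. Note that the cylinder asymptotics are only pointwise in the segment, but the upper bound uses only finitely many cylinders at each fixed $m$, so no uniformity is needed.
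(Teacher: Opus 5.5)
Your proposal is correct. The diagonal extraction and the identification of the rate function and its zero set match the paper's proof. The difference is in the LDP step.

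\emph{The paper's route.} It observes that the almost sure cylinder convergence is exactly an LDP for the pushforwards under the truncations $p_m:\pathsp_x\to E_m^x$ onto the finite discrete spaces of length-$m$ segments. It then applies the Dawson--G\"artner theorem to the projective limit $\pathsp_x$. That gives the rate function $\sup_m I_x(p_m(\cdot))$ and its goodness in one stroke.

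\emph{Your route.} You verify both bounds directly. The lower bound uses the clopen cylinder base. The upper bound uses the finite cover by level-$m$ cylinders, followed by a compactness argument to send $m\to\infty$. That compactness step is sound. In fact, the minimum of $I_x(\gamma_{k:m})$ over level-$m$ segments whose cylinders meet $F$ is nondecreasing in $m$, so the limit you take exists.

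\emph{Comparison.} What you wrote is essentially the proof of Dawson--G\"artner specialized to a projective limit of finite sets. Your version is self-contained and makes it transparent that no uniformity in the segment is needed, since only finitely many cylinders enter at each level. The paper's citation is shorter.

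A minor point on shrinking $\Omhat_1$. For every $n$, you need recovery and the cocycle property of $\Bus^{\beta_n,h_n,\nu_n}(\w^n)$ at all sites. This is what makes each $\Pi_x^{\beta_{n_\ell},h_{n_\ell},\nu_{n_\ell},\w^{n_\ell}(\what)}$ a genuine probability measure on $\pathsp_x$, and the paper states it explicitly. Your sentence attaches these properties, together with Theorem \ref{thm:Bus-process}, only to the limiting cocycle.
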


With our main results now stated, we turn to the proofs.

\section{Properties of \texorpdfstring{$\DLR_x^\beta$}{DLRxbeta} and \texorpdfstring{$\ext \DLR_x^\beta$}{ext DLRxbeta}}\label{sec:structure} 
We start by proving the new structural properties of $\DLR_x^\beta$ and $\ext \DLR_x^\beta$ which play a role in our analysis. Throughout this section, we will work on the canonical space $\Omega$. Many of our results below are restricted to the set $\Omega_0$ from \eqref{eq:Omega0-definition}.

We begin by proving that on $\Omega_0$, all Gibbs-DLR measures which are not convex combinations of trivial measures are fully supported.
\begin{proof}[Proof of Lemma \ref{lem:Omega0}]
Fix $\w\in\Omega_0$, $x\in\bbZ^2$, $\beta\in(0,\infty)$, and $\Pi\in \DLR_x^{\beta,\w}$. We first rule out non-trivial coordinate rays. By symmetry it is enough to treat the case $j=2$. Fix $v\ge x$ with $(v-x)\cdot e_1>0$, set $r=v\cdot(e_1+e_2)$, and suppose
\[
\delta=\Pi\bigl(X_{r+m}=v+me_2\text{ for all }m\ge 0\bigr)>0.
\]
For $N\ge 1$, let
\[
E_N=\{X_{r+m}=v+me_2\text{ for }0\le m\le N\}.
\]
Then $\delta\le \Pi(E_N)$ for every $N\ge 1$. Summing \eqref{eq:Gibbs} over all admissible paths from $x$ to $v$, we get
\[
\Pi(E_N)=\Pi(v+Ne_2)\,\frac{\PF{x}{v}^{\beta,\w}\exp\{\beta\sum_{m=0}^{N-1}\w_{v+me_2}\}}{\PF{x}{v+Ne_2}^{\beta,\w}}
\le
\frac{\PF{x}{v}^{\beta,\w}\exp\{\beta\sum_{m=0}^{N-1}\w_{v+me_2}\}}{\PF{x}{v+Ne_2}^{\beta,\w}}.
\]
Choose one fixed admissible path from $x$ to $v-e_1$, and let $c(\w)$ be the sum of the weights along this path, excluding the endpoint $v-e_1$. We have
\[
\PF{x}{v+Ne_2}^{\beta,\w}\ge \exp\Bigl\{\beta c(\w)+\beta\sum_{m=0}^{N}\w_{v-e_1+me_2}\Bigr\}.
\]
Therefore
\[
\delta\le \PF{x}{v}^{\beta,\w} e^{-\beta c(\w)-\beta\w_{v-e_1}}\exp\Bigl\{\beta\sum_{m=0}^{N-1}\bigl(\w_{v+me_2}-\w_{v-e_1+(m+1)e_2}\bigr)\Bigr\}
= \PF{x}{v}^{\beta,\w} e^{-\beta c(\w)-\beta\w_{v-e_1}} e^{\beta S_N^{v,2}}.
\]
Since $\w\in\Omega_0$, $\varliminf_{N\to\infty}S_N^{v,2}(\w)=-\infty$, 
so the right-hand side has a subsequence converging to $0$. This contradicts $\delta>0$. 

It remains to prove the final claim. Assume now that $\Pi$ is not fully supported.  Let $k=x\cdot(e_1+e_2)$. If $\Pi(x_{k:n})=0$ for some admissible path $x_{k:n}$ with endpoint $x_n=y$, then \eqref{eq:Gibbs} and positivity of $\poly{x}{y}^{\beta,\w}(x_{k:n})$ give $\Pi(y)=0$.

Fix any $z\ge y$. We have $0=\Pi(X_n=y,X_m=z)=\Pi(z)\,\poly{x}{z}^{\beta,\w}(X_n=y)$ and so $\Pi(z)=0$. Therefore $\Pi\bigl(\exists m\ge k:\ X_m\in y+\bbZ_{\ge 0}^2\bigr)=0.$ Consequently, under $\Pi$ the path cannot take infinitely many $e_1$-steps and infinitely many $e_2$-steps, since then both coordinates would diverge to $+\infty$ and the path would eventually enter $y+\bbZ_{\ge 0}^2$. Hence $\Pi$-almost surely the path eventually follows a coordinate ray. The first part of the proof rules out all coordinate rays except the two rooted rays $x+\bbZ_{\geq0}e_1$ and $x+\bbZ_{\geq0}e_2$. Thus $\Pi$ is supported on the two trivial paths, and the result follows.
\end{proof}

\subsection{Tree coupling of finite measures}
Many of our arguments will involve a tree coupling of finite volume polymer measures. This coupling was used to study Gibbs measures in the polymer model previously in \cite{Geo-etal-15, Jan-Ras-20-aop}, but may have appeared earlier as well. 

For $y \in x + \bbZ_{\geq 0}^2$ with $y\neq x$, define the \emph{backward polymer transition probabilities}
\be\label{eq:backward-prob}
\overleftarrow{\pi}^{\beta,\w}_{y,y-e_i}
\;=\;
\frac{\PF{x}{y-e_i}^{\beta,\w}\,e^{\beta \w_{y-e_i}}}{\PF{x}{y}^{\beta,\w}},
\qquad i\in\{1,2\},
\ee
with the convention that the numerator is $0$ if $y-e_i\notin x+\bbZ_{\geq 0}^2$. By the one-step decomposition of the partition function,
$\overleftarrow{\pi}^{\beta,\w}_{y,y-e_1}+\overleftarrow{\pi}^{\beta,\w}_{y,y-e_2}=1$.

Denote by $\zeta=(\zeta_z : z\in\bbZ^2)$ an auxiliary field of i.i.d.\ Uniform$[0,1]$ random variables on the product space. 
For each $\w\in\Omega$, we use the $\zeta$ field to construct a random directed spanning tree $\cT_x$ on the vertex set $x+\bbZ_{\geq 0}^2$ by choosing, for each $y\in(x+\bbZ_{\geq 0}^2)\setminus\{x\}$,
a unique predecessor $p(y)\in\{y-e_1,y-e_2\}$ as follows:
\be\label{eq:parent-map}
p(y)=\begin{cases}
y-e_1, & \zeta_y \le \overleftarrow{\pi}^{\beta,\w}_{y,y-e_1},\\
y-e_2, & \zeta_y > \overleftarrow{\pi}^{\beta,\w}_{y,y-e_1}.
\end{cases}
\ee
Every non-root vertex has exactly one incoming edge, so this defines a tree oriented away from the root $x$.

Let $Q_x^{\beta,\w}$ denote the law of the tree constructed above. A key point that will be used repeatedly in the sequel is that under $Q_x^{\beta,\w}$, the predecessor random variables $(p(y) : y\in x+\bbZ_{\geq 0}^2\setminus\{x\})$ are independent. 

For $y\in x+\bbZ_{\geq 0}^2$, let $\sigma^{x,y}$ denote the unique up-right path in $\cT_x$ from $x$ to $y$. Our next result records the fact that the measure $Q_x^{\beta, \w}$ couples together all point-to-point polymer measures $\poly{x}{y}^{\beta,\w}$ through the single tree $\cT_x$.

\begin{lemma}\label{lem:tree-marginals}
For each $y\in x+\bbZ_{\geq 0}^2$, the law of $\sigma^{x,y}$ under $Q_x^{\beta,\w}$ is $\poly{x}{y}^{\beta,\w}$.
\end{lemma}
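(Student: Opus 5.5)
The plan is a direct computation that exploits the independence of the predecessor choices under $Q_x^{\beta,\w}$. Fix $y\in x+\bbZ_{\geq 0}^2$ and an admissible path $x_{k:n}\in\Path{x}{y}$ with $x_k=x$, $x_n=y$. The path $\sigma^{x,y}$ is obtained by following predecessors backward from $y$: $\sigma^{x,y}_n=y$ and $\sigma^{x,y}_{j}=p(\sigma^{x,y}_{j+1})$ for $k\le j<n$. This reconstruction terminates at $x$, because every predecessor lies in $x+\bbZ_{\geq0}^2$: the backward transition probability toward a site outside the quadrant is zero. Hence
\[
\{\sigma^{x,y}=x_{k:n}\}=\bigcap_{j=k+1}^{n}\{p(x_j)=x_{j-1}\}.
\]
The sites $x_{k+1},\dots,x_n$ are distinct, and the variables $p(\cdot)$ are independent, each a function of its own uniform $\zeta$. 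Therefore
\[
Q_x^{\beta,\w}(\sigma^{x,y}=x_{k:n})=\prod_{j=k+1}^{n}\overleftarrow{\pi}^{\beta,\w}_{x_j,x_{j-1}}.
\]

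The next step is to evaluate this product using \eqref{eq:backward-prob}. Each factor is
\[
\frac{\PF{x}{x_{j-1}}^{\beta,\w}\,e^{\beta\w_{x_{j-1}}}}{\PF{x}{x_j}^{\beta,\w}},
\]
so the partition functions telescope. What remains is
\[
\frac{\PF{x}{x}^{\beta,\w}}{\PF{x}{y}^{\beta,\w}}\,e^{\beta\sum_{j=k}^{n-1}\w_{x_j}}.
\]
Since $\PF{x}{x}^{\beta,\w}=1$ (the single trivial path has an empty weight sum), this equals $\poly{x}{y}^{\beta,\w}(x_{k:n})$ by \eqref{eq:polydef}.

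The one genuine check is that \eqref{eq:parent-map} selects $y-e_1$ with probability exactly $\overleftarrow{\pi}^{\beta,\w}_{y,y-e_1}$ and $y-e_2$ with its complement. This requires the normalization $\overleftarrow{\pi}^{\beta,\w}_{y,y-e_1}+\overleftarrow{\pi}^{\beta,\w}_{y,y-e_2}=1$, which follows from the one-step recursion
\[
\PF{x}{y}^{\beta,\w}=\sum_i \PF{x}{y-e_i}^{\beta,\w}e^{\beta\w_{y-e_i}}.
\]
The boundary conventions must also be handled: on the axes through $x$, the zero numerator forces the predecessor to stay in the quadrant. The case $y=x$ is trivial. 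Since the probabilities match for every admissible path and both sides are probability measures on $\Path{x}{y}$, the laws agree. I expect no real obstacle; the only point needing care is the boundary convention.
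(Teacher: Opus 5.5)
Your proposal is correct and follows the paper's proof essentially verbatim: you write $\{\sigma^{x,y}=x_{k:n}\}$ as the intersection of the independent events $\{p(x_j)=x_{j-1}\}$, take the product of backward transition probabilities, and telescope the partition functions using \eqref{eq:backward-prob}. Your extra remarks on the normalization, on $\PF{x}{x}^{\beta,\w}=1$, and on the boundary convention are correct details that the paper leaves implicit. The boundary convention keeps the predecessor in the quadrant only up to the null event $\zeta_y=0$.
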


\begin{proof}
Fix $y$ with $y\cdot(e_1+e_2)=m$ and a path $\pi_{k:m}=(\pi_k,\dots,\pi_m)\in\Path{x}{y}$. The event $\{\sigma^{x,y}=\pi_{k:m}\}$ occurs if and only if $p(\pi_j)=\pi_{j-1}$ for each $j=k+1,\dots,m$.
By independence of the i.i.d.~field $\zeta$, the predecessor variables are independent under $Q_x^{\beta,\w}$, and hence
\[
Q_x^{\beta,\w}(\sigma^{x,y}=\pi_{k:m})
=\prod_{j=k+1}^m Q_x^{\beta,\w}\bigl(p(\pi_j)=\pi_{j-1}\bigr)
=\prod_{j=k+1}^m \overleftarrow{\pi}^{\beta,\w}_{\pi_j,\pi_{j-1}}.
\]
Substituting \eqref{eq:backward-prob}, we see that
\[
\prod_{j=k+1}^m \overleftarrow{\pi}^{\beta,\w}_{\pi_j,\pi_{j-1}}
=\prod_{j=k+1}^m \frac{\PF{x}{\pi_{j-1}}^{\beta,\w}e^{\beta\w_{\pi_{j-1}}}}{\PF{x}{\pi_j}^{\beta,\w}}
=\frac{e^{\beta\sum_{j=k}^{m-1}\w_{\pi_j}}}{\PF{x}{y}^{\beta,\w}}
=\poly{x}{y}^{\beta,\w}(\pi_{k:m}). \qedhere
\]
\end{proof}

\subsection{Total ordering of rooted extremal Gibbs measures}\label{sec:total-order}
We next work toward proving that the set of extremal Gibbs measures rooted at a site $x$ is totally ordered. As we will see, this essentially follows from the existence of the tree coupling of the finite volume polymer measures, recorded above as Lemma \ref{lem:tree-marginals}.

\begin{lemma}\label{nice-coupling}
    Fix any $\w\in\Omega$ and $x\in\Z^2$ and let $\Pi \in \DLR_x^{\beta,\w}$ be fully supported. Abbreviate $k=x\cdot(e_1+e_2)$ and, as throughout, let $X_{k:\infty}$ denote the path coordinate variables under $\Pi$. Then for each integer $m>k$, the distribution of the concatenation $\sigma^{x,X_m}\concat X_{m:\infty}$, under $Q_x^{\beta,\w}\otimes\Pi$, is equal to $\Pi$.
\end{lemma}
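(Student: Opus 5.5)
We will show the two measures agree on cylinder sets. Since $\pathsa_{k:\infty}$ is generated by the $\pi$-system of cylinders $\{X_{k:n}=x_{k:n}\}$ with $n\ge m$, it suffices to verify, for every $n\ge m$ and every admissible $x_{k:n}$ with $x_k=x$,
\[
(Q_x^{\beta,\w}\otimes\Pi)\bigl((\sigma^{x,X_m}\concat X_{m:\infty})_{k:n}=x_{k:n}\bigr)=\Pi(x_{k:n}).
\]
Here $\sigma^{x,X_m}$ uses the tree, while $X_{m:\infty}$ uses the independent $\Pi$-sample. Cylinders with $n<m$ are then obtained by summing over extensions.

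The key step is to condition on $X_m=x_m$. The event above requires two things: $\sigma^{x,x_m}=x_{k:m}$, and $X_{m:n}=x_{m:n}$. By independence under the product measure, its probability factorizes as
\[
Q_x^{\beta,\w}(\sigma^{x,x_m}=x_{k:m})\,\Pi(X_{m:n}=x_{m:n}).
\]
Lemma \ref{lem:tree-marginals} identifies the first factor as $\poly{x}{x_m}^{\beta,\w}(x_{k:m})$.

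It remains to show that this product equals $\Pi(x_{k:n})$. This is a Markov/Gibbs computation. By the Gibbs property \eqref{eq:Gibbs}, applied at level $m$ and summed over the path beyond, we have $\Pi(X_{k:m}=x_{k:m}\mid X_m=x_m)=\poly{x}{x_m}^{\beta,\w}(x_{k:m})$, and this holds on the event $X_m=x_m$, which has positive mass by full support. Next we need the conditional independence of $X_{k:m}$ and $X_{m:\infty}$ given $X_m$. To see it, note that by \eqref{eq:Gibbs} at level $n$, $\Pi(x_{k:n})=\poly{x}{x_n}^{\beta,\w}(x_{k:n})\Pi(x_n)$. The point-to-point polymer measure itself factorizes as
\[
\poly{x}{x_n}^{\beta,\w}(x_{k:n})=\poly{x}{x_m}^{\beta,\w}(x_{k:m})\cdot\frac{\PF{x}{x_m}^{\beta,\w}\,e^{\beta\sum_{j=m}^{n-1}\w_{x_j}}}{\PF{x}{x_n}^{\beta,\w}}.
\]
Summing over $x_{k:m}$ shows that the second factor times $\Pi(x_n)$ equals $\Pi(X_{m:n}=x_{m:n})$. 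Hence
\[
\Pi(x_{k:n})=\poly{x}{x_m}^{\beta,\w}(x_{k:m})\,\Pi(X_{m:n}=x_{m:n}),
\]
which is exactly the product obtained above.

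The only mild subtlety, and the main thing to get right, is this factorization identity. Once it is written down it is a direct algebraic consequence of \eqref{eq:polydef}. Full support is used only to avoid dividing by zero, and the Gibbs property is applied with the conditioning level equal to $m$. Uniqueness of measures agreeing on the generating $\pi$-system then finishes the proof.
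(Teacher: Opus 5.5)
Your proposal is correct and follows essentially the same route as the paper: factorize by independence under the product measure, identify the tree factor with $\poly{x}{x_m}^{\beta,\w}(x_{k:m})$ via Lemma \ref{lem:tree-marginals}, and recombine using the Gibbs property. The only difference is the last step. The paper cites that $\Pi$ is a Markov chain (from Theorem \ref{thm:DLR-cocycle}) to get $\Pi(x_{k:m}\viiva x_m)=\Pi(x_{k:m}\viiva x_{m:n})$, whereas you derive the needed factorization $\Pi(x_{k:n})=\poly{x}{x_m}^{\beta,\w}(x_{k:m})\,\Pi(X_{m:n}=x_{m:n})$ directly from \eqref{eq:Gibbs} at level $n$, which is valid and slightly more self-contained.
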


\begin{proof}
By Theorem \ref{thm:DLR-cocycle} and equation \eqref{eq:DLR-transition},  $\Pi$ is a Markov chain. Fix integers $n>m>k$ and an admissible path $x_{k:n}$ with $x_k=x$. Then
\begin{align*}
&Q_x^{\beta,\w}\otimes\Pi(\sigma^{x,X_m}=x_{k:m},X_{m:n}=x_{m:n}) =
Q_x^{\beta,\w}(\sigma^{x,x_m}=x_{k:m})\,\Pi(x_{m:n}) =
\poly{x}{x_m}^{\beta,\w}(x_{k:m})\,\Pi(x_{m:n}) \\
&\qquad= \Pi(x_{k:m}\viiva x_m)\,\Pi(x_{m:n}) =
\Pi(x_{k:m}\viiva x_{m:n})\,\Pi(x_{m:n}) =
\Pi(x_{k:n}).
\end{align*}
The first equality is independence, the second is Lemma \ref{lem:tree-marginals}, and the third is
\eqref{eq:Gibbs} with endpoint $x_m$, the fourth is the Markov property, and the last holds by definition.
\end{proof}

The basic structure of our argument will be to use the previous lemma to construct a monotone coupling of two extremal Gibbs measures through a weak limit procedure. Our next result records the fact that any coupling of the type we consider preserves the Gibbs property, even after conditioning on the $\sigma$-algebra of the joint future of the pair of paths, up to the level of the concatenation.

\begin{lemma}
\label{lem:concat-cond}
Fix $\w\in\Omega_0$, $\beta>0$, and $x\in\Z^2$, and abbreviate $k=x\cdot(e_1+e_2)$.
Let $\Pi^1,\Pi^2\in\DLR_x^{\beta,\w}$ be fully supported and let $\Pi^{1,2}$ be any coupling of $\Pi^1$ and $\Pi^2$. Denote by $Q_x^{\beta,\w}\otimes \Pi^{1,2}$ the product measure. For each integer $n>k$, define the concatenated paths
\[
\widehat X^{i,n} \;=\; \sigma^{x,X_n^i}\concat X^i_{n:\infty}, \qquad i\in\{1,2\},
\]
and let $\widetilde\Pi^{1,2,n}$ denote the coupling of $\Pi^1$ and $\Pi^2$ given by the law of $(\widehat X^{1,n},\widehat X^{2,n})$ on $\pathsp_x\times\pathsp_x$ under $Q_x^{\beta,\w}\otimes \Pi^{1,2}$.
Then for any integers $n>m>k$, any finite up-right path segment $x_{k:m}$ with $x_k=x$, and any event
$B\in \pathsa^{(2)}_{m:\infty}$,
\begin{equation}\label{eq:concat-cond}
\widetilde\Pi^{1,2,n}\bigl(X^{1}_{k:m}=x_{k:m},\, B\bigr)
\;=\;
\bfE^{\widetilde\Pi^{1,2,n}}\Bigl[\one_B\;\poly{x}{X^{1}_m}^{\beta,\w}(x_{k:m})\Bigr].
\end{equation}
Equivalently, under $\widetilde\Pi^{1,2,n}$, the conditional distribution of $X^{1}_{k:m}$ given $\pathsa^{(2)}_{m:\infty}$ is $\poly{x}{X^{1}_m}^{\beta,\w}$.
\end{lemma}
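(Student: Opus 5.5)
The plan is to compute the left side of \eqref{eq:concat-cond} directly from the construction, using the independence of the tree $\cT_x$ from the pair $(X^1,X^2)$ under $Q_x^{\beta,\w}\otimes\Pi^{1,2}$. First I would identify the relevant $\sigma$-algebras. For $m<n$, the segment $\widehat X^{i,n}_{m:\infty}$ has two pieces. One is $\sigma^{x,X^i_n}$ restricted to levels $m,\dots,n$, which is determined by the predecessor variables $p(y)$ at sites $y$ on levels $m+1,\dots,n$ together with $X^i_n$. The other is $X^i_{n:\infty}$. The segment $\widehat X^{1,n}_{k:m}=\sigma^{x,\widehat X^{1,n}_m}$ is determined by $\widehat X^{1,n}_m$ and the predecessors at levels $k+1,\dots,m$. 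Since these predecessors are independent of the predecessors at levels above $m$ and of $\Pi^{1,2}$, the law of $\widehat X^{1,n}_{k:m}$ given $\mathcal G=\sigma\bigl(\{p(y):y\cdot(e_1+e_2)\in\lzb m+1,n\rzb\},X^1,X^2\bigr)$ depends only on $\widehat X^{1,n}_m$. Using Lemma~\ref{lem:tree-marginals} applied with endpoint $\widehat X^{1,n}_m$, this law is $\poly{x}{\widehat X^{1,n}_m}^{\beta,\w}$.

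The key steps, in order, are as follows. (i) Fix $z$ on level $m$ and note that on $\{\widehat X^{1,n}_m=z\}$ we have $\widehat X^{1,n}_{k:m}=\sigma^{x,z}$. The random path $\sigma^{x,z}$ depends only on the predecessors at sites $\le z$ on levels $\le m$, which are independent of $\mathcal G$. (ii) Observe that both $B$ evaluated at $(\widehat X^{1,n},\widehat X^{2,n})$ and the event $\{\widehat X^{1,n}_m=z\}$ are $\mathcal G$-measurable, since $B\in\pathsa^{(2)}_{m:\infty}$ only involves the levels $\ge m$ of both concatenated paths. (iii) Compute
\[
\widetilde\Pi^{1,2,n}(X^1_{k:m}=x_{k:m},B)=\sum_{z}Q_x^{\beta,\w}\otimes\Pi^{1,2}\bigl(\sigma^{x,z}=x_{k:m},\,\widehat X^{1,n}_m=z,\,B\bigr).
\]
By independence, this factors as $\poly{x}{z}^{\beta,\w}(x_{k:m})\,Q_x^{\beta,\w}\otimes\Pi^{1,2}(\widehat X^{1,n}_m=z,B)$, using Lemma~\ref{lem:tree-marginals}. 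Summing over $z$ then gives the right side of \eqref{eq:concat-cond}. The sum is effectively just $z=x_m$, since $\poly{x}{z}(x_{k:m})=0$ otherwise. The equivalent conditional-law formulation follows because events of the form $\{X^1_{k:m}=x_{k:m}\}\cap B$ generate $\pathsa_{k:m}\vee\pathsa^{(2)}_{m:\infty}$ for the first coordinate.

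The main point requiring care is step (ii), namely checking that the levels-$\ge m$ portion of $\widehat X^{2,n}$ does not use any predecessor variable at a level $\le m$. The concern arises because $\widehat X^{1,n}$ and $\widehat X^{2,n}$ share the tree, so the two concatenated paths may overlap, and a site on level $m$ could be reached by either path. I will argue this through the structure of the tree. Following $\sigma^{x,X^2_n}$ backward from level $n$ down to level $m$ only queries $p(y)$ at sites on levels $m+1,\dots,n$, and the same is true for $\sigma^{x,X^1_n}$. So conditioning on all of $\mathcal G$ is legitimate, and the lower tree portion below level $m$ remains independent. No other obstruction is expected. The stated hypotheses $\w\in\Omega_0$ and full support guarantee that the relevant objects are well defined, though the identity itself is essentially deterministic in $\w$.
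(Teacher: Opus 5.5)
Your proposal is correct and follows essentially the same route as the paper. The paper conditions on $\sA=\sigma\bigl((p(y):y\ge x,\ y\cdot(e_1+e_2)>m),(X^1_{n:\infty},X^2_{n:\infty})\bigr)$, uses that the levels-$\ge m$ portions of both concatenated paths are $\sA$-measurable while the predecessors at levels $\le m$ are independent of $\sA$, and applies Lemma~\ref{lem:tree-marginals} to identify the conditional probability as $\poly{x}{\widehat X^{1,n}_m}^{\beta,\w}(x_{k:m})$; your explicit sum over the level-$m$ point $z$ is simply an unpacked version of that conditional-expectation step.
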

\begin{proof}
Let $k<m<n$ be as in the statement. Fix any finite path segment $x_{k:m}$ with $x_k=x$ and any $B \in \pathsa_{m:\infty}^{(2)}$. Abbreviate by $\overline{\bfE}$ the expectation under $Q_x^{\beta,\w}\otimes\Pi^{1,2}$.  Observe that $(\wh X^{1,n}_{m:\infty}, \wh X^{2,n}_{m:\infty})$ is measurable with respect to $\sigma((p(y) : y \geq x, y\cdot(e_1+e_2)>m), (X^{1}_{n:\infty}, X^{2}_{n:\infty})) = \sA$. Using the independence of $\sigma(p(y) : y \geq x, y \cdot (e_1+e_2) \leq m)$ and $\sA$ under $\overline{\bfE}$ and Lemma \ref{lem:tree-marginals},
\begin{align*}
\widetilde\Pi^{1,2,n}\bigl(X^{1}_{k:m}=x_{k:m},\, B\bigr) &= \overline{\bfE}[\one_{x_{k:m}}(\wh X^{1,n}_{k:m}) \one_B(\wh X^{1,n}_{m:\infty}, \wh X^{2,n}_{m:\infty}) ] \\
&= \overline{\bfE}[\overline{\bfE}[\one_{x_{k:m}}(\wh X^{1,n}_{k:m})\viiva \sA] \one_B(\wh X^{1,n}_{m:\infty}, \wh X^{2,n}_{m:\infty})] \\
&= \overline{\bfE}[\poly{x}{\widehat X^{1,n}_m}^{\beta,\w}(x_{k:m})\one_B(\wh X^{1,n}_{m:\infty},\wh X^{2,n}_{m:\infty})] = \bfE^{\widetilde\Pi^{1,2,n}}\Bigl[\one_B\;\poly{x}{X^{1}_m}^{\beta,\w}(x_{k:m})\Bigr].\qedhere
\end{align*}
\end{proof}

As an immediate consequence of the previous result,  we see that conditioning any limit point on a joint tail event results in Gibbs-DLR marginals.

\begin{corollary}\label{cor:tail-dlr}
In the setting of Lemma~\ref{lem:concat-cond}, let $\widetilde\Pi^{1,2}$ be any weak limit point of
$\{\widetilde\Pi^{1,2,n} : n\in \bbZ_{\geq k}\}$ on $\pathsp_x\times\pathsp_x$.
Let $A\in\ptail^{(2)}$ satisfy $\widetilde\Pi^{1,2}(A)>0$, and for $i\in\{1,2\}$, let $\nu_i$ be the probability measure on $(\pathsp_x,\pathsa_{k:\infty})$ determined by $\nu_i(\aabullet) = \widetilde\Pi^{1,2}(X^i\in \aabullet\,\viiva A)$. Then $\nu_i\in\DLR_x^{\beta,\w}$.
\end{corollary}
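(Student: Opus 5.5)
The plan is to pass the identity \eqref{eq:concat-cond} of Lemma \ref{lem:concat-cond} to the weak limit, then specialize to tail events and verify the Gibbs property \eqref{eq:Gibbs} directly. Fix $m>k$, a finite up-right segment $x_{k:m}$ with $x_k=x$, and a cylinder event $B\in\pathsa^{(2)}_{m:\infty}$ depending on finitely many coordinates. Both sides of \eqref{eq:concat-cond} are then expectations, under $\widetilde\Pi^{1,2,n}$, of bounded functions of finitely many coordinates. Since the path spaces carry the product-discrete topology, these functions are continuous. Indeed, $\one\{X^1_{k:m}=x_{k:m}\}\one_B$ is a clopen indicator, and $\poly{x}{X^1_m}^{\beta,\w}(x_{k:m})$ depends only on $X^1_m$, which ranges over the finite set of level-$m$ sites, so it is also continuous. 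For all $n>m$ the identity holds for $\widetilde\Pi^{1,2,n}$; passing along the subsequence converging weakly to $\widetilde\Pi^{1,2}$ therefore gives
\[
\widetilde\Pi^{1,2}\bigl(X^1_{k:m}=x_{k:m},B\bigr)=\bfE^{\widetilde\Pi^{1,2}}\bigl[\one_B\,\poly{x}{X^1_m}^{\beta,\w}(x_{k:m})\bigr].
\]

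Next I extend this identity from cylinders to all of $\pathsa^{(2)}_{m:\infty}$. For fixed $x_{k:m}$, both sides define finite measures in $B$ on $\pathsa^{(2)}_{m:\infty}$. Cylinder events form a $\pi$-system that generates this $\sigma$-algebra, and the two measures have equal total mass (take $B$ to be everything). A $\pi$–$\lambda$ argument then shows the identity holds for every $B\in\pathsa^{(2)}_{m:\infty}$.

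Now take $A\in\ptail^{(2)}$ with $\widetilde\Pi^{1,2}(A)>0$. Since $A\in\pathsa^{(2)}_{m:\infty}$ for every $m$, I may set $B=A\cap\{X^1_m=x_m\}$, which lies in $\pathsa^{(2)}_{m:\infty}$. On this event $\poly{x}{X^1_m}^{\beta,\w}(x_{k:m})=\poly{x}{x_m}^{\beta,\w}(x_{k:m})$ is a constant. Dividing by $\widetilde\Pi^{1,2}(A)$ gives
\[
\nu_1(x_{k:m})=\poly{x}{x_m}^{\beta,\w}(x_{k:m})\,\nu_1(x_m),
\]
which is exactly \eqref{eq:Gibbs}. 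Hence $\nu_1\in\DLR_x^{\beta,\w}$. The case $i=2$ is symmetric: Lemma \ref{lem:concat-cond} applies equally to the second coordinate, because the construction concatenates both paths with the same tree $\cT_x$ and Lemma \ref{lem:tree-marginals} applies at the endpoint $X^2_n$ as well.

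The only point requiring care is the weak-limit step. It relies on continuity of the test functions, which holds because the relevant sets at each level are finite and the topology is product-discrete. It also relies on the identity holding for all large $n$ along the convergent subsequence, which it does since it holds for every $n>m$.
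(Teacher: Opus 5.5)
Your proposal is correct and follows essentially the same route as the paper. You pass \eqref{eq:concat-cond} to the weak limit on cylinder events, which works because cylinder indicators are continuous in the product-discrete topology, and extend to all of $\pathsa^{(2)}_{m:\infty}$ by a monotone class/$\pi$--$\lambda$ argument; then you take $B=A\cap\{X^i_m=x_m\}$ and divide by $\widetilde\Pi^{1,2}(A)$, and handling $i=2$ by symmetry of Lemma~\ref{lem:concat-cond} is exactly what the paper does implicitly.
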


\begin{proof}
Fix $m>k$. By Lemma~\ref{lem:concat-cond}, the identity \eqref{eq:concat-cond} holds for each $n>m$ under
$\widetilde\Pi^{1,2,n}$. Considering cylinder sets first and then applying the monotone class theorem \cite[Appendix Theorem 4.3]{Eth-Kur-86}, we see that the identity in \eqref{eq:concat-cond} also holds for any weak limit point $\widetilde\Pi^{1,2}$.

Now take any $A\in\ptail^{(2)}$, $i\in\{1,2\}$, and a finite path segment $x_{k:m}$. Set
\[
C = A\cap\{X^i_m=x_m\}.
\]
We have $C\in \pathsa^{(2)}_{m:\infty}$ and therefore
\[
\widetilde\Pi^{1,2}\bigl(X^i_{k:m}=x_{k:m},\,A\bigr)
=
\bfE^{\widetilde\Pi^{1,2}}\Bigl[\one_{A\cap\{X^i_m=x_m\}}\;\poly{x}{x_m}^{\beta,\w}(x_{k:m})\Bigr]
=
\poly{x}{x_m}^{\beta,\w}(x_{k:m})\;\widetilde\Pi^{1,2}(A, X^i_m=x_m).
\]
Dividing by $\widetilde\Pi^{1,2}(A)$ verifies the DLR equations \eqref{eq:Gibbs} for $\nu_i$. 
\end{proof}

We next record that any weak limit point as in Corollary \ref{cor:tail-dlr} couples together the two measures in such a way that the paths are either equal or strictly ordered, almost surely. This essentially follows from the tree structure of the finite paths under $Q_x^{\beta,\w}$. In the statement below, recall that $X^1 \prec X^2$ means that $X^1 \preceq X^2$ and there exists an index $N$ so that $X^1_{k:N}=X^2_{k:N}$ and for all $n \geq N+1$, $X^1_n \cdot e_1 < X^2_n \cdot e_1$. Similarly, $X^1_{k:m} \prec X^2_{k:m}$ means that there exists $\ell\in \{k,\dots, m-1\}$ so that $X^1_{k:\ell}=X^2_{k:\ell}$ and for $n \in \{\ell+1,\dots,m\}$, $X^1_n \cdot e_1 < X^2_n \cdot e_1$.

\begin{lemma}\label{lem:limit-ordered}
Fix $\w\in\Omega_0$ and $x\in\Z^2$, and let $\wt\Pi^{1,2,n}$ be as in the statement of Lemma \ref{lem:concat-cond}.
Let $\wt\Pi^{1,2}$ be any weak limit point of $\{\wt\Pi^{1,2,n} : n \in \bbZ_{\geq k}\}$ in $\sM_1(\pathsp_x\times\pathsp_x)$.
Then
\[
\wt\Pi^{1,2}\bigl(X^1\prec X^2 \textup{ or }X^1 = X^2 \textup{ or } X^1\succ X^2\bigr)=1.
\]
\end{lemma}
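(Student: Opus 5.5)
Under each $\wt\Pi^{1,2,n}$ the two concatenated paths $\wh X^{1,n}_{k:n}$ and $\wh X^{2,n}_{k:n}$ are the tree paths $\sigma^{x,X^1_n}$ and $\sigma^{x,X^2_n}$ in the single tree $\cT_x$. The plan is to turn the tree structure into a finite-level trichotomy, pass it to the limit level by level, and then rule out ties using $\Omega_0$.

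First, the finite-level statement. Two paths from the root $x$ in a tree agree up to their last common vertex and then never meet again. Planarity of up-right paths then says that once they split at level $\ell$, the sign of $\sigma^{x,y^1}_j\cdot e_1-\sigma^{x,y^2}_j\cdot e_1$ is constant and nonzero for $j\in\{\ell+1,\dots,m\}$. Indeed, it cannot change sign without the paths sharing a vertex, and a shared vertex would force agreement back to the root. Hence for every $m\le n$, $\wt\Pi^{1,2,n}$-a.s.
\[
X^1_{k:m}\prec X^2_{k:m}\ \text{or}\ X^1_{k:m}=X^2_{k:m}\ \text{or}\ X^1_{k:m}\succ X^2_{k:m}.
\]
This is a cylinder event depending only on levels $k,\dots,m$, and the path space is discrete at each level. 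So the event is clopen, weak convergence along the subsequence preserves its probability $1$, and the display holds $\wt\Pi^{1,2}$-a.s.\ for every $m$ simultaneously.

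Next, intersect over $m$. On the resulting full-measure event, either $X^1=X^2$, or the paths separate at some finite level $\ell$. In the latter case they keep a strict sign on every finite window beyond $\ell$, and consistency across $m$ makes the sign the same for all $m$. What this gives is $X^1_j=X^2_j$ for $j\le \ell$ and $X^1_j\cdot e_1<X^2_j\cdot e_1$ for all $j>\ell$ (or the reverse). That is exactly $X^1\prec X^2$ (or $\succ$), and this already yields the claim.

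The one place needing care is that the definition of $\prec$ also requires $X^1\preceq X^2$. This follows from the strict inequality after $\ell$ together with equality before it, so nothing more is needed. I expect the only real obstacle to be the sign-persistence argument at the finite level: one must check that two up-right paths indexed by antidiagonal level cannot cross without sharing a vertex. This holds because their $e_1$-coordinates change by $0$ or $1$ per step, so a strict inequality can only become an equality before it reverses. Notably, neither $\Omega_0$ nor full support is used beyond guaranteeing that the objects in Lemma \ref{lem:concat-cond} are well defined.
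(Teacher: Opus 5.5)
Your proposal is correct and follows the paper's own proof. The paper defines the same finite-level trichotomy event $C_m$ on levels $k,\dots,m$ and notes that it has $\widetilde\Pi^{1,2,n}$-probability one for $n\ge m$, because the prefixes are paths in one tree. Since $C_m$ is a closed cylinder event, this passes to the weak limit, and the result follows by intersecting over $m$. Your no-crossing and sign-consistency arguments simply spell out what the paper leaves implicit. The one flaw is in your opening paragraph, which promises to ``rule out ties using $\Omega_0$.'' No such step occurs or is needed, as you yourself note at the end.
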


\begin{proof}
For each $m>k$ define the closed cylinder event
\[
C_m=\bigl\{X^1_{k:m}\prec X^2_{k:m}\bigr\}\ \cup\ \bigl\{X^1_{k:m} = X^2_{k:m}\bigr\}\ \cup\ \bigl\{X^1_{k:m}\succ X^2_{k:m}\bigr\}.
\]
Fix $m>k$ and take $n\ge m$. Under $\wt\Pi^{1,2,n}$, the first $m$ steps of each coordinate path coincide with the first $m$ steps of the corresponding tree
path $\sigma^{x,X^i_n}$. Because these are paths in a tree,
\[
\wt\Pi^{1,2,n}(C_m)=1\qquad \text{for all }n\ge m.
\]
Passing to any sub-sequential limit, we have for all $m$, $\wt \Pi^{1,2}(C_m)=1$. The result follows.
\end{proof}

We now turn toward the main result of this section. Recall that $\preceq$ denotes the stochastic ordering on path measures, which is induced by the partial order $\preceq$ on paths. 
\begin{proposition}\label{prop:DLR-ordered}
For all $\w\in\Omega_0$, all $\beta\in(0,\infty)$, all $x\in\bbZ^2,$ and all $\Pi^1,\Pi^2 \in \ext \DLR_x^{\beta,\w}$, we have 
\[
\Pi^1 \precneq \Pi^2, \qquad \Pi^2 \precneq \Pi^1, \qquad \text{ or }\qquad\Pi^1=\Pi^2.
\]
\end{proposition}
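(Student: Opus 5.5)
First dispose of trivial measures. By Lemma \ref{lem:Omega0}, an extremal element of $\DLR_x^{\beta,\w}$ is either one of the two trivial measures $\delta_{x+e_1\bbZ_{\ge0}}$, $\delta_{x+e_2\bbZ_{\ge0}}$ or is fully supported. The trivial ray along $e_2$ is the $\preceq$-minimum among all path measures rooted at $x$, and the ray along $e_1$ is the maximum. So any pair involving a trivial measure is ordered (or equal). We may therefore assume that $\Pi^1,\Pi^2$ are both fully supported and extremal.

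Next construct a coupling. Take any coupling $\Pi^{1,2}$ of $\Pi^1$ and $\Pi^2$, for instance the product. Form $\wt\Pi^{1,2,n}$ as in Lemma \ref{lem:concat-cond}; by Lemma \ref{nice-coupling} this is again a coupling of $\Pi^1,\Pi^2$. Since $\pathsp_x\times\pathsp_x$ is compact, we can extract a weak limit point $\wt\Pi^{1,2}$, whose marginals are still $\Pi^1,\Pi^2$. By Lemma \ref{lem:limit-ordered}, almost surely we have $X^1\prec X^2$, $X^1=X^2$, or $X^1\succ X^2$. Consider the three events
\[
A_< = \{X^1\prec X^2\},\quad A_= = \{X^1=X^2\},\quad A_> = \{X^1\succ X^2\}.
\]
Modulo the null set where none of the three occurs, these partition the space.

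The key step, and the main obstacle, is to replace these events by tail events. As written, $A_<$ depends on the initial segment, through the splitting index $N$. On the full-measure set, however, $X^1\prec X^2$ holds iff eventually $X^1_n\cdot e_1<X^2_n\cdot e_1$. Once two paths split with the first strictly left of the second, planarity and the ordering of the whole path forbid rejoining, so this is the same as the tail condition $\{X^1_n\cdot e_1<X^2_n\cdot e_1 \text{ for all large } n\}$. Similarly, $A_>$ corresponds to eventually $>$, and $A_=$ to eventually equal. These three tail conditions are disjoint, and modulo $\wt\Pi^{1,2}$-null sets they coincide with $A_<,A_=,A_>$. Hence each can be taken in $\ptail^{(2)}$.

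Then use extremality. Extremality of a fully supported $\Pi^i$ gives tail triviality of $\Pi^i$; this is a standard consequence of extremality in the DLR setting, since conditioning on a tail event yields a DLR measure. Apply Corollary \ref{cor:tail-dlr}: for each of the three tail events $A$ with positive mass, $\nu_i=\wt\Pi^{1,2}(X^i\in\cdot\mid A)$ belongs to $\DLR_x^{\beta,\w}$. Then
\[
\Pi^i=\sum_A \wt\Pi^{1,2}(A)\,\nu_i^A
\]
is a convex decomposition into DLR measures. By extremality, $\nu_i^A=\Pi^i$ for every $A$ of positive mass, for both $i=1,2$.

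Finally, read off the order. Suppose $A_<$ has positive mass. Then $\wt\Pi^{1,2}(\cdot\mid A_<)$ is a coupling of $\Pi^1$ and $\Pi^2$ under which $X^1\preceq X^2$, so $\Pi^1\preceq\Pi^2$. Likewise, positive mass of $A_>$ gives $\Pi^2\preceq\Pi^1$, and positive mass of $A_=$ gives $\Pi^1=\Pi^2$ directly, since the conditioned coupling is then the diagonal.

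Since $\preceq$ is antisymmetric on measures (by Strassen, using increasing cylinder events), any two of these conclusions force $\Pi^1=\Pi^2$. Otherwise exactly one of $\Pi^1\precneq\Pi^2$, $\Pi^2\precneq\Pi^1$ holds. This gives the trichotomy.
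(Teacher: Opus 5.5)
Your proposal is correct and follows the paper's proof essentially step for step: reduce to fully supported measures, build the tree-concatenation coupling and take a weak limit, and use Lemma \ref{lem:limit-ordered} to identify $\{X^1\prec X^2\}$, $\{X^1=X^2\}$, $\{X^1\succ X^2\}$ with tail events of eventual strict order or equality. Corollary \ref{cor:tail-dlr} and extremality then show that conditioning on any positive-mass event gives an ordered coupling of $\Pi^1,\Pi^2$. One small remark: the strict separation after the splitting index is built into the definition of $\prec$, so you do not need the planarity argument at that step.
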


\begin{proof}
We have $\delta_{x+\bbZ_{\geq 0}e_2}\precneq\Pi\precneq\delta_{x+\bbZ_{\geq 0}e_1}$ for all $\Pi\in\DLR_x^{\beta,\w}\;\setminus\;\{\delta_{x+\bbZ_{\geq 0} e_2}, \delta_{x+\bbZ_{\geq 0} e_1}\}$. These excluded trivial measures are the only extreme non-fully supported DLR solutions rooted at $x$ for $\w \in \Omega_0$.  It is enough to prove the claim for fully supported $\Pi^1,\Pi^2\in\ext\DLR_x^{\beta,\w}$.

Let $\wt\Pi^{1,2,n}$ be as in the statement of Lemma \ref{lem:concat-cond}.
Let $\wt\Pi^{1,2}$ be any weak limit point of $\{\wt\Pi^{1,2,n} : n \in \bbZ_{\geq k}\}$. Define the tail-measurable asymptotic ordering and equality events
\begin{align*}
A_{\prec} &= \{X^1_m \cdot e_1 < X^2_m \cdot e_1 \text{ for all sufficiently large }m\} \in \ptail^{(2)},\\
A_{\succ} &= \{X^1_m \cdot e_1 > X^2_m \cdot e_1 \text{ for all sufficiently large }m\} \in \ptail^{(2)}, \\
A_{=} &= \{X^1_m = X^2_m \text{ for all sufficiently large }m\} \in \ptail^{(2)}.
\end{align*}
By Lemma \ref{lem:limit-ordered}, we have
\[
    \wt \Pi^{1,2}(\{X^1 \prec X^2\}\; \Delta \; A_{\prec}) = \wt \Pi^{1,2}(\{X^1 \succ X^2\}\; \Delta \; A_{\succ}) = \wt \Pi^{1,2}(\{X^1 = X^2\}\; \Delta \; A_{=})=0
\]
and 
\[
\wt \Pi^{1,2}(A_{\prec}) + \wt \Pi^{1,2}(A_{\succ}) + \wt \Pi^{1,2}(A_=)=1.
\]
By the law of total probability (with the understanding that $0$ times an undefined quantity is zero), we have for any $B\in\pathsa_{k:\infty}$ and $i\in\{1,2\}$
\begin{align*}
\Pi^i(B) = \wt \Pi^{1,2}(X^i\in B) &= \wt \Pi^{1,2}(X^i\in B\viiva A_{\prec})\tspb\wt \Pi^{1,2}(A_{\prec}) + \wt \Pi^{1,2}(X^i\in B\viiva A_=)\tspb\wt \Pi^{1,2}(A_=) \\
&\qquad+ \wt \Pi^{1,2}(X^i\in B\viiva A_{\succ})\tspb\wt \Pi^{1,2}(A_{\succ}).
\end{align*}
We can apply Corollary \ref{cor:tail-dlr}, the hypothesis that $\Pi^1,\Pi^2 \in \ext \DLR_x^{\beta,\w}$, and the previous expression to see that if $A\in\{A_{\prec}, A_=, A_{\succ}\}$ has $\wt \Pi^{1,2}(A)>0$, then
\[
\Pi^i(\aabullet) = \wt \Pi^{1,2}(X^i \in\aabullet\viiva A).
\]
Thus, for any $\sigg\in\{\prec,=,\succ\}$, if $\wt \Pi^{1,2}(A_\sig)>0$, then $\wt \Pi^{1,2}(\aabullet \viiva A_\sig)$ is a coupling of $\Pi^1$ and $\Pi^2$ under which $\{X^1 \sigg X^2\}$ holds almost surely. The result follows, since the three events are mutually exclusive. 
\end{proof}

The proof of Proposition \ref{prop:DLR-ordered} gives the following corollary that will be used to prove closedness.

\begin{corollary}\label{cor:ordered-tree-limit}
Fix $\w\in\Omega_0$, $\beta>0$, and $x\in\Z^2$. Let $\Pi^1,\Pi^2\in\ext\DLR_x^{\beta,\w}$ be fully supported with $\Pi^1\precneq \Pi^2$. In the setting of Lemma \ref{lem:concat-cond}, any weak limit point
$\widetilde\Pi^{1,2}$ of $\{\widetilde\Pi^{1,2,n}:n\in\bbZ_{\geq k}\}$ satisfies
\[
\widetilde\Pi^{1,2}(X^1\prec X^2)=1.
\]
\end{corollary}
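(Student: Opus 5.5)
The plan is to rerun the proof of Proposition \ref{prop:DLR-ordered} and use the hypothesis $\Pi^1\precneq\Pi^2$ to exclude the two alternatives to $X^1\prec X^2$. Let $\widetilde\Pi^{1,2}$ be a weak limit point of $\{\widetilde\Pi^{1,2,n}\}$, and let $A_\prec,A_=,A_\succ\in\ptail^{(2)}$ be the events from that proof. By Lemma \ref{lem:limit-ordered}, these three events partition $\pathsp_x\times\pathsp_x$ up to $\widetilde\Pi^{1,2}$-null sets. Moreover, up to null sets they coincide with $\{X^1\prec X^2\}$, $\{X^1=X^2\}$ and $\{X^1\succ X^2\}$. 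It therefore suffices to show $\widetilde\Pi^{1,2}(A_=)=0$ and $\widetilde\Pi^{1,2}(A_\succ)=0$.

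For this we use the extremality step from the proof of Proposition \ref{prop:DLR-ordered}. By Corollary \ref{cor:tail-dlr}, for any $A\in\{A_\prec,A_=,A_\succ\}$ with positive mass, the conditional marginals $\widetilde\Pi^{1,2}(X^i\in\cdot\,|\,A)$ lie in $\DLR_x^{\beta,\w}$. The law of total probability writes $\Pi^i$ as a convex combination of these conditional marginals. Extremality of $\Pi^i$ then forces $\widetilde\Pi^{1,2}(X^i\in\cdot\,|\,A)=\Pi^i$ for every such $A$, and for both $i=1,2$.

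Suppose $\widetilde\Pi^{1,2}(A_=)>0$. Conditioning on $A_=$ gives a coupling of $\Pi^1$ and $\Pi^2$ under which $X^1=X^2$ almost surely, so $\Pi^1=\Pi^2$. This contradicts $\Pi^1\neq\Pi^2$.

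Suppose instead $\widetilde\Pi^{1,2}(A_\succ)>0$. Conditioning on $A_\succ$ gives a coupling under which $X^2\preceq X^1$ almost surely, so $\Pi^2\preceq\Pi^1$. Combined with $\Pi^1\preceq\Pi^2$, antisymmetry of the stochastic order gives $\Pi^1=\Pi^2$, again a contradiction. Antisymmetry holds because both measures assign equal mass to every increasing cylinder event (Strassen), and, for paths with a common root, the increasing cylinder events $\{X_n\cdot e_1\ge a\}$ and their finite intersections determine the finite-dimensional laws.

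Hence $\widetilde\Pi^{1,2}(A_\prec)=1$, and Lemma \ref{lem:limit-ordered} upgrades this to $\widetilde\Pi^{1,2}(X^1\prec X^2)=1$. The only delicate point is the antisymmetry claim, namely that the events $\{X_{n_1}\cdot e_1\ge a_1,\dots\}$ form a $\pi$-system generating $\pathsa_{k:\infty}$. This is routine, since up-right paths from a fixed root are determined by the sequence $(X_n\cdot e_1)_{n\ge k}$.
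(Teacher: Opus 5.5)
Your proposal is correct and is the argument the paper intends. The paper derives this corollary directly from the proof of Proposition \ref{prop:DLR-ordered}: extremality forces every positive-mass conditional coupling on $A_\prec$, $A_=$, $A_\succ$ to have marginals $\Pi^1,\Pi^2$, and the hypothesis $\Pi^1\precneq\Pi^2$ then excludes $A_=$ and $A_\succ$; your antisymmetry argument spells out a step the paper leaves implicit.
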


\begin{remark} \label{rmk:BFS5}
A similar result showing almost sure eventual separation of paths generated by the Gibbs-DLR measures coming from the direction-indexed Busemann process was previously proved in Theorem 3.10 of \cite{Bat-Fan-Sep-25}. This was in the usual monotone coupling obtained by using common i.i.d.~Uniform[0,1] random variables to sample from the transition probabilities. To guarantee that the Gibbs-DLR measures considered there are extreme, the shape function was assumed to satisfy the property that endpoints of linear segments are differentiability points (Lemma 5.12 of \cite{Jan-Ras-18-arxiv}).
\end{remark}

\subsection{$\ext \DLR_x^{\beta,\w}$ is closed}
We next turn to showing that the set of extremal Gibbs measures is closed. The next few results are deterministic statements coming from the Gibbs property. 
\begin{proposition} {\rm\cite[Theorem 5.2]{Jan-Ras-20-aop}} \label{prop:DLR-transition}
For all $\w\in\Omega$, all $x\in\bbZ^2$, and all $\beta\in(0,\infty)$, if $\Pi\in\DLR_x^{\beta,\w}$ is fully supported, then for all $y \geq x$ and all $m > y\cdot(e_1+e_2)$, the transition probabilities $\pi^{\Pi}$ satisfy
\[
\pi^{\Pi}(y,y+e_i)=\Pi(y+e_i \viiva y) = e^{\beta \w_y}\frac{\bfE^{\Pi}\left[\frac{\PF{y+e_i}{X_m}^{\beta,\w}}{\PF{x}{X_m}^{\beta,\w}}\right]}{\bfE^{\Pi}\left[\frac{\PF{y}{X_m}^{\beta,\w}}{\PF{x}{X_m}^{\beta,\w}}\right]}
\]
\end{proposition}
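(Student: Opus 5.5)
Fix $y\ge x$, $m>\ell:=y\cdot(e_1+e_2)$, and $i\in\{1,2\}$. The plan is to express $\Pi(y)$ and $\Pi(y,y+e_i)$, the probability that the path passes through $y$ and then $y+e_i$, as $\Pi$-expectations of functions of $X_m$ alone. The transition probability is then the ratio of these two expectations.

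\emph{First step: express $\Pi(y)$.} Condition on $X_m$ and use the Gibbs property \eqref{eq:Gibbs}. Given $X_m=z$, the law of $X_{k:m}$ is $\poly{x}{z}^{\beta,\w}$. Summing over the paths from $x$ to $z$ that pass through $y$ gives
\[
\Pi(X_\ell=y\viiva X_m=z)=\frac{\PF{x}{y}^{\beta,\w}\,\PF{y}{z}^{\beta,\w}}{\PF{x}{z}^{\beta,\w}}.
\]
This uses the factorization of the weight $e^{\beta\sum_{v\in\pi\setminus\{z\}}\w_v}$ at $y$: the vertex $y$ is counted in the second factor and excluded from the first. Note that $\PF{y}{z}^{\beta,\w}=0$ when $y\not\le z$. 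Averaging over $X_m$ yields
\[
\Pi(y)=\PF{x}{y}^{\beta,\w}\,\bfE^{\Pi}\bigl[\PF{y}{X_m}^{\beta,\w}/\PF{x}{X_m}^{\beta,\w}\bigr].
\]

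\emph{Second step: the same computation for paths through both $y$ and $y+e_i$.} Such paths factor as (a path $x\to y$) followed by (the step $y\to y+e_i$, contributing $e^{\beta\w_y}$) followed by (a path $y+e_i\to z$). This gives
\[
\Pi(y,y+e_i)=\PF{x}{y}^{\beta,\w}\,e^{\beta\w_y}\,\bfE^{\Pi}\bigl[\PF{y+e_i}{X_m}^{\beta,\w}/\PF{x}{X_m}^{\beta,\w}\bigr].
\]
Here $m\ge \ell+1$ is needed so that $y+e_i$ lies at a level $\le m$; this is guaranteed by the hypothesis $m>\ell$.

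\emph{Conclusion.} Full support gives $\Pi(y)>0$, so we may divide. The factor $\PF{x}{y}^{\beta,\w}$ cancels and the displayed formula follows. Since $\Pi$ is a Markov chain (Theorem \ref{thm:DLR-cocycle}), $\Pi(y+e_i\viiva y)$ is indeed the transition probability $\pi^{\Pi}(y,y+e_i)$.

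The only point needing care is the bookkeeping convention that the endpoint weight is excluded from partition functions. With this convention, the concatenation identity $\sum_{\pi\ni y}e^{\beta\sum_{v\in\pi\setminus\{z\}}\w_v}=\PF{x}{y}^{\beta,\w}\PF{y}{z}^{\beta,\w}$ is exact. There is no real obstacle: the argument is a finite computation from \eqref{eq:Gibbs}. In particular the ratio does not depend on $m$, which is consistent with Lemma \ref{lem:DLR-mg}.
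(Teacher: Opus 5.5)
Your proposal is correct. Conditioning on $X_m$ via \eqref{eq:Gibbs} and splitting paths at $y$ (and at the step $y\to y+e_i$) gives exactly your two identities, and full support lets you divide.

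The paper gives no proof of its own here; it only cites \cite[Theorem 5.2]{Jan-Ras-20-aop}. Your direct computation is the natural argument and agrees with the cocycle description \eqref{eq:Bus-cocycle-def}--\eqref{eq:DLR-transition}. Indeed, your first-step identity $\Pi(v)=\PF{x}{v}^{\beta,\w}\,\bfE^{\Pi}[\PF{v}{X_m}^{\beta,\w}/\PF{x}{X_m}^{\beta,\w}]$, applied at $v=y$ and $v=y+e_i$, turns $e^{\beta\w_y}\Pi(y+e_i)\PF{x}{y}^{\beta,\w}/(\Pi(y)\PF{x}{y+e_i}^{\beta,\w})$ into the stated ratio.
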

We next record the fact that a Gibbs measure is extreme if and only if it is tail-trivial. Once again, this is a deterministic statement. The proof is standard and can be found in any textbook covering Gibbs measures in statistical mechanical models. See, for example, Corollary 7.4 in \cite{Geo-88} or Theorem 7.24 in \cite{Ras-Sep-15-ldp}.

\begin{lemma}\label{lem:DLR-triv} For all $\w\in\Omega$, $x\in\bbZ^2$, and $\beta\in(0,\infty),$ $\Pi \in \ext \DLR_x^{\beta,\w}$ if and only if $\Pi \in \DLR_x^{\beta,\w}$ and for all $A \in \ptail$, $\Pi(A)\in\{0,1\}.$
\end{lemma}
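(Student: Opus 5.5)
The plan is to prove the two directions separately, using the standard Gibbs-measure argument adapted to the rooted setting. The key structural input is that, conditionally on the tail-type information $\pathsa_{n:\infty}$, a Gibbs measure looks locally like the finite-volume polymer. Concretely, \eqref{eq:Gibbs} together with the Markov property gives, for $\Pi\in\DLR_x^{\beta,\w}$, every $n>k$, every cylinder $C\in\pathsa_{k:n}$ and every $B\in\pathsa_{n:\infty}$,
\[
\Pi(C\cap B)=\bfE^{\Pi}\bigl[\one_B\,\poly{x}{X_n}^{\beta,\w}(C)\bigr].
\]
This is the same computation as in the proof of Lemma \ref{nice-coupling}: condition on $X_{n:\infty}$ and use that the conditional law of $X_{k:n}$ given $X_n$ is $\poly{x}{X_n}^{\beta,\w}$.

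\emph{Tail-trivial implies extreme.} Suppose $\Pi=\alpha\Pi^1+(1-\alpha)\Pi^2$ with $\Pi^i\in\DLR_x^{\beta,\w}$ and $\alpha\in(0,1)$. Then $\Pi^1\ll\Pi$, so it has a density $f=d\Pi^1/d\Pi$. I will show $f$ is $\ptail$-measurable. Since $\Pi^1\le\alpha^{-1}\Pi$, we have $f\le\alpha^{-1}$. Let $f_n=\bfE^\Pi[f\,|\,\pathsa_{n:\infty}]$. The displayed identity, applied to both $\Pi^1$ and $\Pi$, gives $\Pi^1(C\cap B)=\bfE^\Pi[f_n\one_B\,\poly{x}{X_n}^{\beta,\w}(C)]=\bfE^\Pi[f_n\one_{C\cap B}]$. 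By a $\pi$-$\lambda$ argument over sets $C\cap B$, this shows $f_n$ is also a density of $\Pi^1$, so $f=f_n$ $\Pi$-a.s. for every $n$. Hence $f$ is measurable with respect to the $\Pi$-completion of $\ptail$. Tail triviality then forces $f$ to be a.s. constant, so $f=1$ and $\Pi^1=\Pi$. The same argument gives $\Pi^2=\Pi$.

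\emph{Extreme implies tail-trivial.} Suppose $A\in\ptail$ with $0<\Pi(A)<1$, and set $\Pi^A=\Pi(\cdot\,|\,A)$. Since $A\in\pathsa_{n:\infty}$ for every $n$, the displayed identity applied to $C\cap(A\cap\{X_n=x_n\})$ gives $\Pi^A(x_{k:n})=\poly{x}{x_n}^{\beta,\w}(x_{k:n})\Pi^A(x_n)$. This is the same computation as in Corollary \ref{cor:tail-dlr}, and it shows $\Pi^A\in\DLR_x^{\beta,\w}$. The same holds for $\Pi^{A^c}$. Then $\Pi=\Pi(A)\Pi^A+\Pi(A^c)\Pi^{A^c}$ is a nontrivial decomposition, because $\Pi^A(A)=1\ne\Pi(A)$. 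This contradicts extremality.

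The main point needing care is in the first direction: identifying the density as tail measurable via the $\pi$-$\lambda$ step, and handling null sets through completion. Everything else is routine.
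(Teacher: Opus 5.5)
Your proof is correct and is the standard textbook argument: a Gibbs measure absolutely continuous with respect to another has a tail-measurable density, and conditioning on a nontrivial tail event preserves the DLR equations. The paper does not write this out and instead cites Georgii and Rassoul-Agha--Sepp\"al\"ainen. One minor point: the lemma covers arbitrary, not necessarily fully supported, $\Pi$, so the conditioning identity given $X_{n:\infty}$ should be derived directly from the DLR equations at levels $m>n$ (which is immediate), rather than by analogy with Lemma~\ref{nice-coupling}, whose proof goes through the fully supported cocycle representation.
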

Our next lemma reduces this condition to one which is easier to check, involving finite paths.
\begin{lemma}\label{lem:DLR-triv-finite}
For all $\w\in \Omega$,  $x\in\bbZ^2$, and $\beta\in(0,\infty)$, $\Pi \in \ext \DLR_x^{\beta,\w}$ if and only if $\Pi \in \DLR_x^{\beta,\w}$ and for all finite paths $x_{k:m}$ with $x_k = x$, 
\begin{align}
\Pi(x_{k:m}\viiva \ptail) = \Pi(x_{k:m}) \qquad \Pi\text{-almost surely}.\label{eq:finite-trivial}
\end{align}
\end{lemma}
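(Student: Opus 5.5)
By Lemma \ref{lem:DLR-triv}, it suffices to show that for $\Pi\in\DLR_x^{\beta,\w}$, tail triviality of $\Pi$ is equivalent to \eqref{eq:finite-trivial} holding for every finite path $x_{k:m}$ with $x_k=x$. The forward direction is immediate. If $\Pi$ is tail trivial, then every $\ptail$-measurable random variable is $\Pi$-a.s.\ constant. In particular $\Pi(x_{k:m}\viiva\ptail)$ is a.s.\ equal to its expectation, which is $\Pi(x_{k:m})$.

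For the reverse direction, assume \eqref{eq:finite-trivial} holds for all finite paths and fix $A\in\ptail$. The plan is to show $\Pi(A\cap C)=\Pi(A)\Pi(C)$ for every cylinder event $C\in\pathsa_{k:\infty}$, that is, $A$ is independent of $\pathsa_{k:\infty}$. Since $A\in\ptail\subset\pathsa_{k:\infty}$, applying this with $C$ replaced by $A$ gives $\Pi(A)=\Pi(A)^2$, so $\Pi(A)\in\{0,1\}$.

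Independence from cylinders goes as follows. Any event in $\pathsa_{k:m}$ is a finite disjoint union of events $\{X_{k:m}=x_{k:m}\}$, because the paths are rooted at $x$ and take finitely many values up to level $m$. For such an event,
\[
\Pi(A\cap\{X_{k:m}=x_{k:m}\})=\bfE^\Pi\bigl[\one_A\,\Pi(x_{k:m}\viiva\ptail)\bigr]=\Pi(A)\,\Pi(x_{k:m}),
\]
using $A\in\ptail$ and then \eqref{eq:finite-trivial}. Summing over the finitely many paths gives independence of $A$ from $\pathsa_{k:m}$ for every $m$. The union $\bigcup_m\pathsa_{k:m}$ is a $\pi$-system generating $\pathsa_{k:\infty}$, so Dynkin's $\pi$--$\lambda$ theorem extends independence to all of $\pathsa_{k:\infty}$.

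This reverse direction is the only part with any content, and its key point is that the finite cylinders generate the full path $\sigma$-algebra. Once that is noted the argument is routine; the Gibbs property enters only through Lemma \ref{lem:DLR-triv}.
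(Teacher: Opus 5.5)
Your proposal is correct and takes essentially the same route as the paper, which also checks $\bfE^\Pi[\one_A Y]=\bfE^\Pi[Y]\,\Pi(A)$ for $A\in\ptail$ and cylinder indicators $Y$, extends this to all of $\pathsa_{k:\infty}$ by a monotone class argument (your $\pi$--$\lambda$ version is equivalent), and then takes $Y=\one_A$. In both arguments the forward direction follows immediately from tail triviality via Lemma \ref{lem:DLR-triv}.
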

\begin{proof}
If $\Pi\in \ext \DLR_{x}^{\beta,\w}$, then the claim follows from tail triviality, so it suffices to show the reverse implication. 

Consider the class of bounded $\pathsa_{k:\infty}$-measurable random variables $Y$ for which
\begin{align}
\bfE^{\Pi}[\one_A Y] = \bfE^{\Pi}[Y]\Pi(A)\label{eq:Y-trivial}
\end{align}
holds for all $A\in \ptail$.  The hypothesis \eqref{eq:finite-trivial} says that this holds for cylinder events $\{X_{k:m}=x_{k:m}\}$. Because $\pathsa_{k:\infty}$ is generated by these cylinder events, the monotone class theorem \cite[Appendix Theorem 4.3]{Eth-Kur-86} implies that if \eqref{eq:finite-trivial} holds for all cylinder events, then \eqref{eq:Y-trivial} holds for all bounded $\pathsa_{k:\infty}$-measurable random variables $Y$. Tail triviality now follows by considering $Y=\one_A$ where $A\in \ptail.$
\end{proof}

We next record a quick consequence of Lemma \ref{lem:DLR-mg}, Proposition \ref{prop:DLR-transition}, Lemma \ref{lem:DLR-triv}, and martingale convergence.

\begin{corollary}\label{cor:DLR-mglim}
For all $\w\in\Omega$, $x\in\bbZ^2$, and $\beta \in (0,\infty)$ if $\Pi \in \ext \DLR_x^{\beta,\w}$ is fully supported, then for all $y \geq x$ and $i\in\{1,2\}$, the transition probabilities satisfy
\[
\pi^{\Pi}_{y,y+e_i} = e^{\beta \w_y}\lim_{n\to\infty} \frac{\PF{y+e_i}{X_n}}{\PF{y}{X_n}} \qquad \text{ }\Pi\text{-almost surely}.
\]
\end{corollary}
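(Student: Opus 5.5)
The plan is to combine the transition-probability formula of Proposition \ref{prop:DLR-transition} with the backward martingale of Lemma \ref{lem:DLR-mg} and tail triviality. Fix $y\ge x$ and $i\in\{1,2\}$, and take $m>(y+e_i)\cdot(e_1+e_2)$. By Lemma \ref{lem:DLR-mg}, applied once with the point $y$ and once with $y+e_i$ in place of $y$ (note $x\vee y=y$ and $x\vee(y+e_i)=y+e_i$), the processes
\[
M^{y}_n=\frac{\PF{y}{X_n}^{\beta,\w}}{\PF{x}{X_n}^{\beta,\w}},\qquad M^{y+e_i}_n=\frac{\PF{y+e_i}{X_n}^{\beta,\w}}{\PF{x}{X_n}^{\beta,\w}}
\]
are nonnegative $\Pi$-backward martingales with respect to $(\pathsa_{n:\infty})_n$. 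Their integrability is part of that lemma; concretely, $\bfE^\Pi[M^y_n]=\Pi(X\text{ visits }y)/(\PF{x}{y}e^{\beta\w_y})$-type identities from the Gibbs property give finiteness.

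By the backward martingale convergence theorem, $M^{y}_n\to \bfE^\Pi[M^y_m\viiva \ptail]$ and $M^{y+e_i}_n\to \bfE^\Pi[M^{y+e_i}_m\viiva\ptail]$, $\Pi$-almost surely and in $L^1$. Extremality and Lemma \ref{lem:DLR-triv} make $\ptail$ $\Pi$-trivial, so these limits are the constants $\bfE^\Pi[M^y_m]$ and $\bfE^\Pi[M^{y+e_i}_m]$, which do not depend on $m$ by the martingale property.

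It remains to divide the two limits. We need $\bfE^\Pi[M^y_m]>0$. This holds because $\Pi$ is fully supported, so $\Pi(X_\ell=y)>0$ with $\ell=y\cdot(e_1+e_2)$. On the event $\{X_m\ge y\}$, which has positive probability, we also have $\PF{y}{X_m}>0$. Hence the ratio
\[
\frac{M^{y+e_i}_n}{M^y_n}=\frac{\PF{y+e_i}{X_n}}{\PF{y}{X_n}}
\]
converges $\Pi$-almost surely to $\bfE^\Pi[M^{y+e_i}_m]/\bfE^\Pi[M^y_m]$, since the denominator limit is a strictly positive constant. In particular, $M^y_n>0$ for all large $n$, almost surely, so the ratio is eventually well defined. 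Multiplying by $e^{\beta\w_y}$ and invoking Proposition \ref{prop:DLR-transition} identifies the limit with $\pi^\Pi_{y,y+e_i}$.

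The only delicate point is the positivity of the denominator's limit, which is needed to pass from convergence of numerator and denominator separately to convergence of the ratio. Tail triviality makes this limit a constant, and full support makes that constant positive.
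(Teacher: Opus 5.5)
Your proposal is correct and follows the paper's argument: backward martingale convergence for the two ratios from Lemma \ref{lem:DLR-mg}, tail triviality from Lemma \ref{lem:DLR-triv} to make both limits constants, positivity of the denominator, and Proposition \ref{prop:DLR-transition} to identify the quotient (the same computation appears in the proof of Lemma \ref{lem:Busgeo}). One cosmetic slip: the Gibbs property gives $\bfE^\Pi[M^y_n]=\Pi(X_\ell=y)/\PF{x}{y}^{\beta,\w}$ without the extra factor $e^{\beta\w_y}$, and this identity gives $\bfE^\Pi[M^y_m]>0$ directly from full support.
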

We will use
the following characterization of extremality several times.

\begin{lemma}\label{lem:conditional-transition-sufficient}
For $\w\in\Omega$, $\beta\in(0,\infty)$ and fully supported $\Pi\in\DLR_x^{\beta,\w}$, $\Pi\in \ext \DLR_x^{\beta,\w}$ if and only if
for all $y \geq x$, all $j \in \{1,2\}$, and all $m>y\cdot(e_1+e_2),$
\be\begin{aligned}
\pi^{\Pi}_{y,y+e_j}
\bfE^\Pi\biggl[\frac{\PF{y}{X_m}^{\beta,\w}}{\PF{x}{X_m}^{\beta,\w}}\,\bigg\vert\,\ptail\biggr]
=
e^{\beta \w_y}
\bfE^\Pi\biggl[\frac{\PF{y+e_j}{X_m}^{\beta,\w}}{\PF{x}{X_m}^{\beta,\w}}\,\bigg\vert\,\ptail\biggr].
\end{aligned}\label{eq:conditional-transition-sufficient}\ee
\end{lemma}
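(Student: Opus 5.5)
We prove both directions by comparing \eqref{eq:conditional-transition-sufficient} with the unconditional identity in Proposition \ref{prop:DLR-transition}, and then use Lemmas \ref{lem:DLR-triv} and \ref{lem:DLR-triv-finite} to translate between tail triviality and statements about finite cylinders.

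\emph{Extremal $\Rightarrow$ \eqref{eq:conditional-transition-sufficient}.} By Lemma \ref{lem:DLR-triv}, an extremal $\Pi$ has trivial tail. The tail $\sigma$-algebra $\ptail$ is therefore $\Pi$-trivial, so every conditional expectation given $\ptail$ equals the corresponding unconditional expectation $\Pi$-almost surely. Substituting this into \eqref{eq:conditional-transition-sufficient} reduces it to the identity in Proposition \ref{prop:DLR-transition}, multiplied through by the denominator. This direction is immediate.

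\emph{\eqref{eq:conditional-transition-sufficient} $\Rightarrow$ extremal.} We verify \eqref{eq:finite-trivial} and then invoke Lemma \ref{lem:DLR-triv-finite}. The plan is to compute $\Pi(x_{k:m}\viiva\ptail)$ directly and show that it equals the deterministic number $\Pi(x_{k:m})$.

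Fix a cylinder $x_{k:m}$ and any $n>m$. The Gibbs property \eqref{eq:Gibbs}, applied with endpoint $X_n$ and summed over the continuations from $x_m$ to $X_n$, gives the conditional probability of the cylinder given $\pathsa_{n:\infty}$:
\[
\Pi(x_{k:m}\viiva\pathsa_{n:\infty})=e^{\beta\sum_{r=k}^{m-1}\w_{x_r}}\frac{\PF{x_m}{X_n}^{\beta,\w}}{\PF{x}{X_n}^{\beta,\w}}.
\]
Since $\ptail\subset\pathsa_{n:\infty}$, the tower property yields
\[
\Pi(x_{k:m}\viiva\ptail)=e^{\beta\sum_{r=k}^{m-1}\w_{x_r}}\,\bfE^\Pi\Bigl[\frac{\PF{x_m}{X_n}}{\PF{x}{X_n}}\Bigm|\ptail\Bigr].
\]
Write $g(y)=\bfE^\Pi\bigl[\PF{y}{X_n}/\PF{x}{X_n}\bigm|\ptail\bigr]$. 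By Lemma \ref{lem:DLR-mg}, $g(y)$ does not depend on $n$ once $n$ is large. Note also that $g(x)=1$.

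Iterate \eqref{eq:conditional-transition-sufficient} along the path $x_{k:m}$. At each step it reads $g(x_{r+1})=e^{-\beta\w_{x_r}}\pi^\Pi_{x_r,x_{r+1}}\,g(x_r)$. Telescoping from $r=k$ to $r=m-1$ gives
\[
e^{\beta\sum_{r=k}^{m-1}\w_{x_r}}\,g(x_m)=\prod_{r=k}^{m-1}\pi^\Pi_{x_r,x_{r+1}}=\Pi(x_{k:m}).
\]
The right-hand side is deterministic, so $\Pi(x_{k:m}\viiva\ptail)=\Pi(x_{k:m})$ $\Pi$-almost surely. This is exactly \eqref{eq:finite-trivial}, and Lemma \ref{lem:DLR-triv-finite} then gives $\Pi\in\ext\DLR_x^{\beta,\w}$.

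The main point needing care is the index bookkeeping in this second direction. Hypothesis \eqref{eq:conditional-transition-sufficient} must be applied with one common $n$ at every vertex $y=x_r$, $r<m$, and this is legitimate because the hypothesis is assumed for every $m>y\cdot(e_1+e_2)$. We should also check that the conditional formula above holds jointly in the endpoint $X_n$, which follows from the Markov property of $\Pi$ together with \eqref{eq:Gibbs}.
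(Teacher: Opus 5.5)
Your proof is correct and follows the paper's argument. The forward direction uses tail triviality together with Proposition \ref{prop:DLR-transition}, and the converse iterates \eqref{eq:conditional-transition-sufficient} forward from $g(x)=1$ and then invokes Lemma \ref{lem:DLR-triv-finite}; like the paper's induction, this never divides by a possibly vanishing conditional expectation. The only difference is cosmetic: you pass from $\pathsa_{n:\infty}$ to $\ptail$ by the tower property at one fixed $n$, while the paper takes a backward martingale limit, so your version is slightly more direct.
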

\begin{proof}
If $\Pi\in \ext \DLR_x^{\beta,\w}$ is fully supported, then \eqref{eq:conditional-transition-sufficient} holds by Proposition \ref{prop:DLR-transition} and the tail triviality coming from Lemma \ref{lem:DLR-triv}.

Conversely, suppose \eqref{eq:conditional-transition-sufficient} holds. We will first show by induction on the length of the path that for all  up-right paths $x_{k:\ell}$ from $x=x_k$ to some $y=x_\ell$, we have 
\be\Pi(x_{k:\ell})= \prod_{r=k}^{\ell-1} \pi^\Pi_{x_r, x_{r+1}} = e^{\beta \sum_{r=k}^{\ell-1}\w_{x_r}} \bfE^\Pi\biggl[\frac{\PF{y}{X_m}^{\beta,\w}}{\PF{x}{X_m}^{\beta,\w}}\,\bigg\vert\,\ptail\biggr].
\label{eq:tailcomp}
\ee
Note that we do not know that the conditional expectations given the tail $\sigma$-algebra are almost surely strictly positive, so this is not immediate from a telescoping product. The claim is trivial when the terminal point is $x_k=x$, since $\bfE^\Pi[\PF{x}{X_m}^{\beta,\w}/\PF{x}{X_m}^{\beta,\w}\viiva\ptail]=1$. Suppose the claim holds up to length $r$. Choose $j$ so that $x_{r+1}=x_r+e_j$. Multiplying the induction hypothesis by $\pi^\Pi_{x_r,x_{r+1}}$ and applying \eqref{eq:conditional-transition-sufficient} with $y=x_r$ gives
\[
\prod_{q=k}^{r}\pi^\Pi_{x_q,x_{q+1}} =e^{\beta \sum_{q=k}^{r}\w_{x_q}} \bfE^\Pi\biggl[\frac{\PF{x_{r+1}}{X_m}^{\beta,\w}}{\PF{x}{X_m}^{\beta,\w}}\,\bigg\vert\,\ptail\biggr].
\]

On the other hand, the Gibbs condition says that for $m>\ell$,
\[
\Pi(x_{k:\ell}\viiva\pathsa_{m:\infty}) = \poly{x}{X_m}^{\beta,\w}(x_{k:\ell}) = e^{\beta \sum_{r=k}^{\ell-1}\w_{x_r}}\frac{\PF{y}{X_m}^{\beta,\w}}{\PF{x}{X_m}^{\beta,\w}} \qquad \Pi\text{-almost surely.}
\]
Backward martingale convergence then gives that $\Pi$-almost surely, \begin{align*}
\Pi(x_{k:\ell}\viiva \ptail)&=\lim_{m\to\infty}\Pi(x_{k:\ell}\viiva\pathsa_{m:\infty}) = \lim_{m\to\infty} e^{\beta \sum_{r=k}^{\ell-1}\w_{x_r}}\frac{\PF{y}{X_m}^{\beta,\w}}{\PF{x}{X_m}^{\beta,\w}} \\
&=\lim_{m\to\infty} e^{\beta \sum_{r=k}^{\ell-1}\w_{x_r}}\bfE^\Pi\biggl[\frac{\PF{y}{X_m}^{\beta,\w}}{\PF{x}{X_m}^{\beta,\w}}\,\bigg\vert\,\ptail\biggr] \stackrel{\eqref{eq:tailcomp}}{=} \Pi(x_{k:\ell}).
\end{align*}
In the third equality, we used that the partition function ratios converge both almost surely and in $L^1(\Pi)$. 
The result now follows from Lemma \ref{lem:DLR-triv-finite}.
\end{proof}

The proof that the set of extreme Gibbs-DLR measures is closed uses a countable path generalization of the coupling of two Gibbs-DLR measures in the previous section. The proofs of the next two results are essentially verbatim identical to those in the two-path case.

\begin{lemma}\label{lem:countable-concat-cond}
Fix $\w\in\Omega_0$, $\beta>0$, and $x\in\Z^2$, and abbreviate
$k=x\cdot(e_1+e_2)$.
Let $(\Pi^i)_{i\ge1}\subset\DLR_x^{\beta,\w}$ be fully supported and let $\Pi^{1:\infty}$ be any coupling of $(\Pi^i)_{i\ge1}$. Denote by $Q_x^{\beta,\w}\otimes \Pi^{1:\infty}$ the product measure. For each integer $n>k$, define the concatenated paths
\[
\widehat X^{i,n} \;=\; \sigma^{x,X_n^i}\concat X^i_{n:\infty},
\qquad i\ge1,
\]
and let $\widetilde\Pi^{1:\infty,n}$ denote the law of $(\widehat X^{1,n},\widehat X^{2,n},\dots)$ on $\pathsp_x^{\bbN}$ under $Q_x^{\beta,\w}\otimes \Pi^{1:\infty}$.

Then for any integers $n>m>k$, any finite up-right path segment $x_{k:m}$ with $x_k=x$, any integer $i\ge1$, and any event $B\in\pathsa^{(\infty)}_{m:\infty}$,
\[
\widetilde\Pi^{1:\infty,n}\bigl(X^i_{k:m}=x_{k:m},\,B\bigr)
=
\bfE^{\widetilde\Pi^{1:\infty,n}}\Bigl[\one_B\;\poly{x}{X^i_m}^{\beta,\w}(x_{k:m})\Bigr].
\]
Equivalently, under $\widetilde\Pi^{1:\infty,n}$, the conditional distribution of $X^i_{k:m}$ given $\pathsa^{(\infty)}_{m:\infty}$ is $\poly{x}{X^i_m}^{\beta,\w}$.
\end{lemma}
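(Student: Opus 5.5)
The plan is to repeat the argument of Lemma~\ref{lem:concat-cond} verbatim, with the pair of paths replaced by the countable family. Fix $n>m>k$, an index $i\ge1$, a finite segment $x_{k:m}$ with $x_k=x$, and $B\in\pathsa^{(\infty)}_{m:\infty}$. Write $\overline{\bfE}$ for expectation under $Q_x^{\beta,\w}\otimes\Pi^{1:\infty}$.

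The first step is to identify the $\sigma$-algebra carrying the future of all concatenated paths. Set
\[
\sA=\sigma\bigl((p(y): y\ge x,\ y\cdot(e_1+e_2)>m),\ (X^j_{n:\infty})_{j\ge1}\bigr).
\]
For each $j$, the segment $\widehat X^{j,n}_{m:n}$ is the tree path from level $m$ up to $X^j_n$. It is obtained by following predecessor pointers backward from $X^j_n$, and this only uses $p(y)$ at levels in $(m,n]$. Hence the whole family $(\widehat X^{j,n}_{m:\infty})_{j\ge1}$ is $\sA$-measurable, and so is $\one_B$ evaluated at it, since $B\in\pathsa^{(\infty)}_{m:\infty}$ is generated by countably many coordinates at levels $\ge m$.

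The second step uses independence. Under the product measure, the predecessors $(p(y): y\cdot(e_1+e_2)\le m)$ are independent of $\sA$. This holds because the $\zeta$ field is i.i.d. and is independent of $\Pi^{1:\infty}$.

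The third step computes the conditional probability of the initial segment. Given $\sA$, the event $\{\widehat X^{i,n}_{k:m}=x_{k:m}\}$ means two things: the endpoint $\widehat X^{i,n}_m$, which is $\sA$-measurable, equals $x_m$, and the tree path $\sigma^{x,x_m}$ equals $x_{k:m}$. The tree path $\sigma^{x,x_m}$ depends only on the predecessors at levels $\le m$. By Lemma~\ref{lem:tree-marginals} its law is $\poly{x}{x_m}^{\beta,\w}$. Therefore
\[
\overline{\bfE}\bigl[\one\{\widehat X^{i,n}_{k:m}=x_{k:m}\}\,\big|\,\sA\bigr]=\poly{x}{\widehat X^{i,n}_m}^{\beta,\w}(x_{k:m}),
\]
where $\poly{x}{y}^{\beta,\w}(x_{k:m})$ is read as $0$ when $y\ne x_m$.

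Multiplying by $\one_B$, taking expectations, and rewriting in terms of the law $\widetilde\Pi^{1:\infty,n}$ gives the displayed identity. The "equivalently" statement is a restatement of this identity, since $\poly{x}{X^i_m}^{\beta,\w}(x_{k:m})$ is $\pathsa^{(\infty)}_{m:\infty}$-measurable.

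The only point needing care is the measurability in the first step. One must check that the countable family of futures uses only predecessors strictly above level $m$. This is immediate from the construction of $\sigma^{x,y}$ by backtracking the tree. Nothing else changes from the two-path case.
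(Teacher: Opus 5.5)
Your proposal is correct and follows the paper's route. The paper's proof is the two-path argument of Lemma~\ref{lem:concat-cond} carried over verbatim: condition on the predecessors strictly above level $m$ together with the futures $X^j_{n:\infty}$, then use independence of the lower predecessors and Lemma~\ref{lem:tree-marginals}. Your splitting of $\{\widehat X^{i,n}_{k:m}=x_{k:m}\}$ into the $\sA$-measurable endpoint event $\{\widehat X^{i,n}_m=x_m\}$ and the independent event $\{\sigma^{x,x_m}=x_{k:m}\}$ just makes explicit the step the paper leaves implicit.
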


As in the two-path case, conditioning any limit point on a joint tail event results in Gibbs-DLR marginals.

\begin{corollary}\label{cor:countable-tail-dlr}
In the setting of Lemma~\ref{lem:countable-concat-cond}, let $\widetilde\Pi^{1:\infty}$ be any weak limit point of $\{\widetilde\Pi^{1:\infty,n} : n\in \bbZ_{\geq k}\}$. Let $A\in \ptail^{(\infty)}$ satisfy $\widetilde\Pi^{1:\infty}(A)>0$, and for $i\geq1$, let $\nu_i$ be the probability measure on $(\pathsp_x,\pathsa_{k:\infty})$ determined by $\nu_i(\aabullet) = \widetilde\Pi^{1:\infty}(X^i\in \aabullet\,\viiva A)$. Then $\nu_i\in\DLR_x^{\beta,\w}$.
\end{corollary}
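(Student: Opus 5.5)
The plan is to copy the proof of Corollary \ref{cor:tail-dlr} almost word for word, with the pair of paths replaced by the countable family. Along the way I will extend the identity of Lemma \ref{lem:countable-concat-cond} to the weak limit point $\widetilde\Pi^{1:\infty}$.

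\emph{Step 1.} Fix $m>k$, an index $i\ge1$, and a finite segment $x_{k:m}$ with $x_k=x$. Take a subsequence $n_j\to\infty$ along which $\widetilde\Pi^{1:\infty,n_j}\to\widetilde\Pi^{1:\infty}$ weakly on $\pathsp_x^{\bbN}$ with the product topology.

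Let $B$ be a cylinder event in $\pathsa^{(\infty)}_{m:\infty}$, that is, one depending on finitely many coordinates $X^{i'}_{\ell}$ with $i'\le N$ and $m\le\ell\le L$. In the discrete product topology, such an event is clopen. The integrand $\one_{\{X^i_{k:m}=x_{k:m}\}}\one_B$ is then a bounded continuous function. So is $\one_B\,\poly{x}{X^i_m}^{\beta,\w}(x_{k:m})$, because $X^i_m$ takes finitely many values and each indicator $\{X^i_m=z\}$ is clopen.

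Lemma \ref{lem:countable-concat-cond} gives the identity for every $n_j>m$, and passing to the limit gives it for $\widetilde\Pi^{1:\infty}$ and all cylinder $B$. Both sides are finite measures in $B$. The cylinder events form a $\pi$-system generating $\pathsa^{(\infty)}_{m:\infty}$. By the monotone class theorem \cite[Appendix Theorem 4.3]{Eth-Kur-86}, the identity therefore holds for every $B\in\pathsa^{(\infty)}_{m:\infty}$.

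\emph{Step 2.} Take $A\in\ptail^{(\infty)}$ with $\widetilde\Pi^{1:\infty}(A)>0$. Since $A\in\pathsa^{(\infty)}_{m:\infty}$ and $\{X^i_m=x_m\}\in\pathsa^{(\infty)}_{m:\infty}$, the set $C=A\cap\{X^i_m=x_m\}$ lies in $\pathsa^{(\infty)}_{m:\infty}$. Applying Step 1 with $B=C$ gives
\[
\widetilde\Pi^{1:\infty}(X^i_{k:m}=x_{k:m},A)=\poly{x}{x_m}^{\beta,\w}(x_{k:m})\,\widetilde\Pi^{1:\infty}(A,X^i_m=x_m).
\]
Dividing by $\widetilde\Pi^{1:\infty}(A)$ yields $\nu_i(x_{k:m})=\poly{x}{x_m}^{\beta,\w}(x_{k:m})\,\nu_i(x_m)$, which is exactly \eqref{eq:Gibbs}.

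\emph{Step 3.} It remains to check that $\nu_i$ is supported on $\pathsp_x$. This holds because every $\widetilde\Pi^{1:\infty,n}$ has $i$-th marginal rooted at $x$, and $\pathsp_x$ is closed, so the weak limit is rooted at $x$ as well. Hence $\nu_i\in\DLR_x^{\beta,\w}$.

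The only point needing care is Step 1. It requires that the test functions are genuinely continuous on the countable product, which the clopen cylinder structure provides. It also requires the monotone class extension from cylinders to the full $\sigma$-algebra $\pathsa^{(\infty)}_{m:\infty}$. Both go through as in the two-path case.
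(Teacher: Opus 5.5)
Your proposal is correct and follows the paper's own proof. The paper proves this corollary by repeating the argument of Corollary \ref{cor:tail-dlr}: pass the identity of Lemma \ref{lem:countable-concat-cond} to the weak limit point on cylinder sets, extend it by the monotone class theorem, then apply it with $B=A\cap\{X^i_m=x_m\}$ and divide by $\widetilde\Pi^{1:\infty}(A)$. Your extra checks (continuity of the cylinder test functions in the product-discrete topology, and rootedness of the limit) fill in details the paper leaves implicit.
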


%

Projecting to two dimensional marginals, Corollary \ref{cor:ordered-tree-limit} implies the following result.
\begin{corollary}\label{cor:countable-ordered}
In the setting of Lemma \ref{lem:countable-concat-cond}, suppose additionally that $(\Pi^i)_{i\geq1}\subset \ext \DLR_x^{\beta,\w}$ and that $\Pi^j\precneq \Pi^i$ for all $i<j$. Let $\widetilde\Pi^{1:\infty}$ be any weak limit point of $\{\widetilde\Pi^{1:\infty,n}:n\in\bbZ_{\geq k}\}$ on $\pathsp_x^{\bbN}$. Then
\[
\widetilde\Pi^{1:\infty}(X^j\prec X^i\text{ for all }i<j)=1.
\]
\end{corollary}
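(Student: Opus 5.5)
The plan is to reduce to the two-dimensional marginals and invoke Corollary \ref{cor:ordered-tree-limit}. Since
\[
\{X^j\prec X^i\text{ for all }i<j\}=\bigcap_{i<j}\{X^j\prec X^i\},
\]
a countable intersection, it suffices to show that $\widetilde\Pi^{1:\infty}(X^j\prec X^i)=1$ for each fixed pair $i<j$.

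Fix $i<j$ and let $\Pi^{i,j}$ denote the $(i,j)$ marginal of the coupling $\Pi^{1:\infty}$. This is a coupling of $\Pi^i$ and $\Pi^j$. Because the concatenation map acts coordinatewise and uses the same tree $\cT_x$ for every coordinate, the $(i,j)$ marginal of $\widetilde\Pi^{1:\infty,n}$ is exactly the law of $(\sigma^{x,X^i_n}\concat X^i_{n:\infty},\sigma^{x,X^j_n}\concat X^j_{n:\infty})$ under $Q_x^{\beta,\w}\otimes\Pi^{i,j}$. That is the two-path measure $\widetilde\Pi^{i,j,n}$ of Lemma \ref{lem:concat-cond}, built from the coupling $\Pi^{i,j}$ (with the roles of labels $1,2$ played by $j,i$).

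Next, let $n_\ell$ be a subsequence along which $\widetilde\Pi^{1:\infty,n_\ell}\to\widetilde\Pi^{1:\infty}$ weakly on $\pathsp_x^{\bbN}$. The coordinate projection to the $(i,j)$ factors is continuous, so by the continuous mapping theorem the $(i,j)$ marginal of $\widetilde\Pi^{1:\infty}$ is a weak limit point of $\{\widetilde\Pi^{i,j,n}\}$. We have $\Pi^j\precneq\Pi^i$, both measures lie in $\ext\DLR_x^{\beta,\w}$, and both are fully supported by the standing hypothesis of Lemma \ref{lem:countable-concat-cond}. Corollary \ref{cor:ordered-tree-limit}, applied with $(\Pi^1,\Pi^2)$ replaced by $(\Pi^j,\Pi^i)$, therefore gives that this limit assigns probability one to $\{X^j\prec X^i\}$.

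The event $\{X^j\prec X^i\}$ depends only on the $(i,j)$ coordinates, so its $\widetilde\Pi^{1:\infty}$-probability equals its probability under the $(i,j)$ marginal, which is $1$. Intersecting over the countably many pairs $i<j$ completes the argument.

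There is no substantive obstacle here. The only point requiring care is that Corollary \ref{cor:ordered-tree-limit} holds for an arbitrary initial coupling of $\Pi^1$ and $\Pi^2$, not just a particular one, and it does, since Lemma \ref{lem:concat-cond} allows any coupling $\Pi^{1,2}$.
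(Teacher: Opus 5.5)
Your proposal is correct and is the paper's route: the paper obtains this corollary by projecting $\widetilde\Pi^{1:\infty}$ onto its two-dimensional marginals and applying Corollary \ref{cor:ordered-tree-limit} to each pair. Your write-up supplies the details the paper leaves implicit: the pairwise marginal of $\widetilde\Pi^{1:\infty,n}$ is the two-path concatenation measure of Lemma \ref{lem:concat-cond}, weak limits pass to marginals by continuous mapping, and a countable intersection over pairs $i<j$ finishes the argument.
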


The next lemma rules out the possibility that a strictly monotone limit has an atom on the trivial path that is consistent with the monotonicity without being the Dirac mass at that path.

\begin{lemma}\label{lem:monotone-trivial-atom}
Fix $\w\in\Omega_0$, $\beta>0$, and $x\in\Z^2$, and abbreviate $k=x\cdot(e_1+e_2)$. Let $(\Pi^i)_{i\ge1}\subset\ext\DLR_x^{\beta,\w}$ be fully supported and suppose that $\Pi^i\to\Pi$ weakly.
\begin{enumerate}[label={\rm(\roman*)}, ref={\rm\roman*}]
\item\label{lem:monotone-trivial-atom:decreasing} If $\Pi^{i+1}\precneq\Pi^i$ for all $i\ge1$ and 
\[
\Pi(X_{k:\infty}=x+e_2\bbZ_{\geq0})>0,
\]
then $\Pi=\delta_{x+e_2\bbZ_{\geq0}}$.
\item \label{lem:monotone-trivial-atom:increasing} If $\Pi^i\precneq\Pi^{i+1}$ for all $i\ge1$ and
\[
\Pi(X_{k:\infty}=x+e_1\bbZ_{\geq0})>0,
\]
then $\Pi=\delta_{x+e_1\bbZ_{\geq0}}$.
\end{enumerate}
\end{lemma}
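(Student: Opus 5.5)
The plan is to analyze the transition probabilities of the limit $\Pi$ along the vertical ray $x+e_2\bbZ_{\geq0}$, using the planar monotonicity of partition function ratios. I treat part \eqref{lem:monotone-trivial-atom:decreasing}; part \eqref{lem:monotone-trivial-atom:increasing} is symmetric, exchanging $e_1$ and $e_2$.

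\emph{Step 1: decompose $\Pi$.} Since $\DLR_x^{\beta,\w}$ is closed, $\Pi\in\DLR_x^{\beta,\w}$. The event $C=\{X_n\cdot e_1=x\cdot e_1\text{ for all large }n\}$ lies in $\ptail$. By Lemma \ref{lem:Omega0}, every ray starting at some $v$ with $(v-x)\cdot e_1>0$ is $\Pi$-null. Hence $C$ coincides $\Pi$-a.s.\ with the rooted ray $\{X_{k:\infty}=x+e_2\bbZ_{\geq 0}\}$, and $p:=\Pi(C)>0$. The same argument applies to each $\Pi^i$. There $\Pi^i(C)\in\{0,1\}$ by Lemma \ref{lem:DLR-triv}, and $\Pi^i(C)=1$ would make $\Pi^i$ trivial, contradicting full support; so $\Pi^i(C)=0$. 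Conditioning on the tail event $C^c$ preserves the Gibbs property, so $\Pi=p\,\delta_{x+e_2\bbZ_{\ge0}}+(1-p)\Pi'$ with $\Pi'\in\DLR_x^{\beta,\w}$. The goal is to show $p=1$. Equivalently, writing $y_n=x+ne_2$ and $q_n=\Pi(X_{k+n+1}=y_n+e_1\viiva X_{k+n}=y_n)$, I must show $q_n=0$ for every $n\ge0$.

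\emph{Step 2: monotone transition probabilities along the ray.} By Corollary \ref{cor:DLR-mglim}, each $\Pi^i$ satisfies
\[
q^{(i)}_n:=\pi^{\Pi^i}_{y_n,y_n+e_1}=e^{\beta\w_{y_n}}\lim_{m\to\infty}\PF{y_n+e_1}{X_m}^{\beta,\w}\big/\PF{y_n}{X_m}^{\beta,\w}
\]
$\Pi^i$-a.s. Planar path crossing gives the standard comparison: $\PF{y+e_1}{z}/\PF{y}{z}$ is nondecreasing as the endpoint $z$ moves to the right along an antidiagonal. I will use the strictly ordered coupling of Corollary \ref{cor:countable-ordered}, under which $X^j\prec X^i$ for $i<j$. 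With it, the a.s.\ limits above are nonincreasing in $i$, and so $q^{(i)}_n\downarrow$. Since $\Pi^i\to\Pi$ on cylinder sets and $\Pi(y_n)\ge p>0$, we get $q_n=\lim_i q^{(i)}_n$. So it suffices to show $\lim_i q^{(i)}_n=0$ for each $n$.

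\emph{Step 3 (main obstacle).} This is where I expect the real work to lie. On the coupling space of Step 2, the paths $X^j$ decrease levelwise in their $e_1$-coordinate and converge pointwise to a path $Y$ with law $\Pi$. On the event $\{Y=x+e_2\bbZ_{\ge0}\}$, which has probability $p>0$, the paths $X^j$ follow the column for longer and longer before branching off. I would show that on this event the endpoints $X^j_m$ realising the limit in Step 2 can be taken arbitrarily far to the left relative to the column. Concretely: for fixed $M$ and large $j$, the path $X^j$ stays in the column through level $k+M$. Using Lemma \ref{lem:concat-cond} and the tree coupling, the Busemann ratio for $\Pi^j$ at $y_n$ is then bounded above by the ratio $\PF{y_n+e_1}{\cdot}/\PF{y_n}{\cdot}$ evaluated at the point $x+Me_2$ on the column, which is legitimate because $\PF{y_n+e_1}{z}=0$ for $z$ in the column. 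This forces $q^{(j)}_n\to0$.

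The delicate point is that the ratio is defined as a limit at infinity, not at the finite point $x+Me_2$. I plan to pass from the infinite-level limit to the finite comparison point through the backward martingale of Lemma \ref{lem:DLR-mg}, conditioning on $\pathsa^{(\infty)}_{k+M:\infty}$. I will then use $\Pi^j(C)=0$ from Step 1 to control the event where $X^j$ re-enters the column after branching.
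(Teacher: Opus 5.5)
\textbf{Verdict: genuine gap.} Your Steps 1 and 2 are fine, but the entire content of the lemma lies in Step 3, and the mechanism you propose there does not work.

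\textbf{Why the Step 3 comparison fails.} The planar comparison only relates endpoints on a common antidiagonal. On level $m$ the column point $x+(m-k)e_2$ is the \emph{leftmost} admissible endpoint, so comparing $X^j_m$ with it gives only the trivial lower bound $\PF{y_n+e_1}{X^j_m}/\PF{y_n}{X^j_m}\ge 0$. Nothing relates the tail limit defining $q^{(j)}_n$ to the ratio at the lower-level point $x+Me_2$.

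In fact the bound you state is false: the ratio at $x+Me_2$ is $0$, while $q^{(j)}_n>0$ by full support of $\Pi^j$. No pathwise version can rescue it either. Since $\Pi^j(C)=0$, the path $X^j$ still leaves the column almost surely on $\{Y=x+e_2\bbZ_{\ge0}\}$, after which the level-$m$ ratios are positive for all large $m$. (Your concern about $X^j$ re-entering the column is vacuous: an up-right path never returns to a column it has left.)

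\textbf{What is missing.} The backward martingale only expresses the tail limit as a conditional expectation, given the tail, of a level-$M$ quantity. So what you can obtain is an averaged identity. To learn anything about the constant $q^{(j)}_n$ from an average over $\{Y=x+e_2\bbZ_{\ge0}\}$, you need to know that $X^j$ is independent of that joint-tail event. This decoupling is the missing idea, and it is exactly where extremality of $\Pi^j$ must be used; your sketch never uses it.

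\textbf{How the paper closes it.} The paper's proof is precisely this decoupling. Let $A$ be the event that the monotone limit of the $X^i$ is the vertical ray. Then $A\in\ptail^{(\infty)}$ and $\widetilde\Pi^{1:\infty}(A)>0$. By Corollary \ref{cor:countable-tail-dlr}, the conditional marginals of $X^i$ given $A$ and given $A^c$ are Gibbs-DLR. Extremality then forces $\Pi^i=\widetilde\Pi^{1:\infty}(X^i\in\aabullet\mid A)$ for every $i$. Since $X^i\to x+e_2\bbZ_{\ge0}$ coordinatewise on $A$, letting $i\to\infty$ gives $\Pi=\delta_{x+e_2\bbZ_{\ge0}}$.

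\textbf{The same argument in your martingale language.}
\begin{itemize}
\item Under the limit coupling, Lemma \ref{lem:countable-concat-cond} makes $\PF{y}{X^j_m}/\PF{x}{X^j_m}$ a backward martingale for the joint filtration $\pathsa^{(\infty)}_{m:\infty}$.
\item Its limit is a tail function of $X^j$ alone, hence equals the constant $\Pi^j(y)/\PF{x}{y}$ by extremality.
\item Therefore $\Pi^j(y)\,\widetilde\Pi^{1:\infty}(A)=\PF{x}{y}\,\bfE^{\widetilde\Pi^{1:\infty}}\bigl[\one_A\,\PF{y}{X^j_m}/\PF{x}{X^j_m}\bigr]$ for any fixed $m$ at or beyond the level of $y$.
\item Take $y=x+e_1$. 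On $A$, for large $j$, $X^j_m$ lies on the column, so the integrand vanishes. By bounded convergence the right side tends to $0$, so $\Pi(x+e_1)=0$.
\item Lemma \ref{lem:Omega0} then gives $\Pi=\delta_{x+e_2\bbZ_{\ge0}}$.
\end{itemize}
Once you have this decoupling, the transition-probability analysis of Step 2 is unnecessary.
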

\begin{proof}
We prove \eqref{lem:monotone-trivial-atom:decreasing}, with the proof of \eqref{lem:monotone-trivial-atom:increasing} being similar.
Let $\Pi^{1:\infty}$ be any coupling of $(\Pi^i)_{i\ge1}$, and let $\widetilde\Pi^{1:\infty}$ be any weak limit point of $\{\widetilde\Pi^{1:\infty,n}:n\in\bbZ_{\geq k}\}$ from Lemma \ref{lem:countable-concat-cond}. By Corollary \ref{cor:countable-ordered}, we have $\widetilde\Pi^{1:\infty}(X^j\prec X^i\text{ for all }i<j)=1$.

For each $n\ge k$, define $X_n=\lim_{i\to\infty}X^i_n$. The limit exists by monotonicity. Then $X=(X_n)_{n\ge k}\in\pathsp_x$ and $X$ has law $\Pi$. Let $A$ be the event that for every $N\ge k$, there exists $n\ge N$ such that $X^i_n=x+(n-k)e_2$ for all sufficiently large $i$; we have $A\in\ptail^{(\infty)}$. Since $X^j\prec X^i$ for all $i<j$ with probability one, we have 
\[\widetilde\Pi^{1:\infty}\bigl(A\,\Delta\,\{X_{k:\infty}=x+e_2\bbZ_{\geq0}\}\bigr)=0.\]

Assume $\Pi(X_{k:\infty}=x+e_2\bbZ_{\geq0})>0$. This and the previous display give $\widetilde\Pi^{1:\infty}(A)>0$.
By the law of total probability, with the understanding that $0$ times an undefined quantity is zero, for any $B\in\pathsa_{k:\infty}$ and $i\geq1$,
\begin{align}\label{decomp}
\Pi^i(B)=\widetilde\Pi^{1:\infty}(X^i\in B\viiva A)\widetilde\Pi^{1:\infty}(A)+\widetilde\Pi^{1:\infty}(X^i\in B\viiva A^c)\widetilde\Pi^{1:\infty}(A^c).
\end{align}
By Corollary \ref{cor:countable-tail-dlr}, each positive-probability conditional marginal appearing on the right-hand side is in $\DLR_x^{\beta,\w}$. Since $\Pi^i\in\ext\DLR_x^{\beta,\w}$ and  $\widetilde\Pi^{1:\infty}(A)>0$, it follows that $\Pi^i(\aabullet)=\widetilde\Pi^{1:\infty}(X^i\in\aabullet\viiva A)$ for all $i\geq1$. On the event $A$, $X^i\to x+e_2\bbZ_{\geq0}$ coordinatewise, so since $\Pi^i \to \Pi$, we conclude that $\Pi=\delta_{x+e_2\bbZ_{\geq0}}$.
\end{proof}

We now prove the main result of this section, namely that the set of extremal Gibbs measures is closed.

\begin{proposition}\label{prop:DLR-closed}
For all $\w \in \Omega_0$, $x\in\bbZ^2$, and $\beta\in(0,\infty)$, $\ext \DLR_x^{\beta,\w}$ is closed in $\sM_1(\pathsp_x,\pathsa_{k:\infty})$.
\end{proposition}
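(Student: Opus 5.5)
Take $\Pi^i\in\ext\DLR_x^{\beta,\w}$ with $\Pi^i\to\Pi$ weakly. Since $\DLR_x^{\beta,\w}$ is closed, $\Pi\in\DLR_x^{\beta,\w}$, and it remains to show $\Pi$ is extreme. By Proposition \ref{prop:DLR-ordered} the sequence is totally ordered, so after passing to a subsequence and discarding finitely many terms we may assume it is either eventually constant (trivial), or strictly monotone, say $\Pi^{i+1}\precneq\Pi^i$; the increasing case is symmetric. If infinitely many $\Pi^i$ are trivial, the limit is trivial, hence extreme. So assume every $\Pi^i$ is fully supported. By Lemma \ref{lem:Omega0}, if $\Pi$ is not fully supported it is a mixture of the two trivial measures. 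Monotonicity $\Pi\preceq\Pi^1$ together with full support of $\Pi^1$ forces $\Pi$ to give no mass to the $e_1$-ray, so $\Pi$ has an atom on $x+e_2\bbZ_{\ge0}$. Lemma \ref{lem:monotone-trivial-atom} then gives $\Pi=\delta_{x+e_2\bbZ_{\ge0}}$, which is extreme. Hence we may assume $\Pi$ is fully supported.

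Next build the countable coupling. Take any coupling $\Pi^{1:\infty}$ and a weak limit point $\widetilde\Pi^{1:\infty}$ of $\widetilde\Pi^{1:\infty,n}$ (Lemma \ref{lem:countable-concat-cond}). By Corollary \ref{cor:countable-ordered}, $X^j\prec X^i$ for $i<j$ almost surely. Define $X=\lim_i X^i$ levelwise, which exists by monotonicity, so that $X$ has law $\Pi$. Enlarge the space to carry $(X,X^1,X^2,\dots)$. Two facts pass to this limit. First, the conditional law of $X_{k:m}$ given $\pathsa^{(\infty)}_{m:\infty}$, with $X$ included among the tail coordinates, is $\poly{x}{X_m}^{\beta,\w}$; this follows from Lemma \ref{lem:countable-concat-cond} because $X_{k:m}=X^i_{k:m}$ for large $i$ and the events stabilize. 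Second, the joint tail $\sigma$-algebra of the whole family is trivial on each marginal: for $A$ in the joint tail with positive probability, Corollary \ref{cor:countable-tail-dlr} and extremality give $\widetilde\Pi^{1:\infty}(X^i\in\cdot\viiva A)=\Pi^i$ for all $i$.

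The goal is to verify the criterion of Lemma \ref{lem:conditional-transition-sufficient} for $\Pi$. The transition probabilities satisfy $\pi^{\Pi}_{y,y+e_j}=\lim_i\pi^{\Pi^i}_{y,y+e_j}$, since cylinder probabilities converge and $\Pi(y)>0$. For each $i$, Proposition \ref{prop:DLR-transition} together with tail triviality gives
\[
\pi^{\Pi^i}_{y,y+e_j}\,\frac{\PF{y}{X^i_m}^{\beta,\w}}{\PF{x}{X^i_m}^{\beta,\w}}\text{-conditional means}=e^{\beta\w_y}\,\frac{\PF{y+e_j}{X^i_m}^{\beta,\w}}{\PF{x}{X^i_m}^{\beta,\w}}\text{-conditional means},
\]
where the conditioning is on the joint tail $\ptail^{(\infty)}$. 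This holds because the conditional law of $X^i$ given any positive-probability joint tail event is $\Pi^i$, so these conditional expectations are constants. Now let $i\to\infty$. For fixed $m$, $X^i_m\to X_m$ and the partition-function ratios are bounded, since $\PF{y}{z}/\PF{x}{z}\le e^{-\beta c}$ by the path through $y$. Conditional dominated convergence therefore yields the same identity with $X$ in place of $X^i$, conditioned on $\ptail^{(\infty)}$. Finally, project to $\ptail(X)\subset\ptail^{(\infty)}$ by taking conditional expectation given $\sigma(X)$-tail. This projection is legitimate because $\pi^\Pi$ is deterministic, so the identity is linear in the conditional expectations. The result is \eqref{eq:conditional-transition-sufficient} for $\Pi$, and Lemma \ref{lem:conditional-transition-sufficient} gives $\Pi\in\ext\DLR_x^{\beta,\w}$.

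The main obstacle is the limit step. We must justify that the identity, with conditioning on the joint tail, survives $i\to\infty$. This needs the conditional expectations given $\ptail^{(\infty)}$ to be computed in a single fixed probability space, so that only the integrands vary, and it needs the convergence $X^i_m\to X_m$ to be almost sure at a fixed level $m$. Both hold here, the latter because the paths are eventually equal levelwise.
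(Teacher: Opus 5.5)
Your proof is correct, but it reaches the criterion of Lemma \ref{lem:conditional-transition-sufficient} by a different mechanism than the paper.

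\emph{The paper's route.} The paper works at large levels $n$. It first uses Lemma \ref{lem:monotone-trivial-atom} to rule out an atom of a fully supported $\Pi$ on $x+e_2\bbZ_{\geq0}$, so that $X_n$ and $X^i_n$ eventually lie in $y+e_1+e_2+\bbZ_{\geq0}^2$. It then applies the planar monotonicity of partition-function ratios (Lemma B.2 of \cite{Jan-Ras-20-aop}) along $X^i\succeq X$ and lets $n\to\infty$ by backward martingale convergence. This gives one-sided inequalities for the $\ptail$-conditional expectations under $\Pi$. These become equalities because their $\Pi$-means vanish by Proposition \ref{prop:DLR-transition}.

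\emph{Your route.} You work at a fixed level $m$. You use that each $X^i$ is independent of the joint tail $\ptail^{(\infty)}$, which follows from Corollary \ref{cor:countable-tail-dlr} plus extremality; the paper uses this only inside Lemma \ref{lem:monotone-trivial-atom}. You also use that $X^i_m=X_m$ for all large $i$, and transfer the exact identity by conditional dominated convergence.

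\emph{What each buys.} Your approach avoids Lemma B.2 and the ray-atom step for fully supported $\Pi$, and produces equalities directly. The paper's comparison step is more economical in a different way. It needs only the pairwise order $X^i\succeq X$ (any monotone coupling of $\Pi\preceq\Pi^i$ would do) and tail triviality of each $\Pi^i$ separately, not the joint-tail structure of the countable tree coupling.

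\emph{A shortcut you can take.} Your limit step proves more than you use. Each $\one_{\{X^i_{k:m}=x_{k:m}\}}$ is independent of $\ptail^{(\infty)}$ and converges almost surely to $\one_{\{X_{k:m}=x_{k:m}\}}$, so the whole path $X$ is independent of $\ptail^{(\infty)}$. Moreover $\ptail^{(\infty)}$ contains the tail $\sigma$-algebra of $X$, since each $X_n$ is a function of $(X^i_n)_{i\ge1}$. Hence $\Pi$ is tail trivial, and Lemma \ref{lem:DLR-triv} finishes directly. This needs neither the full-support reduction for $\Pi$ nor Lemma \ref{lem:conditional-transition-sufficient}.

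\emph{Minor points.}
\begin{itemize}
\item Your first ``fact'' about the conditional law of $X_{k:m}$ given $\pathsa^{(\infty)}_{m:\infty}$ is never used.
\item Excluding mass of $\Pi$ on $x+e_1\bbZ_{\geq0}$ uses extremality of $\Pi^1$, not only its full support: a fully supported mixture can carry an atom on that ray.
\end{itemize}
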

\begin{proof}
Take any convergent sequence $(\Pi^n:n\ge1)\subset\ext\DLR_x^{\beta,\w}$ and call the limit $\Pi$. Since $\DLR_x^{\beta,\w}$ is closed, $\Pi\in\DLR_x^{\beta,\w}$. By Proposition \ref{prop:DLR-ordered}, it suffices to pass to a strictly monotone subsequence. We treat the strictly decreasing case with the strictly increasing case being similar. Abbreviate $k=x\cdot(e_1+e_2)$. After discarding at most the first term (and reindexing), we may assume that each $\Pi^i$ is fully supported.

Since $\Pi\preceq\Pi^1$, and since $\Pi^1$ is extremal and fully supported, $\Pi(X_{k:\infty}=x+e_1\bbZ_{\geq0})=0$. If $\Pi$ is not fully supported, Lemma \ref{lem:Omega0} implies that $\Pi$ is a convex combination of the two trivial measures, and hence $\Pi=\delta_{x+e_2\bbZ_{\geq0}}\in\ext\DLR_x^{\beta,\w}$. We may therefore assume that $\Pi$ is fully supported. Lemma \ref{lem:monotone-trivial-atom} then gives $\Pi(X_{k:\infty}=x+e_2\bbZ_{\geq0})=0$.

Let $\Pi^{1:\infty}$ be any coupling of $(\Pi^i)_{i\ge1}$, and let $\wt\Pi=\widetilde\Pi^{1:\infty}$ be any weak limit point of $\{\widetilde\Pi^{1:\infty,n}:n\in\bbZ_{\geq k}\}$ from Lemma \ref{lem:countable-concat-cond}. By Corollary \ref{cor:countable-ordered}, $\wt\Pi(X^j\prec X^i\text{ for all }i<j)=1$. Define $X_n=\lim_{i\to\infty}X^i_n$. Then $X=(X_n)_{n\ge k}\in\pathsp_x$ and $X$ has law $\Pi$.

By Lemma \ref{lem:DLR-mg} and backward martingale convergence, for all $y\in x+\Z_{\geq 0}^2$ and all $m>y\cdot(e_1+e_2)$,
\[
\lim_{n\to\infty}\frac{\PF{y}{X_n}^{\beta,\w}}{\PF{x}{X_n}^{\beta,\w}}
=
\bfE^\Pi\biggl[\frac{\PF{y}{X_m}^{\beta,\w}}{\PF{x}{X_m}^{\beta,\w}}\,\bigg\vert\,\ptail\biggr],
\qquad \Pi\text{-almost surely and in }L^1(\Pi).
\]
The same identity holds $\wt\Pi$-almost surely.

Since $\Pi$ and $\Pi^i$, $i\in\Z_{>0}$, are all fully supported, Lemma \ref{lem:Omega0} and the coupling imply that, $\wt\Pi$-almost surely, for all $y\in x+\Z_{\geq 0}^2$ and all $i\ge1$, $X_n\ge y+e_1+e_2$ and $X^i_n\ge y+e_1+e_2$ for all sufficiently large $n$.

Since $X^i\succeq X$, Lemma B.2 in \cite{Jan-Ras-20-aop} implies that for each $i\ge1$ and all sufficiently large $n$,
\[
\frac{\PF{y+e_1}{X^i_n}^{\beta,\w}}{\PF{y}{X^i_n}^{\beta,\w}}\ge\frac{\PF{y+e_1}{X_n}^{\beta,\w}}{\PF{y}{X_n}^{\beta,\w}}
\qquad\text{and}\qquad
\frac{\PF{y+e_2}{X^i_n}^{\beta,\w}}{\PF{y}{X^i_n}^{\beta,\w}}\le\frac{\PF{y+e_2}{X_n}^{\beta,\w}}{\PF{y}{X_n}^{\beta,\w}}.
\]
Recall that $\Pi^i$ is assumed to be extremal. Cross-multiply, divide by $\PF{x}{X_n^i}^{\beta,\w}\PF{x}{X_n}^{\beta,\w}$, and apply Lemma \ref{lem:DLR-mg} and backward martingale convergence to get
\[
\bfE^{\Pi^i}\biggl[\frac{\PF{y+e_1}{X_m}^{\beta,\w}}{\PF{x}{X_m}^{\beta,\w}}\biggr]\bfE^\Pi\biggl[\frac{\PF{y}{X_m}^{\beta,\w}}{\PF{x}{X_m}^{\beta,\w}}\,\bigg\vert\,\ptail\biggr]
\ge
\bfE^\Pi\biggl[\frac{\PF{y+e_1}{X_m}^{\beta,\w}}{\PF{x}{X_m}^{\beta,\w}}\,\bigg\vert\,\ptail\biggr]\bfE^{\Pi^i}\biggl[\frac{\PF{y}{X_m}^{\beta,\w}}{\PF{x}{X_m}^{\beta,\w}}\biggr]
\]
and
\[
\bfE^{\Pi^i}\biggl[\frac{\PF{y+e_2}{X_m}^{\beta,\w}}{\PF{x}{X_m}^{\beta,\w}}\biggr]\bfE^\Pi\biggl[\frac{\PF{y}{X_m}^{\beta,\w}}{\PF{x}{X_m}^{\beta,\w}}\,\bigg\vert\,\ptail\biggr]
\le
\bfE^\Pi\biggl[\frac{\PF{y+e_2}{X_m}^{\beta,\w}}{\PF{x}{X_m}^{\beta,\w}}\,\bigg\vert\,\ptail\biggr]\bfE^{\Pi^i}\biggl[\frac{\PF{y}{X_m}^{\beta,\w}}{\PF{x}{X_m}^{\beta,\w}}\biggr].
\]
Here, we used that the marginals of $X^i$ and $X$ under $\wt \Pi$ are $\Pi^i$ and $\Pi$ as well as the tail triviality of $\Pi^i$. By Proposition \ref{prop:DLR-transition} applied to $\Pi^i$, and since $\Pi^i$ is fully supported, this gives $\Pi$-almost surely,
\[
e^{-\beta\w_y}\pi^{\Pi^i}_{y,y+e_1}\bfE^\Pi\biggl[\frac{\PF{y}{X_m}^{\beta,\w}}{\PF{x}{X_m}^{\beta,\w}}\,\bigg\vert\,\ptail\biggr]\ge
\bfE^\Pi\biggl[\frac{\PF{y+e_1}{X_m}^{\beta,\w}}{\PF{x}{X_m}^{\beta,\w}}\,\bigg\vert\,\ptail\biggr]
\]
and
\[
e^{-\beta\w_y}\pi^{\Pi^i}_{y,y+e_2}\bfE^\Pi\biggl[\frac{\PF{y}{X_m}^{\beta,\w}}{\PF{x}{X_m}^{\beta,\w}}\,\bigg\vert\,\ptail\biggr]\le
\bfE^\Pi\biggl[\frac{\PF{y+e_2}{X_m}^{\beta,\w}}{\PF{x}{X_m}^{\beta,\w}}\,\bigg\vert\,\ptail\biggr].
\]
Multiplying by $e^{\beta\w_y}$ and then letting $i\to\infty$, convergence of transition probabilities gives 
\[
\pi^{\Pi}_{y,y+e_1}\bfE^\Pi\biggl[\frac{\PF{y}{X_m}^{\beta,\w}}{\PF{x}{X_m}^{\beta,\w}}\,\bigg\vert\,\ptail\biggr]-e^{\beta\w_y}\bfE^\Pi\biggl[\frac{\PF{y+e_1}{X_m}^{\beta,\w}}{\PF{x}{X_m}^{\beta,\w}}\,\bigg\vert\,\ptail\biggr]\ge 0
\]
and
\[
\pi^{\Pi}_{y,y+e_2}\bfE^\Pi\biggl[\frac{\PF{y}{X_m}^{\beta,\w}}{\PF{x}{X_m}^{\beta,\w}}\,\bigg\vert\,\ptail\biggr]-e^{\beta\w_y}\bfE^\Pi\biggl[\frac{\PF{y+e_2}{X_m}^{\beta,\w}}{\PF{x}{X_m}^{\beta,\w}}\,\bigg\vert\,\ptail\biggr]\le 0.
\]
Taking $\Pi$ expectations and applying Proposition \ref{prop:DLR-transition} shows that both expressions have expectation zero. Hence the inequalities are equalities $\Pi$-almost surely. The result now follows from Lemma \ref{lem:conditional-transition-sufficient}.
\end{proof}

\subsection{Coalescence and extremality}\label{sec:coal-ext}

\begin{proof}[Proof of Theorem \ref{thm:coalext}]
Suppose that $\Pi\in \DLR_x^{\beta,\w}$ is fully supported and admits a coalescing self-coupling $\wt \Pi$ in environment $\w$. Let $A \in \ptail$ and observe that the assumption that $X^x \coal X^y$ $\wt \Pi$-almost surely implies that
\[
\one_A(X^x) = \one_A(X^y) \qquad \wt \Pi\text{-almost surely.}
\]
Taking $\wt \Pi$ expectations of this equality, we see that $\wt \Pi(X^x \in A) = \wt \Pi(X^y \in A)$ for all $y \geq x$. It then follows that $\Pi(A) = \Pi_y(A)$ for all $y \geq x$. Call $k=x \cdot (e_1+e_2)$. Next, we apply in sequence (i) the fact that $A \in \ptail \subset \pathsa_{k:\infty}$, (ii) martingale convergence, (iii) the Gibbs property and $A\in\ptail\subset\pathsa_{n:\infty}$, and (iv) the just-proven fact that $\Pi(A) = \Pi(A \viiva y)$ for all $y\geq x$ to conclude: 
\[
\one_A = \Pi(A\viiva \pathsa_{k:\infty}) = \lim_{n\to\infty}\Pi(A \viiva \pathsa_{k:n}) = \lim_{n\to\infty} \Pi(A \viiva X_n) = \Pi(A) \qquad \Pi\text{-almost surely.}
\]
Thus $\Pi(A) \in \{0,1\}.$ Then $\Pi$ is extremal by Lemma \ref{lem:DLR-triv}.

Conversely, suppose that $\Pi \in \ext \DLR_x^{\beta,\w}$ is fully supported and fix $y \geq x$. Take $n > y \cdot(e_1+e_2)$ and let $A\in \pathsa_{n:\infty}$. 
We compute 
\[
\Pi_y(A) = \frac{\bfE^{\Pi}[\one_A \one_{\{y\}\in X}]}{\Pi(y)} = \bfE^{\Pi}\left[\one_A \frac{\Pi(y\viiva \pathsa_{n:\infty})}{\Pi(y)}\right].
\]
It follows from this identity that the restriction of $\Pi_y$ to $\pathsa_{n:\infty}$ is absolutely continuous with respect to the restriction of $\Pi$ to $\pathsa_{n:\infty}$ with Radon-Nikodym derivative
\[
\frac{d\Pi_y|_{\pathsa_{n:\infty}}}{d\Pi|_{\pathsa_{n:\infty}}} = \frac{\Pi(y\viiva \pathsa_{n:\infty})}{\Pi(y)}.
\]
By backward martingale convergence, this Radon-Nikodym derivative converges $\Pi$-almost surely and in $L^1(\Pi)$ as $n\to\infty$ to 
\[
\frac{\Pi(y\viiva \ptail)}{\Pi(y)} = 1,
\]
where in the last step, we used the tail triviality of $\Pi$ coming from Lemma \ref{lem:DLR-triv}. Because the Radon-Nikodym derivative converges to $1$ almost surely and in $L^1(\Pi)$, we see that
\[
\lim_{n\to\infty} \|\Pi_y|_{\pathsa_{n:\infty}} - \Pi|_{\pathsa_{n:\infty}}\|_{\textup{TV}} = 0.
\]
It now follows from Theorem 2.1 in \cite{Gol-79} that, for each $y\ge x$, there exists a coupling $\wt\Pi^y$ of $\Pi_y$ and $\Pi$ under which the two paths coalesce. Disintegrate this coupling with respect to its $\Pi$-coordinate:
\[
\wt\Pi^y(d\gamma^y,d\gamma)=K_y(\gamma,d\gamma^y)\Pi(d\gamma),
\]
where for $y=x$, we set $K_x(\gamma,\aabullet)=\delta_\gamma(\aabullet)$. If $F\subset x+\bbZ_{\geq0}^2$ is finite, define a probability measure on $\prod_{y\in F}\pathsp_y$ by
\[
\wt\Pi^F\bigl((d\gamma^y)_{y\in F}\bigr)=\int_{\pathsp_x}\prod_{y\in F}K_y(\gamma,d\gamma^y)\Pi(d\gamma).
\]
These finite-dimensional distributions are consistent as $F$ varies, so Kolmogorov's extension theorem gives a coupling of the entire family $\{\Pi_y:y\ge x\}$. For each fixed $y\ge x$, the pair $(\gamma^y,\gamma^x)$ has law $\wt\Pi^y$, and hence coalesces almost surely. Taking the countable intersection over $y\ge x$, all paths coalesce with $\gamma^x$ almost surely, and therefore the coupling is coalescing.\end{proof}

\subsection{Extension to the plane and the asymptotic total order}
We next start to work toward the proof of Proposition \ref{prop:global-ext}. We begin with some intermediate lemmas. Note that in the next lemma, we do not restrict to $\Omega_0$. 

\begin{lemma}\label{lem:forward-extreme}
For $\w\in\Omega$, $\beta\in(0,\infty)$, let $\Pi\in \DLR_x^{\beta,\w}$ be fully supported and let $v\in x+\bbZ_{\geq0}^2$. Assume that $\Pi_v\in\ext\DLR_v^{\beta,\w}$ and that 
\[
\Pi(X_n\in v+\bbZ_{\geq0}^2\text{ for all sufficiently large }n)=1.
\]
Then $\Pi\in\ext\DLR_x^{\beta,\w}$.
\end{lemma}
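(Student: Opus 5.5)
We prove extremality through tail triviality (Lemma \ref{lem:DLR-triv}). Fix $A\in\ptail$, abbreviate $k=x\cdot(e_1+e_2)$ and $\ell=v\cdot(e_1+e_2)$, and let $G=\{X_n\in v+\bbZ_{\geq0}^2\text{ for all sufficiently large }n\}$, so that $\Pi(G)=1$ by hypothesis. The goal is to show $\Pi(A)\in\{0,1\}$.

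\emph{Step 1: the measures $\Pi_u$ for $u\geq v$ all give $A$ the same mass.} By Lemma \ref{lem:quad-consist}, for every $u\ge v$ we have $\Pi_u=(\Pi_v)_u$. Since $\Pi_v\in\ext\DLR_v^{\beta,\w}$ is fully supported, Theorem \ref{thm:coalext} shows that the family $\{(\Pi_v)_u:u\ge v\}$ admits a coalescing coupling. As in the first half of the proof of Theorem \ref{thm:coalext}, coalescence forces $\Pi_u(A)=\Pi_v(A)=:c$ for all $u\ge v$. Tail triviality of $\Pi_v$ gives $c\in\{0,1\}$.

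\emph{Step 2: transfer this to $\Pi$ by conditioning on late positions.} For $n\ge \ell$, the Gibbs property implies that $\Pi(A\viiva \pathsa_{k:n})=\Pi(A\viiva X_n)=\Pi_{X_n}(A)$, because the future of the path after level $n$ given $X_n$ is $\Pi_{X_n}$. On the event $\{X_n\in v+\bbZ_{\geq0}^2\}$, Step 1 gives $\Pi_{X_n}(A)=c$. By forward martingale convergence, $\Pi(A\viiva\pathsa_{k:n})\to\one_A$ $\Pi$-almost surely. Since $\Pi(G)=1$, we also have $\Pi(A\viiva\pathsa_{k:n})=c$ for all large $n$, $\Pi$-almost surely. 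Therefore $\one_A=c$ $\Pi$-almost surely, so $\Pi(A)=c\in\{0,1\}$, and Lemma \ref{lem:DLR-triv} finishes the proof.

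\emph{Main obstacle.} The one delicate point is the identity $\Pi(A\viiva X_n=u)=\Pi_u(A)$ for $A\in\ptail$ and $u\ge x$ at level $n$. It follows from the Markov property of $\Pi$ (Theorem \ref{thm:DLR-cocycle}) together with $A\in\pathsa_{n:\infty}$, and it is exactly the definition \eqref{eq:induced-DLR-def}. Full support of $\Pi$ guarantees $\Pi(X_n=u)>0$, so this conditioning is well defined. The rest is bookkeeping. Note that Step 1 could alternatively be done without Theorem \ref{thm:coalext}, by applying the Radon--Nikodym argument from its proof directly: $\Pi_u|_{\pathsa_{m:\infty}}$ has density $\Pi_v(u\viiva\pathsa_{m:\infty})/\Pi_v(u)$ with respect to $\Pi_v$, and this density tends to $1$ by tail triviality of $\Pi_v$.
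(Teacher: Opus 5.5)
Your argument is correct, but it runs in the opposite time direction from the paper's.

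\textbf{The paper's route.} The paper never computes $\Pi$ on the tail. It only shows that $\Pi$ and $\Pi_v$ have the same null sets in $\ptail$.
\begin{itemize}
\item If $\Pi(A)=0$, then trivially $\Pi_v(A)=0$.
\item If $\Pi(A)>0$, then $\Pi(A,X_n\ge v)>0$ for some $n>\ell$. The backward Gibbs identity $\Pi(A,X_\ell=v)=\bfE^\Pi\bigl[\one_A\,\poly{x}{X_n}^{\beta,\w}(X_\ell=v)\bigr]$ holds, and the point-to-point polymer measure is strictly positive on $\{X_n\ge v\}$. Hence $\Pi_v(A)>0$.
\end{itemize}
Tail triviality then transfers from $\Pi_v$ to $\Pi$.

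\textbf{Your route.} You use the forward Markov property and L\'evy's upward theorem, as in the first half of the proof of Theorem \ref{thm:coalext}. This gives the slightly more informative conclusion that $\Pi$ and $\Pi_v$ actually agree on $\ptail$.

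\textbf{Step 1 is heavier than needed.} Since $\Pi_v$ is tail trivial, for $u\ge v$ at level $m$ one has $\Pi_v(A,X_m=u)\in\{0,\Pi_v(X_m=u)\}$, according as $\Pi_v(A)$ is $0$ or $1$. So $\Pi_u(A)=\Pi_v(A\viiva u)=\Pi_v(A)$ immediately. No appeal to Theorem \ref{thm:coalext}, Goldstein's theorem, or the Radon--Nikodym argument is required.

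\textbf{Step 2 can avoid martingales.} Up-right paths never leave a quadrant once they enter it. So the events $\{X_n\ge v\}$ increase to the event in the hypothesis. The Markov decomposition gives $\Pi(A,X_n\ge v)=c\,\Pi(X_n\ge v)$, and this tends to $c$.

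\textbf{Comparison.} With these simplifications your proof is about as short as the paper's. The paper's version uses only the DLR specification and positivity of polymer weights. Yours makes the link to agreement on the tail, and hence to coalescence, explicit.
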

\begin{proof}
We show that $\Pi$ is tail trivial. Let $\ell=v\cdot(e_1+e_2)$ and take $A\in\ptail$. Since $A\in\pathsa_{\ell:\infty}$, $\Pi(A)=0$ implies $\Pi_v(A)=0$.

Conversely, assume $\Pi(A)>0$. By the hypothesis, there exists $n> \ell$ such that
\[
\Pi(A,\ X_n\in v+\bbZ_{\geq0}^2)>0.
\]
Since $A\in\pathsa_{n:\infty}$, the Gibbs property gives
\[
\Pi(A,\ X_\ell=v)
=
\bfE^\Pi\bigl[\one_A\,\Pi(X_\ell=v\viiva\pathsa_{n:\infty})\bigr]
=
\bfE^\Pi\bigl[\one_A\,\poly{x}{X_n}^{\beta,\w}(X_\ell=v)\bigr].
\]
On the event $\{X_n\in v+\bbZ_{\geq0}^2\}$, the point-to-point polymer measure $\poly{x}{X_n}^{\beta,\w}$ gives strictly positive probability to paths that pass through $v$ at time $\ell$. Hence $\Pi(A,\ X_\ell=v)>0$, and therefore $\Pi_v(A)>0$.

Thus $\Pi(A)=0$ if and only if $\Pi_v(A)=0$ for all $A\in\ptail$. Since $\Pi_v$ is extremal, its tail is trivial. Therefore $\Pi$ is tail trivial. Lemma \ref{lem:DLR-triv} gives $\Pi\in\ext\DLR_x^{\beta,\w}$.
\end{proof}

\begin{lemma}\label{lem:Busgeo}
For $\w\in\Omega_0$, $\beta\in(0,\infty)$, $x,y,z\in\bbZ^2$, and fully supported $\Pi\in\ext\DLR_x^{\beta,\w}$, the following limit exists $\Pi$-almost surely
\be\Busgeo_\Pi(y,z)=\lim_{n\to\infty}\frac{1}{\beta}\Bigl(\log \PF{y}{X_n}^{\beta,\w}-\log\PF{z}{X_n}^{\beta,\w}\Bigr)=\lim_{n\to\infty}\frac{1}{\beta}\Bigl(\log \bfE^\Pi\Bigl[\frac{\PF{y}{X_n}^{\beta,\w}}{\PF{x}{X_n}^{\beta,\w}}\Bigr]-\log \bfE^\Pi\Bigl[\frac{\PF{z}{X_n}^{\beta,\w}}{\PF{x}{X_n}^{\beta,\w}}\Bigr]\Bigr).\label{eq:Busgeo}\ee
and satisfies for all $y\in\bbZ^2$
\be
\sum_{i=1}^2 e^{\beta \w_y-\beta \Busgeo_\Pi(y,y+e_i)}=1.\label{eq:Busgeo-lemma-recovery}
\ee
If $y \geq x$, then for $i\in\{1,2\}$
\be
\pi^{\Pi}_{y,y+e_i} = \Pi(y+e_i\viiva y) = e^{\beta \w_y - \beta \Busgeo_{\Pi}(y,y+e_i)}\label{eq:Geobus-transition-equal}.
\ee
\end{lemma}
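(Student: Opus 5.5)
The plan is to reduce everything to the backward martingale of Lemma \ref{lem:DLR-mg} together with tail triviality. For $y\in\bbZ^2$, set
\[
Z_n(y)=\frac{\PF{y}{X_n}^{\beta,\w}}{\PF{x}{X_n}^{\beta,\w}},
\qquad n\ge m_y=(x\vee y)\cdot(e_1+e_2).
\]
By Lemma \ref{lem:DLR-mg}, $(Z_n(y))_{n\ge m_y}$ is a nonnegative $\Pi$-backward martingale for the filtration $(\pathsa_{n:\infty})$. Two consequences follow.

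First, its mean $c_y=\bfE^\Pi[Z_n(y)]$ does not depend on $n\ge m_y$. This already gives the eventual constancy of the second expression in \eqref{eq:Busgeo}.

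Second, backward martingale convergence gives $Z_n(y)\to\bfE^\Pi[Z_{m_y}(y)\viiva\ptail]$, $\Pi$-almost surely and in $L^1(\Pi)$. Since $\Pi$ is extremal, Lemma \ref{lem:DLR-triv} says $\Pi$ is tail trivial, so this conditional expectation is the constant $c_y$. Hence $Z_n(y)\to c_y$ $\Pi$-almost surely, for every $y$ simultaneously (there are countably many $y$).

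Given this, \eqref{eq:Busgeo} follows by taking logarithms of $Z_n(y)/Z_n(z)$. It yields $\Busgeo_\Pi(y,z)=\beta^{-1}(\log c_y-\log c_z)$, provided we know $0<c_y<\infty$ for all $y$.

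The main obstacle is this positivity and finiteness, especially for $y\not\ge x$, where $\PF{y}{X_n}^{\beta,\w}$ vanishes unless $X_n\ge y$. Finiteness is automatic, since $Z_n(y)\in L^1(\Pi)$. For positivity, $Z_n(y)>0$ on $\{X_n\ge y\}$, and $\Pi(X_{m}\ge y)>0$ for any $m\ge m_y$ because $\Pi$ is fully supported: pick any admissible path from $x$ through $x\vee y$. Therefore $c_y=\bfE^\Pi[Z_m(y)]>0$. For the almost-sure ratio of logarithms to make sense along the path, I will also note that on $\Omega_0$, Lemma \ref{lem:Omega0} applied to the fully supported $\Pi$ rules out the path eventually following any coordinate ray. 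Thus both coordinates of $X_n$ diverge, and $X_n\ge y$ for all large $n$, $\Pi$-almost surely. This step is where $\w\in\Omega_0$ enters.

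For the recovery identity \eqref{eq:Busgeo-lemma-recovery}, I use the one-step decomposition
\[
\PF{y}{X_n}^{\beta,\w}=e^{\beta\w_y}\bigl(\PF{y+e_1}{X_n}^{\beta,\w}+\PF{y+e_2}{X_n}^{\beta,\w}\bigr),
\]
valid whenever $X_n\ge y$ and $X_n\neq y$; note that a term vanishes appropriately when $X_n$ is not above $y+e_i$. Divide by $\PF{x}{X_n}^{\beta,\w}$ and let $n\to\infty$ to get $c_y=e^{\beta\w_y}(c_{y+e_1}+c_{y+e_2})$. Dividing by $c_y$ and inserting the definition of $\Busgeo_\Pi$ gives \eqref{eq:Busgeo-lemma-recovery}.

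Finally, for $y\ge x$, Proposition \ref{prop:DLR-transition} expresses $\pi^\Pi_{y,y+e_i}$ as $e^{\beta\w_y}$ times the ratio of the expectations of $Z_m(y+e_i)$ and $Z_m(y)$. By the constancy of means this ratio is $c_{y+e_i}/c_y=e^{-\beta\Busgeo_\Pi(y,y+e_i)}$, which is \eqref{eq:Geobus-transition-equal}. Alternatively, this is Corollary \ref{cor:DLR-mglim} combined with the almost-sure limits.
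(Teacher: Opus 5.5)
Your proposal is correct and follows essentially the same route as the paper: backward martingale convergence from Lemma \ref{lem:DLR-mg}, tail triviality to identify the limit with the constant mean $c_y$, positivity of $c_y$ from full support, the one-step decomposition of the partition function for recovery, and Proposition \ref{prop:DLR-transition} for the transition probabilities. One small remark: your appeal to $\w\in\Omega_0$ is not actually needed, since the almost sure convergence $Z_n(y)\to c_y>0$ already forces $X_n\ge y$ for all large $n$. In any case, Lemma \ref{lem:Omega0} by itself does not exclude the two coordinate rays rooted at $x$; for those you would need tail triviality.
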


\begin{proof}
By Lemma \ref{lem:DLR-mg} and backward martingale convergence, for all $m\ge (x \vee y)\cdot(e_1+e_2)$,
\be \lim_{n\to\infty}   \frac{\PF{y}{ X_n}^{\beta,\w}}{\PF{x}{X_n}^{\beta,\w}}  =  \bfE^{\Pi}\biggl[ \,  \frac{\PF{y}{X_m}^{\beta,\w}}{\PF{x}{X_m}^{\beta,\w}} \,\bigg\vert\, \ptail\,\biggr]  =\bfE^{\Pi}\biggl[ \,  \frac{\PF{y}{X_m}^{\beta,\w}}{\PF{x}{X_m}^{\beta,\w}}\,\biggr],\qquad\text{$\Pi$-almost surely and in $L^1(\Pi)$}. \label{eq:Blimtmp}  
\ee 
The second equality above uses the triviality of $\ptail$ under $\Pi \in \ext \DLR_x^{\beta,\w}$ coming from Lemma \ref{lem:DLR-triv}. Moreover, because $\Pi$ is fully supported, as soon as $m \geq\max(x\cdot e_1,y\cdot e_1) + \max(x\cdot e_2, y\cdot e_2),$ $X_m \in x \vee y + \bbZ_{\geq 0}^2$ with positive $\Pi$-probability. Therefore, the expectation on the right-hand side is strictly positive. Taking $\beta^{-1}\log$ gives \eqref{eq:Busgeo}.
For all $n$ such that $\PF{y+e_1}{X_n}^{\beta,\w}>0$ and $\PF{y+e_2}{X_n}^{\beta,\w}>0$ (which, by Lemma \ref{lem:Omega0}, hold for all sufficiently large $n$), we have that
\[
\PF{y}{X_n}^{\beta,\w} = e^{\beta\w_{y}}(\PF{y+e_1}{X_n}^{\beta,\w} + \PF{y+e_2}{X_n}^{\beta,\w}),
\]
and hence
\[
1 = e^{\beta\w_y}\left(\frac{\PF{y+e_1}{X_n}^{\beta,\w}}{\PF{y}{X_n}^{\beta,\w}}+\frac{\PF{y+e_2}{X_n}^{\beta,\w}}{\PF{y}{X_n}^{\beta,\w}}\right).
\]
Letting $n\to\infty$ and using \eqref{eq:Busgeo} gives \eqref{eq:Busgeo-lemma-recovery}. \eqref{eq:Geobus-transition-equal} follows from \eqref{eq:Busgeo} and Proposition \ref{prop:DLR-transition}. Note also that by Proposition \ref{prop:DLR-transition}, the right-hand side of \eqref{eq:Busgeo} does not change with $n$ once $n>(x\vee y)\cdot(e_1+e_2)$.
\end{proof}

\begin{proof}[Proof of Proposition \ref{prop:global-ext}]
The existence of the limit in \eqref{eq:global-ext-Busemann} is in  Lemma \ref{lem:Busgeo}. The cocycle property is immediate from the definition and recovery is in \eqref{eq:Busgeo-lemma-recovery}. This verifies the claims in \eqref{prop:global-ext:cocrev}.

That the expression in \eqref{eq:cDLR-def} defines an element of $\DLR_z^{\beta,\w}$ whenever $\Busgeo_{\Pi}$ is a $\beta$-recovering cocycle is a consequence of a straightforward computation recorded as part of Theorem 5.2 in \cite{Jan-Ras-20-aop}. The positivity of the right-hand side ensures the measure is fully supported. To check consistency, let $y \leq z$ be given and abbreviate $m = y \cdot(e_1+e_2)$ and $\ell=z\cdot(e_1+e_2)$. Fix any admissible path $x_{\ell:n}$ with $x_\ell=z$. Then
\[
\cDLR_y^{\Pi}(x_{\ell:n} \viiva z) = \frac{\cDLR_y^{\Pi}(x_{\ell:n})}{\cDLR_y^{\Pi}(z)} = \frac{\PF{y}{z}^{\beta,\w}e^{\beta \sum_{k=\ell}^{n-1}\w_{x_k} - \beta \Busgeo_{\Pi}(y,x_n)}}{\PF{y}{z}^{\beta,\w}e^{-\beta \Busgeo_{\Pi}(y,z)}} = e^{\beta \sum_{k=\ell}^{n-1}\w_{x_k} - \beta \Busgeo_{\Pi}(z,x_n)}=\cDLR_z^{\Pi}(x_{\ell:n}).
\]

To check the extremality claim in \eqref{prop:global-ext:consistent-Gibbs}, we first verify \eqref{prop:global-ext:agree}. This is straightforward: together, \eqref{eq:cDLR-def} and \eqref{eq:Geobus-transition-equal} imply that for any $z\ge x$, the transition probabilities of $\cDLR_{z}^{\Pi}$ match those of $\Pi_z$.

Returning to extremality, Corollary \ref{cor:quadrant-extreme} gives $\cDLR_y^{\Pi}\in\ext \DLR_y^{\beta,\w}$ for all $y\ge x$. We will extend this to arbitrary roots using Lemma \ref{lem:forward-extreme}. First observe that no measure $\cDLR_z^\Pi$ assigns positive probability to a path of the form $u+e_j \bbZ_{\geq 0}$, $j\in\{1,2\}$, $u\in z+\Z^2_+$. Lemma \ref{lem:Omega0} rules out all such coordinate rays that are not rooted at $z$. To rule out the two rays rooted at $z$, fix $j\in\{1,2\}$. If $\cDLR_z^\Pi$ assigned positive probability to $z+e_j\bbZ_{\geq0}$, then consistency and full support would imply that $\cDLR_{z-e_{3-j}}^\Pi$ assigns positive probability to the same ray. This contradicts Lemma \ref{lem:Omega0}. It follows that, under $\cDLR_z^\Pi$, the path almost surely eventually enters $v+\bbZ_{\geq0}^2$ for every vertex $v\ge z$ as any path that fails to do so must eventually follow a coordinate ray. 

Now take any $y\in\bbZ^2$ and set $v=x\vee y$. The measure $\cDLR_v^\Pi$ is extremal by the case $v\ge x$, and the preceding paragraph gives
\[
\cDLR_y^\Pi(X_n\in v+\bbZ_{\geq0}^2\text{ for all sufficiently large }n)=1.
\]
Lemma \ref{lem:forward-extreme} implies that $\cDLR_y^\Pi\in\ext\DLR_y^{\beta,\w}$. This completes the verification of \eqref{prop:global-ext:consistent-Gibbs}.

Finally, we check \eqref{prop:global-ext:Busagree}. Denote by $C\in\ptail$ the event
\[
C = \biggl\{X' : \lim_{n\to\infty}\frac{1}{\beta}\bigl(\log \PF{y}{X_n'}^{\beta,\w}-\log\PF{z}{X_n'}^{\beta,\w}\bigr)=\lim_{n\to\infty}\frac{1}{\beta}\biggl(\log \bfE^\Pi\biggl[\frac{\PF{y}{X_n}^{\beta,\w}}{\PF{x}{X_n}^{\beta,\w}}\biggr]-\log \bfE^\Pi\biggl[\frac{\PF{z}{X_n}^{\beta,\w}}{\PF{x}{X_n}^{\beta,\w}}\biggr]\biggr)\biggr\}.
\]
Note that the limit of expectations on the right-hand side exists (by backward martingale convergence) for any $\w\in \Omega$ and $\beta\in(0,\infty)$ provided only that $\Pi \in \ext \DLR_x^{\beta,\w}$ is fully supported, which is our hypothesis. In particular, the existence and value of that limit are not part of the event. 

We have $\Pi(C) = 1$ by Lemma \ref{lem:Busgeo}. For any vertex $u\ge x$, full support gives $\Pi(u)>0$, and \eqref{prop:global-ext:agree} gives $\cDLR_u^\Pi(\aabullet)=\Pi(\aabullet\viiva u)$. Since $\Pi(C)=1$, $\cDLR_u^\Pi(C)=1$.
Now fix any $w\in\bbZ^2$ and set $v=w\vee x$. Since $v\ge x$, we have $\cDLR_v^\Pi(C)=1$. By consistency, $\cDLR_w^\Pi(C\viiva v)=1$, and full support gives $\cDLR_w^\Pi(v)>0$. Thus $\cDLR_w^\Pi(C)>0$. Since $C\in\ptail$ and $\cDLR_w^\Pi$ is extremal, $\cDLR_w^\Pi(C)=1$.
\end{proof}
We next turn to the proofs of the results concerning a global total ordering of extremal Gibbs measures, starting with Theorem \ref{thm:global-weakorder}.
\begin{proof}[Proof of Theorem \ref{thm:global-weakorder}]
Let $\Pi^1$ and $\Pi^2$ be as in the statement. We check the equivalence in the listed order. First, \eqref{thm:global-weakorder:Pi} implies \eqref{thm:global-weakorder:cDLR-exist} by Proposition \ref{prop:global-ext}\eqref{prop:global-ext:agree} with $z= x \vee y$. 

Now suppose that \eqref{thm:global-weakorder:cDLR-exist} holds and let $y\in\bbZ^2$ be such that $\cDLR_y^{\Pi^1} \preceq \cDLR_y^{\Pi^2}$. Let $\wt \Pi$ be a coupling of $\cDLR_y^{\Pi^1}$ and $\cDLR_y^{\Pi^2}$ under which $\wt{\Pi}(X^1 \preceq X^2) = 1.$ Since $X^1_n\preceq X^2_n$ almost surely under $\wt{\Pi}$, Lemma B.2 in \cite{Jan-Ras-20-aop} gives us that for all $z\in\bbZ^2$, with $z+e_1+e_2 \leq X_n^j$ for both $j\in\{1,2\}$,
\[
\frac{\PF{z+e_1}{X^1_n}^{\beta,\w}}{\PF{z}{X^1_n}^{\beta,\w}}
\le
\frac{\PF{z+e_1}{X^2_n}^{\beta,\w}}{\PF{z}{X^2_n}^{\beta,\w}} \qquad \text{ and }\qquad
\frac{\PF{z+e_2}{X^1_n}^{\beta,\w}}{\PF{z}{X^1_n}^{\beta,\w}}
\ge
\frac{\PF{z+e_2}{X^2_n}^{\beta,\w}}{\PF{z}{X^2_n}^{\beta,\w}}.
\]
Since $\w\in\Omega_0$ and the measures $\cDLR_y^{\Pi^1}$ and $\cDLR_y^{\Pi^2}$ are extremal and fully supported, Lemma \ref{lem:Omega0} implies that, $\wt\Pi$-almost surely, for each $z$, $z+e_1+e_2\le X_n^j$ for both $j\in\{1,2\}$ and all sufficiently large $n$. Taking $n\to\infty$ and appealing to Proposition \ref{prop:global-ext}\eqref{prop:global-ext:Busagree} implies \eqref{thm:global-weakorder:Busgeo}.

Suppose \eqref{thm:global-weakorder:Busgeo} holds and fix $z\in\bbZ^2$. By definition, for $u \geq z$ and $i\in\{1,2\}$, the transition probability of $\cDLR_z^{\Pi^i}$ to go from $u$ to $u+e_1$ is
\begin{align}\label{1666}
\pi^{\cDLR_z^{\Pi^i}}(u,u+e_1) = e^{\beta \w_u - \beta \Busgeo_{\Pi^i}(u,u+e_1)}.
\end{align}
It follows that $\cDLR_z^{\Pi^1} \preceq \cDLR_z^{\Pi^2}$, which is \eqref{thm:global-weakorder:cDLR-all}.

\eqref{thm:global-weakorder:cDLR-all} implies \eqref{thm:global-weakorder:Pi} by Proposition \ref{prop:global-ext}\eqref{prop:global-ext:agree}.
\end{proof}
Next we prove the characterization of the strict total order in Theorem \ref{thm:global-strictorder}.
\begin{proof}[Proof of Theorem \ref{thm:global-strictorder}]
As above, \eqref{thm:global-strictorder:Pi} immediately implies \eqref{thm:global-strictorder:cDLR-exist}, \eqref{thm:global-strictorder:Busgeo} immediately implies \eqref{thm:global-strictorder:cDLR-all}, and \eqref{thm:global-strictorder:cDLR-all} immediately implies \eqref{thm:global-strictorder:Pi}. The only non-trivial implication is \eqref{thm:global-strictorder:cDLR-exist} implies \eqref{thm:global-strictorder:Busgeo}, so that is what we devote the rest of the proof to.

Assume that \eqref{thm:global-strictorder:cDLR-exist} holds, i.e., there exists $z \in \bbZ^2$ for which $\cDLR_z^{\Pi^1}\precneq \cDLR_z^{\Pi^2}$.  Then, by Theorem \ref{thm:global-weakorder},
\begin{align}\label{1678}
\Busgeo_{\Pi^1}(u,u+e_1) \geq \Busgeo_{\Pi^2}(u,u+e_1)\quad\text{and}\quad
\Busgeo_{\Pi^1}(u,u+e_2) \leq \Busgeo_{\Pi^2}(u,u+e_2)\qquad\forall u\in\bbZ^2.
\end{align} 

Next, observe that, by the recovery property \eqref{eq:Busgeo-recovery}, for any $u\in\bbZ^2$, 
\[\Busgeo_{\Pi^1}(u,u+e_1)>\Busgeo_{\Pi^2}(u,u+e_1) \text{ if and only if }\Busgeo_{\Pi^1}(u,u+e_2)<\Busgeo_{\Pi^2}(u,u+e_2).\] 
We will call a point $u$ for which both these strict inequalities hold a point of increase. 

If $u$ is not a point of increase, then
\[\Busgeo_{\Pi^1}(u,u+e_1) - \Busgeo_{\Pi^1}(u,u+e_2) = \Busgeo_{\Pi^2}(u,u+e_1) - \Busgeo_{\Pi^2}(u,u+e_2)
\label{eq:plaquette-1}.\]
Combining this with the cocycle property, we have
\begin{align*}
&\Busgeo_{\Pi^1}(u+e_2,u+e_1+e_2) - \Busgeo_{\Pi^1}(u+e_1,u+e_1+e_2) \\
&\qquad= \Busgeo_{\Pi^1}(u,u+e_1)-\Busgeo_{\Pi^1}(u,u+e_2)= \Busgeo_{\Pi^2}(u,u+e_1) - \Busgeo_{\Pi^2}(u,u+e_2)\\ 
&\qquad= \Busgeo_{\Pi^2}(u+e_2,u+e_1+e_2) - \Busgeo_{\Pi^2}(u+e_1,u+e_1+e_2).
\end{align*}
This and \eqref{1678} force both $\Busgeo_{\Pi^1}(u+e_2,u+e_1+e_2)=\Busgeo_{\Pi^2}(u+e_2,u+e_1+e_2)$ and $\Busgeo_{\Pi^1}(u+e_1,u+e_1+e_2)=\Busgeo_{\Pi^2}(u+e_1,u+e_1+e_2)$. Therefore if $u$ is not a point of increase, neither $u+e_1$ nor $u+e_2$ is a point of increase. An induction argument then shows that if $u$ is not a point of increase, then no vertex in the quadrant $u + \bbZ_{\geq 0}^2$ is a point of increase. But then the transition probabilities of $\cDLR_u^{\Pi^1}$ and $\cDLR_u^{\Pi^2}$ (given in \eqref{1666}) all agree and, consequently, $\cDLR_u^{\Pi^1}=\cDLR_u^{\Pi^2}$.

Having $\cDLR_u^{\Pi^1}=\cDLR_u^{\Pi^2}$, apply the equivalence \eqref{thm:global-weakorder:cDLR-exist}$\Leftrightarrow$\eqref{thm:global-weakorder:cDLR-all} in Theorem \ref{thm:global-weakorder} twice, to get that $\cDLR_y^{\Pi^1}=\cDLR_y^{\Pi^2}$ for all $y\in \bbZ^2$. This contradicts the assumption $\cDLR_z^{\Pi^1}\precneq\cDLR_z^{\Pi^2}$.
This contradiction proves that every vertex in $\bbZ^2$ must be a point of increase, which is \eqref{thm:global-strictorder:Busgeo}.
\end{proof}

We now turn to the characterization of coalescence in terms of cocycle equality, recorded as Proposition \ref{prop:coal-cond}.
\begin{proof}[Proof of Proposition \ref{prop:coal-cond}]
First, assume \eqref{prop:coal-cond:cons}, i.e., that the family $\{\Pi^x : x \in V\}$ is consistent. It suffices to show that pairwise coalescing couplings exist, so fix vertices $x\neq y$ in $V$. Full support implies that $\Pi^x(x \vee y)>0$ and $\Pi^y(x \vee y)>0$, so that we can condition. Consistency implies that $\Pi^x_{x \vee y} = \Pi^y_{x \vee y}$. For any event $A\in\ptail$, we must have $\Pi^x(A) = \Pi^x_{x\vee y}(A) = \Pi^y_{x \vee y}(A) = \Pi^y(A)$. Existence of a coalescing coupling now follows from Goldstein's theorem \cite[Theorem 2.1]{Gol-79}.

Now, suppose that \eqref{prop:coal-cond:coal} holds. Fix $x \neq y$ in $V$. Because a coalescing coupling exists, $\Pi^x$ and $\Pi^y$ agree on $\ptail$. This is part of Goldstein's theorem and the proof appeared previously in the proof of Theorem \ref{thm:coalext}. The limit in \eqref{eq:global-ext-Busemann} is tail-measurable and hence \eqref{prop:coal-cond:Busgeo} follows.

Finally, suppose \eqref{prop:coal-cond:Busgeo} holds. Then consistency follows from Proposition \ref{prop:global-ext}. This is because $\Busgeo_{\Pi^x}=\Busgeo_{\Pi^y}$ implies $\cDLR_z^{\Pi^x} = \cDLR_z^{\Pi^y}$ for all $z\in\bbZ^2$ while for $z \geq x$, $\cDLR_z^{\Pi^x} = \Pi^x_z$ and for $z \geq y$, $\cDLR_z^{\Pi^y} = \Pi_z^y$. Therefore \eqref{prop:coal-cond:cons} holds.
\end{proof}

\section{Strong existence and strong uniqueness}\label{sec:strong}
We next prove strong existence of the Busemann process in the directed polymer model. The argument can be viewed as a positive temperature analogue of that in \cite{Jan-Ras-Sep-25-strong-}, itself inspired by the construction of \cite{Ahl-Hof-16-}. At a high level, the strategy parallels that in Section 4.2 of \cite{Jan-Ras-Sep-25-strong-}, but several key steps require substantially different arguments. In particular, the almost sure total order and closedness of $\ext\DLR_x^{\beta,\w}$ (Theorem \ref{thm:cltot}) are straightforward in the zero-temperature setting, while requiring a separate analysis in positive temperature (Section \ref{sec:structure}). Furthermore, we need to identify an appropriate dense subset of $\ext\DLR_x^{\beta,\w}$ on which to carry out the construction, leading to a rather different implementation of the general strategy. With these ingredients in place, the main idea of the argument is to use the total order on the almost surely compact space $\ext\DLR_x^{\beta,\w}$ to define a shift-covariant cumulative distribution function. Given a random element of this set on an extended probability space, we can then use the associated conditional quantile function given the weights $\w(\what)$ to shift covariantly sample from the conditional distribution using the inverse CDF method. We then show that the resulting random variable is degenerate by taking advantage of certain consistency properties of the measures coming from coalescence. This implies measurability with respect to the weights.

\subsection{Approximating set}
Our first goal in this section is to construct a shift covariantly defined, countable, and dense measurable subset of $\ext \DLR_x^{\beta, \w}$ which contains all left- or right-isolated elements of $\ext \DLR_x^{\beta,\w}$, if any exist.

\subsubsection{Left- and right-isolated measures}\label{sec:lriso}
We start by giving the definition of being right-isolated relative to a totally ordered subset of the space of measures, with the corresponding definition of left-isolated being similar. Since the stochastic order is closed under weak convergence, the weak closure of any totally ordered subset of $\sM_1(\pathsp_x,\pathsa_{k:\infty})$ is compact and totally ordered. The topology of weak convergence is finer than the order topology in this setting, so every nonempty totally ordered subset $K$ which is compact in the topology of weak convergence has a unique minimum and a unique maximum, denoted by $\inf K$ and $\sup K$, respectively. By convention, these extreme elements are regarded as left- and right-isolated relative to $K$, respectively.

\begin{definition}\label{def:LIRI}
If $K$ is a closed totally ordered subset of $(\sM_1(\pathsp_x,\pathsa_{k:\infty}),d_{\sM_1},\preceq)$, then $\mu \in$ $K\,\setminus$ $\,\{\sup K\}$ is right-isolated relative to $K$ if there exists $\nu \in K$ with $\mu \precneq \nu$ in the total order for which $\{\gamma \in K : \mu \precneq \gamma \precneq \nu \} = \varnothing$.
\end{definition}

Before moving on to our construction, we prove a characterization of being right-isolated which will be useful in the sequel.  The corresponding left-isolated statement is similar. 

\begin{lemma}\label{lem:RI}
Fix $x\in\bbZ^2$ with $x\cdot(e_1+e_2)=k$.  Let $K$ be a closed totally ordered subset of
$(\sM_1(\pathsp_x,\pathsa_{k:\infty}),d_{\sM_1},\preceq)$.  Then $\mu\in K\,\setminus\,\{\sup K\}$ is right-isolated relative to $K$ if and only if there exist
$\nu\in K$ with $\mu\precneq \nu$, a finite up-right path $x_{k:n}$, $x_k=x$, and $\e>0$ such that
\begin{equation}\label{eq:right-gap}
\{\gamma\in K:\ \mu\precneq\gamma\text{ and }\gamma(x_{k:n}\preceq X_{k:n})<\mu(x_{k:n}\preceq X_{k:n})+\e\}=\varnothing.
\end{equation}
\end{lemma}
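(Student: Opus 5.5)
The plan is to prove both directions separately. The reverse implication is essentially immediate; the forward one is a compactness argument that combines the characterization of the stochastic order through increasing cylinder events with the fact that weak convergence on $\pathsp_x$ is equivalent to convergence of cylinder probabilities.

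\emph{Reverse direction.} Suppose $\nu$, $x_{k:n}$ and $\e$ are as in \eqref{eq:right-gap}. Then every $\gamma\in K$ with $\mu\precneq\gamma$ satisfies $\gamma(x_{k:n}\preceq X_{k:n})\ge\mu(x_{k:n}\preceq X_{k:n})+\e$. We claim there is a $\gamma$ immediately above $\mu$, which we obtain as an infimum. Let $U=\{\gamma\in K:\mu\precneq\gamma\}$; it is nonempty since it contains $\nu$. Because $K$ is closed and totally ordered, hence compact, and the topology of weak convergence is finer than the order topology, $\overline U$ is a compact totally ordered set and $\inf \overline U$ exists. Call it $\gamma_0$; it is a weak limit of elements of $U$. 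The map $\gamma\mapsto\gamma(x_{k:n}\preceq X_{k:n})$ is continuous, since this is a cylinder event in the discrete product topology. Hence $\gamma_0(x_{k:n}\preceq X_{k:n})\ge\mu(x_{k:n}\preceq X_{k:n})+\e$, so $\gamma_0\neq\mu$. Also $\mu\preceq\gamma_0$, since the order is closed, and therefore $\gamma_0\in U$. By the definition of the infimum, no element of $K$ lies strictly between $\mu$ and $\gamma_0$. So $\mu$ is right-isolated, with witness $\gamma_0$.

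\emph{Forward direction.} Let $\mu$ be right-isolated with witness $\nu$, so $\{\gamma\in K:\mu\precneq\gamma\precneq\nu\}=\varnothing$. By total order, every $\gamma\in K$ with $\mu\precneq\gamma$ then satisfies $\nu\preceq\gamma$.
\begin{itemize}
\item Since $\mu\precneq\nu$ and $\mu\neq\nu$, the two measures differ on some cylinder. Under the stochastic order, two ordered measures that agree on all increasing cylinder events agree on all cylinders. This can be seen either by the Strassen coupling, where equal marginals of the level-$m$ coordinates force the ordered coordinates to coincide almost surely, or by noting that the increasing events $\{x_{k:n}\preceq X_{k:n}\}$ generate the finite-dimensional distributions.
\item Hence there is a path $x_{k:n}$ with $\nu(x_{k:n}\preceq X_{k:n})>\mu(x_{k:n}\preceq X_{k:n})$, and we set $\e$ equal to this gap.
\item For every $\gamma\succeq\nu$, monotonicity on the increasing event gives $\gamma(x_{k:n}\preceq X_{k:n})\ge\nu(x_{k:n}\preceq X_{k:n})=\mu(x_{k:n}\preceq X_{k:n})+\e$, which is exactly \eqref{eq:right-gap}.
\end{itemize}

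\emph{Main obstacle.} The step needing the most care is the generating claim in the first bullet: that the events $\{x_{k:n}\preceq X_{k:n}\}$ separate stochastically ordered measures. I would prove it via the coupling. Under a monotone coupling $X^1\preceq X^2$, suppose the laws of $X^i_{k:n}$ agree on every increasing set of the form $\{y_{k:n}\preceq X_{k:n}\}$. Sets of the form $\{X_m\cdot e_1\ge a\}$ are upward closed, and each is a finite union of such sets, so inclusion–exclusion over finite unions (working level by level) gives $\bfE[X^2_m\cdot e_1-X^1_m\cdot e_1]=0$. Combined with $X^1_m\cdot e_1\le X^2_m\cdot e_1$, this forces $X^1_m=X^2_m$ almost surely for each $m$, so the two laws coincide.

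A simpler alternative avoids inclusion–exclusion: use the single-coordinate events $\{X_m\cdot e_1\ge a\}$ as the separating family directly. Each such event is a finite union of sets $\{x_{k:m}\preceq X_{k:m}\}$ with $x_{k:m}$ the lowest path ending at $(a,\cdot)$; in fact it equals a single such set, namely the path going up first and then right. If $\mu$ and $\nu$ agreed on all of these, the level-$m$ marginals would agree and the coupling argument above would give $\mu=\nu$. So some event $\{x_{k:m}\preceq X_{k:m}\}$ distinguishes $\mu$ from $\nu$, which is all the forward direction needs.
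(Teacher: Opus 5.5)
Your proof is correct and follows essentially the same route as the paper. In the forward direction you separate $\mu$ from $\nu$ by an order interval $\{x_{k:n}\preceq X_{k:n}\}$ and use that everything in $K$ strictly above $\mu$ dominates $\nu$; in the reverse direction you take the minimum of a compact totally ordered set, and under the hypothesis your $\overline U$ coincides with the paper's set $\{\gamma\in K:\gamma(x_{k:n}\preceq X_{k:n})\ge\mu(x_{k:n}\preceq X_{k:n})+\varepsilon\}$. Your explicit check that the events $\{x_{k:m}\preceq X_{k:m}\}$ separate stochastically ordered measures, via $\{X_m\cdot e_1\ge a\}$ and a monotone coupling, fills in a step the paper states without proof.
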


\begin{proof}
Let $\mu\in K\ \setminus\ \{\sup K\}$ be right-isolated and let $\nu\in K$ satisfy $\mu\precneq\nu$ and
$\{\gamma\in K:\mu\precneq\gamma\precneq\nu\}=\varnothing$.
Since order intervals separate probability measures, $\mu\precneq\nu$ implies that there exist $n$ and $x_{k:n}\in\pathsp_{x:n}$ with
$\mu(x_{k:n}\preceq X_{k:n})<\nu(x_{k:n}\preceq X_{k:n})$.
Set $\e=\nu(x_{k:n}\preceq X_{k:n})-\mu(x_{k:n}\preceq X_{k:n})>0$.
If $\gamma\in K$ satisfies $\mu\precneq\gamma$, then necessarily $\nu\preceq\gamma$ (else $\mu\precneq\gamma\precneq\nu$),
hence $\gamma(x_{k:n}\preceq X_{k:n})\ge \nu(x_{k:n}\preceq X_{k:n})=\mu(x_{k:n}\preceq X_{k:n})+\e$.
This proves \eqref{eq:right-gap}.

Conversely, assume \eqref{eq:right-gap} holds for some $x_{k:n}$ and $\e>0$, and also fix $\nu_0\in K$ with $\mu\precneq\nu_0$. $\nu_0$ exists because $\mu\ne\sup K$. Then $\nu_0(x_{k:n}\preceq X_{k:n})\ge \mu(x_{k:n}\preceq X_{k:n})+\e$. Let
\[
S=\bigl\{\gamma\in K:\ \gamma(x_{k:n}\preceq X_{k:n})\ge \mu(x_{k:n}\preceq X_{k:n})+\e\bigr\}.
\]
Then $S$ is nonempty and closed in $K$, hence compact.
Since $S$ is totally ordered, it has a minimum; call it $\nu$. We have $\mu\precneq \nu$ because $\nu(x_{k:n}\preceq X_{k:n})\ge \mu(x_{k:n}\preceq X_{k:n})+\e$.
If there were $\gamma\in K$ with $\mu\precneq\gamma\precneq\nu$, then \eqref{eq:right-gap} forces
$\gamma(x_{k:n}\preceq X_{k:n})\ge \mu(x_{k:n}\preceq X_{k:n})+\e$, i.e.\ $\gamma\in S$, contradicting the minimality of $\nu$.
Thus $\mu$ is right-isolated. 
\end{proof}

\subsubsection{Dyadic tuple sup and inf construction}\label{sec:dyadic}
We now turn to the construction of the dense set of measures.

For each $m \in \bbN$ and $k\in\{1,\dots,2^m\}$ we denote the $k^{\mathrm{th}}$ dyadic sub-interval of $[0,1]$ at dyadic scale $m$ and the collection of such intervals by
\[
\dya_{k,m}=\Bigl[\frac{k-1}{2^m},\frac{k}{2^m}\Bigr],\qquad
\dy_m=\{\dya_{k,m}:k=1,\dots,2^m\},\qquad
\dy=\bigcup_{m\ge1}\dy_m .
\]
For $n\in\bbN$, denote the set of $2^n$-tuples of dyadic intervals via 
\[
\dyatup_n=\{(I^1,\dots,I^{2^n}):\ I^j\in\dy\ \text{for }j=1,\dots,2^n\}.
\]
Fix a deterministic enumeration (lexicographic, for example) of the $2^n$ paths of length $n$  rooted at the origin and call these paths $\{\pi^{0,n,1},\dots,\pi^{0,n,2^n}\}$. Recall the shift $\theta_y$ on path spaces, defined in \eqref{eq:pathshift}. For $x\in\mathbb{Z}^2$, define $\pi^{x,n,j}=\theta_x\pi^{0,n,j}$.

Fix $x \in \bbZ^2$ with $x\cdot(e_1+e_2) = k$. For each $I=(I^1,\dots,I^{2^n})\in\dyatup_n$ call
\[
A_{x,I} = \bigl\{\mu \in \sM_1(\pathsp_x,\pathsa_{k:\infty}) : \mu(\pi^{x,n,j})\in I^j\ \ \forall\, j=1,\dots, 2^n\bigr\}.
\]
Because the dyadic intervals are closed, each such $A_{x,I}$ is closed. For each $\omega\in\Omega_0$, $A_{x,I} \cap \ext \DLR_x^{\beta,\w}$ is compact and totally ordered, so each such non-empty set admits unique maximal and minimal elements. As is typical, we take the convention that the supremum and infimum of the empty set are $\delta_{x + e_2\bbZ_{\geq 0}}$ and $\delta_{x + e_1\bbZ_{\geq 0}}$, respectively. 

Our next result shows existence of a shift-covariant measurable selection of these maximal and minimal elements. Recall the notation $(\theta_x)_\#$ for the pushforward of $\theta_x$ onto the path measure spaces as defined in \eqref{eq:pathshift-pushforward}.
\begin{proposition}\label{prop:dmeas}
For each $x\in\mathbb{Z}^2$, $n\in\bbZ_{\geq 1}$, and $I\in\dyatup_n$, there exist jointly Borel random variables
\begin{align*}
(\w,\beta) \in  \Omega\times(0,\infty) \mapsto \mu_{x,I}^{\beta,\omega,-}, \mu_{x,I}^{\beta,\omega,+} \in \sM_1(\pathsp_x,\pathsa_{k:\infty}) 
\end{align*}
such that for all $\omega\in\Omega_0$, all $\beta\in(0,\infty)$ and all $x\in\bbZ^2$, the following hold.
\begin{enumerate}[label={\rm(\alph*)}, ref={\rm\alph*}]
\item\label{prop:dmeas:empty}
If $A_{x,I}\cap \ext\DLR_x^{\beta,\omega}=\varnothing$, then
\[
\mu^{\beta,\omega,+}_{x,I}=\delta_{x+e_2\mathbb{Z}_{\ge0}}
\qquad\text{and}\qquad
\mu^{\beta,\omega,-}_{x,I}=\delta_{x+e_1\mathbb{Z}_{\ge0}}.
\]
\item\label{prop:dmeas:inset}
If $A_{x,I}\cap \ext\DLR_x^{\beta,\omega}\neq\varnothing$, then
\[
\mu^{\beta,\omega,+}_{x,I}=\sup\bigl(A_{x,I}\cap \ext\DLR_x^{\beta,\omega}\bigr)
\qquad\text{and}\qquad
\mu^{\beta,\omega,-}_{x,I}=\inf\bigl(A_{x,I}\cap \ext\DLR_x^{\beta,\omega}\bigr).
\]
\item\label{prop:dmeas:mu-covariant}
For all $y\in\mathbb{Z}^2$, all $n\in\mathbb{N}$, and all $I\in\dyatup_n$,
\be\label{eq:dmeas-def-shift}
\mu^{\beta,\omega,\pm}_{x+y,I}
\;=\;
(\theta_y)_\#\,\mu^{\beta,T_y\omega,\pm}_{x,I}.
\ee
\item\label{prop:dmeas:dense}
With the notation
\[
\Dmeas_x^{\beta,\omega}
=\Bigl\{\mu^{\beta,\omega,+}_{x,I},\ \mu^{\beta,\omega,-}_{x,I}:\ I\in\dyatup_n,\ n\in\mathbb{N}\Bigr\},
\]
the set $\Dmeas_x^{\beta,\omega}$ is countable and dense in $\ext\DLR_x^{\beta,\omega}$.

\item\label{prop:dmeas:isolated}
If $\mu\in\ext\DLR_x^{\beta,\omega}$ is left-isolated or right-isolated relative to $\ext\DLR_x^{\beta,\omega}$,
then $\mu\in\Dmeas_x^{\beta,\omega}$. 

\item\label{prop:dmeas:Dmeas-covariant}
For all $y\in\mathbb{Z}^2$,
\[
\Dmeas_{x+y}^{\beta,\omega}\;=\;(\theta_y)_\#\,\Dmeas_x^{\beta,T_y\omega}.
\]
\end{enumerate}
\end{proposition}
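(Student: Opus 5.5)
The plan is to treat the six claims in three groups: (a), (b) and the covariance claims (c), (f) come from the definition plus a measurable selection argument; density (d) comes from the fact that the cylinder probabilities of the $2^n$ length-$n$ paths determine a measure; and (e) comes from Lemma \ref{lem:RI}.

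\emph{Measurability and covariance.} The set-valued map $(\w,\beta)\mapsto \ext\DLR_x^{\beta,\w}\cap A_{x,I}$ is compact-valued for $\w\in\Omega_0$ by Theorem \ref{thm:cltot}. Define $\mu^{\beta,\w,+}_{x,I}$ as the maximum of this set in $\preceq$, and $\mu^{\beta,\w,-}_{x,I}$ as its minimum, using the stated conventions when it is empty. Off $\Omega_0$ we set them to the trivial measures. Claims (a) and (b) then hold by definition.

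For joint Borel measurability, characterize the maximum through its increasing-event probabilities. For each finite path $x_{k:n}$,
\[
\mu^{+}(x_{k:n}\preceq X_{k:n})=\sup\{\gamma(x_{k:n}\preceq X_{k:n}):\gamma\in \ext\DLR_x^{\beta,\w}\cap A_{x,I}\}.
\]
This holds because the set is totally ordered and compact, so the supremum of each increasing functional is attained at the maximum. These numbers determine $\mu^+$, since order intervals separate measures. It therefore suffices that each such supremum is a Borel function of $(\w,\beta)$.

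I would get this by writing $\ext\DLR_x^{\beta,\w}$ via Lemma \ref{lem:conditional-transition-sufficient}, or via tail triviality, as a Borel subset of $\Omega\times(0,\infty)\times\sM_1$ with compact sections. Then I would invoke a measurable projection or selection theorem of Kuratowski--Ryll-Nardzewski type for compact-valued maps; this is presumably the content of Appendix \ref{app:meas}.

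Covariance (c) follows from \eqref{eq:DLR-set-covariant}. The pushforward $(\theta_y)_\#$ preserves extremality, since it is an affine bijection between $\DLR_x^{\beta,T_y\w}$ and $\DLR_{x+y}^{\beta,\w}$. It also preserves the path order. Finally, $(\theta_y)_\#A_{x,I}=A_{x+y,I}$ because $\pi^{x+y,n,j}=\theta_y\pi^{x,n,j}$. So sup and inf commute with the shift, and (f) follows immediately from (c).

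\emph{Density and isolated points.} Given $\mu\in\ext\DLR_x^{\beta,\w}$ and $n$, choose $m$ large and the tuple $I$ of scale-$m$ dyadic intervals containing the values $\mu(\pi^{x,n,j})$. Then $A_{x,I}\cap\ext\DLR_x^{\beta,\w}\ni\mu$ is nonempty, so $\mu^\pm_{x,I}$ lie in it. Consequently their level-$n$ path probabilities are within $2^{-m}$ of those of $\mu$. The marginals on $\pathsa_{k:n}$ are determined by these $2^n$ probabilities, so letting $n,m\to\infty$ gives weak convergence $\mu^\pm_{x,I}\to\mu$. This proves (d); countability is clear.

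For (e), take $\mu$ right-isolated and apply Lemma \ref{lem:RI} to obtain a path $x_{k:n}$ and $\e>0$. Choose $N\ge n$ and $m$ with $2^{-m}\ll \e/2^N$, and let $I$ be a tuple of scale-$m$ intervals at level $N$ containing the level-$N$ probabilities of $\mu$. Any $\gamma\in A_{x,I}$ then satisfies $|\gamma(x_{k:n}\preceq X_{k:n})-\mu(x_{k:n}\preceq X_{k:n})|<\e$, because that increasing event is a union of level-$N$ cylinders. By \eqref{eq:right-gap}, no $\gamma\succneq\mu$ in the extremal set lies in $A_{x,I}$. Since $\mu$ itself does, $\mu=\mu^+_{x,I}$. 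The left-isolated case, including the convention for $\inf K$ and $\sup K$, is symmetric and gives $\mu=\mu^-_{x,I}$.

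The main obstacle is the measurability step. It needs a workable Borel description of $\{(\w,\beta,\mu):\mu\in\ext\DLR_x^{\beta,\w}\}$, which is not automatic since extreme points of a convex set are generally only $G_\delta$. It also needs the argument that suprema over compact sections of a Borel set are Borel. I would first try to exhibit that graph as closed on $\Omega_0$, combining Theorem \ref{thm:cltot} with continuity in $(\w,\beta)$ of the DLR equations. If that fails, I would fall back on the Borel characterization in Lemma \ref{lem:conditional-transition-sufficient} together with a measurable-projection argument.
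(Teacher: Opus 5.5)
Your arguments for (c)--(f) are sound once the maps exist, and they are close to the paper's. Covariance follows because $(\theta_y)_\#$ is an order-preserving affine bijection. Density follows by trapping $\mu$ in $A_{x,I}$ with fine dyadic boxes; take $m$ large enough that $2^{n-m}\to0$. For (e) you use Lemma \ref{lem:RI} with uniformly fine boxes at a deeper level, where the paper uses one-sided bounds $\sup I^j<\mu(\pi^{x,m,j})+\varepsilon_0/(2|J|)$; either works. Recovering $\mu^{\pm}_{x,I}$ from the countably many numbers $\sup_\gamma\gamma(x_{k:n}\preceq X_{k:n})$ (resp.\ $\inf_\gamma$) is also a legitimate substitute for the paper's argmax of the strictly monotone functional $\Phi$.

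\textbf{Gap: joint Borel measurability.} The missing step is the one you flag yourself. You never produce a Borel description of $\{(\w,\beta,\mu):\mu\in\ext\DLR_x^{\beta,\w}\}$, and that is exactly what joint Borel measurability of $\mu^\pm_{x,I}$ requires; without it, (a) and (b) are only pointwise definitions. Neither of your candidate routes supplies it.
\begin{itemize}
\item Your first route does not work as stated. Theorem \ref{thm:cltot} is a statement for each fixed $(\w,\beta)$ and says nothing about limits $(\w_n,\beta_n,\mu_n)\to(\w,\beta,\mu)$. The product topology lets $\w_n$ differ from $\w$ arbitrarily far from the root, which can alter tail behavior, so there is no reason to expect joint closedness.
\item Your fallback via Lemma \ref{lem:conditional-transition-sufficient} involves $\bfE^\mu[\,\cdot\mid\ptail]$, which is not a priori a Borel function of $(\w,\beta,\mu)$.
\item Even with a Borel graph, a generic measurable projection or selection theorem only gives universally measurable maps. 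To get Borel maps you need compact sections and the Arsenin--Kunugui theorem. For instance, $\{(\w,\beta):\sup_{\gamma}\gamma(F)\ge c\}$, with $\gamma$ ranging over $A_{x,I}\cap\ext\DLR_x^{\beta,\w}$, is the projection of a Borel set whose sections are compact on $\Omega_0$.
\end{itemize}

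\textbf{How the paper closes it.} First, Lemma \ref{lem:DLRBorelGraph} and Corollary \ref{cor:DLRmeas} make $(\w,\beta)\mapsto\DLR_x^{\beta,\w}$ hyperspace-measurable, so $d_{\sM_1}(\nu,\DLR_x^{\beta,\w})$ is Borel. Second, Lemma \ref{lem:notext} writes non-extremality as countably many conditions on a dense sequence $(\nu_i)$, rational weights, and these distances. Third, the fibres are replaced off $\Omega_0$ so that sections stay compact. Finally, it applies Arsenin--Kunugui and the measurable maximum theorem (Lemma \ref{lem:extDLR-meas}, Proposition \ref{prop:meas-sup}).

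\textbf{A direct fix via tail triviality.} Your tail-triviality idea can also be made rigorous directly. For $\mu\in\DLR_x^{\beta,\w}$, the Gibbs and Markov properties give $\mu(x_{k:\ell}\mid\pathsa_{m:\infty})=\poly{x}{X_m}^{\beta,\w}(x_{k:\ell})$. Backward martingale convergence in $L^1(\mu)$ together with Lemma \ref{lem:DLR-triv-finite} then shows that $\mu\in\ext\DLR_x^{\beta,\w}$ if and only if, for every finite path $x_{k:\ell}$,
\[
\lim_{m\to\infty}\sum_{y\ge x,\ y\cdot(e_1+e_2)=m}\mu(X_m=y)\,\bigl|\poly{x}{y}^{\beta,\w}(x_{k:\ell})-\mu(x_{k:\ell})\bigr|=0 .
\]
Each summand is Borel in $(\w,\beta,\mu)$, so this gives a Borel graph. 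Combined with Arsenin--Kunugui as above, this completes your plan.
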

\begin{remark}
With reference to part \eqref{prop:dmeas:isolated} of the proposition, we note that it is an open problem (even in the exactly solvable log-gamma polymer model) whether or not fully supported left- or right-isolated measures in $\ext \DLR_x^{\beta,\w}$ exist. In the context of the log-gamma polymer, where it is known that multiple extremal Gibbs measures \cite{Bat-Fan-Sep-25} with the same asymptotic direction exist, this question is related to the positive temperature version of the ``No 3 geodesics'' problem in first- and last-passage percolation; see \cite{Bus-25-, Cou-11, Jan-Ras-Sep-23} for discussion of the zero temperature case. 
\end{remark}

The proof of Proposition \ref{prop:dmeas} is fairly standard (if somewhat tedious) measure theory, so we defer the argument to Appendix \ref{app:meas}.
\subsection{Strong existence}
\subsubsection{Existence of the shift-covariant conditional quantile function}
Our next lemma constructs a shift-covariant version of the conditional quantile function (given the $\sigma$-algebra generated by the weights) of the random measure $\Pi_x^{\Bhat,\what}$, viewed as an element of the random compact, totally ordered metric space $\ext \DLR_x^{\beta,\w(\what)}$. We begin by proving Lemma \ref{lem:Bhat-ext}, which says that elements of $\cKhat^{\beta}$ generate extremal Gibbs measures.

\begin{proof}[Proof of Lemma \ref{lem:Bhat-ext}]
On the almost sure event where $\Bhat$ is a globally defined $\beta$-recovering cocycle, the family $\{\Pi_x^{\Bhat,\what} : x\in\bbZ^2\}$ is consistent by Theorem \ref{thm:DLR-cocycle}. Without loss of generality, we may also work on a shift-invariant event for which shift covariance also holds.  By Theorem \ref{thm:coalext}, to prove the result, it thus suffices to produce a shift-invariant full probability event on which a coalescing coupling of these measures exists. This is the content of \cite[Theorem A.2]{Jan-Ras-20-aop} (see Appendix A.2 of \cite{Jan-Ras-18-arxiv} for the details), which adapts the Licea-Newman argument \cite{Lic-New-96} to the setting of nearest-neighbor random walks in stationary random environments with weakly elliptic steps.
\end{proof}

Next, we choose a shift-covariant and proper version of the conditional distribution given $\kS$ which is compatible with the event on which the Gibbs measures generated by $\Bhat$ are extremal.

\begin{lemma}\label{lem:nu-exists}
Fix $\beta\in(0,\infty)$ and $\Bhat\in\cKhat^{\beta}$. On $(\Omhat, \kShat, \Phat)$, there exists a shift-invariant Borel event $\Omega_{\Bhat}\subset\Omega_0$ with $\bbP(\Omega_{\Bhat})=1$, a shift-invariant Borel event $\Omgood\subset \Omhatext$ with $\Phat(\Omgood)=1$, and a Borel-measurable version $\nu^{\w(\what)}$ of the conditional distribution  given the $\sigma$-algebra $\kS$ generated by the weight map $\w$,  which satisfy the following properties:
\begin{enumerate}  [label={\rm(\alph*)}, ref={\rm\alph*}, series= enum:xBhatproc]   \itemsep=3pt
\item\label{lem:nu-exists:Omega0} For all $\what\in\Omgood$ and all $z\in\Z^2$, $\w(\That_z\what)=T_z\w(\what)\in\Omega_{\Bhat}$.
\item\label{lem:nu-exists:proper} {\rm(}Properness and compatibility of $\nu${\rm)} For all $\omega\in\Omega_{\Bhat}$,
\[
\nu^\omega\bigl(\Omgood\cap\{\what : \w(\what) = \omega \}\bigr)=1.
\]
\item\label{lem:nu-exists:cov} {\rm(}Shift covariance of $\nu${\rm)} For all $\what\in \Omgood$, all $z\in\Z^2$, and all $A\in\kShat$,
\begin{equation}\label{eq:nucov}
\nu^{\w(\That_z\what)}(A)=\nu^{\w(\what)}(\That_z^{-1}A).
\end{equation}
\end{enumerate}
\end{lemma}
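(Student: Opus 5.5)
The plan is to build $\nu$ from a regular conditional distribution, then enforce properness and shift covariance by restricting to good events. Start from the standard existence of a regular conditional probability. Since $(\Omhat,\kShat)$ is Polish and $\kS=\sigma(\w)$ is generated by the measurable map $\w:\Omhat\to\Omega$, there is a Borel probability kernel $\omega\mapsto\nu_0^\omega$ from $\Omega$ to $\cM_1(\Omhat)$ with $\Phat(A\,|\,\kS)(\what)=\nu_0^{\w(\what)}(A)$. This kernel disintegrates $\Phat$ over $\P$: $\Phat(A\cap\{\w\in C\})=\int_C\nu_0^\omega(A)\,\P(d\omega)$.

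Next comes properness and compatibility, part \eqref{lem:nu-exists:proper}. Set
\[
\Omgood=\bigcap_{z\in\bbZ^2}\That_z^{-1}\Bigl(\Omhatext\cap\{\w(\That_y\what)=T_y\w(\what)\ \forall y\}\cap\{\w\in\Omega_0\}\Bigr).
\]
This event is shift invariant, Borel, has full $\Phat$-measure, and gives \eqref{lem:nu-exists:Omega0} once $\Omega_{\Bhat}\subset\Omega_0$ is shift invariant and contains $\w(\Omgood)$ on the relevant set. The standard fact that the graph event $\{(\what,\omega):\w(\what)=\omega\}$ is Borel in the Polish product gives $\nu_0^\omega(\w=\omega)=1$ for $\P$-a.e.\ $\omega$. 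Here I use countably many cylinder events generating $\sF$ and the identity $\nu_0^\omega(\w\in C)=\one_C(\omega)$ a.s. Since $\Phat(\Omgood)=1$, I also get $\nu_0^\omega(\Omgood)=1$ for a.e.\ $\omega$. Let $\Omega_1$ be the Borel set of $\omega$ where both hold; measurability follows because $\omega\mapsto\nu_0^\omega(\cdot)$ is Borel.

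The main obstacle is shift covariance \eqref{eq:nucov}. This must hold identically, not only almost surely, and for all $A\in\kShat$ at once. First, for each fixed $z$, invariance of $\Phat$ and covariance of $\w$ show that $\omega\mapsto\nu_0^{T_z\omega}(\That_z\,\cdot)$ is also a version of the conditional distribution. Uniqueness on a countable generating $\pi$-system then gives $\nu_0^{T_z\omega}=(\That_z)_\#\nu_0^\omega$ for a.e.\ $\omega$; $\kShat$ is countably generated since $\Omhat$ is Polish. Let $\Omega_2$ be the set where this holds for all $z\in\bbZ^2$ simultaneously. It is Borel, because equality of two measures is tested on a countable generating algebra, and it has full measure. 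Define
\[
\Omega_{\Bhat}=\bigcap_{z}T_z^{-1}(\Omega_1\cap\Omega_2\cap\Omega_0),
\]
which is shift invariant, Borel, of full measure, and contained in $\Omega_0$. Put $\nu^\omega=\nu_0^\omega$ on $\Omega_{\Bhat}$, and let $\nu^\omega$ be a fixed measure elsewhere. Then $\nu$ remains a version of the conditional distribution, and parts \eqref{lem:nu-exists:proper} and \eqref{lem:nu-exists:cov} hold exactly on $\Omega_{\Bhat}$.

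To finish, replace $\Omgood$ by $\Omgood\cap\w^{-1}(\Omega_{\Bhat})$, which is still shift invariant with full measure. Properness survives this intersection, since $\nu^\omega(\w=\omega)=1$ and $\omega\in\Omega_{\Bhat}$. For $\what$ in the new $\Omgood$ we have $\w(\what)\in\Omega_{\Bhat}$, so \eqref{eq:nucov} holds for every $z$ and every $A$. This is also the point to check the direction of the pushforward: $\nu^{T_z\omega}(A)=\nu^\omega(\That_z^{-1}A)$ must match the form of \eqref{eq:nucov}.
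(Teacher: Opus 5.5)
Your proposal is correct and follows essentially the same route as the paper: a regular, proper version of the conditional distribution given $\kS$; almost sure shift covariance from uniqueness of conditional distributions on a countable generating $\pi$-system; then intersecting shift-invariant full-measure events to get $\Omega_{\Bhat}$ and the final event $\Omgood$ (old good event intersected with $\w^{-1}(\Omega_{\Bhat})$). The differences are cosmetic. You prove properness directly by the countable-generation argument where the paper cites Bogachev, you record covariance as a set $\Omega_2$ of environments rather than an event in $\Omhat$, and you redefine $\nu$ off $\Omega_{\Bhat}$, which is harmless but not needed.
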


This is standard measure theory and we defer the proof of this lemma to Section \ref{sec:CDFmeas}.  One fact used extensively below is that for $\w\in\Omega_0$, by Theorem \ref{thm:cltot}, $\ext \DLR_x^{\beta,\omega}$ is closed and totally ordered.

Recall the set of measures $\Dmeas_x^{\beta,\omega}$ constructed in Proposition \ref{prop:dmeas}. For $\beta\in(0,\infty)$, let $\Bhat \in \cKhat^{\beta}$. For each $x \in \bbZ^2$ and $s \in (0,1)\cap \bbQ$, define \be
\cdfm_x^{\Bhat,s}(\what) = \begin{cases} \inf\{\mu \in \Dmeas_x^{\beta,\w(\what)} : \nu^{\w(\what)}(\Omgood,\, \Pi_x^{\Bhat} \preceq \mu) \geq s \} & \w(\what) \in \Omega_{\Bhat}, \\
\delta_{x + e_1\bbZ_{\geq 0}} & \w(\what) \notin \Omega_{\Bhat}.
\end{cases}
\label{eq:inf-rational-formula}
\ee
Our next proposition records basic measurability properties of these variables and the fact that we can extend them into a left-continuous process with right limits.

In the next result, $\Skor([0,1],\, \sM_1(\pathsp_x, \pathsa_{k:\infty}))$ and $\Skor([0,1],\, \ext \DLR_x^{\beta,\w(\what)})$ denote the Skorokhod spaces of left-continuous paths with right limits taking values in the non-random compact metric space $\sM_1(\pathsp_x,\pathsa_{k:\infty})$ and its random compact subset $\ext \DLR_x^{\beta,\w(\what)}$, respectively.

\begin{proposition}\label{prop:xBhatproc}
    For each $\beta\in(0,\infty)$ and each $\Bhat \in \cKhat^{\beta}$, the following hold:
\begin{enumerate}  [label={\rm(\alph*)}, ref={\rm\alph*}, series= enum:xBhatproc]   \itemsep=3pt
\item \label{prop:inf-rational-meas:meas} For each $s\in(0,1)\cap\bbQ$, $\cdfm_x^{\Bhat,s}$ is an $\kS$-measurable, $\sM_1(\pathsp_x,\pathsa_{k:\infty})$-valued random variable such that for all $\what \in \Omhat$, $\cdfm_x^{\Bhat,s}(\what)\in \ext \DLR_x^{\beta,\w(\what)}$. 
\item \label{prop:inf-rational-meas:fullinf} For all $s\in(0,1)\cap\bbQ$ and all $\what$ with $\w(\what)\in\Omega_{\Bhat}$,
\begin{align}
\cdfm_x^{\Bhat,s}(\what)=\inf\bigl\{\mu\in\ext\DLR_x^{\beta,\w(\what)}:\nu^{\w(\what)}(\Omgood, \Pi_x^{\Bhat}\preceq \mu)\geq s\bigr\}.\label{eq:fullinf}
\end{align}
\item \label{prop:inf-rational-meas:inf-formula} If we extend the definition to $s\in[0,1]$ by setting
\be
\cdfm_x^{\Bhat,s}(\what) =
\begin{cases}
\inf\bigl\{\cdfm_x^{\Bhat,r}(\what):r\in\bbQ\cap(0,1)\bigr\}, & s=0,\\
\sup\bigl\{\cdfm_x^{\Bhat,r}(\what) : r\in \bbQ\cap(0,1), r<s\bigr\}, & s\in(0,1],
\end{cases}\label{eq:inf-formula}
\ee
then the expression in  \eqref{eq:inf-formula} agrees with \eqref{eq:inf-rational-formula} for $s \in(0,1)\cap\bbQ$, for all $\what\in\Omhat.$
\end{enumerate}
The process $\cdfm_x^{\Bhat,\aabullet}$ defined through \eqref{eq:inf-formula} satisfies the following properties.
\begin{enumerate}  [label={\rm(\alph*)}, ref={\rm\alph*}, resume= enum:xBhatproc]   \itemsep=3pt
\item\label{prop:xBhatproc.meas} $\what\mapsto \cdfm_x^{\Bhat,\aabullet}(\what)$ is an $\kS$-measurable, $\Skor([0,1],\, \sM_1(\pathsp_x,\pathsa_{k:\infty}))$-valued random variable which almost surely takes values in $\Skor([0,1],\, \ext\DLR_{x}^{\beta,\w(\what)})$.
\item\label{prop:xBhatproc.inf} 
For all $s \in (0,1)$ and all $\what$ with $\w(\what)\in\Omega_{\Bhat}$, \eqref{eq:fullinf} holds. Consequently, $\cdfm_x^{\Bhat,r}(\what)\preceq\cdfm_x^{\Bhat,s}(\what)$ for $r\le s$ in $[0,1]$.
\item \label{prop:xBhatproc.key} 
For each $\what$ with $\w(\what)\in\Omega_{\Bhat}$ and each $\mu \in \ext\DLR_x^{\beta,\w(\what)}$,
\begin{align}
\bigl\{s\in (0,1) : \cdfm_x^{\Bhat,s}(\what) \preceq \mu\bigr\}=(0,1)\cap\bigl[0,\nu^{\w(\what)}(\Omgood, \Pi_x^{\Bhat} \preceq \mu)\bigr].
\label{eq:key-sym} 
\end{align}
\item \label{prop:xBhatproc:cov} 
Viewing $\cdfm_x^{\Bhat,s}$ as a function on the canonical space, for all $\omega\in\Omega_{\Bhat}$, all $x,y\in\Z^2$, and all $s\in[0,1]$,
\be\begin{aligned}\label{eq:cdfm-cov}
\cdfm_{x+y}^{\Bhat,s}(\omega)=(\theta_y)_\#\,\cdfm_x^{\Bhat,s}(T_y\omega).
	\end{aligned}\qedhere\ee
\end{enumerate}    
\end{proposition}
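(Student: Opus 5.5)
The plan is to treat $\ext\DLR_x^{\beta,\w}$, for $\w\in\Omega_{\Bhat}\subset\Omega_0$, as a compact totally ordered space (Theorem \ref{thm:cltot}). The function
\[
F(\mu)=\nu^{\w}(\Omgood,\ \Pi_x^{\Bhat}\preceq\mu)
\]
is then a distribution function on it, and $\cdfm_x^{\Bhat,s}$ is its quantile function. Lemma \ref{lem:Bhat-ext} gives $\Pi_x^{\Bhat,\what}\in\ext\DLR_x^{\beta,\w(\what)}$ for $\what\in\Omgood$. Hence, under $\nu^\w$, the random element $\Pi_x^{\Bhat}$ lives in this space. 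Because the order is closed, $F$ is nondecreasing and right-continuous in the order sense: if $\mu_j\searrow\mu$, then the events $\{\Pi_x^{\Bhat}\preceq\mu_j\}$ decrease to $\{\Pi_x^{\Bhat}\preceq\mu\}$, since $\preceq$ is closed under weak limits.

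\textbf{Part (b).} I first show that the infimum over the countable set $\Dmeas_x^{\beta,\w}$ equals the infimum over all of $\ext\DLR_x^{\beta,\w}$.
\begin{enumerate}
\item Let $\mu^\ast$ be the full infimum. It exists by compactness, and $F(\mu^\ast)\ge s$ by right-continuity, so the set $\{F\ge s\}$ is a closed upper ray $[\mu^\ast,\sup]$.
\item If $\mu^\ast$ is not right-isolated, then by density (Proposition \ref{prop:dmeas}\eqref{prop:dmeas:dense}) there are elements of $\Dmeas_x^{\beta,\w}$ decreasing to $\mu^\ast$ from above. The countable infimum is then $\mu^\ast$.
\item If $\mu^\ast$ is right-isolated, then $\mu^\ast\in\Dmeas_x^{\beta,\w}$ by Proposition \ref{prop:dmeas}\eqref{prop:dmeas:isolated}.
\end{enumerate}
Either way (b) holds.

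\textbf{Part (a).} Membership in $\ext\DLR_x^{\beta,\w}$ follows from closedness. For the off-event case, the value is the trivial measure $\delta_{x+e_1\bbZ_{\ge0}}$, which is extremal. For measurability, I write
\[
\{\cdfm_x^{\Bhat,s}\preceq\lambda\}
\]
through countably many events involving the jointly measurable selections $\mu^{\pm}_{x,I}$. The maps $\w\mapsto\nu^\w(\Omgood,\Pi^{\Bhat}_x\preceq\mu_{x,I}^{\pm}(\w))$ are measurable because $\nu$ is a kernel and $\preceq$ is a Borel relation on a Polish space. This shows $\cdfm_x^{\Bhat,s}$ is $\kS$-measurable.

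\textbf{Part (c).} This is just monotonicity of $s\mapsto\cdfm_x^{\Bhat,s}$ together with left-continuity at rationals. Left-continuity holds because $\{\mu:F(\mu)\ge s\}=\bigcap_{r<s}\{F\ge r\}$, and a decreasing intersection of closed upper rays has infimum equal to the supremum of the individual infima.

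\textbf{Parts (d) and (e).} A monotone function into a compact totally ordered space, in which order-monotone sequences converge weakly, has one-sided limits. The definition by suprema makes $s\mapsto\cdfm_x^{\Bhat,s}$ left-continuous, which gives the càglàd property. Measurability of the path-valued map follows from measurability at rational times. Property (e) for irrational $s$ follows from the same intersection argument used in (c).

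\textbf{Part (f).} The key identity \eqref{eq:key-sym} is the standard Galois relation for quantiles: $\cdfm^s\preceq\mu$ if and only if $F(\mu)\ge s$. One direction is immediate from the definition as an infimum. The other uses the right-continuity of $F$ and total order: if $\mu\prec\cdfm^s$, then $F(\mu)<s$.

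\textbf{Part (g).} Covariance comes from three ingredients:
\begin{itemize}
\item covariance of $\Dmeas$ (Proposition \ref{prop:dmeas}\eqref{prop:dmeas:Dmeas-covariant});
\item covariance of $\nu$ (Lemma \ref{lem:nu-exists});
\item the identity $\Pi_{x+y}^{\Bhat,\what}=(\theta_y)_\#\Pi_x^{\Bhat,\That_y\what}$, which comes from the shift covariance of $\Bhat$.
\end{itemize}
Together with the fact that $(\theta_y)_\#$ preserves $\preceq$, these show that $F$ transforms covariantly, and so does its quantile function. Shift-invariance of $\Omgood$ and $\Omega_{\Bhat}$ handles the conditioning event.

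\textbf{Main obstacle.} I expect the delicate step to be the measurability in (a) and (d). The sets $\Dmeas_x^{\beta,\w}$ and the random order space both vary with $\w$. The infimum over a countable but $\w$-dependent family must therefore be shown Borel, and I would rely on the joint Borel selections $\mu^{\pm}_{x,I}$ indexed by the fixed countable set of $I$.
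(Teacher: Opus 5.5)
Your proposal is correct and follows essentially the same route as the paper: quantile of $F$ on the compact totally ordered set $\ext\DLR_x^{\beta,\w}$, reduction of the countable infimum to the full one via density plus inclusion of right-isolated points, measurability via the fixed countable family of Borel selections $\mu^{\pm}_{x,I}$ and the kernel $\nu$, left-continuity from $F(\sup_{r<s}\cdfm_x^{\Bhat,r})\ge s$, and covariance from the covariance of $\Dmeas$, $\nu$, and $\Bhat$. One small slip: in (f), the implication you call ``the other direction'' is only the contrapositive of the immediate one. The direction actually needed is $\cdfm_x^{\Bhat,s}\preceq\mu\Rightarrow F(\mu)\ge F(\cdfm_x^{\Bhat,s})\ge s$, which your observation in (b) that $\{F\ge s\}$ is the closed upper ray $[\mu^\ast,\sup]$ already supplies.
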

\begin{remark}
In view of $\kS$ measurability, we may write $\cdfm_x^{\Bhat,\aabullet}(\what)=\cdfm_x^{\Bhat,\aabullet}(\w(\what))$. This pushes $\cdfm_x^{\Bhat,\aabullet}$ onto the canonical space $(\Omega,\sF,\bbP)$. When working with the process $\cdfm_x^{\Bhat,\aabullet}$, we will typically work directly on the canonical space and only pull back to the extended space when working with objects that are defined there. In particular, we remind the reader that $\Bhat$ is a decoration to remind us of the origin of $\cdfm_x^{\Bhat,\aabullet}$; it does not carry any extra dependence on $\what$.
\end{remark}

We also defer the proof of this proposition to Section \ref{sec:CDFmeas}. Properness of the conditional distribution is used to ensure that if $\mu\in\DLR_x^{\beta,\w(\what)}$, then it lies in $\DLR_x^{\beta,\w(\what')}$ for $\nu^{\w(\what)}$-almost all $\what'$. The claim that \eqref{eq:inf-formula} agrees with \eqref{eq:inf-rational-formula} for rational $s>0$ is the step where we need to know that $\Dmeas_x^{\beta,\w(\what)}$ contains all fully supported right-isolated measures, if any exist.

With existence of the shift-covariant conditional quantile function in hand, we next use inverse CDF sampling and coupling arguments to prove strong existence.
\subsubsection{Inverse CDF sampling and strong existence}
Denote by $\leb$ the Lebesgue measure on $[0,1]$. As usual, we write integrals with respect to the Lebesgue measure using $ds$ instead of $\leb(ds)$. Our next result is the analogue in this setting of the classical fact that a random variable can be sampled by substituting a Uniform[0,1] random variable into its quantile function.
\begin{lemma}\label{lem:uniform-quantile}
    Fix $\beta>0$, $x \in \bbZ^2$, and $\Bhat \in \cKhat^{\beta}$. Then the distribution on $\Omega\times\sM_1(\pathsp_x, \pathsa_{k:\infty})$ of $(\w(\what),\Pi_x^{\Bhat,\what})$ under $\Phat(d\,\what)$ is the same as the distribution of $(\w,\cdfm_x^{\Bhat,s}(\w))$ under $\bbP(d\,\w)\otimes\leb(d\,s)$.
\end{lemma}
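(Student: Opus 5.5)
The plan is to reduce the claim to the classical fact that inverse-CDF sampling reproduces a law on a totally ordered space. The reduction is done by conditioning on $\w$ and using the key identity \eqref{eq:key-sym} of Proposition \ref{prop:xBhatproc}.

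First I reduce to a $\pi$-system. The space $\sM_1(\pathsp_x,\pathsa_{k:\infty})$ is compact metric. Both random measures take values in the random closed totally ordered set $\ext\DLR_x^{\beta,\w}$: for $\Pi_x^{\Bhat,\what}$ this holds by Lemma \ref{lem:Bhat-ext} on $\Omhatext$, and for $\cdfm_x^{\Bhat,s}$ by Proposition \ref{prop:xBhatproc}. Hence it suffices to show that for every Borel $F\subset\Omega$ and every $\mu$ in a suitable countable family,
\[
\Phat\bigl(\w\in F,\ \Pi_x^{\Bhat}\preceq \mu\bigr)=\int_F\int_0^1 \one\{\cdfm_x^{\Bhat,s}(\w)\preceq\mu\}\,ds\,\bbP(d\w).
\]
The difficulty is that $\mu$ would naturally be random (an element of $\ext\DLR_x^{\beta,\w}$). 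I will therefore work with the lower sets $L_\mu=\{\gamma:\gamma\preceq\mu\}$ for $\mu\in\Dmeas_x^{\beta,\w}$, which are selected measurably and shift covariantly via Proposition \ref{prop:dmeas}. I will then argue that, on a totally ordered compact set with countable dense subset $\Dmeas$ containing all isolated points, the events $\{\gamma\preceq\mu\}$ for $\mu\in\Dmeas$ separate points and generate the trace Borel $\sigma$-algebra. For deterministic test functions this follows because the order topology is coarser than, and on a compact set equal to, the weak topology. Concretely, I will index by the measurable selections $\mu_{x,I}^{\beta,\w,\pm}$, $I\in\dyatup_n$, so each test event is jointly measurable in $(\w,\gamma)$.

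Second, I fix such a selection $\mu=\mu_{x,I}^{\beta,\w,\pm}$ and compute the inner integral. By \eqref{eq:key-sym}, for $\w\in\Omega_{\Bhat}$ the set of $s\in(0,1)$ with $\cdfm_x^{\Bhat,s}(\w)\preceq\mu(\w)$ is an interval of Lebesgue measure $\nu^{\w}(\Omgood,\ \Pi_x^{\Bhat}\preceq\mu(\w))$. Integrating over $F$ and using that $\nu^{\w}$ is a version of the conditional distribution given $\kS$ gives $\Ehat[\one_F(\w)\one\{\Pi_x^{\Bhat}\preceq\mu(\w)\}]$. Here I use that $\mu(\w)$ is $\kS$-measurable, and the properness in Lemma \ref{lem:nu-exists}\eqref{lem:nu-exists:proper}, which lets $\mu(\w(\what'))=\mu(\w)$ be frozen inside the $\nu^\w$-integral. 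This yields the displayed identity.

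Third, I conclude equality of the joint laws. I will pass to the regular conditional laws given $\w$ of both measure-valued variables. Both are, for $\bbP$-a.e.\ $\w$, probability measures on the compact totally ordered set $\ext\DLR_x^{\beta,\w}$, and they agree on the countable family of lower sets $\{\gamma\preceq\mu\}$, $\mu\in\Dmeas_x^{\beta,\w}$. Lower sets are closed under intersection in a total order, so this family is a $\pi$-system. The main obstacle is showing that it generates the Borel sets of $\ext\DLR_x^{\beta,\w}$, i.e.\ that a measure on this set is determined by its values on these lower sets. Density alone only gives the sets $\{\gamma\precneq\mu\}$ as limits. The missing sets of the form $\{\gamma\preceq\mu\}$ for $\mu$ right-isolated are exactly why Proposition \ref{prop:dmeas}\eqref{prop:dmeas:isolated} includes isolated points in $\Dmeas$, and I expect the careful argument to go through this point. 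Intervals $\{\mu'\precneq\gamma\preceq\mu\}$ then form a base of the order topology, which coincides with the weak topology on the compact set. The monotone class theorem finishes the proof.
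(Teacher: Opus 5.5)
Your proposal is correct, but it is organized differently from the paper's proof.

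\textbf{The paper's route.} It never disintegrates and never studies the Borel structure of the random set $\ext\DLR_x^{\beta,\w}$. Via the monotone class theorem it reduces to \emph{deterministic} closed lower sets $C\subset\sM_1(\pathsp_x,\pathsa_{k:\infty})$, namely finite intersections of super-level sets $\{\mu:\mu\{\gamma\preceq\pi\}\ge r\}$. These form a $\pi$-system generating the Borel $\sigma$-algebra of the ambient space. Proposition \ref{prop:meas-sup} then supplies a measurable top element $\mu_C^\w=\sup(C\cap\ext\DLR_x^{\beta,\w})$. For extremal $\Pi$ the event $\{\Pi\in C\}$ becomes $\{\Pi\preceq\mu_C^\w\}$, and \eqref{eq:key-sym} plus Fubini finishes. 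The gain is a fixed generating class, with every isolated-point issue confined to the construction of $\cdfm_x^{\Bhat,\cdot}$.

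\textbf{Your route.} You test against random lower rays indexed by $\Dmeas_x^{\beta,\w}$, compare the two conditional kernels fiberwise, and then prove a generation lemma on each compact totally ordered fiber. This is more intrinsic to the ordered structure but needs that extra lemma.

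\textbf{A simplification.} Once you disintegrate, you do not need $\Dmeas_x^{\beta,\w}$ or the integrated Step 2. By properness, for every $\w\in\Omega_{\Bhat}$ the fiber law $\nu^\w(\Pi_x^{\Bhat}\in\cdot)$ lives on $\ext\DLR_x^{\beta,\w}$. Also, \eqref{eq:key-sym} holds \emph{pointwise} for every $\w\in\Omega_{\Bhat}$ and every $\mu\in\ext\DLR_x^{\beta,\w}$. Hence $\nu^\w(\Pi_x^{\Bhat}\in\cdot)$ and $\leb\{s:\cdfm_x^{\Bhat,s}(\w)\in\cdot\}$ agree on all closed rays $\{\gamma\preceq\mu\}$ with $\mu\in\ext\DLR_x^{\beta,\w}$. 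This family is a $\pi$-system, and it automatically contains the rays at right-isolated points, which are the ones you correctly flagged as the obstruction.

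\textbf{A small correction.} The half-open intervals $\{\mu'\precneq\gamma\preceq\mu\}$ are not a base of the order topology, since they are not open when $\mu$ is not right-isolated. Your argument survives anyway. Every open interval is a countable union of closed rays, using an approximating sequence from the left, or the immediate predecessor at a left-isolated endpoint. A compact metric space is hereditarily Lindel\"of. Together these are all the $\sigma$-algebra argument needs.
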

\begin{proof}
It suffices to prove that for all $A\in\sF$ and $C \in \mathscr{B}(\sM_1(\pathsp_x,\pathsa_{k:\infty}))$,
\be
\Phat(\w\in A,\, \Pi_x^{\Bhat}\in C) = \int_0^1 \bbP(A,\, \cdfm_x^{\Bhat,s}\in C)\,ds.\label{eq:quantile-goal}
\ee
Call a set $C\subset \sM_1(\pathsp_x,\pathsa_{k:\infty})$ \emph{lower} if
$\nu\preceq\mu\in C$ implies $\nu\in C$. We first show that to prove \eqref{eq:quantile-goal} for all $C\in \mathscr{B}(\sM_1(\pathsp_x,\pathsa_{k:\infty}))$, it is enough to
consider closed lower sets $C$. To see this, note that lower path intervals of the form $\{\gamma\in\pathsp_x:\gamma\preceq\pi\}$ are a $\pi$-system that generates the Borel $\sigma$-algebra on $\pathsp_x$
(see \cite[Lemma A.3]{Jan-Ras-Sep-25-strong-}). By the monotone class theorem \cite[Appendix Theorem 4.3]{Eth-Kur-86}, the Borel
$\sigma$-algebra of the weak topology on $\sM_1(\pathsp_x,\pathsa_{k:\infty})$ is then generated by the maps
\[\mu\mapsto F_\pi(\mu) = \mu\{\gamma\in\pathsp_x:\gamma\preceq\pi\},
\qquad \pi\in\pathsp_x.\]
The $\sigma$-algebra generated by a collection of real-valued maps is equal to the $\sigma$-algebra generated by super-level sets of those maps (because $\mathscr{B}(\bbR)$ is generated by sets of the form $[x,\infty)$). For $m\ge1$, $\pi_1,\dots,\pi_m\in\pathsp_x$, and $r_1,\dots,r_m\in [0,1]$, define
\[
C(\pi_1,\dots,\pi_m;r_1,\dots,r_m)
=
\bigcap_{j=1}^m\{\mu\in \sM_1(\pathsp_x,\pathsa_{k:\infty}): F_{\pi_j}(\mu)\ge r_j\}.
\]
The collection of such sets is a generating  $\pi$-system of $\mathscr{B}(\sM_1(\pathsp_x,\pathsa_{k:\infty}))$ consisting of closed lower sets. The reduction to closed lower sets then follows.

Equality in \eqref{eq:quantile-goal} holds trivially for all $A\in\sF$ if $C=\varnothing$. For any non-empty, closed, lower set $C$, by Proposition \ref{prop:meas-sup}, there is a Borel-measurable $\DLR_x^{\beta,\w}$-valued random variable $\mu_C^\w$ with the property that on $\Omega_0$, $\mu_C^\w = \sup\{ C \cap \ext \DLR_x^{\beta,\w}\} \in C \cap \ext \DLR_x^{\beta,\w}.$ This intersection is non-empty, because it necessarily contains $\delta_{x+e_2\bbZ_{\geq0}}.$  Recall the event $\Omgood\subset\Omhatext$ from Lemma \ref{lem:nu-exists}. By Lemma \ref{lem:Bhat-ext}, $\Pi_x^{\Bhat,\what}\in\ext\DLR_x^{\beta,\w(\what)}$ and is fully supported for every $\what\in\Omgood$. By total ordering of $\ext \DLR_x^{\beta,\w(\what)}$ and the hypothesis that $C$ is lower, a measure $\mu\in \ext \DLR_x^{\beta,\w}$ satisfies $\mu \in C$ if and only if $\mu \preceq \mu_C^\w$.

Consequently,
\be\begin{aligned}
\Phat(\w \in A, \Pi_x^{\Bhat}\in C) &= \Ehat[\one_A(\w)\,\nu^\w(\Pi_x^{\Bhat} \in C)] = \Ehat[\one_A(\w)\,\nu^\w(\Omgood,\,\Pi_x^{\Bhat} \preceq \mu_C^\w)] \\
&= \Ehat\left[\one_A(\w)\int_0^1 \one_{\{\cdfm_x^{\Bhat,s} \preceq \mu_C^\w\}}ds\right]= \int_0^1 \bbP(A, \cdfm_x^{\Bhat,s} \preceq \mu_C^\w) ds \\
&= \int_0^1 \bbP(A, \cdfm_x^{\Bhat,s} \in C) ds.
\end{aligned}\ee
The third equality uses \eqref{eq:key-sym}; the endpoints do not affect the integral. The result now follows.
\end{proof}
Next, we look into the consequences of existence of coalescing couplings. Recall Definition \ref{def:coalcoup-pair} of a coalescing coupling and that $\mu\coal \gamma$ means that there exists a coalescing coupling of two path measures $\mu$ and $\gamma$ rooted at possibly distinct vertices. By Proposition \ref{prop:coal-cond}, if the two measures are extremal fully supported DLR measures, this is equivalent to $\Busgeo_{\mu}=\Busgeo_{\gamma}$, where these cocycles are defined in \eqref{eq:global-ext-Busemann}.
\begin{lemma}\label{lem:quantile-equal}
Fix $x,y\in\bbZ^2$ and $\beta\in(0,\infty)$. Let $\Bhat\in\cKhat^{\beta}$ and define $\cdfm_x^{\Bhat,\aabullet}$ and $\cdfm_y^{\Bhat,\aabullet}$ as in Proposition \ref{prop:xBhatproc}. Then for all $\w\in\Omega_{\Bhat}$, the following holds: for all pairs of fully supported measures $(\mu,\gamma)\in \ext \DLR_x^{\beta,\w}\times \ext \DLR_y^{\beta,\w}$ with $\mu \coal \gamma$,
\[
\{s \in (0,1) : \cdfm_x^{\Bhat,s} \preceq \mu\} = \{s\in(0,1) : \cdfm_y^{\Bhat,s} \preceq \gamma\}.
\]
\end{lemma}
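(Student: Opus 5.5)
The plan is to reduce the claimed equality of sets to the equality of two numbers, using the key identity \eqref{eq:key-sym} of Proposition \ref{prop:xBhatproc}\eqref{prop:xBhatproc.key}. Fix $\w\in\Omega_{\Bhat}\subset\Omega_0$ and fully supported $\mu\in\ext\DLR_x^{\beta,\w}$, $\gamma\in\ext\DLR_y^{\beta,\w}$ with $\mu\coal\gamma$. By \eqref{eq:key-sym} applied at the roots $x$ and $y$,
\[
\{s\in(0,1):\cdfm_x^{\Bhat,s}\preceq\mu\}=(0,1)\cap\bigl[0,\nu^{\w}(\Omgood,\Pi_x^{\Bhat}\preceq\mu)\bigr],
\]
and analogously for $y$ and $\gamma$. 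It therefore suffices to show that
\[
\nu^{\w}(\Omgood,\Pi_x^{\Bhat}\preceq\mu)=\nu^{\w}(\Omgood,\Pi_y^{\Bhat}\preceq\gamma).
\]
By properness (Lemma \ref{lem:nu-exists}\eqref{lem:nu-exists:proper}), $\nu^\w$ is concentrated on $\Omgood\cap\{\what:\w(\what)=\w\}$. So it is enough to prove that, for every $\what$ in this set, the events $\{\Pi_x^{\Bhat,\what}\preceq\mu\}$ and $\{\Pi_y^{\Bhat,\what}\preceq\gamma\}$ either both occur or both fail.

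Fix such an $\what$. Since $\Omgood\subset\Omhatext$, Lemma \ref{lem:Bhat-ext} shows that $\Pi_x^{\Bhat,\what}\in\ext\DLR_x^{\beta,\w}$ and $\Pi_y^{\Bhat,\what}\in\ext\DLR_y^{\beta,\w}$. Both measures are fully supported, and the family $\{\Pi_z^{\Bhat,\what}\}$ is consistent and admits a coalescing coupling. Proposition \ref{prop:coal-cond} then gives $\Busgeo_{\Pi_x^{\Bhat,\what}}=\Busgeo_{\Pi_y^{\Bhat,\what}}$. The same proposition applied to the pair $\{\mu,\gamma\}$, which admits a coalescing coupling by hypothesis, gives $\Busgeo_\mu=\Busgeo_\gamma$. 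Here we use that $\w\in\Omega_0$.

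Next apply Theorem \ref{thm:global-weakorder} to the pair $\Pi^1=\Pi_x^{\Bhat,\what}$, $\Pi^2=\mu$, both rooted at $x$, so that $x\vee x=x$ and $\Pi^i_x=\Pi^i$. The equivalence \eqref{thm:global-weakorder:Pi}$\Leftrightarrow$\eqref{thm:global-weakorder:Busgeo} yields that $\Pi_x^{\Bhat,\what}\preceq\mu$ if and only if $\Busgeo_{\Pi_x^{\Bhat,\what}}\preceq\Busgeo_\mu$ in the cocycle order of Definition \ref{def:cocycle-order}. The same argument at root $y$ gives that $\Pi_y^{\Bhat,\what}\preceq\gamma$ if and only if $\Busgeo_{\Pi_y^{\Bhat,\what}}\preceq\Busgeo_\gamma$. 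By the two equalities of cocycles from the previous paragraph, the right-hand conditions are identical. Hence the two events coincide pointwise on the support of $\nu^\w$, the two $\nu^\w$-probabilities agree, and the two sets in the statement are equal.

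The only step needing care is the bookkeeping of full-measure events. We must make sure that every $\what$ charged by $\nu^\w$ has environment exactly $\w\in\Omega_0$, so that $\mu$ and $\gamma$ are extremal Gibbs measures in the same environment as $\Pi_x^{\Bhat,\what}$ and Theorem \ref{thm:global-weakorder} applies. We must also make sure that $\what$ lies in $\Omhatext$, so that the coalescence of the $\Bhat$-family holds. Both facts are provided by Lemma \ref{lem:nu-exists}\eqref{lem:nu-exists:Omega0}--\eqref{lem:nu-exists:proper} and the inclusion $\Omgood\subset\Omhatext$. With these in hand, the argument is a direct combination of Proposition \ref{prop:coal-cond} and Theorem \ref{thm:global-weakorder}.
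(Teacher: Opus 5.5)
Your proposal is correct and follows essentially the same route as the paper. Both reduce via \eqref{eq:key-sym} to equality of the two $\nu^\w$-probabilities, then use, on the $\nu^\w$-full set $\Omgood\cap\{\w(\cdot)=\w\}$, the equalities of Busemann cocycles coming from coalescence (Proposition \ref{prop:coal-cond}) together with the global order theorems. The only difference is minor: the paper bounds the difference of the probabilities by $\nu^\w$ of the event $\{\Pi_x^{\Bhat}\preceq\mu,\ \Pi_y^{\Bhat}\succneq\gamma\}$, shows that event is empty via Theorems \ref{thm:global-weakorder} and \ref{thm:global-strictorder}, and then argues symmetrically; you instead show the two events coincide pointwise using only the equivalence (i)$\Leftrightarrow$(iii) of Theorem \ref{thm:global-weakorder}, which is slightly more direct.
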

\begin{proof}
By Lemma \ref{lem:nu-exists}, $\nu^\w(\Omgood\cap\{\what:\w(\what)=\w\})=1$. On $\Omgood$, the family $\{\Pi_x^{\Bhat,\what} : x\in\bbZ^2\}$ admits a coalescing coupling and $\Pi_x^{\Bhat,\what}\in \ext \DLR_x^{\beta,\w(\what)}$ for each $x$. Moreover, by construction, each of these measures is fully supported.

We have
\be\begin{aligned}
\nu^\w(\Omgood, \Pi_x^{\Bhat} \preceq \mu) - \nu^\w(\Omgood, \Pi_y^{\Bhat} \preceq \gamma) 
\leq \nu^\w(\Omgood, \Pi_x^{\Bhat} \preceq \mu, \Pi_y^{\Bhat}\succneq \gamma)=0.
\end{aligned}\ee
The last probability is zero because $\nu^\w$ gives full mass to the fiber of $\w$ and, on this fiber, the event inside the probability is empty by Theorems \ref{thm:global-weakorder} and \ref{thm:global-strictorder}. A symmetric argument then implies that whenever $\mu \coal \gamma$, we have
\[
\nu^\w(\Omgood, \Pi_x^{\Bhat} \preceq \mu) = \nu^\w(\Omgood, \Pi_y^{\Bhat} \preceq \gamma).
\]
The claim now follows from Proposition \ref{prop:xBhatproc}\eqref{prop:xBhatproc.key}.
\end{proof}

\begin{lemma}\label{lm:coal}
Let $\w\in\Omega_{\Bhat},$ $\beta\in(0,\infty)$, $\Bhat \in \cKhat^\beta$, and $x,y\in \bbZ^2$. For all $s\in (0,1)$, $\cdfm_x^{\Bhat,s} \in \ext \DLR_x^{\beta,\w}$ and $\cdfm_y^{\Bhat,s} \in \ext \DLR_y^{\beta,\w}$, both are fully supported, and
$\cdfm_x^{\Bhat,s} \coal \cdfm_y^{\Bhat,s}$. 
\end{lemma}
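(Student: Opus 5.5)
The plan has three steps: show that each $\cdfm_x^{\Bhat,s}$, $s\in(0,1)$, is extremal and fully supported; use Proposition \ref{prop:global-ext} to transport it to root $y$; then apply Lemma \ref{lem:quantile-equal} in both directions and conclude with the global order (Theorem \ref{thm:global-weakorder}) and Proposition \ref{prop:coal-cond}. Fix $\w\in\Omega_{\Bhat}$ and $s\in(0,1)$ throughout. Extremality, $\cdfm_x^{\Bhat,s}\in\ext\DLR_x^{\beta,\w}$, is already contained in Proposition \ref{prop:xBhatproc}\eqref{prop:xBhatproc.meas}--\eqref{prop:xBhatproc.inf}: by \eqref{eq:fullinf} it is an infimum over a subset of the closed, totally ordered set $\ext\DLR_x^{\beta,\w}$ (Theorem \ref{thm:cltot}). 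The same holds at $y$.

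\emph{Full support.} By Lemma \ref{lem:Omega0}, the only non-fully-supported extremal measures are the two trivial ones, so it suffices to rule them out.
\begin{itemize}
\item[(1)] To exclude $\delta_{x+e_2\bbZ_{\geq0}}$, apply \eqref{eq:key-sym} with $\mu=\delta_{x+e_2\bbZ_{\geq0}}$. On $\Omgood$ the measure $\Pi_x^{\Bhat}$ is fully supported, so $\nu^{\w}(\Omgood,\Pi_x^{\Bhat}\preceq\delta_{x+e_2\bbZ_{\geq0}})=0$. Hence $\cdfm_x^{\Bhat,s}\preceq\delta_{x+e_2\bbZ_{\geq0}}$ fails for every $s>0$.
\item[(2)] To exclude $\delta_{x+e_1\bbZ_{\geq0}}$, suppose $\cdfm_x^{\Bhat,s}=\delta_{x+e_1\bbZ_{\geq0}}$. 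Then for every fully supported $\mu\in\ext\DLR_x^{\beta,\w}$, \eqref{eq:key-sym} forces $s>\nu^\w(\Omgood,\Pi_x^{\Bhat}\preceq\mu)$.
\item[(3)] I will show this contradicts the claim that the countable family of fully supported $\mu\in\Dmeas_x^{\beta,\w}$ has
\[\nu^\w\Bigl(\bigcup_{\mu}\{\Pi_x^{\Bhat}\preceq\mu\}\Bigr)=1.\]
This family is totally ordered, so the union is increasing along a cofinal sequence, and its probability is a monotone limit of values each below $s<1$.
\item[(4)] To verify the union claim, let $\Pi$ be fully supported and extremal. Either there is a fully supported extremal $\mu'\succneq\Pi$, and density of $\Dmeas_x^{\beta,\w}$ (Proposition \ref{prop:dmeas}\eqref{prop:dmeas:dense}) together with the total order yields a fully supported element of $\Dmeas_x^{\beta,\w}$ lying above $\Pi$. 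Or no such $\mu'$ exists, and then $\Pi$ is right-isolated (its successor is $\delta_{x+e_1\bbZ_{\geq0}}$), so $\Pi\in\Dmeas_x^{\beta,\w}$ by Proposition \ref{prop:dmeas}\eqref{prop:dmeas:isolated}.
\end{itemize}
The same argument applies at $y$. I expect this step to be the main technical obstacle. The delicate point is the density claim in the first alternative of (4): one must make sure the approximating element of $\Dmeas_x^{\beta,\w}$ is fully supported and genuinely dominates $\Pi$, and not merely lies weakly close to $\mu'$.

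\emph{Coalescence.} Write $\mu=\cdfm_x^{\Bhat,s}$ and $\gamma=\cdfm_y^{\Bhat,s}$, both now fully supported and extremal.
\begin{itemize}
\item[(5)] By Proposition \ref{prop:global-ext}, $\gamma':=\cDLR_y^{\mu}$ is fully supported and lies in $\ext\DLR_y^{\beta,\w}$, with $\Busgeo_{\gamma'}=\Busgeo_\mu$. Proposition \ref{prop:coal-cond} then gives $\mu\coal\gamma'$.
\item[(6)] Lemma \ref{lem:quantile-equal} gives $\{t:\cdfm_x^{\Bhat,t}\preceq\mu\}=\{t:\cdfm_y^{\Bhat,t}\preceq\gamma'\}$. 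The left side contains $s$, so $\gamma\preceq\gamma'=\cDLR_y^{\mu}$.
\item[(7)] Theorem \ref{thm:global-weakorder} converts this into $\Busgeo_\gamma\preceq\Busgeo_\mu$ in the sense of Definition \ref{def:cocycle-order}.
\item[(8)] Exchanging the roles of $x$ and $y$, with $\mu':=\cDLR_x^{\gamma}$, gives $\Busgeo_\mu\preceq\Busgeo_\gamma$.
\end{itemize}
Hence $\Busgeo_\mu=\Busgeo_\gamma$, and Proposition \ref{prop:coal-cond} yields $\mu\coal\gamma$.
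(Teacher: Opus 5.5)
Your proposal is correct. The coalescence half, steps (5)--(8), is the paper's argument: transport each quantile to the other root via $\cDLR$, apply Lemma \ref{lem:quantile-equal} in both directions, squeeze the Busemann cocycles with Theorem \ref{thm:global-weakorder}, and conclude with Proposition \ref{prop:coal-cond}.

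You differ from the paper in the proof of full support. The paper does not rule out the trivial measures one at a time. Instead it reruns the argument of Lemma \ref{lem:uniform-quantile} fiberwise. It first shows $\int_0^1\one_{\{\cdfm_z^{\Bhat,s}(\w)\in C\}}\,ds=\nu^\w(\Omgood,\Pi_z^{\Bhat}\in C)$ for closed lower sets $C$, then extends this by monotone class to all Borel $C$. Taking $C$ to be the fully supported extremal measures gives full support for Lebesgue-a.e.\ $s$. Monotonicity in $s$ and closedness of $\ext\DLR_z^{\beta,\w}$ then upgrade this to every $s\in(0,1)$.

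Your route instead works at each fixed $s$ directly from \eqref{eq:key-sym}. The bottom trivial measure is excluded at once. The top one is excluded by a countable cofinal family together with continuity from below. The paper's version is shorter given the machinery already in place, and it yields a stronger fiberwise distributional identity. Yours avoids the monotone-class extension and the a.e.-to-everywhere step, but uses the density and isolated-point clauses of Proposition \ref{prop:dmeas} once more.

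The point you flagged in (4) is harmless. Let $V=\{\gamma\in\ext\DLR_x^{\beta,\w}:\Pi\precneq\gamma\precneq\delta_{x+e_1\bbZ_{\geq0}}\}$. Its complement in $\ext\DLR_x^{\beta,\w}$ is $\{\gamma\preceq\Pi\}\cup\{\delta_{x+e_1\bbZ_{\geq0}}\}$, which is closed because the stochastic order is closed, so $V$ is relatively open. When $V$ is nonempty, density puts an element of $\Dmeas_x^{\beta,\w}$ in $V$. Every element of $V$ dominates $\Pi$ and is fully supported by Lemma \ref{lem:Omega0}.

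You could even bypass $\Dmeas_x^{\beta,\w}$ here. Closedness of the order and compactness already give an increasing cofinal sequence in the set of fully supported extremal measures rooted at $x$, and that sequence serves the same purpose in step (3).
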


\begin{proof}
By Lemma \ref{lem:nu-exists}, $\nu^\w(\Omgood\cap\{\what:\w(\what)=\w\})=1$. On this event, $\Pi_z^{\Bhat,\what}\in \ext \DLR_z^{\beta,\w}$ and is fully supported for every $z\in\bbZ^2$.  Next we argue as in the proof of Lemma \ref{lem:uniform-quantile}. Let $C\subset\sM_1(\pathsp_z,\pathsa_{\ell:\infty})$ be a non-empty closed lower set and let $\mu_C^\w=\sup\{C\cap\ext\DLR_z^{\beta,\w}\}\in C\cap\ext\DLR_z^{\beta,\w}$ be furnished by Proposition \ref{prop:meas-sup}. Then, by total ordering and Proposition \ref{prop:xBhatproc}\eqref{prop:xBhatproc.key},
\[
\nu^\w(\Omgood,\Pi_z^{\Bhat}\in C)=\nu^\w(\Omgood,\Pi_z^{\Bhat}\preceq \mu_C^\w)
=\int_0^1 \one_{\{\cdfm_z^{\Bhat,s}(\w)\preceq\mu_C^\w\}}\,ds
=\int_0^1 \one_{\{\cdfm_z^{\Bhat,s}(\w)\in C\}}\,ds.
\]
By the monotone class theorem, this identity now holds for all Borel $C$. Taking $C$ to be the set of fully supported extremal Gibbs measures, we have that for almost all $s\in(0,1)$ and $z\in\{x,y\}$, $\cdfm_z^{\Bhat,s}\in \ext \DLR_z^{\beta,\w}$ and is fully supported. By monotonicity in $s$ (Proposition \ref{prop:xBhatproc}\eqref{prop:xBhatproc.inf}) and closedness of $\ext \DLR_z^{\beta,\w}$ when $\w\in\Omega_0$, we see that for all $s\in(0,1)$ and $z\in\{x,y\}$, $\cdfm_z^{\Bhat,s}\in \ext \DLR_z^{\beta,\w}$ and is fully supported.

Fix $s\in(0,1)$ and abbreviate $\mu_x=\cdfm_x^{\Bhat,s}$ and $\mu_y=\cdfm_y^{\Bhat,s}$. By Propositions \ref{prop:global-ext}\eqref{prop:global-ext:consistent-Gibbs} and \ref{prop:coal-cond}, the measures $\gamma_y=\cDLR_y^{\mu_x}$ and $\gamma_x=\cDLR_x^{\mu_y}$ are fully supported elements of $\ext \DLR_y^{\beta,\w}$ and $\ext \DLR_x^{\beta,\w}$, respectively, and satisfy $\mu_x \coal \gamma_y$ and $\mu_y \coal \gamma_x$. Applying Lemma \ref{lem:quantile-equal} to the pairs $(\mu_x,\gamma_y)$ and $(\gamma_x,\mu_y)$ gives
\[
\{r\in(0,1):\cdfm_x^{\Bhat,r}\preceq \mu_x\}
=
\{r\in(0,1):\cdfm_y^{\Bhat,r}\preceq \gamma_y\}
\]
and
\[
\{r\in(0,1):\cdfm_x^{\Bhat,r}\preceq \gamma_x\}
=
\{r\in(0,1):\cdfm_y^{\Bhat,r}\preceq \mu_y\}.
\]
Since $s$ belongs to the left-hand set in the first identity and to the right-hand set in the second, we obtain
\[
\mu_y\preceq \gamma_y
\qquad\text{and}\qquad
\mu_x\preceq \gamma_x.
\]
By Theorem \ref{thm:global-weakorder}, this implies
\[
\Busgeo_{\mu_y}\preceq \Busgeo_{\gamma_y}=\Busgeo_{\mu_x}
\qquad\text{and}\qquad
\Busgeo_{\mu_x}\preceq \Busgeo_{\gamma_x}=\Busgeo_{\mu_y},
\]
where the equalities use Proposition \ref{prop:global-ext}\eqref{prop:global-ext:Busagree}. Hence $\Busgeo_{\mu_x}=\Busgeo_{\mu_y}$, and therefore $\mu_x\coal \mu_y$ by Proposition \ref{prop:coal-cond}.
\end{proof}

Fix $\beta\in(0,\infty)$ and some $\Bhat\in\cKhat^\beta$. With some abuse of notation, set for $s\in(0,1),$ 
\be
\Busgeo_{s}^{\Bhat}(\w) = \begin{cases}
0 & \w\notin\Omega_{\Bhat} \\
\Busgeo_{\cdfm_0^{\Bhat,s}(\w)}(\w) & \w\in\Omega_{\Bhat}
\end{cases}\label{eq:Busgeo_s}
\ee
where $\Busgeo_{\cdfm_0^{\Bhat,s}(\w)}(\w)$ is as defined in \eqref{eq:global-ext-Busemann}.

\begin{lemma}\label{lem:Busgeo-cocycle}
The map $(\w,s)\in \Omega\times(0,1)\mapsto \Busgeo_{s}^{\Bhat}(\w) \in \bbR^{\bbZ^2\times\bbZ^2}$ is jointly measurable. The field $\Busgeo_{s}^{\Bhat}(\w)$ satisfies the following properties:
\begin{enumerate}[label={\rm(\roman*)}, ref={\rm\roman*}] \itemsep=2pt
\item {\rm(}Cocycle{\rm)} For all $s\in(0,1)$, $\w\in\Omega$, and $x,y,z\in\bbZ^2$, 
\[
\Busgeo_s^{\Bhat}(\w,x,y) + \Busgeo_s^{\Bhat}(\w, y,z) = \Busgeo_s^{\Bhat}(\w, x,z).
\]
\item {\rm(}Recovery{\rm)} For all $s\in(0,1)$, $\w\in\Omega_{\Bhat}$, and all $x\in\bbZ^2$,
\[
\sum_{i=1}^2 e^{\beta \w_x - \beta \Busgeo_s^{\Bhat}(\w, x,x+e_i)} = 1.
\]
\item {\rm(}Shift covariance{\rm)} For all $s\in(0,1)$, $\w\in\Omega$, and all $x,y,z\in\bbZ^2$,
\[
\Busgeo_s^{\Bhat}(\w, x+z,y+z) = \Busgeo_s^{\Bhat}(T_z\w, x,y).
\]
\end{enumerate}
\end{lemma}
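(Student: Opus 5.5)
We prove joint measurability first, then the three algebraic properties; cocycle and recovery are inherited from Proposition~\ref{prop:global-ext}, and shift covariance uses Proposition~\ref{prop:xBhatproc}\eqref{prop:xBhatproc:cov} together with Lemma~\ref{lm:coal}.

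\emph{Measurability.} By Proposition~\ref{prop:xBhatproc}\eqref{prop:xBhatproc.meas}, $\w\mapsto \cdfm_0^{\Bhat,\aabullet}(\w)$ is a measurable map into the Skorokhod space of left-continuous paths with right limits. Hence $(\w,s)\mapsto \cdfm_0^{\Bhat,s}(\w)$ is jointly measurable: it is the pointwise limit of the measurable maps obtained by evaluating at dyadic left endpoints of $s$, by left-continuity. For $\w\in\Omega_{\Bhat}$, Lemma~\ref{lm:coal} gives that $\cdfm_0^{\Bhat,s}(\w)$ is fully supported and extremal. The second expression in \eqref{eq:global-ext-Busemann} then writes
\[
\Busgeo_{\cdfm_0^{\Bhat,s}}(y,z)=\frac1\beta\lim_{n\to\infty}\Bigl(\log \bfE^{\mu}\bigl[\PF{y}{X_n}/\PF{0}{X_n}\bigr]-\log \bfE^{\mu}\bigl[\PF{z}{X_n}/\PF{0}{X_n}\bigr]\Bigr),\qquad \mu=\cdfm_0^{\Bhat,s}(\w),
\]
as a limit of explicit measurable functions of $(\w,\mu)$. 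Each such function is a finite sum over endpoints of $\mu(X_n=v)$ times ratios of partition functions, and these are jointly Borel in $(\w,\mu)$.

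The term $\PF{0}{X_n}$ may vanish when $y\not\ge 0$. To handle this, we use the eventual constancy in $n$ noted after Lemma~\ref{lem:Busgeo} and take $n$ large. Alternatively, we restrict to endpoints $v\ge 0\vee y\vee z$ and pass to $\limsup$, which is measurable everywhere and equals the limit on $\Omega_{\Bhat}$. Composing and multiplying by $\one_{\Omega_{\Bhat}}$ gives joint measurability.

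\emph{Cocycle and recovery.} Off $\Omega_{\Bhat}$ the field is zero, so the cocycle property holds trivially there. On $\Omega_{\Bhat}$, both the cocycle property and recovery are Proposition~\ref{prop:global-ext}\eqref{prop:global-ext:cocrev} applied to the fully supported extremal measure $\cdfm_0^{\Bhat,s}(\w)$.

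\emph{Shift covariance.} This is the main step. Since $\Omega_{\Bhat}$ is shift invariant, the case $\w\notin\Omega_{\Bhat}$ is trivial. Fix $\w\in\Omega_{\Bhat}$ and $z$. By \eqref{eq:cdfm-cov},
\[
\cdfm_z^{\Bhat,s}(\w)=(\theta_z)_\#\cdfm_0^{\Bhat,s}(T_z\w).
\]
A direct computation from \eqref{eq:global-ext-Busemann}, using $\PF{x+z}{y+z}^{\beta,\w}=\PF{x}{y}^{\beta,T_z\w}$, shows that for any fully supported extremal $\mu$ rooted at $0$ in environment $T_z\w$,
\[
\Busgeo_{(\theta_z)_\#\mu}(\w,x+z,y+z)=\Busgeo_{\mu}(T_z\w,x,y).
\]
It therefore remains to identify $\Busgeo_{\cdfm_z^{\Bhat,s}(\w)}(\w)$ with $\Busgeo_{\cdfm_0^{\Bhat,s}(\w)}(\w)$. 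Lemma~\ref{lm:coal}, applied with roots $0$ and $z$, gives $\cdfm_0^{\Bhat,s}\coal\cdfm_z^{\Bhat,s}$, and Proposition~\ref{prop:coal-cond} converts this coalescence into equality of the Busemann cocycles. Chaining these equalities yields
\[
\Busgeo_s^{\Bhat}(\w,x+z,y+z)=\Busgeo_s^{\Bhat}(T_z\w,x,y).
\]
The only delicate point is bookkeeping of the shift conventions ($\theta_z$ versus $T_z$, and the root change in the Busemann limit); the rest is direct citation.
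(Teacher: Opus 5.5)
Your proposal is correct and follows the paper's proof essentially verbatim. Measurability comes from Proposition~\ref{prop:xBhatproc}, the extremality and full support supplied by Lemma~\ref{lm:coal}, and the expectation form of \eqref{eq:global-ext-Busemann} with a $\limsup$; cocycle and recovery come from Proposition~\ref{prop:global-ext}; and shift covariance uses the same chain of equalities via \eqref{eq:cdfm-cov}, the identity $\PF{u}{v}^{\beta,T_z\w}=\PF{u+z}{v+z}^{\beta,\w}$, Lemma~\ref{lm:coal}, and Proposition~\ref{prop:coal-cond}. One harmless slip: since $\mu$ is rooted at $0$, the denominator $\PF{0}{X_n}^{\beta,\w}$ never vanishes; it is the numerator $\PF{y}{X_n}^{\beta,\w}$ that can be zero for small $n$, and your eventual-constancy remark already handles this.
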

\begin{proof}
The claimed measurability follows from a standard argument. The inputs are the measurability of $\cdfm_0^{\Bhat,\aabullet}$ coming from Proposition \ref{prop:xBhatproc}, the fact that for $\w\in\Omega_{\Bhat}$, $\cdfm_0^{\Bhat,s}$ is extremal and fully supported for all $s\in(0,1)$ (by Lemma \ref{lm:coal}), the expression and finiteness of the limit in \eqref{eq:global-ext-Busemann}, the fact that the map
\[
(\w,\mu)\in\Omega\times \sM_1(\pathsp_x,\pathsa_{k:\infty}) \mapsto \bfE^{\mu}\left[\frac{\PF{y}{X_n}^{\beta,\w}}{\PF{x}{X_n}^{\beta,\w}}\right] \in \bbR
\]
is measurable for each $n \geq k= x\cdot(e_1+e_2)$ and $y\in \bbZ^2$, and the fact that the limsup of measurable functions is measurable. The cocycle and recovery properties follow from Proposition \ref{prop:global-ext}, so all that remains to check is shift covariance.

Recall that $\Omega_{\Bhat}$ is shift-invariant. By definition, $\Busgeo_s^\Bhat$ is constant and equal to $0$ and hence shift covariantly defined off of $\Omega_{\Bhat}$, so we check the condition on $\Omega_{\Bhat}$. We have for $\w\in\Omega_{\Bhat}$ and $x,y\in\bbZ^2$, 
\[
\begin{aligned}
\Busgeo_s^\Bhat(T_z\w, x,y)
&=\Busgeo_{\cdfm_0^{\Bhat,s}(T_z\w)}(T_z\w,x,y) =\Busgeo_{(\theta_z)_\#\cdfm_0^{\Bhat,s}(T_z\w)}(\w,x+z,y+z)\\
&=\Busgeo_{\cdfm_z^{\Bhat,s}(\w)}(\w,x+z,y+z) =\Busgeo_{\cdfm_0^{\Bhat,s}(\w)}(\w,x+z,y+z)\\
&=\Busgeo_s^\Bhat(\w,x+z,y+z).
\end{aligned}
\]
The second equality is the definition \eqref{eq:global-ext-Busemann} together with
\[
X_{n+z\cdot(e_1+e_2)}\circ\theta_z = X_n+z
\qquad\text{and}\qquad
\PF{u}{v}^{\beta,T_z\w}=\PF{u+z}{v+z}^{\beta,\w},
\]
the third equality is \eqref{eq:cdfm-cov}, and the fourth uses Lemma \ref{lm:coal}
together with Proposition \ref{prop:coal-cond}. This proves
\[
\Busgeo_s^\Bhat(\w,x+z,y+z)=\Busgeo_s^\Bhat(T_z\w, x,y). \qedhere
\]
\end{proof}
An immediate consequence of Proposition \ref{prop:global-ext} and Lemma \ref{lem:Bhat-ext}  is that for every $\what\in\Omhatext$ with $\w(\what)\in\Omega_0$ we have that, for all $x\in\Z^2$, $\Pi_x^{\Bhat,\what}\in \ext \DLR_x^{\beta,\w(\what)}$ is fully supported, and  
\be
\Busgeo_{\Pi_x^{\Bhat,\what}}(\w(\what),y,z)=\Bhat(\what,y,z)\quad\text{for all }y,z\in\bbZ^2.\label{eq:Bus=Busgeo}
\ee
To see the equality, note that, by Lemma \ref{lem:Bhat-ext}, the family $\{\Pi^{\Bhat,\what}_x:x\in\Z^2\}$ is consistent. Hence, Proposition \ref{prop:coal-cond} implies that \[\Busgeo_{\Pi^{\Bhat,\what}_x}=\Busgeo_{\Pi^{\Bhat,\what}_y}\quad\text{for all $x,y\in\Z^2$}.\] 
On the other hand, for any given $x\in\Z^2$, parts \eqref{prop:global-ext:consistent-Gibbs} and \eqref{prop:global-ext:agree} of Proposition \ref{prop:global-ext}, the definition of $\Pi^{\Bhat,\what}_x$, and the cocycle property of $\Busgeo_{\Pi_x^{\Bhat,\what}}$ and $\Bhat$ imply that the two agree on $\Z^2_{\ge x}\times\Z^2_{\ge x}$. The equality \eqref{eq:Bus=Busgeo} follows.

We next push Lemma \ref{lem:uniform-quantile} forward to the distribution of cocycles, using the formulas in \eqref{eq:Busgeo_s} and \eqref{eq:Bus=Busgeo}. Recall the notation $\leb$ for the Lebesgue measure on $[0,1].$

\begin{corollary}\label{cor:A_s=dB}
    Fix $\beta\in(0,\infty)$ and  $\Bhat\in\cKhat^\beta$. The distribution of $(\w(\what),\Bhat(\what))$ under $\Phat(d\what)$ is the same as the distribution of $(\w,\Busgeo^\Bhat_s(\w))$ under $\bbP(d\w)\otimes\leb(ds)$.
\end{corollary}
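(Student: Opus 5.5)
The plan is to push forward the joint law in Lemma \ref{lem:uniform-quantile} through a single measurable map that sends a pair (weights, extremal fully supported Gibbs measure rooted at $0$) to the associated Busemann cocycle $\Busgeo$. Concretely, define on $\Omega\times\sM_1(\pathsp_0,\pathsa_{0:\infty})$ the map
\[
\Phi(\w,\mu)(y,z)=\varlimsup_{n\to\infty}\frac{1}{\beta}\Bigl(\log \bfE^\mu\Bigl[\frac{\PF{y}{X_n}^{\beta,\w}}{\PF{0}{X_n}^{\beta,\w}}\Bigr]-\log \bfE^\mu\Bigl[\frac{\PF{z}{X_n}^{\beta,\w}}{\PF{0}{X_n}^{\beta,\w}}\Bigr]\Bigr)
\]
when $\w\in\Omega_{\Bhat}$ and the expressions are finite, and $\Phi\equiv 0$ otherwise. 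Each expectation is jointly measurable in $(\w,\mu)$ for fixed $n$ (as already noted in the proof of Lemma \ref{lem:Busgeo-cocycle}), and a limsup of measurable functions is measurable, so $\Phi$ is Borel. By Proposition \ref{prop:global-ext} (specifically \eqref{eq:global-ext-Busemann}), for $\w\in\Omega_{\Bhat}\subset\Omega_0$ and any fully supported $\mu\in\ext\DLR_0^{\beta,\w}$, the limsup is a genuine limit and $\Phi(\w,\mu)=\Busgeo_\mu(\w)$.

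Next I will identify both sides as images under $(\w,\mu)\mapsto(\w,\Phi(\w,\mu))$. On the $\bbP\otimes\leb$ side, for $\w\in\Omega_{\Bhat}$ and $s\in(0,1)$, Lemma \ref{lm:coal} gives that $\cdfm_0^{\Bhat,s}(\w)$ is extremal and fully supported, so $\Phi(\w,\cdfm_0^{\Bhat,s}(\w))=\Busgeo^{\Bhat}_s(\w)$ by \eqref{eq:Busgeo_s}; off $\Omega_{\Bhat}$ both are $0$. Since $\bbP(\Omega_{\Bhat})=1$, this equality holds $\bbP\otimes\leb$-a.s. On the $\Phat$ side, for $\what\in\Omgood\subset\Omhatext$ with $\w(\what)\in\Omega_{\Bhat}$ (a $\Phat$-full event by Lemma \ref{lem:nu-exists}\eqref{lem:nu-exists:Omega0}), $\Pi_0^{\Bhat,\what}$ is extremal and fully supported by Lemma \ref{lem:Bhat-ext}, so $\Phi(\w(\what),\Pi_0^{\Bhat,\what})=\Busgeo_{\Pi_0^{\Bhat,\what}}(\w(\what))=\Bhat(\what)$ by \eqref{eq:Bus=Busgeo}.

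Finally, Lemma \ref{lem:uniform-quantile} with $x=0$ says $(\w(\what),\Pi_0^{\Bhat,\what})$ under $\Phat$ and $(\w,\cdfm_0^{\Bhat,s}(\w))$ under $\bbP\otimes\leb$ have the same law. Applying the measurable map $(\w,\mu)\mapsto(\w,\Phi(\w,\mu))$ and using the a.s.\ identifications above yields the corollary. The only step needing care is the measurability of $\Phi$, together with checking that the exceptional sets on which $\Phi$ is set to $0$ are null under both laws; both follow from the results already cited, so I do not expect a genuine obstacle.
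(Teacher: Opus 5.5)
Your proposal is correct and is essentially the paper's argument: the paper obtains the corollary by pushing the joint law in Lemma \ref{lem:uniform-quantile} forward through $\mu\mapsto\Busgeo_\mu$, using \eqref{eq:Busgeo_s} on the $\bbP\otimes\leb$ side and \eqref{eq:Bus=Busgeo} on the $\Phat$ side. Your explicit Borel version $\Phi$ of that map, and your check that the sets where $\Phi$ is set to $0$ are null under both laws, fill in details the paper leaves implicit; the measurability ingredients are the same ones listed in the proof of Lemma \ref{lem:Busgeo-cocycle}.
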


Our next result shows that the inverse CDF sampling results in a constant random variable.

\begin{lemma}\label{lem:quantile-constant}
    Fix $\beta\in(0,\infty)$ and let $\Bhat\in\cKhat^\beta$ satisfy $\Phat\{\what: \hhB(\Bhat, \what)=\Ehat[\hhB(\Bhat)]\}=1$. Then $\P\{\forall s,t\in(0,1):\Busgeo^\Bhat_s= \Busgeo^\Bhat_t\}=1$.
\end{lemma}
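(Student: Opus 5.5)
The plan is to combine the monotonicity of $s\mapsto\Busgeo^\Bhat_s$ with a mean comparison. Set $h=\Ehat[\hhB(\Bhat)]$.

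\emph{Monotonicity in $s$.} Fix $\w\in\Omega_{\Bhat}$. By Proposition \ref{prop:xBhatproc}\eqref{prop:xBhatproc.inf}, $s\mapsto\cdfm_0^{\Bhat,s}(\w)$ is nondecreasing in $\preceq$. By Lemma \ref{lm:coal}, each $\cdfm_0^{\Bhat,s}$ is extremal and fully supported. Theorem \ref{thm:global-weakorder} then shows that $s\mapsto\Busgeo^\Bhat_s(\w)$ is nondecreasing in the cocycle order of Definition \ref{def:cocycle-order}. Concretely, $\Busgeo^\Bhat_s(\w,z,z+e_1)$ is nonincreasing in $s$ and $\Busgeo^\Bhat_s(\w,z,z+e_2)$ is nondecreasing in $s$, for every $z$.

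\emph{Each $\Busgeo^\Bhat_s$ lies in $\cK^\beta$ with tilt $h$.} For a.e.\ fixed $s$, Lemma \ref{lem:Busgeo-cocycle} makes $\Busgeo^\Bhat_s$ a shift-covariant recovering cocycle on $(\Omega,\sF,\bbP)$. By Corollary \ref{cor:A_s=dB} and Fubini,
\[
\int_0^1\bbE\bigl[|\Busgeo^\Bhat_s(0,e_i)|\bigr]\,ds=\Ehat\bigl[|\Bhat(0,e_i)|\bigr]<\infty,
\]
so for a.e.\ $s$ we have $\Busgeo^\Bhat_s\in\cK^\beta$. By ergodicity (Condition \ref{cond:ergodic}), the tilt $\hhB(\Busgeo^\Bhat_s)$ is deterministic. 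It can be identified through the shape limit \eqref{B-shape}: $\hhB(\Busgeo^\Bhat_s)$ equals minus the almost sure limit of $n^{-1}\Busgeo^\Bhat_s(0,x_n)$ along, say, $x_n=ne_i$. Corollary \ref{cor:A_s=dB} says the joint law of $(\w,\Busgeo^\Bhat_s)$ under $\bbP\otimes\leb$ equals that of $(\w,\Bhat)$ under $\Phat$. The event that $n^{-1}\Busgeo(0,ne_i)\to -h\cdot e_i$ is measurable in the cocycle field, and it has full $\Phat$-probability because $\hhB(\Bhat)=h$ almost surely by hypothesis. Hence $\hhB(\Busgeo^\Bhat_s)=h$ for a.e.\ $s$.

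\emph{Conclusion.} Let $S\subset(0,1)$ be a countable dense set of such good $s$. For $s<t$ in $S$, $\Busgeo^\Bhat_s\preceq\Busgeo^\Bhat_t$ pointwise and both have the same mean $\bbE[\Busgeo^\Bhat_s(0,e_i)]=-h\cdot e_i$. A nonnegative random variable with zero mean vanishes almost surely, so $\Busgeo^\Bhat_s(z,z+e_i)=\Busgeo^\Bhat_t(z,z+e_i)$ a.s.\ for all $z$ and $i$. The cocycle property then extends this to all pairs of sites, so the two cocycles agree a.s.; alternatively, Theorem \ref{thm:uniqueness}(c) gives this directly. Thus, almost surely, $\Busgeo^\Bhat_s$ is constant on $S$.

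\emph{Main obstacle: extending from $S$ to all of $(0,1)$.} Here I use monotonicity together with left-continuity with right limits of $\cdfm_0^{\Bhat,\aabullet}$, where the limits are taken in $\ext\DLR_0^{\beta,\w}$, which is closed. Any $s\in(0,1)$ is sandwiched between elements $s'<s<s''$ of $S$, and $\Busgeo^\Bhat_{s'}\preceq\Busgeo^\Bhat_s\preceq\Busgeo^\Bhat_{s''}$ with the two outer cocycles equal. Hence $\Busgeo^\Bhat_s$ agrees with them on every nearest-neighbour edge, and by the cocycle property on all pairs. This needs only the monotonicity from the first step, not continuity, and it holds for every $\w$ in a single full-measure event.
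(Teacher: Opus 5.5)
Your proposal is correct and follows the paper's proof. You transfer the cocycle shape theorem from $\Bhat$ to $\Busgeo^\Bhat_s$ via Corollary \ref{cor:A_s=dB} to get tilt $h$ for Lebesgue-a.e.\ $s$, and then combine equal means with the monotonicity in $s$ (Theorem \ref{thm:global-weakorder} and Proposition \ref{prop:xBhatproc}\eqref{prop:xBhatproc.inf}) to force constancy. The only minor differences are that you get $L^1(\bbP)$-integrability for a.e.\ $s$ directly by Tonelli from the distributional identity, where the paper uses Birkhoff's theorem with the lower bound $\Busgeo^\Bhat_s(x,x+e_j)\ge\w_x$, and that you spell out the final sandwich step, which the paper leaves implicit.
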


\begin{proof}
Abbreviate $h=\Ehat[\hhB(\Bhat)]$. By the cocycle shape theorem \cite[Theorem 4.4]{Jan-Ras-20-aop}, recorded above as \eqref{B-shape}, $\Phat$ almost surely,
\[
\lim_{n\to\infty}\max_{|x|_1\leq n}\frac{|\Bhat(\what, 0,x) + h\cdot x|}{n}  = 0.
\]
It then follows from Corollary \ref{cor:A_s=dB} that for $\bbP\otimes\leb$ almost every $(\w,s)$,
\[
\lim_{n\to\infty}\max_{|x|_1\leq n}\frac{|\Busgeo^\Bhat_s(\w,0,x) + h\cdot x|}{n}  = 0.
\] 
In particular, for almost all $(\w,s)$ and $j\in\{1,2\}$,  
\[
\lim_{n\to\infty}\frac{\sum_{k=1}^n\Busgeo^\Bhat_s(\w, (k-1)e_j,ke_j)}{n} = -\,h \cdot e_j
\]
By the recovery property in Lemma \ref{lem:Busgeo-cocycle}, we have that for all $s\in(0,1)$ and $\w\in\Omega_{\Bhat}$, $\Busgeo^\Bhat_s(\w, x,x+e_j) \geq \w_x$ for all $x\in\bbZ^2$ and $j\in\{1,2\}$.

Next, recall that $\Busgeo^\Bhat_s$ is shift-covariant. Applying Birkhoff's ergodic theorem with $s\in (0,1)$ fixed, the above convergence combined with the integrability of the negative part of $\Busgeo^\Bhat_s(0, e_j)$ coming from the lower bound by $\w_0$ implies that $\Busgeo^\Bhat_s(0,e_j)\in L^1(\bbP)$ for $j\in\{1,2\}$ (else the limit would be infinite almost surely). Now, we apply the cocycle shape theorem \cite[Theorem 4.4]{Jan-Ras-20-aop} (\eqref{B-shape} above) to $\Busgeo^\Bhat_s$ to conclude that for almost all $s\in(0,1)$, $\hhB(\Busgeo^\Bhat_s)=h$, $\bbP$-almost surely. In particular, the mean does not depend on $s$, for a full Lebesgue measure set of $s$.

For $\w\in\Omega_{\Bhat}$, by Theorem \ref{thm:global-weakorder} and Proposition \ref{prop:xBhatproc}\eqref{prop:xBhatproc.inf},  the real-valued maps
\[
s \mapsto \Busgeo^\Bhat_s(x,x+e_1) \quad \text{ and } \quad s\mapsto \Busgeo^\Bhat_s(x,x+e_2)
\]
are both monotone. We have just shown that these processes have constant means on a dense set. This implies the processes are constant.
\end{proof}
The previous result shows that the random variable obtained by using the inverse CDF method to sample from the conditional distribution given the weights is trivial. By a standard measure theoretic argument, it follows that the process we started with is measurable with respect to the weights (up to a set of measure zero). The proof is verbatim identical to that of \cite[Theorem 3.1]{Jan-Ras-Sep-25-strong-}.

\begin{proof}[Proof of Theorem \ref{thm:Bhat-B}]
Define $\Bus(x,y)$ = $\int_0^1\Busgeo^\Bhat_s(x,y)ds$ and call $\Bus(\w) = (\Bus(\w, x,y) : x,y\in\bbZ^2)$. By Lemma \ref{lem:quantile-constant},  for $\bbP$-almost all $\w$, for all $s\in(0,1)$, $\Busgeo^\Bhat_s(\w,x,y) = \Bus(\w, x,y)$. By Corollary \ref{cor:A_s=dB}, the joint distribution of $(\w(\what), \Bhat(\what))$ is the same as the distribution of $(\w,\Bus(\w))$. It follows that $\Phat$-almost surely, $\Bhat(\what) = \Bus(\w(\what))$ by a standard measure theoretic argument (e.g.~Lemma 2.2 in \cite{Kur-07}): factorize the joint distribution of $(\w(\what), \Bhat(\what))$ under $\Phat$ as the distribution $\bbP(d\w)$ of $\w$ together with a transition kernel $\eta(db\viiva\w)$ that represents the conditional distribution of $\Bhat$ given $\w$. Do the same on the other side of the equality in distribution to see that $\eta(db\viiva\w) = \delta_{\Bus(\w)}(db)$ $\bbP$-almost surely. Thus $\Phat\{\what : \Bhat(\what) = \Bus(\w(\what))\} = \Ehat[\Phat(\Bhat=\Bus\viiva\kS)]=1.$
\end{proof}

\subsection{Strong uniqueness}
We begin with the following observation, which is essentially immediate from the definition. Recall the partial order on cocycles recorded as Definition \ref{def:cocycle-order}.
\begin{lemma}\label{lem:Bus-monotone-as}
Fix $\beta\in(0,\infty)$ and let $\Bus^1,\Bus^2\in\cK^{\beta}$. Then exactly one of the following holds:
\[
\bbP(\Bus^1 \prec \Bus^2) = 1 \text{ or }\bbP(\Bus^1 = \Bus^2)=1 \text{ or }\bbP(\Bus^2 \prec \Bus^1)=1.
\]
\end{lemma}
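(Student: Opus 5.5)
The plan is to work pointwise in $\w$ and then upgrade to almost sure statements via ergodicity. Fix $\Bus^1,\Bus^2\in\cK^\beta$. By Lemma \ref{lem:Bhat-ext}, applied with $(\Omhat,\kShat,\Phat)=(\Omega,\sF,\bbP)$, there is a shift-invariant full-probability event $\Omega'\subset\Omega_0$ with the following property. For every $\w\in\Omega'$ and $x\in\bbZ^2$, each $\Bus^j$ is a $\beta$-recovering shift-covariant cocycle, and the measures $\Pi_x^{\Bus^j,\w}$ are fully supported elements of $\ext\DLR_x^{\beta,\w}$ that form a consistent family in $x$.

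The first step is to identify $\Busgeo_{\Pi_0^{\Bus^j,\w}}$ with $\Bus^j(\w)$ on $\Omega'$. This is exactly the identity \eqref{eq:Bus=Busgeo}, whose derivation used only Proposition \ref{prop:global-ext} and Proposition \ref{prop:coal-cond}, so it applies verbatim here. Corollary \ref{cor:as-order-trichotomy}, applied to $\Pi_0^{\Bus^1,\w}$ and $\Pi_0^{\Bus^2,\w}$, then gives that for every $\w\in\Omega'$ exactly one of
\[
\Bus^1(\w)\prec\Bus^2(\w),\qquad \Bus^1(\w)=\Bus^2(\w),\qquad \Bus^2(\w)\prec\Bus^1(\w)
\]
holds, in the order of Definition \ref{def:cocycle-order}. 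Call the corresponding events $E_\prec$, $E_=$ and $E_\succ$. They are disjoint and their union contains $\Omega'$.

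The second step, and the only delicate one, is to show that each event is $T$-invariant up to null sets; Condition \ref{cond:ergodic} then makes exactly one of them have full probability. Shift covariance of $\Bus^j$ gives
\[
\Bus^j(T_z\w,x,x+e_i)=\Bus^j(\w,x+z,x+z+e_i)
\]
on a full-measure invariant event, which we intersect into $\Omega'$. Each of the three relations is defined by inequalities or equalities holding simultaneously at all sites $x\in\bbZ^2$. Relabeling the sites by $x\mapsto x+z$ shows that $T_z\w$ lies in a given event if and only if $\w$ does. So $E_\prec\cap\Omega'$ and the other two intersections are $T$-invariant, and each has probability $0$ or $1$ by ergodicity. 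Since they partition $\Omega'$, exactly one of them has probability one, which is the claim.

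I expect no real obstacle here. The only care needed is measurability of the three events, which is clear because each is a countable intersection of events of the form $\{\Bus^1(x,x+e_i)>\Bus^2(x,x+e_i)\}$ and similar, together with the verification that \eqref{eq:Bus=Busgeo} holds on the canonical space.
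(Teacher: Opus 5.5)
Your proposal is correct and follows the paper's proof essentially step for step. You use Lemma \ref{lem:Bhat-ext} on the canonical space, the identification \eqref{eq:Bus=Busgeo}, and Corollary \ref{cor:as-order-trichotomy} for a pointwise trichotomy; then shift covariance makes the three events shift-invariant and ergodicity selects exactly one of them.
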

\begin{proof}
By Lemma \ref{lem:Bhat-ext}, each cocycle generates extremal Gibbs measures almost surely. By Corollary \ref{cor:as-order-trichotomy} and \eqref{eq:Bus=Busgeo}, the three possibilities above are a trichotomy for each realization of the environment for which the cocycles generate extremal Gibbs measures. By shift covariance, these events are all shift-invariant. By ergodicity of $\bbP$, exactly one has probability one.
\end{proof}

\begin{proof}[Proof of Theorem \ref{thm:uniqueness}]
By the trichotomy in Lemma \ref{lem:Bus-monotone-as}, exactly one of
\[
\bbP(\Bus^1\prec\Bus^2)=1,\qquad \bbP(\Bus^1=\Bus^2)=1,\qquad\text{or}\qquad \bbP(\Bus^2\prec\Bus^1)=1
\]
holds. In the first case, ergodicity of $\bbP$, together with the definition
\eqref{h-def}, implies $\hhB(\Bus^1)\preceq\hhB(\Bus^2)$. Equality of the tilts
would force $\Bus^1=\Bus^2$ almost surely: indeed, then \eqref{h-def} gives $
\E[\Bus^1(0,e_1)-\Bus^2(0,e_1)]=\E[\Bus^2(0,e_2)-\Bus^1(0,e_2)]=0.$ Since both differences are nonnegative almost surely, they would have to vanish almost surely. Hence in the first case
$\hhB(\Bus^1)\precneq\hhB(\Bus^2)$. The third case is analogous. In the second
case, $\Bus^1=\Bus^2$ almost surely, so trivially $\hhB(\Bus^1)=\hhB(\Bus^2)$.
\end{proof}

We next prove the process-level statement, Theorem \ref{thm:Bus-process}.
\begin{proof}[Proof of Theorem \ref{thm:Bus-process}]
First, recall that the monotone limits \eqref{Busproc-def} defining the process exist by Corollary \ref{cor:Bus-process-order}. For each $h\in\sH^\beta$ and $\sigg\in\{-,+\}$ the limiting field is a shift-covariant $\beta$-recovering cocycle because those properties are preserved by taking limits. Lemma \ref{lem:Bus-process-extend} gives \eqref{thm:Bus-process:same}. In particular, for each $h\in\sH^\beta$, the common value agrees almost surely with the previously defined cocycle $\Bus^{\beta,h}$ (introduced after the proof of Proposition \ref{prop:sH-extreme}). This gives \eqref{thm:Bus-process:Bus} and \eqref{thm:Bus-process:mean}.

The monotonicity statement \eqref{thm:Bus-process:monotone}, with all weak inequalities, follows from Corollary \ref{cor:Bus-process-order} by passing to the defining monotone limits. We prove the claimed strict inequalities.
If $h\precneq h'$ are both in $\cHdense^\beta$, then the claim follows because, by Lemma \ref{lem:Bus-process-extend}, there is no sign distinction and, by Theorem \ref{thm:uniqueness}, $\Bus^h\prec \Bus^{h'}$, almost surely. In particular, the strict inequalities hold when $h$ is right-isolated and $h'$ is left-isolated in $\sH^\beta$. In the remaining case, there are infinitely many tilts in $\cHdense^\beta$ strictly between $h$ and $h'$. Let $h''\precneq h'''$ be two such tilts. Then $\Bus^h\preceq \Bus^{h''}\prec \Bus^{h'''}\preceq \Bus^{h'}$ and \eqref{thm:Bus-process:monotone} is proved.

The continuity statement \eqref{thm:Bus-process:cont} follows from the definitions and the density of $\cHdense^\beta$. It remains to prove uniqueness. 

Let $\widetilde \Bus^{\beta,h\sig}$ be another process as in the statement of the theorem. Let $\bar\sH_0^\beta$ be a countable dense subset of the dense set of $h\in\sH^\beta$ for which $\widetilde B^{h-},\widetilde B^{h+}\in\cK^\beta$ and $\E[\hhB(B^{h-})]=\E[\hhB(B^{h+})]=h$. Then, by \eqref{Bbar=B} and Theorem \ref{thm:uniqueness}, we have, on a full-probability event, for every $h\in\bar\sH_0^\beta$, $\Bus^{\beta,h-}=\Bus^{\beta,h+}=\widetilde\Bus^{\beta,h-}=\widetilde\Bus^{\beta,h+}=B^{\beta,h}$. Applying \eqref{thm:Bus-process:cont} to both processes extends this agreement to all $h\in\sH^\beta$ and both signs.
\end{proof}

We next show that the process constructs extremal Gibbs measures. The main non-trivial claim here is that the resulting measures are extremal for all $h$ and all signs $\sigg\in\{+,-\}$.

\begin{proof}[Proof of Corollary \ref{cor:Bus-process-DLR}]
Let $\Omega'$ be the intersection of $\Omega_0$ with a shift-invariant full-probability event on which the signed process from Theorem \ref{thm:Bus-process} has the stated cocycle, recovery, monotonicity, and continuity properties for all $h\in\sH^\beta$, $\sigg\in\{+,-\}$, and $x,y\in\bbZ^2$, and on which Lemma \ref{lem:Bhat-ext} applies to the countable collection $\{\Bus^{\beta,h}:h\in\cHdense^\beta\}$. Fix $\w\in\Omega'$.

First take $h\in\cHdense^\beta$. Lemma \ref{lem:Bhat-ext}, applied on the canonical space to $\Bus^{\beta,h}$, implies that the measures generated by $\Bus^{\beta,h}$ are fully supported elements of $\ext\DLR_x^{\beta,\w}$ for every $x$. For general $h\in\sH^\beta$ and $\sigg\in\{+,-\}$, choose a monotone sequence $h_n\in\cHdense^\beta$ from the defining approximation \eqref{Busproc-def} of $\Bus^{\beta,h\sig}$. For every finite admissible path $x_{k:m}$, Theorem \ref{thm:Bus-process}\eqref{thm:Bus-process:cont} gives $\Pi_x^{\beta,h_n,\w}(x_{k:m})\to \Pi_x^{\beta,h\sig,\w}(x_{k:m}).$ This implies weak convergence. Since $\ext\DLR_x^{\beta,\w}$ is closed when $\w\in\Omega_0$, the limit belongs to $\ext\DLR_x^{\beta,\w}$. Full support follows directly from \eqref{eq:Bus-process-DLR-definition}. This proves \eqref{cor:Bus-process-DLR:extreme}.

The consistency in \eqref{cor:Bus-process-DLR:consistent} follows from Theorem \ref{thm:DLR-cocycle}. With this consistency and the extremality from part \eqref{cor:Bus-process-DLR:extreme}, \eqref{APi=B} follows from Propositions \ref{prop:global-ext} and \ref{prop:coal-cond}, as in the proof of \eqref{eq:Bus=Busgeo}.

Finally, if $h\precneq h'$ in $\sH^\beta$, \eqref{APi=B} and Theorem \ref{thm:Bus-process}\eqref{thm:Bus-process:monotone} give the ordering
\[\Busgeo_{\Pi_x^{\beta,h-,\w}}\preceq\Busgeo_{\Pi_x^{\beta,h+,\w}}\prec\Busgeo_{\Pi_x^{\beta,h'-,\w}}\preceq\Busgeo_{\Pi_x^{\beta,h'+,\w}}\quad\forall x\in\Z^2,\] 
and then part \eqref{cor:Bus-process-DLR:monotone}  follows from Theorems  \ref{thm:global-weakorder} and \ref{thm:global-strictorder}.
\end{proof}

We close with the ergodic decomposition consequence, recorded above as Theorem \ref{thm:decomp}.

\begin{proof}[Proof of Theorem \ref{thm:decomp}]
The proof is identical to that of \cite[Theorem 3.4]{Jan-Ras-Sep-25-strong-}. First, consider the case in which $\Phat$ is ergodic under $\That$. Then $\hhB(\Bhat)$ is deterministic $\Phat$-almost surely. Denote this value by $h$. By Theorems \ref{thm:Bhat-B} and \ref{thm:uniqueness}, $h\in\sH^\beta$ and $\Bhat(\what)=\Bus^{\beta,h}(\w(\what))$, $\Phat$-almost surely. By Theorem \ref{thm:Bus-process}\eqref{thm:Bus-process:same}, $\Bus^{\beta,h-}(\w)=\Bus^{\beta,h+}(\w)$, $\bbP$-almost surely, and by Theorem \ref{thm:Bus-process}\eqref{thm:Bus-process:Bus}, this common cocycle is $\Bus^{\beta,h}(\w)$. Hence 
\[
\Phat\Bigl\{\what:\hhB(\Bhat,\what)\in\sH^\beta,\ \Bus^{\beta,\hhB(\Bhat,\what)-}(\w(\what))=\Bus^{\beta,\hhB(\Bhat,\what)+}(\w(\what))=\Bhat(\what)\Bigr\}=1.
\]
Since the above event has full probability under every ergodic probability measure, the ergodic decomposition theorem \cite[Appendix B]{Ras-Sep-15-ldp} tells us that it also has full probability under any $\That$-invariant probability measure.
\end{proof}

\section{\texorpdfstring{$L^1$}{L1} continuity of the Busemann process}\label{sec:L1-cont}
We recall the following result concerning weak-strong convergence from \cite{Jac-Mem-81-Stoch}.

  \begin{theorem}{\rm\cite[Theorem 2.1]{Jac-Mem-81-Stoch}.}\label{a:cor:jm8}
   Let $(\Omhat,\kShat)$ and $(S,\mathscr{B}(S))$ be two Polish spaces with their Borel $\sigma$-algebras. Let  $\{\mu^n\}_{n\ge1}$ be a sequence of probability measures on  $(\Omhat\times S, \kShat\otimes\mathscr{B}(S))$ that converges under the standard  weak topology: 
  $\mu^n\to\mu$ in $\cM_1(\Omhat\times S)$.   Assume further that the $\Omhat$-marginals are constant:   
  $\mu^n_{\Omhat}=\mu_{\Omhat}$ for all $n\ge1$.   Then 
  \[    \lim_{n\to\infty} \int_{\Omhat\times S} g\,d\mu^n =   \int_{\Omhat\times S} g\,d\mu \]  
for all bounded measurable functions $g:\Omhat\times S\to\R$ such that $x\mapsto g(\what,x)$ is a continuous function on $S$ for each $\what\in\Omhat$.   
\end{theorem}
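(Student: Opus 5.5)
The plan is to reduce to the case where $g$ is a product $g(\what,x)=\one_A(\what)f(x)$ with $A\in\kShat$ and $f\in C_b(S)$, and then extend by approximation. The key input is that when the $\Omhat$-marginal is fixed, weak convergence upgrades from continuous test functions to test functions that are merely measurable in the $\Omhat$ variable.

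For the product case, fix $A\in\kShat$ and $f\in C_b(S)$ with $\|f\|_\infty\le1$. Since $\mu_{\Omhat}$ is a Borel probability measure on a Polish space, it is regular. So for $\e>0$ there is a bounded continuous $\phi:\Omhat\to[0,1]$ with $\int|\one_A-\phi|\,d\mu_{\Omhat}<\e$; one can take $\phi$ to approximate the indicator of a compact $K\subset A$ with $\mu_{\Omhat}(A\setminus K)$ small, using an open $U\supset A$. Then
\[
\Bigl|\int \one_A f\,d\mu^n-\int \phi f\,d\mu^n\Bigr|\le \int|\one_A-\phi|\,d\mu^n_{\Omhat}=\int|\one_A-\phi|\,d\mu_{\Omhat}<\e.
\]
This bound is uniform in $n$ precisely because the marginals are constant, and the same bound holds for $\mu$. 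Since $\phi\otimes f$ is bounded and continuous, $\int\phi f\,d\mu^n\to\int\phi f\,d\mu$. Letting $\e\to0$ proves the claim for products. By linearity it then holds for $g=\sum_i \one_{A_i}(\what)f_i(x)$ with $A_i$ a finite measurable partition of $\Omhat$.

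For the general $g$, the main obstacle is passing from finite sums of products to an arbitrary Carathéodory function, that is, one that is measurable in $\what$ and continuous in $x$. I would first use tightness. Since $\mu^n\to\mu$ weakly on a Polish space, the $S$-marginals are tight. So there is a compact $L\subset S$ with $\mu^n(\Omhat\times L^c)<\e$ for all $n$, and also for $\mu$. By Tietze/Dugundji one may assume $S$ is embedded so that functions on $L$ extend, but it is cleaner to work directly on $L$.

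On $L$, the map $\what\mapsto g(\what,\cdot)|_L$ takes values in the separable Banach space $C(L)$. It is Borel measurable, because evaluations at a countable dense subset of $L$ are measurable and determine the norm. Hence it is strongly measurable, and there are simple functions $\what\mapsto\sum_i\one_{A_i}(\what)f_i$ with $f_i\in C(L)$ approximating it in $\sup$-norm on a set of $\mu_{\Omhat}$-measure at least $1-\e$. Lusin-type truncation plus boundedness then gives
\[
\sup_n\int|g-g_\e|\,d\mu^n\le C\e,
\]
where $g_\e=\sum_i\one_{A_i}(\what)\tilde f_i(x)$ and $\tilde f_i$ is a bounded continuous extension of $f_i$ to $S$ with $\|\tilde f_i\|_\infty\le\|g\|_\infty$. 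The error is controlled by $\mu^n(\Omhat\times L^c)$, by the $\mu_{\Omhat}$-null-ish exceptional set (again uniform in $n$ by constant marginals), and by the $\sup$-norm error on $L$. The cutoff near $L$ is handled by bounding $|g-g_\e|\le 2\|g\|_\infty$ off $\Omhat\times L$.

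Combining with the product case and letting $\e\to0$ gives $\int g\,d\mu^n\to\int g\,d\mu$.
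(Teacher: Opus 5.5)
The paper gives no proof of this statement. It is quoted from Jacod--M\'emin and used only as a black box, essentially through one consequence: if the laws of $(\widehat\omega,X_n(\widehat\omega))$ converge weakly to the law of $(\widehat\omega,X(\widehat\omega))$, then the choice $g(\widehat\omega,x)=d(x,X(\widehat\omega))$ yields $X_n\to X$ in probability. There is therefore no internal argument to compare against. Judged as a stand-alone proof, your proposal is correct.

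The fixed $\widehat\Omega$-marginal is what makes every replacement in the $\widehat\omega$ variable cost at most a constant times $\varepsilon$, uniformly in $n$. It is used twice. In the product step, it controls the error from replacing $\mathbf 1_A$ by a continuous $\phi$ that is close in $L^1(\mu_{\widehat\Omega})$. In the general step, it controls the error from discarding the $\widehat\omega$-set of measure less than $\varepsilon$ outside $A_1\cup\dots\cup A_m$.

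Polishness of $S$ supplies a single compact $L$ that carries all but $\varepsilon$ of every $S$-marginal. Compactness of $L$ makes $C(L)$ separable, so $\widehat\omega\mapsto g(\widehat\omega,\cdot)|_L$ is Borel into a separable metric space. Your countable-dense-set argument shows that preimages of balls are measurable, and every open set is a countable union of balls. Such a map can then be approximated uniformly on $L$ by a finitely-valued map, off a set of small measure.

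Two cosmetic points:
\begin{itemize}
\item To guarantee $\|\tilde f_i\|_\infty\le\|g\|_\infty$, truncate each approximant at level $\pm\|g\|_\infty$ before applying Tietze; truncation is $1$-Lipschitz in sup norm, so the approximation is preserved. Alternatively, take $f_i=g(\widehat\omega_i,\cdot)|_L$ for some $\widehat\omega_i\in A_i$.
\item The remark about Dugundji is unnecessary, since Tietze on the metric space $S$ already suffices.
\end{itemize}
Note also that Polishness of $\widehat\Omega$ enters your argument only in the product step, through the density of $C_b(\widehat\Omega)$ in $L^1(\mu_{\widehat\Omega})$.
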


One straightforward consequence of weak-strong convergence we will use is the following: if a sequence of random variables $X_n$ is coupled together on a single probability space $\Omhat$ along with another random variable $X$ and if the law of $(\what,X_n(\what))$ converges to the law of $(\what,X(\what))$ in the weak-strong topology which appears in Theorem \ref{a:cor:jm8}, then $X_n$ converges to $X$ in probability. To see this, take $g(\what,x) = d(x,X(\what))$, where $d$ is a bounded metric generating the topology of $S$.

We now turn to the proof of Theorem \ref{thm:L1-continuity-coupled}. The basic outline is the following: our hypotheses imply tightness of the Busemann cocycles and convergence of their means along subsequential weak limits. But because we have strong existence and uniqueness and can work on a common probability space, subsequential weak limits become in probability limits. This holds provided that we know that any weak limit has the property that its conditional mean vector is equal to its unconditional mean vector. Verifying this condition is why we have the extremality hypothesis. Once that is handled, the convergence can be upgraded to $L^1$ convergence using Scheff\'e's lemma.

\begin{proof}[Proof of Theorem \ref{thm:L1-continuity-coupled}]
Fix $x,y\in\bbZ^2$ and a nearest-neighbor lattice path $x=x_0,x_1,\dotsc,x_m=y$. By the cocycle property, $\Phat$-almost surely,
\[
\Bus^{\beta_n,h_n,\nu_n}(\w^n,x,y)-\Bus^{\beta_\infty,h_\infty,\nu_\infty}(\w^\infty,x,y)
=
\sum_{k=0}^{m-1}\Bigl(\Bus^{\beta_n,h_n,\nu_n}(\w^n,x_k,x_{k+1})-\Bus^{\beta_\infty,h_\infty,\nu_\infty}(\w^\infty,x_k,x_{k+1})\Bigr).
\]
If $x_{k+1}=x_k-e_i$, then
\[
\Bus^{\beta_n,h_n,\nu_n}(\w^n,x_k,x_{k+1})=-\Bus^{\beta_n,h_n,\nu_n}(\w^n,x_{k+1},x_k)
\]
and similarly for $\Bus^{\beta_\infty,h_\infty,\nu_\infty}(\w^\infty)$. Therefore, by the triangle inequality, it suffices to prove the claim for $y=x+e_j$ with $j\in\{1,2\}$, and, by shift covariance, to only consider $x=0$.

Fix a sequence $(\beta_n,h_n)\to(\beta_\infty,h_\infty)$ as in the statement. Let
\[
S=\R^{\bbZ^2\times\bbZ^2},
\qquad
E=[0,\infty)\times\Omega\times S,
\]
and for $n\in\bbN\cup\{\infty\}$ define the \emph{temperature}
\[
\alpha_n=
\begin{cases}
\beta_n^{-1}, & \beta_n<\infty,\\
0, & \beta_n=\infty,
\end{cases}
\]
and
\[
\Bus_n(\what)=\Bus^{\beta_n,h_n,\nu_n}(\w^n(\what))\in S.
\]
Let $\mu_n$ be the law of $(\what,\alpha_n,\w^n,\Bus_n)$ on $\Omhat\times E$. Denote by $T_{x,y}$ the coordinate shift by $x$ in the first coordinate and $y$ in the second on $S=\bbR^{\bbZ^2\times\bbZ^2}$.

By assumption \eqref{thm:L1-continuity-coupled:weights-L1}, the family
$\{\w_0^n\}_{n\in\bbN\cup\{\infty\}}$ is uniformly integrable.
For each $i\in\{1,2\}$ and each $n\in\bbN\cup\{\infty\}$, $\beta_n$-recovery gives
$0\le \Bus_n(0,e_i)^-\le (\w_0^n)^-$, so
$\{\Bus_n(0,e_i)^-\}_{n\in\bbN\cup\{\infty\}}$ is uniformly integrable. Therefore, $\sup_n\Ehat[\Bus_n(0,e_i)^-]<\infty$.
Since
\begin{equation}\label{eq:Bus-means-converge}
\Ehat[\Bus_n(0,e_i)]=-\,h_n\cdot e_i \to -\,h_\infty\cdot e_i=\Ehat[\Bus_\infty(0,e_i)],
\end{equation}
it follows that $\sup_n\Ehat[|\Bus_n(0,e_i)|]<\infty$ for $i\in\{1,2\}$. By shift covariance and the cocycle property, the same holds for $\Bus_n(x,y)$ for all $x,y\in\bbZ^2$. 

By \eqref{w-cov} and the $\That$ invariance of $\Phat$,
\[
\Ehat[|\w_x^n-\w_x^\infty|]=\Ehat[|\w_0^n-\w_0^\infty|]\to0
\qquad\text{for each }x\in\bbZ^2.
\]
Hence $\w^n\to\w^\infty$ in $\Phat$-probability as $\Omega$-valued random variables. In particular, the laws of $\w^n$ on $\Omega$ are tight, and consequently the family $\{\mu_n\}$ is tight.

Take a weakly convergent subsequence, still denoted by $\mu_n$, with limit $\mu$, and let
\[
\widehat W(\what,\alpha,\omega,b)=\what,\qquad A(\what,\alpha,\omega,b)=\alpha,\qquad W(\what,\alpha,\omega,b)=\omega,\qquad \Bhat(\what,\alpha,\omega,b)=b
\]
be the coordinate maps on $\Omhat\times E$. Let $\widetilde T$ be the continuous automorphism group on $\Omhat\times E$ defined by 
\[
\widetilde T_z(\what,\alpha,\omega,b)=\bigl(\That_z\what,\alpha,T_z\omega,T_{z,z}b\bigr).
\]
Since each $\mu_n$ is $\widetilde T$-invariant, so is $\mu$, and since each $\mu_n$ is supported on the closed set of $(\what,\alpha,\omega,b)\in\Omhat\times E$ on which the coordinate map $\Bhat$ is a cocycle, $\Bhat$ is $\mu$-almost surely a cocycle. We have that the law of $(\widehat W,A,W)$ under $\mu_n$ converges weakly to the law of $\bigl(\what,\alpha_\infty,\w^\infty(\what)\bigr)$ under $\Phat$, because $(\what,\alpha_n,\w^n)\to(\what,\alpha_\infty,\w^\infty)$ in $\Phat$-probability. Therefore, the first three-coordinate marginal of $\mu$ is the law of $\bigl(\what,\alpha_\infty,\w^\infty(\what)\bigr)$ under $\Phat$. In particular,
\[
A=\alpha_\infty\quad\text{and}\quad W=\w^\infty(\widehat W),\qquad \mu\text{-almost surely.}
\]
The law of $W$ under $\mu$ is therefore $\nu_\infty$.

Define
\[
\mathfrak{r}(\alpha,a_1,a_2)=
\begin{cases}
-\alpha\log\bigl(e^{-a_1/\alpha}+e^{-a_2/\alpha}\bigr), & \alpha>0,\\
a_1\wedge a_2, & \alpha=0.
\end{cases}
\]
The map $\mathfrak{r}$ is continuous on $[0,\infty)\times\R^2$, and $\beta_n$-recovery is equivalent to
\[
\mathfrak{r}\bigl(A,\Bhat(0,e_1)-W_0,\Bhat(0,e_2)-W_0\bigr)=0,
\qquad \mu_n\text{-almost surely.}
\]
The set
\[
C=\bigl\{(\what,\alpha,\omega,b):\mathfrak{r}\bigl(\alpha,b(0,e_1)-\omega_0,b(0,e_2)-\omega_0\bigr)=0\bigr\}
\]
is closed, and $\mu_n(C)=1$ for all $n$. Hence
\[
\mathfrak{r}\bigl(A,\Bhat(0,e_1)-W_0,\Bhat(0,e_2)-W_0\bigr)=0,
\qquad \mu\text{-almost surely.}
\]
Since $A=\alpha_\infty$ and $W=\w^\infty(\widehat W)$, $\Bhat$ is a $\beta_\infty$-recovering cocycle for $W$, $\mu$-almost surely. For $i\in\{1,2\}$, let $g_i(\what,\alpha,\omega,b)=(b(0,e_i)-\omega_0)^+$. Then $g_i$ is continuous and nonnegative on $\Omhat\times E$. Since $\mu_n(C)=1$ for all $n$ and $\mu(C)=1$, recovery implies that $g_i=b(0,e_i)-\omega_0\ge0$, $\mu_n$-almost surely and $\mu$-almost surely. Hence Portmanteau, combined with the convergence $\Ehat[\w_0^n]\to\Ehat[\w_0^\infty]$ coming from \eqref{thm:L1-continuity-coupled:weights-L1} and $h_n\to h_\infty$ from \eqref{thm:L1-continuity-coupled.params}, gives
\be\label{eq:Bus-portmanteau}
0 \leq \int \bigl(\Bhat(0,e_i)-W_0\bigr)\,d\mu \le \varliminf_{n\to\infty}\int \bigl(\Bhat(0,e_i)-W_0\bigr)\,d\mu_n = -h_{\infty}\cdot e_i - \Ehat[\w_0^\infty].
\ee
It follows that $\Bhat(0,e_i)\in L^1(\mu)$. Hence, $\Bhat$ is a shift-covariant $\beta_\infty$-recovering $L^1(\mu)$ cocycle on an extended space satisfying all of the hypotheses of Section \ref{sec:probsp}, including (by assumption) Condition \ref{cond:moment-mixing}, whose environment marginal is $\nu_\infty$. Thus, Lemma \ref{lem:h-superdiff} applies.

Set $\bar h=\int\hhB(\Bhat)\,d\mu.$ 
Since $W_0$ has the same law under $\mu$ as $\w_0^\infty$ under $\Phat$, \eqref{eq:Bus-portmanteau} gives $\bar h\cdot e_i\ge h_\infty\cdot e_i$ for $i\in\{1,2\}$.  By Lemma \ref{lem:h-superdiff}, $-\hhB(\Bhat)\in\partial\fe^{\beta_\infty,\nu_\infty}(\Uset)$, $\mu$-almost surely. Lemma \ref{lm:m'=m} then implies that $\bar h=h_\infty$. Since we assumed $-h_\infty\in\ext\partial\fe^{\beta_\infty,\nu_\infty}(\xi)$ for some $\xi\in\ri\Uset$, Lemma \ref{lm:hext} (or Lemma \ref{lem:h-superdiff}\eqref{lem:h-superdiff-extreme}) implies that $\hhB(\Bhat)=h_\infty$, $\mu$-almost surely.
If $\beta_\infty<\infty$, Theorem \ref{thm:decomp}, applied to $(\Omhat\times E,\mathscr B(\Omhat\times E),\mu,\widetilde T,W)$, gives $\Bhat=\Bus^{\beta_\infty,h_\infty,\nu_\infty}(W)$, $\mu$-almost surely. If $\beta_\infty=\infty$, the zero-temperature analogue in Condition \ref{cond:zero-temp-strong} gives $\Bhat=\Bus^{\beta_\infty,h_\infty,\nu_\infty}(W)$, $\mu$-almost surely.

Consequently every weak limit point of $\mu_n$ is the law of $\bigl(\what,\alpha_\infty,\w^\infty(\what),\Bus^{\beta_\infty,h_\infty,\nu_\infty}(\w^\infty(\what))\bigr),$ and hence $\mu_n$ converges weakly to this law. Projecting onto the first and fourth coordinates gives weak convergence of the laws of $(\what,\Bus_n)$ to the law of $\bigl(\what,\Bus^{\beta_\infty,h_\infty,\nu_\infty}(\w^\infty(\what))\bigr)$ under $\Phat$ on $\Omhat\times S$. Since the first coordinate is $\what$ in both cases, Theorem \ref{a:cor:jm8} gives weak-strong convergence on $(\Omhat,\kShat,\Phat)$. In particular,
\[
\Bus^{\beta_n,h_n,\nu_n}(\w^n,0,e_j)\to \Bus^{\beta_\infty,h_\infty,\nu_\infty}(\w^\infty,0,e_j)
\qquad\text{in }\Phat\text{-probability.}
\]

By recovery, $0\le \Bus_n(0,e_j)^-\le (\w_0^n)^-$, so the family
$\{\Bus_n(0,e_j)^-\}_{n\in\bbN}$ is uniformly integrable. Since
$\Bus_n(0,e_j)\to \Bus_\infty(0,e_j)$ in $\Phat$-probability, we obtain
$\Ehat[\Bus_n(0,e_j)^-]\to\Ehat[\Bus_\infty(0,e_j)^-]$. Together with
\eqref{eq:Bus-means-converge}, this gives
\[
\Ehat[|\Bus_n(0,e_j)|]
=
\Ehat[\Bus_n(0,e_j)]+2\Ehat[\Bus_n(0,e_j)^-]
\to
\Ehat[\Bus_\infty(0,e_j)]+2\Ehat[\Bus_\infty(0,e_j)^-]
=
\Ehat[|\Bus_\infty(0,e_j)|].
\]
Since $\Bus_n(0,e_j)\to \Bus_\infty(0,e_j)$ in $\Phat$-probability, Scheff\'e's lemma gives
\[
\Ehat[|\Bus_n(0,e_j)-\Bus_\infty(0,e_j)|]\to0. \qedhere
\]
\end{proof}

\begin{proof}[Proof of Corollary \ref{cor:L1-continuity-perturb}]
The setting of the corollary is that of Theorem \ref{thm:L1-continuity-coupled}, with $\beta_n=\beta$ for all $n$. We verify hypotheses \eqref{thm:L1-continuity-coupled:weights-L1} and \eqref{thm:L1-continuity-coupled.params} of the theorem. Since $|\eta_0|\le M$, $\Ehat[|\w_0^n-\w_0^\infty|]\le |\epsilon_n|M\to0$. 

For any $u\le v$, every admissible path from $u$ to $v$ has length $|v-u|_1$, and the perturbation changes its energy by at most $|\epsilon_n|M|v-u|_1$. Hence, for both $\beta<\infty$ and $\beta=\infty$,
$\bigl|\FE{u}{v}^{\beta,\w^n}-\FE{u}{v}^{\beta,\w^\infty}\bigr|\le |\epsilon_n|M|v-u|_1$.
Applying \eqref{sh-th} to both environments $\w^n$ and $\w^\infty$ 
gives 
\[\sup_{\xi\in\Uset}\abs{\fe^{\beta,\nu_n}(\xi)-\fe^{\beta,\nu_\infty}(\xi)}\le\abs{\epsilon_n}M.\] 
Since $-h_n\in\partial\fe^{\beta,\nu_n}(\xi)$ and $\fe^{\beta,\nu_\infty}$ is differentiable at $\xi\in\ri\Uset$, Lemma \ref{lem:supergrad-converge} gives $h_n\to h_\infty$. Finally, differentiability at $\xi$ and Proposition \ref{prop:sH-extreme} when $\beta<\infty$ (and \cite[Proposition 3.5]{Jan-Ras-Sep-25-strong-} when $\beta=\infty$) imply $(\beta,h_\infty)\in\sH_{(0,\infty]}^{\nu_\infty,\textup{ext}}$. Theorem \ref{thm:L1-continuity-coupled} now applies and gives the result.
\end{proof}

\section{Asymptotics of Gibbs-DLR measures}\label{sec:zero-temp}

We begin by proving continuity in probability of the positive-temperature extremal Gibbs measures with respect to the parameters $(\beta,h,\nu,\w)$.

\begin{proof}[Proof of Corollary \ref{cor:Gibbs-measure-continuity}]
Fix $x\in\bbZ^2$ and a finite admissible path $x_{k:m}$ with $x_k=x$. 
Assumption \eqref{thm:L1-continuity-coupled:weights-L1} in Theorem \ref{thm:L1-continuity-coupled}, together with shift covariance and the conclusion of the theorem imply that the exponent in \eqref{Pindef} converges in $\Phat$-probability to the corresponding exponent with $n=\infty$. Thus the $\Pi_x^{\beta_n,h_n,\nu_n,\w^n}$-probabilities of cylinder events converge in $\Phat$-probability. Since finite cylinders are convergence determining and there are countably many finite paths rooted at $x$, the claimed weak convergence follows.
\end{proof}

Next, we turn to the zero temperature limit.

\begin{proof}[Proof of Corollary \ref{cor:cylinder-rate}] 
By the definition of $\Pi_x^{\beta_n,h_n,\nu_n,\w^n}$ and the cocycle property, 
\begin{align*}
-\frac{1}{\beta_n}\log \Pi_x^{\beta_n,h_n,\nu_n,\w^n}(x_{k:m})
=
\Bus^{\beta_n,h_n,\nu_n}(\w^n,x,x_m)-\sum_{j=k}^{m-1}\w^n_{x_j}
=
\sum_{j=k}^{m-1}\bigl(\Bus^{\beta_n,h_n,\nu_n}(\w^n,x_j,x_{j+1})-\w^n_{x_j}\bigr).
\end{align*}
For each $j$, Theorem \ref{thm:L1-continuity-coupled} gives $\Bus^{\beta_n,h_n,\nu_n}(\w^n,x_j,x_{j+1})\to \Bus^{\infty,h_\infty,\nu_\infty}(\w^\infty,x_j,x_{j+1})$ in $L^1(\Phat)$, while assumption \eqref{thm:L1-continuity-coupled:weights-L1} in the theorem and shift covariance give $\w^n_{x_j}\to\w^\infty_{x_j}$ in $L^1(\Phat)$. This proves the first claim.  The $\beta=\infty$ recovery relation gives $\Bus^{\infty,h_\infty,\nu_\infty}(\w^\infty,x_j,x_{j+1})\ge \w^\infty_{x_j}$ for each $j$, and equality throughout is the definition of the path being a $\Bus^{\infty,h_\infty,\nu_\infty}(\w^\infty)$-geodesic.
\end{proof}

Finally, we turn to the proof of Corollary \ref{cor:subseq-quenched-ldp}.

\begin{proof}[Proof of Corollary \ref{cor:subseq-quenched-ldp}]
There are countably many pairs $(x,x_{k:m})$ consisting of a root $x\in\bbZ^2$ and an admissible finite path segment started at $x$. Applying Corollary \ref{cor:cylinder-rate} and a diagonal argument gives a deterministic subsequence $\{n_\ell\}_{\ell\ge1}$ and an event $\Omhat_1$ of full $\Phat$-probability on which, for every such pair $(x,x_{k:m})$,
\be
-\frac{1}{\beta_{n_\ell}}\log \Pi_x^{\beta_{n_\ell},h_{n_\ell},\nu_{n_\ell},\w^{n_\ell}(\what)}(x_{k:m})
\to
I_x^{h_\infty,\nu_\infty,\w^\infty(\what)}(x_{k:m})\quad\text{as }\ell\to\infty.\label{eq:LDP-lim}
\ee
Shrinking $\Omhat_1$ if necessary, we may also assume that for every $\what\in\Omhat_1$, the cocycle and recovery properties of $\Bus^{\beta_n,h_n,\nu_n}(\w^n(\what))$ hold at all sites and all $n\in\bbN\cup\{\infty\}$. We verify the large deviation principle on $\pathsp_x$ for $\what\in\Omhat_1$ by applying the Dawson-G\"artner theorem to the finite prefix projections. 

Fix $\what\in\Omhat_1$ and $x\in\bbZ^2$ with $k=x\cdot(e_1+e_2)$. For $m\ge k$, let $E_m^x$ be the finite set of admissible path segments $x_{k:m}$ started at $x$, equipped with the discrete topology, and let $p_m:\pathsp_x\to E_m^x$ be given by $p_m(x_{k:\infty})=x_{k:m}.$ Then $\pathsp_x$ is the projective limit of the system $\{E_m^x\}_{m\ge k}$ under these truncation maps. 

For $m\ge k$, define
\[
I_x^{h_\infty,\nu_\infty,\w^\infty(\what)}(x_{k:m})
=
\Bus^{\infty,h_\infty,\nu_\infty}(\w^\infty(\what),x,x_m)-\sum_{j=k}^{m-1}\w^\infty_{x_j}(\what).
\]
By the cocycle property,
\[
I_x^{h_\infty,\nu_\infty,\w^\infty(\what)}(x_{k:m})
=
\sum_{j=k}^{m-1}\bigl(\Bus^{\infty,h_\infty,\nu_\infty}(\w^\infty(\what),x_j,x_{j+1})-\w^\infty_{x_j}(\what)\bigr),
\]
and the recovery property makes this sequence nondecreasing in $m$. Thus the second equality in \eqref{eq:quenched-rate-fn} holds and says that
\[
I_x^{h_\infty,\nu_\infty,\w^\infty(\what)}(x_{k:\infty})=\sup_{m\ge k}I_x^{h_\infty,\nu_\infty,\w^\infty(\what)}(x_{k:m}).
\]
With the notation used in this paper, for each $m\ge k$ and each $x_{k:m}\in E_m^x$,
\[
(p_m)_\#\Pi_x^{\beta_{n_\ell},h_{n_\ell},\nu_{n_\ell},\w^{n_\ell}(\what)}(x_{k:m})
=
\Pi_x^{\beta_{n_\ell},h_{n_\ell},\nu_{n_\ell},\w^{n_\ell}(\what)}(x_{k:m}),
\]
so the convergence in \eqref{eq:LDP-lim} above is an LDP for the measures $(p_m)_\#\Pi_x^{\beta_{n_\ell},h_{n_\ell},\nu_{n_\ell},\w^{n_\ell}(\what)}$ on the finite space $E_m^x$ with normalization $\beta_{n_\ell}$ and rate function $I_x^{h_\infty,\nu_\infty,\w^\infty(\what)}(x_{k:m})$. The Dawson-G\"artner theorem \cite[Theorem 4.6.1]{Dem-Zei-98} now yields an LDP for $\Pi_x^{\beta_{n_\ell},h_{n_\ell},\nu_{n_\ell},\w^{n_\ell}(\what)}$ on $\pathsp_x$ with normalization $\beta_{n_\ell}$ and rate function \eqref{eq:quenched-rate-fn}.

Since each term in the series above is nonnegative, $I_x^{h_\infty,\nu_\infty,\w^\infty(\what)}(x_{k:\infty})=0$ if and only if
\[
\Bus^{\infty,h_\infty,\nu_\infty}(\w^\infty(\what),x_j,x_{j+1})=\w^\infty_{x_j}(\what)
\qquad\text{for all }j\ge k,
\]
which is exactly the condition that $x_{k:\infty}$ is a $\Bus^{\infty,h_\infty,\nu_\infty}(\w^\infty(\what))$-geodesic.
\end{proof}

\appendix
\section{Measurability}\label{app:meas}
In this section, we collect the purely technical, measure theoretic, aspects of our proofs. 
In the arguments that follow, we work with a fixed $x\in\bbZ^2$ and denote the hyperspace of non-empty compact subsets of $(\sM_1(\pathsp_x,\pathsa_{k:\infty}),$ $d_{\sM_1})$ equipped with the Hausdorff metric, by $(\sK_x,d_{\sK_x})$. Explicitly, the Hausdorff metric is given for $K_1,K_2 \in \sK_x$ by
\begin{align*}
	d_{\sK_x}(K_1,K_2) = \max\{\max_{\mu\in K_2}d_{\sM_1}(\mu,K_1),\max_{\mu\in K_1}d_{\sM_1}(\mu,K_2)\}.
\end{align*}
It is a standard fact that $(\sK_x,d_{\sK_x})$ is compact because $(\sM_1(\pathsp_x,\pathsa_{k:\infty}),d_{\sM_1})$ is \cite[Theorem 3.2.4]{Bee-93}.

\subsection{Proof of Proposition \ref{prop:dmeas}}
Recall the shift-invariant and Borel-measurable event $\Omega_0$ from Lemma \ref{lem:Omega0}. By Theorem \ref{thm:cltot}, on $\Omega_0$, for all $\beta\in(0,\infty)$ and all $x\in\bbZ^2$, $\ext \DLR_x^{\beta,\w}$ is a closed subset of the compact metric space $\sM_1(\pathsp_x,\pathsa_{k:\infty})$ and totally ordered under stochastic domination. The goal of this section is to construct for each $\beta\in(0,\infty)$ a Borel-measurable version of the map $\w \mapsto \sup A \cap \ext \DLR_x^{\beta,\w}$, where $A$ is a deterministic closed subset of $\sM_1(\pathsp_x,\pathsa_{k:\infty})$. We begin by proving that $\w \mapsto A \cap \ext \DLR_x^{\beta,\w}$ is a Borel correspondence taking values in the hyperspace of compact subsets of $\sM_1$, up to the full measure set $\Omega_0$.

\subsubsection{Measurability of $\DLR_x^{\beta}$} We first argue that the graph of $\DLR_x^{\beta,\w}$ is Borel-measurable.

\begin{lemma}\label{lem:DLRBorelGraph} 
The graph
\[
\Gr(\DLR_x)=\{(\w,\beta,\mu) \in \Omega\times(0,\infty)\times\sM_1(\pathsp_x,\pathsa_{k:\infty}) :  \mu\in \DLR_x^{\beta,\w}\}
\]
is Borel-measurable in the product space $\Omega\times(0,\infty)\times\sM_1(\pathsp_x,\pathsa_{k:\infty})$.
\end{lemma}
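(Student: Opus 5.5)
The plan is to write the graph as a countable intersection of sets of the form $\{(\w,\beta,\mu): F(\w,\beta,\mu)=0\}$, with each $F$ Borel. The DLR condition \eqref{eq:Gibbs} is a countable family of scalar identities. There is one for each $n\ge k$ and each admissible path $x_{k:n}$ with $x_k=x$, and there are countably many such paths. So
\[
\Gr(\DLR_x)=\bigcap_{n\ge k}\ \bigcap_{x_{k:n}}\Bigl\{(\w,\beta,\mu):\ \mu(x_{k:n})-\poly{x}{x_n}^{\beta,\w}(x_{k:n})\,\mu(X_n=x_n)=0\Bigr\}.
\]
It then suffices to show that, for each fixed finite path $x_{k:n}$, the map $(\w,\beta,\mu)\mapsto \mu(x_{k:n})-\poly{x}{x_n}^{\beta,\w}(x_{k:n})\,\mu(X_n=x_n)$ is Borel on $\Omega\times(0,\infty)\times\sM_1(\pathsp_x,\pathsa_{k:\infty})$. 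The zero set of that map is then Borel, and so is the countable intersection.

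For the $\mu$-dependence, the cylinder $\{X_{k:n}=x_{k:n}\}$ is both open and closed in the product-discrete topology on $\pathsp_x$, because it depends only on finitely many coordinates and each coordinate takes discrete values. Its indicator is therefore bounded and continuous, and $\mu\mapsto\mu(x_{k:n})$ is continuous for the weak topology. The same reasoning applies to $\mu\mapsto\mu(X_n=x_n)$.

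For the $(\w,\beta)$-dependence, \eqref{eq:polydef} shows that $\poly{x}{x_n}^{\beta,\w}(x_{k:n})$ is a ratio. The numerator is $e^{\beta\sum_{v}\w_v}$, and the denominator is the finite sum $\PF{x}{x_n}^{\beta,\w}>0$ of such exponentials. Each involves only finitely many coordinates $\w_v$, so the ratio is jointly continuous in $(\w,\beta)$ for the product topology on $\Omega=\bbR^{\bbZ^2}$.

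Combining these, the map is a sum and product of continuous functions on the product space, hence continuous and in particular Borel. Each constraint set is then closed, so the graph is in fact closed, and certainly Borel. We do not need to go through the Borel $\sigma$-algebra of a product of Polish spaces at all, because continuity is checked directly.

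The only potential subtlety is joint continuity in $\mu$ together with the other variables. It is resolved by the clopen-cylinder observation, which gives continuity in $\mu$ uniformly in the other variables, since that factor does not depend on $(\w,\beta)$. I do not expect a real obstacle here.
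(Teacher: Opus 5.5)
Your proof is correct. It uses the same decomposition as the paper: $\Gr(\DLR_x)$ is a countable intersection of zero sets of the maps $(\w,\beta,\mu)\mapsto \mu(x_{k:n})-\poly{x}{x_n}^{\beta,\w}(x_{k:n})\,\mu(x_n)$. Where you differ is in how you get joint measurability of each map.

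The paper observes that each map is continuous in $(\beta,\mu)$ for fixed $\w$ and Borel in $\w$ for fixed $(\beta,\mu)$. It then invokes the Carath\'eodory-function lemma (Lemma 4.51 of Aliprantis--Border) to conclude joint Borel measurability. You instead note that $\poly{x}{x_n}^{\beta,\w}(x_{k:n})$ depends continuously on $\beta$ and on finitely many coordinates of $\w$, so the map is jointly continuous in the product topology.

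The paper's route only needs measurability in the environment, so it would survive a setting where the weights entered non-continuously. That generality is not needed here. Your route is more elementary and gives the stronger conclusion that $\Gr(\DLR_x)$ is closed.

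The stronger conclusion also simplifies the next step. Because $\sM_1(\pathsp_x,\pathsa_{k:\infty})$ is compact, projection onto $\Omega\times(0,\infty)$ is a closed map. Hence for closed $C$ the set $\{(\w,\beta):\DLR_x^{\beta,\w}\cap C\neq\varnothing\}$ is closed, and the Arsenin--Kunugui theorem is unnecessary in Corollary \ref{cor:DLRmeas}.
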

\begin{proof}
Recall that the topology on $\sM_1(\pathsp_x,\pathsa_{k:\infty})$ is the coarsest topology in which the evaluation maps sending a finite path segment $x_{m:n}$ accessible from $x$ to the probability of that path segment, $\mu(x_{m:n})$, are continuous.

For each $\w \in \Omega$, the map $(\beta,\mu) \mapsto$ $\mu(x_{m:n}) - \poly{x}{x_n}^{\beta,\w}(x_{m:n})\mu(x_n)$ is continuous. For each $(\beta,\mu) \in (0,\infty)\times \sM_1(\pathsp_x,\pathsa_{k:\infty})$, the map $\w\mapsto \mu(x_{m:n}) - \poly{x}{x_n}^{\beta, \w}(x_{m:n})\mu(x_n)$ is Borel-measurable in $\Omega$. It then follows that
\[
(\w,\beta,\mu) \in \Omega\times(0,\infty)\times\sM_1(\pathsp_x,\pathsa_{k:\infty}) \mapsto \mu(x_{m:n}) - \poly{x}{x_n}^{\beta, \w}(x_{m:n})\mu(x_n)
\]
is a Carath\'eodory function and hence is jointly Borel-measurable by Lemma 4.51 in \cite{Ali-Bor-06}. Therefore,
\[
\{(\w,\beta,\mu) : \mu\in \DLR_x^{\beta, \w}\} = \bigcap_{x_{m:n}} \{(\w,\beta,\mu) : \mu(x_{m:n}) - \poly{x}{x_n}^{\beta, \w}(x_{m:n})\mu(x_n) = 0\}
\]
is Borel-measurable.
\end{proof}
\begin{corollary}\label{cor:DLRmeas}
The map $\Omega\times(0,\infty)\to \sK_x$ given by $(\w,\beta) \mapsto \DLR_x^{\beta, \w}$ is $(\sF\times\mathscr{B}((0,\infty)),\mathscr{B}(\sK_x))$-measurable.
\end{corollary}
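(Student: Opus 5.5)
The plan is to obtain measurability of $(\w,\beta)\mapsto\DLR_x^{\beta,\w}$ into the hyperspace $(\sK_x,d_{\sK_x})$ from the Borel graph in Lemma \ref{lem:DLRBorelGraph}, together with the fact that each section is nonempty and compact. Nonemptiness holds for every $(\w,\beta)$ because the trivial measures $\delta_{x+e_1\bbZ_{\geq0}}$ and $\delta_{x+e_2\bbZ_{\geq0}}$ always lie in $\DLR_x^{\beta,\w}$. Compactness holds because $\DLR_x^{\beta,\w}$ is closed in the compact metric space $\sM_1(\pathsp_x,\pathsa_{k:\infty})$. So the map indeed takes values in $\sK_x$.

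The key step is to show that the graph is in fact closed in the $\mu$ and $\beta$ variables for fixed $\w$. I would then use the Effros/Hausdorff characterization: for compact-valued correspondences into a compact metric space, Borel measurability into $(\sK_x,\mathscr B(\sK_x))$ is equivalent to weak measurability. Weak measurability means that $\{(\w,\beta):\DLR_x^{\beta,\w}\cap U\ne\varnothing\}$ is measurable for every open $U$; see e.g.\ Theorem 18.10 and Lemma 18.2 in \cite{Ali-Bor-06}.

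Rather than rely on a projection theorem (which would only give universal measurability), I would argue directly. Write $\DLR_x^{\beta,\w}=\bigcap_{x_{m:n}}\{\mu: f_{x_{m:n}}(\w,\beta,\mu)=0\}$ with $f$ the Carathéodory function from the previous proof. Each $f$ is continuous in $(\beta,\mu)$ and Borel in $\w$. Truncating the countable intersection to the first $N$ constraints gives compact-valued correspondences $\Phi_N(\w,\beta)=\{\mu:|f_j|=0,\ j\le N\}$ that decrease to $\DLR_x^{\beta,\w}$.

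For a closed set $F$ and a Carathéodory function $f$, the set $\{(\w,\beta):\exists\mu\in F\text{ with }f(\w,\beta,\mu)=0\}$ equals
\[
\bigcap_{r\ge1}\bigcup_{\mu\in D_F}\bigl\{(\w,\beta):|f(\w,\beta,\mu)|<1/r\bigr\},
\]
up to a compactness argument, where $D_F$ is a countable dense subset of $F$. This is Borel, which yields that each $\Phi_N$ is upper measurable, i.e.\ $\{\Phi_N\cap F\ne\varnothing\}$ is Borel for closed $F$. The same compactness argument passes this to the decreasing limit. Indeed, $\DLR_x^{\beta,\w}\cap F\neq\varnothing$ iff $\Phi_N\cap F\ne\varnothing$ for all $N$, by the finite intersection property. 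Hence $\{\DLR_x^{\beta,\w}\cap F\ne\varnothing\}$ is Borel for every closed $F$.

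Finally, open $U$ is a countable union of closed sets $F_j$, so weak measurability follows. For compact-valued maps in a compact metric space, this is equivalent to $\mathscr B(\sK_x)$-measurability. The main obstacle I expect is the care needed in the density/compactness step: showing that $\inf_{\mu\in F}|f(\w,\beta,\mu)|$ is Borel in $(\w,\beta)$, with the infimum attained. Continuity in $\mu$ reduces the infimum to one over a countable dense subset of $F$, and attainment comes from compactness of $F$.
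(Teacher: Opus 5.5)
Your argument is correct, but it is not the route the paper takes.

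\textbf{The paper's route.} The paper also reduces, via the hitting-set characterization from \cite{Cas-Val-77}, to showing that $\{(\w,\beta):\DLR_x^{\beta,\w}\cap C\neq\varnothing\}$ is Borel for every closed $C$. It gets this by writing the set as the projection of the Borel set $\Gr(\DLR_x)\cap(\Omega\times(0,\infty)\times C)$ from Lemma \ref{lem:DLRBorelGraph}. It then invokes the Arsenin--Kunugui theorem \cite[Theorem 18.18]{Kec-95}: a Borel set whose sections are compact (or $K_\sigma$) has a Borel projection. So your remark that a projection theorem ``would only give universal measurability'' is not right here. That is only the general situation, and compactness of the sections is exactly what upgrades the conclusion to Borel.

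\textbf{Your route.} You use the explicit description of the graph by countably many constraints $f_j$, each continuous in $\mu$. You reduce the hitting event to $\{\inf_{\mu\in D_F}\max_{j\le N}|f_j|=0\}$ using a countable dense set and compactness of $F$. You then let $N\to\infty$ via Cantor's intersection theorem. This is elementary and avoids descriptive set theory.

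\textbf{What each buys.} The paper's argument needs only a Borel graph with compact sections. That is why it can be recycled almost verbatim in Lemma \ref{lem:extDLR-meas}, where the graph of $\ext\DLR_x$ is merely Borel and is not cut out by countably many constraints continuous in $\mu$. In that setting your dense-subset argument has nothing to work with.

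\textbf{A shortcut.} Your route can be made even shorter. Each $f_j$ is jointly continuous in $(\w,\beta,\mu)$, because polymer probabilities depend continuously on $\beta$ and on finitely many weights. So the graph is closed. Projection along the compact factor $\sM_1(\pathsp_x,\pathsa_{k:\infty})$ is a closed map, so the sets $\{\DLR_x^{\beta,\w}\cap F\neq\varnothing\}$ are closed, not just Borel.
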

\begin{proof} 
Recall that $\DLR_x^{\beta,\w}$ is compact and non-empty for each $(\w,\beta)\in\Omega\times(0,\infty)$. By Theorem III.2 in Chapter 3 of \cite{Cas-Val-77}, measurability of $(\w,\beta) \mapsto \DLR_x^{\beta,\w}$ is equivalent to Borel measurability of $\{(\w,\beta) : \DLR_x^{\beta,\w} \cap C \neq \emptyset\}$ for all closed sets $C\subset \sM_1(\pathsp_x,\pathsa_{k:\infty})$. This is the projection
\[
\PrOmB(\{(\w,\beta,\mu) : \mu \in \DLR_x^{\beta,\w}\cap C\}).
\]
By Lemma \ref{lem:DLRBorelGraph}, $\Gr(\DLR_x^{\beta})\cap(\Omega \times (0,\infty) \times C) = \{(\w,\beta,\mu) : \mu \in \DLR_x^{\beta,\w}\cap C\}$ is Borel. For each $(\w,\beta)\in\Omega\times(0,\infty)$, the section $\{\mu : \mu \in\DLR_x^{\beta,\w}\cap C\}$ is compact. Since $\Omega\times(0,\infty)$ and $\sM_1(\pathsp_x,\pathsa_{k:\infty})$ are both Polish spaces, it then follows from the Arsenin-Kunugui Theorem \cite[Theorem 18.18]{Kec-95} that this projection is Borel.
\end{proof}

\subsubsection{Measurability of $\ext \DLR_x^{\beta}$} Measurability of $(\w,\beta) \mapsto A \cap \ext \DLR_x^{\beta,\w}$ will be argued similarly, modulo some minor technical points. We start with an intermediate lemma which encodes the (non-)extremality condition.

We will use the following characterization of a point of a compact convex set in a separable convex metric subspace of a topological vector space being a proper mixture. Because the proof is entirely general, we state the result generally. The proof is almost immediate from the definitions.
\begin{lemma}\label{lem:notext}
Let $(E,d)$ be a separable convex metric subspace of a topological vector space and let $K$ be a compact convex subset of $E$. Denote by $\ext K$ the extreme points of $K$ and let $\{y_i : i \in \bbN\}$ be a dense subset of $E$. The following are equivalent:
\begin{enumerate}[label={\rm(\alph*)}, ref={\rm\alph*}] \itemsep=2pt
\item\label{lem:notext:def} $x \in K\ \setminus\ext K$.
\item\label{lem:notext:seq} There exist $\epsilon \in (0,1)\cap\bbQ$, sequences of natural numbers $n_k, m_k \to \infty$, and a sequence of rational numbers $\alpha_k \in [\epsilon,1-\epsilon]\cap\bbQ$ for which (i) $d(y_{m_k},K)\to 0$ and $d(y_{n_k},K)\to 0$, (ii) $d(x, \alpha_k y_{m_k} + (1-\alpha_k)y_{n_k}) \to 0$, and (iii) $d(y_{m_k}, y_{n_k}) \geq \epsilon$ for all $k\in\bbN$.
\item\label{lem:notext:set} There exists $\epsilon \in (0,1)\cap\bbQ$ such that for all $N \in \mathbb{N}$ and all $\delta\in(0,1)\cap\bbQ$, there exist $n,m \geq N$  and $\alpha \in [\epsilon,1-\epsilon]\cap \bbQ$ for which (i) $d(y_{m},K)\leq \delta$ and $d(y_{n},K)\leq \delta $, (ii) $d(x, \alpha y_{m} + (1-\alpha)y_{n})\leq \delta$, and (iii) $d(y_{m}, y_{n}) \geq \epsilon$.
\end{enumerate}
\end{lemma}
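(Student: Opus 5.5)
We prove the cycle of implications (a)$\Rightarrow$(b)$\Rightarrow$(c)$\Rightarrow$(a). Throughout, the compactness of $K$ and the density of $\{y_i\}$ in $E$ do the work.

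\emph{(a)$\Rightarrow$(b).} Suppose $x\in K\setminus\ext K$. Then $x=\alpha u+(1-\alpha)v$ with $u,v\in K$, $u\neq v$ and $\alpha\in(0,1)$. Choose a rational $\epsilon$ with $\epsilon<\min(\alpha,1-\alpha)$ and $\epsilon<d(u,v)/2$. Pick rationals $\alpha_k\in[\epsilon,1-\epsilon]$ with $\alpha_k\to\alpha$. By density, pick indices $m_k,n_k\to\infty$ with $y_{m_k}\to u$ and $y_{n_k}\to v$; indices can be taken arbitrarily large because every ball about $u$ or $v$ contains infinitely many $y_i$. If $E$ has isolated points, we can simply repeat indices, or note that a dense sequence may be assumed to list each point infinitely often.

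Condition (i) holds because $u,v\in K$. Condition (iii) holds eventually, since $d(y_{m_k},y_{n_k})\to d(u,v)>2\epsilon$; discard the finitely many early terms. Condition (ii) requires continuity of $(a,p,q)\mapsto ap+(1-a)q$ in the metric $d$. This holds because $E$ is a convex metric subspace of a topological vector space and $d$ generates the subspace topology, so the affine operations are continuous.

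\emph{(b)$\Rightarrow$(c).} This is immediate: given $N$ and $\delta$, take $k$ large enough that $m_k,n_k\ge N$ and that the quantities in (i) and (ii) are at most $\delta$.

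\emph{(c)$\Rightarrow$(a).} This is the main step. Apply (c) with $N=j$ and $\delta=1/j$ to obtain $m_j,n_j,\alpha_j$. Because $d(y_{m_j},K)\le 1/j$, we can choose $u_j\in K$ with $d(y_{m_j},u_j)\le 1/j$, and similarly $v_j\in K$ for $y_{n_j}$. By compactness of $K$ and of $[\epsilon,1-\epsilon]$, pass to a subsequence along which $u_j\to u\in K$, $v_j\to v\in K$ and $\alpha_j\to\alpha\in[\epsilon,1-\epsilon]$. Then $y_{m_j}\to u$ and $y_{n_j}\to v$, so $d(u,v)\ge\epsilon>0$. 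Continuity of the affine combination together with (ii) gives $x=\alpha u+(1-\alpha)v$.

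It remains to check that $x\in K$, which the statement implicitly presumes. It follows from closedness of $K$: $K$ is convex, so $\alpha u+(1-\alpha)v\in K$. This exhibits $x$ as a proper convex combination of two distinct points of $K$, so $x\notin\ext K$.

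The only delicate point is continuity of the convex-combination map with respect to $d$ on $E$. This is exactly why the hypothesis that $E$ is a convex metric subspace of a topological vector space is included. In the application, $E$ is the space of finite signed measures with the weak topology restricted to $\sM_1$, so this continuity holds.
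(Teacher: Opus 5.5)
Your argument is correct and is essentially the paper's proof: the same choice of $\epsilon$ and approximating sequences for (a)$\Rightarrow$(b), and the same compactness extraction for the converse (nearest points in $K$, convergent subsequences, limit $x=\alpha u+(1-\alpha)v$ with $d(u,v)\ge\epsilon$), which you run directly from (c) rather than passing through (b). Your aside about isolated points is unnecessary, and ``repeating indices'' would not give $m_k\to\infty$ anyway: since $E$ is convex and contains $u\neq v$, no point of $E$ is isolated, so by density alone every ball about $u$ or $v$ contains $y_i$ with arbitrarily large $i$.
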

\begin{proof}
\eqref{lem:notext:seq} and \eqref{lem:notext:set} are equivalent by basic set theoretic manipulations, so we just check the equivalence of \eqref{lem:notext:def} and \eqref{lem:notext:seq}.

Suppose first that $x \in K\ \setminus\ext K$ and let $x_1,x_2 \in K$ with $x_1\neq x_2$ and $\alpha \in (0,1)$ be such that $x = \alpha x_1 + (1-\alpha) x_2$. Let $\epsilon \in(0, \min\{d(x_1,x_2), \alpha, (1-\alpha)\}/2)\cap \bbQ$. Take sequences $n_k,m_k\to\infty$ for which $y_{m_k} \to x_1$ and $y_{n_k} \to x_2$ and let $\alpha_k \in (0,1)\cap \bbQ$ converge to $\alpha$. For all sufficiently large $k$, we must have $d(y_{m_k},y_{n_k}) \geq \epsilon$ and $\alpha_k \in [\epsilon,1-\epsilon]$ so without loss of generality, we can discard all of the terms which fail these conditions. The convergence $y_{m_k} \to x_1$ implies $d(y_{m_k},K)\to 0$ and similarly $d(y_{n_k},K)\to 0.$ Finally, because addition and scalar multiplication are continuous, we have $d(x,\alpha_k y_{m_k} + (1-\alpha_k)y_{n_k})\to 0$.

Conversely, suppose that \eqref{lem:notext:seq} holds. Because $K$ is compact, for each $k \in \bbN$, there exists $x_{m_k} \in K$ for which $d(y_{m_k},K)= d(y_{m_k},x_{m_k})$. A similar statement holds for $y_{n_k}$. We may therefore extract a subsequence $k_j$ along which $y_{m_{k_j}}$ and $y_{n_{k_j}}$ both converge to points $x^{(1)}$ and $x^{(2)}$ in $K$ respectively and $\alpha_{k_j} \to \alpha \in [\epsilon,1-\epsilon]$. It follows from the hypotheses that $x = \alpha x^{(1)} + (1-\alpha) x^{(2)} \in K$, and $d(x^{(1)},x^{(2)})\geq \epsilon$. Therefore $x \in K\ \setminus \ext K$.
\end{proof}

With the previous result in mind, fix a countable dense subset $(\nu_i : i \in \bbN)$ of $\sM_1(\pathsp_x,\pathsa_{k:\infty})$. Denote by
\[
\ext \DLR_{x,0}^{\beta,\w} = \begin{cases}
    \ext \DLR_x^{\beta,\w} & \w \in \Omega_0 \\
    \{\delta_{x + e_1 \bbZ_{\geq 0}}, \delta_{x + e_2 \bbZ_{\geq 0}}\} & \w \notin \Omega_0.
\end{cases}
\]
The point of this re-definition is to ensure that the set is closed for all $\w\in\Omega$.
\begin{lemma}\label{lem:extDLR-meas}
For each $x \in \bbZ^2$, the map $\Omega\times(0,\infty)\mapsto \sK_x$ given by $(\w,\beta) \mapsto \ext \DLR_{x,0}^{\beta,\w}$ is $(\sF\times\mathscr{B}((0,\infty)),\mathscr{B}(\sK_x))$-measurable.
\end{lemma}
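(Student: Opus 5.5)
We mirror the proof of Corollary \ref{cor:DLRmeas}. By Theorem III.2 in Chapter 3 of \cite{Cas-Val-77}, it suffices to show that for every closed $C\subset\sM_1(\pathsp_x,\pathsa_{k:\infty})$ the set
\[
\{(\w,\beta):\ext\DLR_{x,0}^{\beta,\w}\cap C\neq\varnothing\}
\]
is Borel. The values are non-empty and compact: on $\Omega_0$ this is Theorem \ref{thm:cltot}, and off $\Omega_0$ the value is a two-point set. Since $\Omega_0$ is Borel and the two-point set is constant in $(\w,\beta)$, we may restrict attention to $(\w,\beta)\in\Omega_0\times(0,\infty)$. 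There the hitting set is the projection onto $\Omega\times(0,\infty)$ of
\[
G_C=\{(\w,\beta,\mu):\mu\in\ext\DLR_x^{\beta,\w}\cap C\}.
\]
Each section of $G_C$ is compact, so if $G_C$ is Borel, the Arsenin--Kunugui theorem \cite[Theorem 18.18]{Kec-95} makes the projection Borel, exactly as in Corollary \ref{cor:DLRmeas}.

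The main step is to show that the graph $\{(\w,\beta,\mu):\mu\in\ext\DLR_x^{\beta,\w}\}$ is Borel. We write it as $\Gr(\DLR_x)\setminus N$, where $N$ is the set of triples with $\mu\in\DLR_x^{\beta,\w}$ but $\mu$ not extreme. Lemma \ref{lem:notext}\eqref{lem:notext:set} applies with $E=\sM_1(\pathsp_x,\pathsa_{k:\infty})$ (a convex subset of the space of signed measures, with a metric such as $d_{\sM_1}(\mu,\nu)=\sum_{j}2^{-j}|\mu(c_j)-\nu(c_j)|$ over an enumeration $(c_j)$ of finite cylinders), $K=\DLR_x^{\beta,\w}$, and the countable dense family $(\nu_i)$. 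It gives
\[
N=\Gr(\DLR_x)\cap\bigcup_{\epsilon}\bigcap_{N',\delta}\bigcup_{n,m\ge N'}\bigcup_{\alpha}\Bigl\{d(\nu_m,\DLR_x^{\beta,\w})\le\delta,\ d(\nu_n,\DLR_x^{\beta,\w})\le\delta,\ d(\mu,\alpha\nu_m+(1-\alpha)\nu_n)\le\delta\Bigr\}\cap\{d(\nu_m,\nu_n)\ge\epsilon\},
\]
where all unions and intersections are countable.

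Each condition in this expression is Borel:
\begin{itemize}
\item $d(\mu,\alpha\nu_m+(1-\alpha)\nu_n)$ is continuous in $\mu$.
\item $d(\nu_m,\nu_n)\ge\epsilon$ is a deterministic condition.
\item $(\w,\beta)\mapsto d(\nu_m,\DLR_x^{\beta,\w})$ is a composition of the Borel map of Corollary \ref{cor:DLRmeas} with the continuous map $K\mapsto d(\nu_m,K)$ on $(\sK_x,d_{\sK_x})$, hence Borel.
\end{itemize}
So $N$ is Borel, and $G_C$, obtained by intersecting with $\Omega_0\times(0,\infty)\times C$, is Borel.

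The step needing the most care is this encoding of non-extremality through the countable quantifiers of Lemma \ref{lem:notext}\eqref{lem:notext:set}. We must check that the metric $d_{\sM_1}$ is compatible with convex combinations, so that $(\alpha,\nu,\nu')\mapsto\alpha\nu+(1-\alpha)\nu'$ is continuous, which the cylinder-sum metric above ensures. We must also check that the compactness of $K=\DLR_x^{\beta,\w}$ assumed by the lemma holds for every $(\w,\beta)$, which it does. Everything else is routine.
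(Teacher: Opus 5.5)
Your proposal is correct and follows the paper's proof essentially step for step. As in the paper, you write the graph of $\ext\DLR_x$ as $\Gr(\DLR_x)$ minus the non-extremality set encoded via Lemma \ref{lem:notext}\eqref{lem:notext:set}, show each piece is Borel using Corollary \ref{cor:DLRmeas} and the Lipschitz map $K\mapsto d(\nu,K)$, and then handle the hitting sets with the Arsenin--Kunugui theorem. Restricting to $\Omega_0\times(0,\infty)$ before projecting, instead of replacing the fibers over the complement by the fixed two-point set as the paper does, is only a cosmetic difference.
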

\begin{proof}
Using Lemma \ref{lem:notext}, we have that
\[
\{(\w, \beta, \mu): \mu \in \DLR_x^{\beta, \w} \ \setminus\ \ext \DLR_{x}^{\beta, \w}\}
\]
is equal to
\be\begin{aligned}
\bigcup_{\epsilon\in (0,1)\cap \bbQ} &\bigcap_{N\in\bbN}\bigcap_{\delta \in (0,1) \cap \bbQ} \bigcup_{(n,m)\in A(\epsilon,N)} \bigcup_{\alpha \in [\epsilon,1-\epsilon]\cap \bbQ}\bigl\{(\w, \beta, \mu) : d_{\sM_1}(\nu_m,\DLR_x^{\beta, \w}) \leq \delta, \\
&\qquad\qquad\qquad\qquad\qquad
d_{\sM_1}(\nu_n, \DLR_x^{\beta, \w}) \leq \delta, \quad d_{\sM_1}(\mu, \alpha \nu_m + (1-\alpha) \nu_n)\leq \delta\bigr\},
\end{aligned}\label{eq:nextmeas}\ee
where $A(\epsilon,N) = \{(n,m) \in \bbN\times \bbN : n,m \geq N, d_{\sM_1}(\nu_n,\nu_m) \geq \epsilon\}$ is countable and deterministic.

The map $\sM_1(\pathsp_x,\pathsa_{k:\infty})\times \sK_x\mapsto\mathbb{R}$ that sends $(\nu,K)$ to $d_{\sM_1}(\nu,K)$ is $1$-Lipschitz, and therefore the map sending $(\w,\beta,\nu) \in \Omega\times(0,\infty)\times\sM_1(\pathsp_x,\pathsa_{k:\infty})$ to $d_{\sM_1}(\nu,\DLR_x^{\beta,\w})$ is Borel-measurable. It follows that each of the sets in the expression in \eqref{eq:nextmeas} above is Borel-measurable. Therefore,
\[
\Gr(\ext \DLR_x)=\{(\w,\beta,\mu) : \mu \in \DLR_x^{\beta, \w}\}\ \setminus\ \{(\w,\beta,\mu) : \mu \in \DLR_x^{\beta, \w}\ \setminus\  \ext \DLR_x^{\beta, \w}\}
\]
is Borel. Since $\Omega_0$ is Borel, the graph of $(\w,\beta)\mapsto \ext \DLR_{x,0}^{\beta,\w}$ is obtained from $\Gr(\ext \DLR_x)$ by replacing the fibers over $(\Omega\ \setminus\ \Omega_0)\times(0,\infty)$ with the fixed two-point set $\{\delta_{x+e_1\bbZ_{\ge0}},\delta_{x+e_2\bbZ_{\ge0}}\}$. Hence it is Borel. Next we fix a closed set $C$ and argue as in the proof of Corollary \ref{cor:DLRmeas}. For each $(\omega,\beta)\in \Omega \times (0,\infty)$, $\ext \DLR_{x,0}^{\beta,\w}$ is compact, so it suffices to show that if $C$ is a deterministic closed subset of $\sM_1(\pathsp_x,\pathsa_{k:\infty})$, then $\{(\w,\beta) : \ext \DLR_{x,0}^{\beta,\w} \cap C \neq \varnothing\}$ is Borel. We again lift and project:
\begin{align*}
\{(\w,\beta) : \ext \DLR_{x,0}^{\beta,\w} \cap C \neq \varnothing\} = \PrOmB(\{(\w,\beta,\mu) : \mu\in \ext \DLR_{x,0}^{\beta,\w} \cap C\}).
\end{align*}
As before, for each $(\w,\beta)\in\Omega\times(0,\infty)$, the section $\{\mu : \mu \in\ext \DLR_{x,0}^{\beta,\w}\cap C\}$ is compact. Since $\Omega\times(0,\infty)$ and $\sM_1(\pathsp_x,\pathsa_{k:\infty})$ are both Polish spaces, it then follows from the Arsenin-Kunugui Theorem \cite[Theorem 18.18]{Kec-95} that this projection is Borel.
\end{proof}
The next corollary follows immediately from the previous result.
\begin{corollary}\label{cor:GammaAmeas}
Fix $x\in\bbZ^2$. Let $A\subset \sM_1(\pathsp_x,\pathsa_{k:\infty})$ be closed and define
\[
\Gamma_{x,A}^{\beta,\w}=\begin{cases}
    A\cap \ext \DLR_x^{\beta,\w} & \w \in \Omega_0, \\
    A \cap \{\delta_{x+e_2\bbZ_{\geq 0}}, \delta_{x + e_1 \bbZ_{\geq 0}}\} & \w \notin \Omega_0. \end{cases}
\]
Then
\begin{enumerate}[label={\rm(\alph*)}, ref={\rm\alph*}]
\item The graph
\[
\Gr(\Gamma_{x,A})=\{(\omega,\beta,\mu)\in\Omega\times(0,\infty)\times \sM_1(\pathsp_x,\pathsa_{k:\infty}):\ \mu\in \Gamma_{x,A}^{\beta,\w}\}
\]
is a Borel subset of $\Omega\times(0,\infty)\times \sM_1(\pathsp_x,\pathsa_{k:\infty})$.
\item The compact set-valued correspondence
$(\omega,\beta) \mapsto\Gamma_{x,A}^{\beta,\w}$ is Borel.
\end{enumerate}
\end{corollary}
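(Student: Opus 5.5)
The corollary follows by intersecting the correspondence of Lemma \ref{lem:extDLR-meas} with the fixed closed set $A$. I will handle part (a) at the level of graphs and deduce part (b) from it by the same lift-and-project argument used in Corollary \ref{cor:DLRmeas}.

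For (a), I write the graph as
\[
\Gr(\Gamma_{x,A})=\Gr(\ext\DLR_{x,0})\cap\bigl(\Omega\times(0,\infty)\times A\bigr).
\]
This holds because for $\w\notin\Omega_0$ the fiber of $\ext\DLR_{x,0}$ is the two-point set $\{\delta_{x+e_1\bbZ_{\geq0}},\delta_{x+e_2\bbZ_{\geq0}}\}$. The proof of Lemma \ref{lem:extDLR-meas} already shows that $\Gr(\ext\DLR_{x,0})$ is Borel. The set $\Omega\times(0,\infty)\times A$ is closed, hence Borel, so the intersection is Borel.

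For (b), the fibers $\Gamma_{x,A}^{\beta,\w}$ are compact: each is the intersection of the compact set $\ext\DLR_{x,0}^{\beta,\w}$ with the closed set $A$. Compactness of $\ext\DLR_{x,0}^{\beta,\w}$ comes from Theorem \ref{thm:cltot} on $\Omega_0$ and is trivial off $\Omega_0$. By the Castaing--Valadier characterization (Theorem III.2 of \cite{Cas-Val-77}), Borel measurability of the correspondence reduces to showing that $\{(\w,\beta):\Gamma_{x,A}^{\beta,\w}\cap C\neq\varnothing\}$ is Borel for every closed $C$. This set is the projection onto $\Omega\times(0,\infty)$ of the Borel set $\Gr(\Gamma_{x,A})\cap(\Omega\times(0,\infty)\times C)$. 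That set has compact sections, so the Arsenin--Kunugui theorem \cite[Theorem 18.18]{Kec-95} makes the projection Borel.

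The one wrinkle is that $\Gamma_{x,A}^{\beta,\w}$ may be empty, while $\sK_x$ consists only of nonempty compact sets. I would first treat the domain set $\{(\w,\beta):\Gamma_{x,A}^{\beta,\w}\neq\varnothing\}$, which is Borel by the projection argument above with $C$ the whole space. I would then either interpret measurability on that Borel set or adjoin the empty set as an isolated point of the hyperspace. Beyond this bookkeeping I expect no real obstacle, since all the analytic work was done in Lemma \ref{lem:extDLR-meas}.
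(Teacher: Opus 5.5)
Your proof is correct and follows the route the paper intends. The paper says the corollary follows immediately from Lemma \ref{lem:extDLR-meas}, whose proof uses exactly your argument: intersect the Borel graph with a product set, then lift and project via Castaing--Valadier and Arsenin--Kunugui, here with $A$ included in the product set. Your remark about possibly empty fibers is reasonable bookkeeping that the paper leaves implicit; it handles empty fibers separately through the set $B$ in the proof of Proposition \ref{prop:meas-sup}.
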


Denote by $f_n:\bbX_0\to\{0,1\}$ an enumeration of the indicator functions of all order-intervals of finite paths rooted at the origin, i.e., all functions of the form
\be
f(\gamma) = \one_{\{\pi_{0:m} \preceq \gamma_{0:m}\}}\label{eq:orderint}
\ee
where $\pi_{0:m}$ is any path with $\pi_0=0$. We have the following lemma, which is immediate from the definitions.
\begin{lemma}\label{lem:Phi-separates}
With
$\Phi:\sM_1(\pathsp_0,\pathsa_0)\to [0,1]$ defined by
\[
\Phi(\mu)\;=\;\sum_{n=1}^\infty 2^{-n}\int f_n\,d\mu,
\]
$\Phi$ is continuous and satisfies $\Phi(\mu) \leq \Phi(\nu)$ whenever $\mu \preceq \nu$, and $\Phi(\mu) < \Phi(\nu)$ when $\mu \precneq \nu$.
\end{lemma}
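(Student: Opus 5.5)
We need to verify three properties of $\Phi$: continuity, monotonicity, and strict monotonicity. The proof goes in that order.

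\emph{Continuity.} Each $f_n$ is the indicator of a cylinder event depending only on the finite prefix $\gamma_{0:m}$. Since $\pathsp_0$ carries the product-discrete topology, this set is clopen, so $f_n$ is bounded and continuous. Hence $\mu\mapsto\int f_n\,d\mu$ is continuous in the weak topology. The series is dominated by $\sum_n 2^{-n}=1$, so it converges uniformly on $\sM_1(\pathsp_0,\pathsa_0)$, and $\Phi$ is a uniform limit of continuous maps, hence continuous and $[0,1]$-valued.

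\emph{Monotonicity.} If $\pi_{0:m}\preceq\gamma_{0:m}$ and $\gamma\preceq\gamma'$, then $\pi_{0:m}\preceq\gamma'_{0:m}$, because the path order compares $e_1$-coordinates levelwise. So each $f_n$ is increasing for $\preceq$. Given $\mu\preceq\nu$, take a monotone coupling $\widetilde\Pi$ with $\widetilde\Pi(X^1\preceq X^2)=1$, which exists by the definition via Strassen. Then $f_n(X^1)\le f_n(X^2)$ almost surely, so $\int f_n\,d\mu\le\int f_n\,d\nu$ for every $n$. Summing gives $\Phi(\mu)\le\Phi(\nu)$.

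\emph{Strict monotonicity.} This step carries the content. Suppose $\mu\precneq\nu$. We know $\int f_n\,d\mu\le\int f_n\,d\nu$ for all $n$, so it suffices to find a single $n$ with strict inequality. Suppose instead that equality holds for all $n$. The events $\{\pi_{0:m}\preceq X_{0:m}\}$ are exactly the upper order intervals of finite prefixes.

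We then argue that their probabilities determine the measure. This is the fact used in the proof of Lemma \ref{lem:RI}: ``order intervals separate probability measures.'' One route is the one in the proof of Lemma \ref{lem:uniform-quantile}, which uses \cite[Lemma A.3]{Jan-Ras-Sep-25-strong-} and the monotone class theorem.

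A more direct route works level by level. At a fixed level $m$, $X_m$ is determined by $X_m\cdot e_1$. The event that $X_m\cdot e_1\ge a$ is a finite union of upper intervals $\{\pi_{0:m}\preceq X_{0:m}\}$. Taking $\pi$ to be the path that goes right first and then up shows it is in fact a single such interval. More generally, finite-dimensional joint events of the form $\{X_{m_i}\cdot e_1\ge a_i\ \forall i\}$ are upper intervals for a suitable $\pi$: take $\pi$ to be the minimal up-right path meeting those constraints, which exists because the constraints are on $e_1$-coordinates at increasing levels and the pointwise max of the constraint profile is admissible.

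Equality on these intersection-stable generating families gives equality of all finite-dimensional distributions, so $\mu=\nu$. This contradicts $\mu\precneq\nu$, and therefore $\Phi(\mu)<\Phi(\nu)$. The one technical point to check is that the minimal admissible path above the constraints exists; this follows because an up-right path's $e_1$-coordinate changes by $0$ or $1$ per level.
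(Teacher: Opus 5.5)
Your proof is correct and follows the same route the paper leaves implicit when it calls the lemma immediate from the definitions. Continuity comes from the $f_n$ being cylinder indicators, monotonicity from the $f_n$ being increasing, and strictness from the fact, invoked in the proofs of Lemmas \ref{lem:RI} and \ref{lem:uniform-quantile}, that upper order intervals form an intersection-stable class that separates measures. One slip: the single interval equal to $\{X_m\cdot e_1\ge a\}$ comes from the path that goes \emph{up} first and then right, i.e.\ $\pi_i\cdot e_1=(i-m+a)^+$; the right-then-up path instead gives the strictly smaller event $\{X_a=ae_1\}$. This is harmless, because your minimal-path prescription in the next sentence produces the correct $\pi$.
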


\begin{proposition}\label{prop:meas-sup}
For each $x\in\bbZ^2$ and closed $A\subset \sM_1(\pathsp_x,\pathsa_{k:\infty})$, there exists a Borel-measurable map
\[
(\w,\beta)\in\Omega\times(0,\infty)\ \longmapsto\ \mu_{A}^{\beta,\w}\in\sM_1(\pathsp_x,\pathsa_{k:\infty})
\]
such that the following hold:
\begin{enumerate}[label={\rm(\alph*)}, ref={\rm\alph*}]
\item\label{prop:meas-sup:empty} If $\w\in \Omega_0$ and $A\cap \ext \DLR_x^{\beta,\w}=\varnothing$, then $\mu_{A}^{\beta,\w}= \delta_{x+e_2\bbZ_{\geq 0}}$.
\item \label{prop:meas-sup:inset} If $\w \in \Omega_0$ and $A\cap \ext \DLR_x^{\beta,\w}\neq\varnothing$, then 
\[
\mu_{A}^{\beta,\w}\;=\;\sup\{ A\cap \ext \DLR_x^{\beta,\w}\} \in A\cap \ext \DLR_x^{\beta,\w}.
\]
\end{enumerate}
\end{proposition}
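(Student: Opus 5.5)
The plan is to select the supremum measurably by maximizing the strictly monotone continuous functional $\Phi$ from Lemma \ref{lem:Phi-separates} (transported to root $x$ via $\theta_x$) over the Borel compact-valued correspondence $(\w,\beta)\mapsto\Gamma_{x,A}^{\beta,\w}$ of Corollary \ref{cor:GammaAmeas}. First, handle the empty case: the set $E=\{(\w,\beta):\Gamma_{x,A}^{\beta,\w}=\varnothing\}$ is Borel, because Borel measurability of a compact-valued correspondence (in the sense of Castaing--Valadier, as used in Corollary \ref{cor:DLRmeas}) means $\{\Gamma\cap C\neq\varnothing\}$ is Borel for closed $C$; take $C=\sM_1(\pathsp_x,\pathsa_{k:\infty})$. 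On $E$ set $\mu_A^{\beta,\w}=\delta_{x+e_2\bbZ_{\ge0}}$. For $\w\notin\Omega_0$ we may simply also set $\mu_A^{\beta,\w}=\delta_{x+e_2\bbZ_{\geq0}}$ (on the Borel set $\Omega_0^c\times(0,\infty)$), since nothing is claimed there.

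On the complement $E^c\cap(\Omega_0\times(0,\infty))$, $\Gamma_{x,A}^{\beta,\w}$ is a nonempty compact set which is totally ordered by Theorem \ref{thm:cltot}. Since $\Phi_x=\Phi\circ(\theta_{-x})_\#$ is continuous, the value $m(\w,\beta)=\max\{\Phi_x(\mu):\mu\in\Gamma_{x,A}^{\beta,\w}\}$ is attained and is a Borel function of $(\w,\beta)$: indeed $\{m\ge t\}=\{\Gamma\cap\{\Phi_x\ge t\}\neq\varnothing\}$ with $\{\Phi_x\ge t\}$ closed. (Equivalently, one can invoke the measurable maximum theorem, \cite{Ali-Bor-06} Theorem 18.19.) The argmax correspondence $(\w,\beta)\mapsto\{\mu\in\Gamma:\Phi_x(\mu)=m(\w,\beta)\}$ is then again Borel, compact and nonempty. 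By strict monotonicity of $\Phi$ under $\precneq$ and total order of $\Gamma$, the argmax is a single point and equals $\sup\Gamma$: if $\mu$ maximizes $\Phi_x$ and $\nu\in\Gamma$ with $\mu\precneq\nu$ then $\Phi_x(\mu)<\Phi_x(\nu)$, a contradiction; so every other element is below $\mu$.

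A singleton-valued Borel correspondence yields a Borel function: the graph is Borel, and for closed $C$ the preimage $\{\mu_A\in C\}=\{\mathrm{argmax}\cap C\neq\varnothing\}$ is Borel. Alternatively apply the Kuratowski--Ryll-Nardzewski selection theorem to the argmax correspondence; uniqueness makes the selection canonical. Piecing together the three Borel regions gives the measurable map with properties \eqref{prop:meas-sup:empty} and \eqref{prop:meas-sup:inset}.

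The main point requiring care is the Borel measurability of the argmax correspondence: intersecting a Borel compact-valued correspondence with the closed-in-graph condition $\Phi_x(\mu)=m(\w,\beta)$. I would verify it directly via the lift-and-project argument of Corollary \ref{cor:DLRmeas}: its graph is $\Gr(\Gamma_{x,A})\cap\{\Phi_x(\mu)=m(\w,\beta)\}$, which is Borel since $m$ is Borel and $\Phi_x$ continuous, with compact sections, so Arsenin--Kunugui gives Borel projections of its intersections with $\Omega\times(0,\infty)\times C$.
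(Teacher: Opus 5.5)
Your proposal is correct and follows essentially the paper's proof: maximize the continuous, strictly $\preceq$-monotone functional $\Phi$ over the Borel compact-valued correspondence $\Gamma_{x,A}$, then use the total order of $\ext\DLR_x^{\beta,\w}$ on $\Omega_0$ to see that the maximizer is unique and equals the supremum. The differences are cosmetic. You check measurability of the value function and of the (singleton) argmax directly via the lower-inverse/Arsenin--Kunugui argument, whereas the paper quotes the measurable maximum theorem for a selector. You also transport $\Phi$ to root $x$ instead of reducing to $x=0$.
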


\begin{proof}Without loss of generality, we take $x=0$. $\{(\w,\beta) : A\cap \ext \DLR_0^{\beta,\w}=\varnothing\}$ is Borel, as is $\Omega_0\times \sM_1(\pathsp_0,\pathsa_{0:\infty})$, so we may take \eqref{prop:meas-sup:empty} as a definition. 

Let
$B\;=\;\{(\w,\beta)\in\Omega\times(0,\infty):\ \Gamma_{0,A}^{\beta,\w}\neq\varnothing\}.$ On $B$, define the value function $v(\w,\beta)\;=\;\sup_{\mu\in\Gamma_{0,A}^{\beta,\w}}\Phi(\mu)$ and the argmax correspondence 
\[
M(\w,\beta)\;=\;\bigl\{\mu\in\Gamma_{0,A}^{\beta,\w}:\ \Phi(\mu)=v(\w,\beta)\bigr\}.
\]
Off of $B$, we set $v(\w,\beta)=0$ and $M(\w,\beta)=\{\delta_{e_2\bbZ_{\geq 0}}\}$. Because $\Gamma_{0,A}$ is measurable and compact-valued and $\Phi$ is continuous, $M$ is never empty. The measurable maximum theorem \cite[Theorem 18.19]{Ali-Bor-06} implies that $v$ is measurable and there exists a measurable selection
$(\w,\beta) \mapsto \mu_{A}^{\beta,\w}$ such that
$\mu_A^{\beta,\w}\in M(\w,\beta)$ for all $(\w,\beta)\in \Omega\times(0,\infty)$.  For $\w\in\Omega_0$, the maximizer is unique by Lemma \ref{lem:Phi-separates} and the fact that $\ext \DLR_0^{\beta,\w}$ is totally ordered. Recall the discussion at the start of Section \ref{sec:lriso} which shows that every non-empty totally ordered subset of $\sM_1(\pathsp_0,\pathsa_{0:\infty})$ has a unique supremum, which lies in the set if it is weakly closed.  Hence we can conclude that $M(\w,\beta)=\{\sup(A \cap \ext \DLR_0^{\beta,\w})\}$. 
\end{proof}

\subsubsection{Completing the proof}
With the previous result in hand, we now complete the proof of Proposition \ref{prop:dmeas}.
\begin{proof}[Proof of Proposition \ref{prop:dmeas}]
Let $A_{0,I}\subset\sM_1(\pathsp_0,\pathsa_{0:\infty})$ be the closed set
\[
A_{0,I}=\Bigl\{\mu\in\sM_1(\pathsp_0,\pathsa_{0:\infty}):\ \mu(\pi^{0,n,j})\in I^j\ \ \forall\,j=1,\dots,2^n\Bigr\}.
\]
Apply Proposition~\ref{prop:meas-sup} with $A=A_{0,I}$ to obtain a Borel-measurable map
\[
(\omega,\beta)\longmapsto \mu^{\beta,\omega,+}_{0,I}\in\sM_1(\pathsp_0,\pathsa_{0:\infty})
\]
such that for $\omega\in\Omega_0$,
\[
\mu^{\beta,\omega,+}_{0,I}=
\begin{cases}
\delta_{e_2\mathbb{Z}_{\ge0}} & \text{if }A_{0,I}\cap\ext\DLR_0^{\beta,\omega}=\varnothing,\\
\sup\bigl(A_{0,I}\cap\ext\DLR_0^{\beta,\omega}\bigr) & \text{if }A_{0,I}\cap\ext\DLR_0^{\beta,\omega}\neq\varnothing.
\end{cases}
\]
$\mu^{\beta,\omega,-}_{0,I}$ can be constructed similarly. For $x\in\mathbb{Z}^2$ and $\sigg\in\{+,-\}$, we take \eqref{eq:dmeas-def-shift} as the definition of $\mu^{\beta,\omega,\sigg}_{x,I}$ so that \eqref{prop:dmeas:mu-covariant} holds by construction. \eqref{prop:dmeas:Dmeas-covariant} follows immediately from \eqref{prop:dmeas:mu-covariant}, by definition.

Fix $\omega\in\Omega_0$ and $\beta\in(0,\infty)$.
By shift-invariance of $\Omega_0$, $T_x\omega\in\Omega_0$. The DLR equations are shift-covariant, so we have
\begin{align*}
\DLR_x^{\beta,\omega}=(\theta_x)_\#\,\DLR_0^{\beta,T_x\omega},
\qquad\text{ and }\qquad
\ext\DLR_x^{\beta,\omega}=(\theta_x)_\#\,\ext\DLR_0^{\beta,T_x\omega}.
\end{align*}
By our convention that $\pi^{x,n,j}=\theta_x\pi^{0,n,j}$, we also have $A_{x,I}=(\theta_x)_\#\,A_{0,I}.$ We conclude that
\begin{align*}
A_{x,I}\cap\ext\DLR_x^{\beta,\omega}
\;=\;
(\theta_x)_\#\bigl(A_{0,I}\cap\ext\DLR_0^{\beta,T_x\omega}\bigr),
\end{align*}
and consequently 
\begin{align*}
\sup\bigl\{A_{x,I}\cap\ext\DLR_x^{\beta,\omega}\bigr\}
&=(\theta_x)_{\#}\,\sup\bigl\{A_{0,I}\cap\ext\DLR_0^{\beta,T_x\omega}\bigr\},\\
\inf\bigl\{A_{x,I}\cap\ext\DLR_x^{\beta,\omega}\bigr\}
&=(\theta_x)_{\#}\,\inf\bigl\{A_{0,I}\cap\ext\DLR_0^{\beta,T_x\omega}\bigr\}.
\end{align*}
Combining these identities with \eqref{eq:dmeas-def-shift} yields \eqref{prop:dmeas:empty} and \eqref{prop:dmeas:inset}.

We next prove \eqref{prop:dmeas:dense}. Countability of $\Dmeas_x^{\beta,\omega}$ is immediate since $\bigcup_{n\ge1}\dyatup_n$ is countable. To prove density, let $\mu\in\ext\DLR_x^{\beta,\omega}$ and let $U$ be any open neighborhood of $\mu$ in
$\sM_1(\pathsp_x,\pathsa_{k:\infty})$.
There exist $n\in\mathbb{N}$ and open
intervals $J^j\subset(0,1)$, $j=1,\dots,2^n$, such that
\[
\mu\in \Bigl\{\nu:\ \nu(\pi^{x,n,j})\in J^j\ \ \forall\,j\in\{1,\dots,2^n\}\Bigr\}\ \subset\ U.
\]
Using density of dyadic rationals, choose closed dyadic intervals $I^j\in\dy$ with
$\mu(\pi^{x,n,j})\in I^j\subset J^j$ for all $j$, and set $I=(I^1,\dots,I^{2^n})$. Then $A_{x,I}\subset U$ and $\mu\in A_{x,I}\cap\ext\DLR_x^{\beta,\omega}$, so
$A_{x,I}\cap\ext\DLR_x^{\beta,\omega}\neq\varnothing$ and consequently
$\mu^{\beta,\omega,\pm}_{x,I}\in A_{x,I}\subset U$ by \eqref{prop:dmeas:inset}.
Thus $\Dmeas_x^{\beta,\omega}\cap U\neq\varnothing$ for every neighborhood $U$ of $\mu$.

To show \eqref{prop:dmeas:isolated}, we consider the case where $\mu\in \ext \DLR_x^{\beta,\w}$ is right-isolated, the left-isolated case being similar.
By Lemma~\ref{lem:RI}, there exist $\nu\in \ext \DLR_x^{\beta,\w}$ with $\mu\precneq\nu$, a finite segment $x_{k:n}$ with $x_k=x$,
and $\varepsilon_0>0$ such that
\begin{equation}\label{eq:dmeas-gap}
\Bigl\{\gamma\in \ext \DLR_x^{\beta,\w}:\ \mu\precneq\gamma\ \text{ and }\ \gamma(x_{k:n}\preceq X_{k:n})<\mu(x_{k:n}\preceq X_{k:n})+\varepsilon_0\Bigr\}
=\varnothing.
\end{equation}
Let $m=n-k$ and define the index set
\[
J=\Bigl\{j\in\{1,\dots,2^m\}:\ x_{k:n}\preceq \pi^{x,m,j}\Bigr\}.
\]
Then, for any $\gamma\in\sM_1(\pathsp_x,\pathsa_{k:\infty})$,
\[
\gamma(x_{k:n}\preceq X_{k:n})=\sum_{j\in J}\gamma(\pi^{x,m,j}).
\]
For each $j\in\{1,\dots,2^m\}$, choose a dyadic interval $I^j\in\dy$ containing $\mu(\pi^{x,m,j})$. If $j\in J$, additionally require that
\[
\sup I^j<\mu(\pi^{x,m,j})+\frac{\varepsilon_0}{2|J|}.
\]
Let $I=(I^1,\dots,I^{2^m})\in\dyatup_m$. Then $\mu\in A_{x,I}$. Moreover, for any $\gamma\in A_{x,I}$,
\[
\gamma(x_{k:n}\preceq X_{k:n})
=\sum_{j\in J}\gamma(\pi^{x,m,j})
\le\sum_{j\in J}\sup I^j
<\sum_{j\in J}\mu(\pi^{x,m,j})+\frac{\varepsilon_0}{2}
=\mu(x_{k:n}\preceq X_{k:n})+\frac{\varepsilon_0}{2}.
\]
In particular, 
\[
A_{x,I}\cap \ext \DLR_x^{\beta,\w}
\subset
\bigl\{\gamma\in \ext \DLR_x^{\beta,\omega}:\ \gamma(x_{k:n}\preceq X_{k:n})<\mu(x_{k:n}\preceq X_{k:n})+\varepsilon_0\bigr\}.
\]
By \eqref{eq:dmeas-gap}, there is no $\gamma\in A_{x,I}\cap \ext \DLR_x^{\beta,\w}$ with $\mu\precneq \gamma$. Since $\mu\in A_{x,I}\cap \ext \DLR_x^{\beta,\w}$, it follows that
\[
\mu=\sup\bigl\{A_{x,I}\cap \ext \DLR_x^{\beta,\w}\bigr\}=\mu^{\beta,\w,+}_{x,I}\in \Dmeas_x^{\beta,\w}. \qedhere
\]
\end{proof}

\subsection{Measurability and properties of the conditional quantile function}\label{sec:CDFmeas}
In this section, we prove Proposition \ref{prop:xBhatproc}, which constructs the shift-covariant conditional quantile function and describes its basic properties. We start with the existence of the shift-covariant regular conditional distribution in Lemma \ref{lem:nu-exists}.

\begin{proof}[Proof of Lemma \ref{lem:nu-exists}]
Denote by $\nu^{\w(\what)}$ a Borel-measurable and proper (in the language of \cite{Bog-07}) version of the conditional distribution of $\Phat$ on $(\Omhat, \kShat)$ given the $\sigma$-algebra $\kS$ generated by the weight map $\w$. Properness of $\nu^{\w(\what)}$ means that $\nu^{\w(\what)}$ is supported on the fiber of $\w(\what)$, $\nu^{\w(\what)}(\{\what' : \w(\what') = \w(\what) \})=1$, for all $\what$ in a full measure Borel set, which we denote by $\Omhat_{\textup{fib}}$. See Example 10.4.11 in \cite{Bog-07} for existence of such a measure. Without loss of generality, replace $\Omhat_{\textup{fib}}$ by $\bigcap_{z\in\bbZ^2} \That_z\Omhat_{\textup{fib}}$ to obtain a shift-invariant event.
Let
\[
\Omega_{\textup{fib}}=\{\w\in\Omega:\nu^\w(\{\what' : \w(\what')=\w\})=1\}.
\]
Then $\bbP(\Omega_{\textup{fib}})=1$. By \eqref{w-cov}, there is a shift-invariant Borel event $\Omhat_{\textup{w-cov}}$ of full $\Phat$-measure on which $\w(\That_z\what)=T_z\w(\what)$ for all $z\in\bbZ^2$. 

Next, we fix a countable $\pi$-system $\sC$ generating $\kShat$. Fix any $B\in\kS$ and $A\in\sC$ and compute using the shift covariance in \eqref{w-cov} and shift invariance of $\Phat$:
\be\begin{aligned}
\Ehat[\one_B\, \nu(A)\circ \That_z] &= \Ehat\bigl[(\one_B\circ \That_{-z})\, \nu(A)\bigr] =\Ehat\bigl[\one_{T_zB}\, \nu(A)\bigr]=\Ehat\bigl[\one_{T_zB}\, \one_A\bigr]\\ &=\Ehat\bigl[(\one_B\circ \That_{-z})\, \one_A\bigr]
= \Ehat[\one_B\,\one_{\That_z^{-1}A}] = \Ehat[\one_B\,\nu(\That_z^{-1}A)].
\end{aligned}\ee

It follows that for each $A\in \sC$, $\nu^{\w(\That_z\what)}(A)=\nu^{\w(\what)}(\That_z^{-1}A)$, $\Phat$-almost surely. Take a shift-invariant Borel event $\Omhat_{\textup{cov}}$ of full $\Phat$-measure on which this holds for all $A \in \sC$ and $z \in \bbZ^2$. By the $\pi$-$\lambda$ theorem, \eqref{eq:nucov} then holds for all $A \in \kShat$ and $\what\in\Omhat_{\textup{cov}}$.

Now define
\[
\Omhat_{\textup{pre}}=\Omhatext \cap \Omhat_{\textup{fib}}\cap \Omhat_{\textup{cov}}\cap\Omhat_{\textup{w-cov}}\cap \bigcap_{z\in\bbZ^2}\That_z \w^{-1}(\Omega_0).
\]
Since $\Phat(\Omhat_{\textup{pre}})=1$, the set
\[
\Omega_{\textup{full}}'=\{\omega\in\Omega:\nu^\omega(\Omhat_{\textup{pre}})=1\}\cap\Omega_{\textup{fib}}
\]
has full $\bbP$-measure. Let $\Omega_{\textup{full}}=\bigcap_{z\in\bbZ^2}T_z\Omega_{\textup{full}}'$, set $\Omega_{\Bhat}=\Omega_0\cap\Omega_{\textup{full}}$, and define
\[
\Omgood=\Omhat_{\textup{pre}}\cap \w^{-1}(\Omega_{\Bhat}).
\]
Then $\Omega_{\Bhat}$ is a shift-invariant Borel subset of $\Omega_0$ with full $\bbP$-measure, and $\Omgood\subset\Omhatext$ is Borel and has full $\Phat$-measure. It is shift-invariant because $\Omhat_{\textup{pre}}$ is shift-invariant, $\Omega_{\Bhat}$ is shift-invariant, and $\w(\That_z\what)=T_z\w(\what)$ on $\Omhat_{\textup{pre}}$. This also gives \eqref{lem:nu-exists:Omega0}, and \eqref{lem:nu-exists:cov} follows from $\Omgood\subset\Omhat_{\textup{cov}}$. Finally, if $\omega\in\Omega_{\Bhat}$, then $\nu^\omega(\Omhat_{\textup{pre}})=1$ and $\nu^\omega(\{\what':\w(\what')=\omega\})=1$. Since the fiber of $\omega$ is contained in $\w^{-1}(\Omega_{\Bhat})$, \eqref{lem:nu-exists:proper} follows.
\end{proof}

\begin{proof}[Proof of Proposition \ref{prop:xBhatproc}]
We start with claim \eqref{prop:inf-rational-meas:meas}. The event
\[\{\what' : \what' \in \Omgood,\, \Pi_x^{\Bhat,\what'} \preceq \mu_{x,I}^{\beta,\w(\what),\sigg}\}\]
is measurable for each fixed $\sigg\in\{+,-\}$ and each fixed $I \in \dyatup_m$, where $m\in\bbZ_{\geq 1}$. This is because ordering of two measures can be checked by comparing the probabilities assigned to the countable collection of increasing events $A$ with $A\in\pathsa_{k:n}$ for some $n>k.$ Below, we use the countable, separating collection of increasing events given by order intervals of the form 
\be\label{eq:path-interval}\{\gamma \in \pathsp_x : \pi_{k:n} \preceq \gamma_{k:n}\},\ee 
where $\pi_{k:n}$ is any finite path with $\pi_k=x$.

Properness of $\nu^{\w(\what)}$ says that $\nu^{\w(\what)}\left\{\what' : \what'\in\Omgood,\, \w(\what')=\w(\what) \right\}=1$ for all $\what$ with $\w(\what)\in\Omega_{\Bhat}$. Thus if $\mu \in \ext \DLR_x^{\beta,\w(\what)}$, then $\mu\in\ext \DLR_x^{\beta,\w(\what')}$ for $\nu^{\w(\what)}$-almost all $\what'$. Therefore, for $\nu^{\w(\what)}$-almost all $\what'$, $\Pi_x^{\Bhat,\what'}$ is extremal and this measure and $\mu$ are ordered. To see that the infimum is measurably defined, recall that for $\w(\what)\in\Omega_{\Bhat}$, $\ext \DLR_x^{\beta,\w(\what)}$ is compact and totally ordered. We can thus define the infimal measure by specifying its values on the countable collection of order intervals in \eqref{eq:path-interval}. For such an order interval $A$, we have
\[
\cdfm_x^{\Bhat,s}(\what)(A) =
\inf\{\mu(A) :\,\mu \in \Dmeas_x^{\beta,\w(\what)}, \nu^{\w(\what)}(\Omgood,\, \Pi_x^{\Bhat} \preceq \mu) \geq s \},
\]
which is $\kS$-measurable as an infimum of countably many $\kS$-measurable real random variables. Next, observe that when $\w(\what)\in\Omega_{\Bhat}$, the set of measures in \eqref{eq:inf-rational-formula} is non-empty because it contains the trivial measure $\delta_{x+e_1\bbZ_{\geq 0}}$. On $\Omega_0$, $\ext \DLR_x^{\beta,\w(\what)}$ is closed and totally ordered. Off of $\Omega_{\Bhat}$, the random variable is one of the trivial measures and so is automatically extremal. \eqref{prop:inf-rational-meas:meas} is proved.

We next check \eqref{prop:inf-rational-meas:fullinf}. Note that if $r < s$ and both are rational, then using definition \eqref{eq:inf-rational-formula}, we have $\cdfm_x^{\Bhat,r}(\what) \preceq \cdfm_x^{\Bhat,s}(\what)$. We have the inequality for $\w(\what)\in\Omega_{\Bhat}$ and $s \in (0,1)\cap\bbQ$,
\begin{align*}
\cdfm_x^{\Bhat,s}(\what) &=  \inf\{\mu \in \Dmeas_x^{\beta,\w(\what)} : \nu^{\w(\what)}(\Omgood,\, \Pi_x^{\Bhat} \preceq \mu) \geq s \} \\
&\geq \inf\{\mu \in \ext \DLR_x^{\beta,\w(\what)} : \nu^{\w(\what)}(\Omgood,\, \Pi_x^{\Bhat} \preceq \mu) \geq s \} = \wt{\bfm}^s.
\end{align*}
To prove the reverse inequality in \eqref{prop:inf-rational-meas:fullinf}, fix $\what$ with $\w(\what)\in\Omega_{\Bhat}$ and
$\mu\in\ext\DLR_x^{\beta,\w(\what)}$ such that
\[
\nu^{\w(\what)}\bigl(\Omgood,\ \Pi_x^{\Bhat}\preceq \mu\bigr)\ge s.
\]
Because $\Dmeas_x^{\beta,\w(\what)}$ is dense and contains all right-isolated elements of $\ext \DLR_x^{\beta,\w(\what)}$, there exists a  (weakly) decreasing sequence $(\mu_n)_{n\ge1}\subset
\Dmeas_x^{\beta,\w(\what)}$ such that $\mu\preceq \mu_n$ for all $n$ and $\mu_n\to \mu$. Then 
\[
\nu^{\w(\what)}\bigl(\Omgood,\ \Pi_x^{\Bhat}\preceq \mu_n\bigr)\ge
\nu^{\w(\what)}\bigl(\Omgood,\ \Pi_x^{\Bhat}\preceq \mu\bigr)\ge s
\qquad\text{for all }n.
\]
Therefore each $\mu_n$ is admissible in the infimum in \eqref{eq:inf-rational-formula}, so
$\cdfm_x^{\Bhat,s}(\what)\preceq \mu_n$ for all $n$.  Letting $n\to\infty$ gives $\cdfm_x^{\Bhat,s}(\what)\preceq \mu$. Taking the infimum over all such
$\mu$ yields $\cdfm_x^{\Bhat,s}(\what)\preceq \wt{\bfm}^s$, which combined with the previously proved
$\cdfm_x^{\Bhat,s}(\what)\succeq \wt{\bfm}^s$ implies \eqref{eq:fullinf} for $s\in(0,1)\cap\bbQ$.

We turn to part \eqref{prop:inf-rational-meas:inf-formula}. To prove the claim that \eqref{eq:inf-rational-formula} and \eqref{eq:inf-formula} agree, first note that equality is clear when $\w(\what)\notin\Omega_{\Bhat}$. Fix
$s\in(0,1)\cap\bbQ$ and $\what$ with $\w(\what)\in\Omega_{\Bhat}$. Abbreviate
\[
F_{\what}(\mu)
=\nu^{\w(\what)}\bigl(\Omgood,\ \Pi_x^{\Bhat}\preceq \mu\bigr),
\qquad \mu\in \ext\DLR_x^{\beta,\w(\what)}.
\]
By construction $F_{\what}$ is nondecreasing in $\mu$ and for each rational
$r\in(0,1)$ we have $F_{\what}(\cdfm_x^{\Bhat,r}(\what))\ge r$. To see this, take any sequence $\mu_n \in \ext \DLR_x^{\beta,\w(\what)}$ with $F_{\what}(\mu_n)\ge r$ which decreases to $\cdfm_x^{\Bhat,r}(\what)$. Continuity of measure then implies that $F_{\what}(\cdfm_x^{\Bhat,r}(\what))\ge r$.

Next, let $r_n$ be any strictly increasing sequence of rationals in $(0,1)$ that converges to $s$. Define
\[
\underline{\bfm}^s(\what)=\sup\bigl\{\cdfm_x^{\Bhat,r}(\what): r\in\bbQ\cap(0,1),\ r<s\bigr\}
=\sup_{n}\cdfm_x^{\Bhat,r_n}(\what).
\]
Monotonicity in $r$ implies $\cdfm_x^{\Bhat,r_n}(\what)\nearrow \underline{\bfm}^s(\what)$, where the limit and supremum are in the compact totally ordered set $\ext\DLR_x^{\beta,\w(\what)}$. Since $F_{\what}$ is nondecreasing and
$\underline{\bfm}^s(\what)\succeq \cdfm_x^{\Bhat,r_n}(\what)$ for each $n$,
\[
F_{\what}\bigl(\underline{\bfm}^s(\what)\bigr)\ \ge\ F_{\what}\bigl(\cdfm_x^{\Bhat,r_n}(\what)\bigr)\ \ge\ r_n
\qquad\text{for all }n.
\]
Letting $n\to\infty$ yields $F_{\what}(\underline{\bfm}^s(\what))\ge s$, so
$\underline{\bfm}^s(\what)$ is admissible for the infimum in \eqref{eq:fullinf}. Hence
\[
\cdfm_x^{\Bhat,s}(\what)\ \preceq\ \underline{\bfm}^s(\what).
\]
The reverse inequality $\underline{\bfm}^s(\what)\preceq \cdfm_x^{\Bhat,s}(\what)$ follows from
$r<s\Rightarrow \cdfm_x^{\Bhat,r}(\what)\preceq \cdfm_x^{\Bhat,s}(\what)$. Thus,
\[
\cdfm_x^{\Bhat,s}(\what)
=\sup\bigl\{\cdfm_x^{\Bhat,r}(\what): r\in\bbQ\cap(0,1),\ r<s\bigr\}.
\]
This proves \eqref{prop:inf-rational-meas:inf-formula}.

We now establish the remaining claims for the process $s\mapsto \cdfm_x^{\Bhat,s}$ defined by
\eqref{eq:inf-formula} for all $s\in[0,1]$, starting with \eqref{prop:xBhatproc.meas}.

For each fixed $s\in[0,1]$, the random variable $\cdfm_x^{\Bhat,s}$ is $\kS$-measurable as an
infimum or supremum of countably many $\kS$-measurable random variables
$\cdfm_x^{\Bhat,r}$, $r\in(0,1)\cap\bbQ$. The map
\[
\what \longmapsto \bigl(\cdfm_x^{\Bhat,r}(\what)\bigr)_{r\in\bbQ\cap[0,1]}
\]
is therefore $\kS$-measurable into the countable product
$\bigl(\sM_1(\pathsp_x,\pathsa_{k:\infty})\bigr)^{\bbQ\cap[0,1]}$.

For each fixed $\what$ with $\w(\what)\in\Omega_{\Bhat}$, $s\mapsto \cdfm_x^{\Bhat,s}(\what)$ is nondecreasing in $s$ and,
by definition \eqref{eq:inf-formula}, is left-continuous on $(0,1]$ and right-continuous at $0$. Since $\ext\DLR_x^{\beta,\w(\what)}$ is
compact and totally ordered, for each $s\in[0,1)$ the right limit exists and equals
$\lim_{t\searrow s}\cdfm_x^{\Bhat,t}(\what)$, again as an element of
$\ext\DLR_x^{\beta,\w(\what)}$. Hence $\cdfm_x^{\Bhat,\aabullet}(\what)\in
\Skor([0,1],\ext\DLR_x^{\beta,\w(\what)})$ for all $\what$ with $\w(\what)\in\Omega_{\Bhat}$. 

Since the Borel $\sigma$-algebra on $\Skor([0,1],\sM_1(\pathsp_x,\pathsa_{k:\infty}))$ is generated
by the coordinate maps at rational times, the previous two paragraphs combine to imply that
$\what\mapsto \cdfm_x^{\Bhat,\aabullet}(\what)$ is $\kS$-measurable as a
$\Skor([0,1],\sM_1(\pathsp_x,\pathsa_{k:\infty}))$-valued random variable. This completes the proof of
\eqref{prop:xBhatproc.meas}.

Next, consider \eqref{prop:xBhatproc.inf}.
Fix $\what$ with $\w(\what)\in\Omega_{\Bhat}$ and $s\in(0,1)$. Let  
\[
A_s(\what)=\Bigl\{\mu\in\ext\DLR_x^{\beta,\w(\what)}:\ 
\nu^{\w(\what)}\bigl(\Omgood,\ \Pi_x^{\Bhat}\preceq \mu\bigr)\ge s\Bigr\}.
\]
If $\mu\in A_s(\what)$ and $r<s$ then $\mu\in A_r(\what)$, so $\cdfm_x^{\Bhat,r}(\what)\preceq \mu$
for all rationals $r<s$. Taking the supremum over rationals $r<s$ gives
$\cdfm_x^{\Bhat,s}(\what)\preceq \mu$. Since $\mu\in A_s(\what)$ was arbitrary,
$\cdfm_x^{\Bhat,s}(\what)\preceq \inf A_s(\what)$.

Conversely, for any increasing sequence $r_n\nearrow s$ in $\bbQ\cap(0,1)$ with $r_n<s$, the same
argument as above shows
\[
\nu^{\w(\what)}\bigl(\Omgood,\ \Pi_x^{\Bhat}\preceq \cdfm_x^{\Bhat,s}(\what)\bigr)
\ \ge\ r_n
\qquad\text{for all }n.
\]
Letting $n\to\infty$ yields that $\cdfm_x^{\Bhat,s}(\what)\in A_s(\what)$, so
$\inf A_s(\what)\preceq \cdfm_x^{\Bhat,s}(\what)$. Hence $\cdfm_x^{\Bhat,s}(\what)=\inf A_s(\what)$,
which is exactly \eqref{eq:fullinf} for all $s\in(0,1)$. Monotonicity
$\cdfm_x^{\Bhat,r}(\what)\preceq \cdfm_x^{\Bhat,s}(\what)$ for $r\le s$ follows immediately.

We turn next to \eqref{prop:xBhatproc.key}. Fix $\what$ with $\w(\what)\in\Omega_{\Bhat}$ and $\mu\in\ext\DLR_x^{\beta,\w(\what)}$. With $F_{\what}$ as above,
the inclusion $(0,1)\cap[0,F_{\what}(\mu)]\subset\{s\in(0,1):\cdfm_x^{\Bhat,s}(\what)\preceq \mu\}$ is immediate:
if $s\in(0,1)$ and $s\le F_{\what}(\mu)$ then $\mu\in A_s(\what)$, and hence
$\cdfm_x^{\Bhat,s}(\what)=\inf A_s(\what)\preceq \mu$.

For the reverse inclusion, suppose $s\in(0,1)$ and $\cdfm_x^{\Bhat,s}(\what)\preceq \mu$. By definition of
$\cdfm_x^{\Bhat,s}(\what)$ as $\inf A_s(\what)$, there exists a (weakly) decreasing sequence
$(\mu_n)_{n\ge1}\subset A_s(\what)$ with $\mu_n\to \cdfm_x^{\Bhat,s}(\what)$.
Then the events $\{\Pi_x^{\Bhat}\preceq \mu_n\}$ decrease to
$\{\Pi_x^{\Bhat}\preceq \cdfm_x^{\Bhat,s}(\what)\}$, so by continuity of measure from above, 
\[
F_{\what}\bigl(\cdfm_x^{\Bhat,s}(\what)\bigr)
=\lim_{n\to\infty}F_{\what}(\mu_n)\ \ge\ s.
\]
Since $F_{\what}$ is nondecreasing and $\mu\succeq \cdfm_x^{\Bhat,s}(\what)$, we obtain
$F_{\what}(\mu)\ge F_{\what}(\cdfm_x^{\Bhat,s}(\what))\ge s$, i.e.\ $s\le F_{\what}(\mu)$.
Thus $\{s\in(0,1):\cdfm_x^{\Bhat,s}(\what)\preceq \mu\}\subset(0,1)\cap[0,F_{\what}(\mu)]$, proving
\eqref{eq:key-sym}.

\smallskip
We turn to \eqref{prop:xBhatproc:cov}. We first prove the lifted identity on $\Omgood$. Fix $\what\in\Omgood$, $x\in\Z^2$, and $s\in(0,1)$. Take $\mu\in\ext\DLR_x^{\beta,\w(\That_y\what)}$ and define
$\mu'=(\theta_y)_\#\,\mu$.
By the shift covariance in \eqref{prop:dmeas:mu-covariant} and \eqref{prop:dmeas:Dmeas-covariant} of Proposition~\ref{prop:dmeas}, 
$\mu\in\Dmeas_x^{\beta,\w(\That_y\what)}$ if and only if $\mu'\in\Dmeas_{x+y}^{\beta,\w(\what)}$.
The same shift map gives a bijection from $\DLR_x^{\beta,\w(\That_y\what)}$ to $\DLR_{x+y}^{\beta,\w(\what)}$, and this bijection preserves convex combinations. Hence it also gives a bijection between the corresponding sets of extremal measures.
Furthermore, shift covariance of $\Pi^{\Bhat}$ gives
\[
\Pi_{x+y}^{\Bhat}(\what')\preceq \mu'
\quad\Longleftrightarrow\quad
\Pi_x^{\Bhat}(\That_y\what')\preceq \mu.
\]
Since $\Omgood$ is shift-invariant, the preceding equivalence gives
\[
\Omgood\cap\{\Pi_{x+y}^{\Bhat}\preceq\mu'\}=\That_y^{-1}(\Omgood\cap\{\Pi_x^{\Bhat}\preceq\mu\}).
\]
Using \eqref{eq:nucov}, we obtain
\[
\nu^{\w(\what)}\bigl(\Omgood,\ \Pi_{x+y}^{\Bhat}\preceq \mu'\bigr)
=
\nu^{\w(\That_y\what)}\bigl(\Omgood,\ \Pi_x^{\Bhat}\preceq \mu\bigr).
\]
Consequently, the admissible sets in \eqref{eq:fullinf} for $(x+y,\what)$ and $(x,\That_y\what)$
correspond under $\mu\mapsto (\theta_y)_\#\mu$, and taking infima preserves this correspondence.
Therefore,
\be
\cdfm_{x+y}^{\Bhat,s}(\what)=(\theta_y)_\#\,\cdfm_x^{\Bhat,s}(\That_y\what),
\label{eq:cdfm-cov-lifted}
\ee
for all $\what\in\Omgood$ and $s\in(0,1)$. Now fix $\omega\in\Omega_{\Bhat}$. By Lemma \ref{lem:nu-exists}, there exists $\what\in\Omgood$ with $\w(\what)=\omega$. Since $\w(\That_y\what)=T_y\omega$ and $\cdfm_x^{\Bhat,s}$ is a function of the weights, \eqref{eq:cdfm-cov-lifted} gives \eqref{eq:cdfm-cov} for $s\in(0,1)$. The endpoint cases follow from the definition \eqref{eq:inf-formula} and the fact that $(\theta_y)_\#$ preserves infima and suprema in the stochastic order. This completes the proof of Proposition~\ref{prop:xBhatproc}.
\end{proof}

\section{Auxiliary Results}\label{app:aux}
\subsection{Forward measurability}\label{app:forward-measurable}
We show that if the weights are i.i.d., any shift-covariant, recovering $L^1(\bbP)$ cocycle is forward-measurable. The proof is the positive-temperature analogue of the forward-measurability argument for coalescing geodesics in Lemmas 4.6 and 4.7 of \cite{Jan-Ras-Sep-25-strong-}. 

\begin{lemma}\label{lem:forward-measurable}
Assume $\beta\in(0,\infty)$ and that Condition \ref{cond:moment-mixing}\eqref{cond:moment-mixing-iid} holds. Then every $B\in\cK^\beta$ is forward-measurable: if $u\in\bbZ^2$ and $\{x,y\}\subset u+\bbZ_{\geq0}^2$, then $B(x,y)$ is measurable with respect to $\sigma\{\w_z:z\in u+\bbZ_{\geq0}^2\}$, up to $\bbP$-null sets.
\end{lemma}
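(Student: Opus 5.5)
The plan is to use the structure theory of Section \ref{sec:structure} together with the strong existence argument, applied to an auxiliary environment in which the weights off the quadrant are resampled. The idea is that an extremal Gibbs measure rooted at $x\ge u$ lives in $u+\bbZ_{\geq0}^2$. Its transition probabilities are limits of ratios of partition functions $\PF{y+e_i}{X_n}/\PF{y}{X_n}$, and these involve only weights in the quadrant. So the cocycle restricted to the quadrant should be recoverable from the quadrant weights plus some "tail" randomness. Strong existence then shows that this tail randomness is trivial.

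By the cocycle property and shift covariance, it suffices to take $u=0$ and prove that $B(x,x+e_i)$ is $\sF_+=\sigma\{\w_z:z\in\bbZ_{\geq0}^2\}$-measurable for $x\ge0$. I would construct an extended space $\Omhat$ carrying two independent i.i.d.\ fields, $\w$ and $\w'$. On it I would define new weights $\w^*$, which agree with $\w$ on a shifted quadrant and with $\w'$ elsewhere. Because this is not shift-invariant, I instead argue directly via conditional expectation.

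Set $\tilde B(x,y)=\bbE[B(x,y)\mid \sF_+]$ for $x,y\ge0$. The key claim is that $B(x,x+e_i)=\tilde B(x,x+e_i)$ almost surely. Consider the fully supported measure $\Pi_0^{B,\w}$, which is extremal by Lemma \ref{lem:Bhat-ext}. Its transition probabilities from $y\ge0$ are $e^{\beta\w_y-\beta B(y,y+e_i)}$. Conditionally on $\sF_+$, the random measure $\Pi_0^{B,\w}$ is an element of $\ext\DLR_0^{\beta,\w}$, and this set depends only on $\w|_{\bbZ_{\geq0}^2}$. It is moreover closed and totally ordered by Theorem \ref{thm:cltot}. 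I would then run the conditional quantile construction of Proposition \ref{prop:xBhatproc} with $\sF_+$ in place of $\kS$. This gives a version $\cdfm^{s}$, measurable in $(\w|_{\bbZ^2_{\ge0}},s)$, with $(\w,\Pi_0^{B})\overset{d}=(\w,\cdfm^{s})$ as in Lemma \ref{lem:uniform-quantile}. The goal is then to show that $s\mapsto\cdfm^s$ is constant. For this I would combine monotonicity in $s$ (through Theorem \ref{thm:global-weakorder} on the associated cocycles $\Busgeo_{\cdfm^s}$) with equality of means.

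The main obstacle is showing that the means agree. Shift covariance of $\cdfm^s$ is no longer available, because $\sF_+$ is not shift-invariant. I would handle this by using, instead of the root $0$, roots $-Ne_1-Ne_2$ for $N\to\infty$ along with the coalescence and consistency in Proposition \ref{prop:coal-cond}. Concretely, the conditioning $\sigma$-algebras $\sF_{-N}=\sigma\{\w_z: z\ge -N(e_1+e_2)\}$ increase to $\sF$, so $\bbE[B(x,x+e_i)\mid\sF_{-N}]\to B(x,x+e_i)$ in $L^1$ by martingale convergence.

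It remains to show that $\bbE[B\mid\sF_{-N}]$ restricted to $\bbZ^2_{\ge0}$ does not depend on $N$. For this I would use i.i.d.\ independence. The weights in the strip $\{-N\le z\cdot e_j<0\}$ are independent of $\sF_+$. Under extremality, the cocycle on $\bbZ^2_{\ge0}$ is a tail (Busemann) function of paths that eventually lie in $\bbZ^2_{\ge0}$, by Lemma \ref{lem:Busgeo} and Lemma \ref{lem:Omega0}. Also, the quantile at level $s$ is the same object by Lemma \ref{lem:quantile-equal}. Together these should force the conditional law given $\sF_{-N}$ to be that given $\sF_+$. This step, in which independence of the strip weights is converted into a statement about the conditional law, is the one I expect to be delicate.
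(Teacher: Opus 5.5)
\textbf{There is a genuine gap, and it is at the step you flag as delicate.} Your claim that $\bbE[B(x,x+e_i)\mid\sF_{-N}]$, restricted to $\bbZ^2_{\geq0}$, does not depend on $N$ is, given the martingale convergence you invoke, equivalent to the lemma itself. None of the tools you cite proves it.

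\begin{itemize}
\item Independence of the strip weights from $\sF_+$ says nothing about whether the \emph{selection} $\Pi_0^{B}(\w)\in\ext\DLR_0^{\beta,\w}$ depends on those weights. The set $\ext\DLR_0^{\beta,\w}$ is a function of the quadrant weights, and Lemma~\ref{lem:Busgeo} makes the quadrant cocycle a tail function of the selected measure. But $B$ is a function of the whole field and could a priori select different elements for different outside configurations.
\item Lemma~\ref{lem:quantile-equal} compares quantiles at two roots built from one conditional law $\nu^{\w}$ given the full field. It gives no comparison between conditioning on different $\sigma$-algebras.
\item A conditional quantile given $\sF_+$ is not shift-covariant. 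So the mean-equality step that forced $s\mapsto\cdfm^{s}$ to be constant in the proof of Theorem~\ref{thm:Bhat-B} is unavailable. The cross-root consistency of Lemma~\ref{lm:coal} is also lost, since different roots would be conditioned on different $\sigma$-algebras.
\item As a smaller point, the identity $(\w,\Pi_0^B)\overset{d}=(\w,\cdfm^s)$ can only hold with $\w$ replaced by its restriction to the quadrant, unless you already know the conclusion.
\end{itemize}

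\textbf{The missing idea is the coupling you discarded at the start, applied to a half-plane instead of a quadrant.} A half-plane keeps shift invariance along its boundary.

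\begin{itemize}
\item Let $\wt\w$ agree with $\w$ on $\{z\cdot e_2\ge0\}$, and resample $\{z\cdot e_2<0\}$ independently.
\item For $k\ge0$, $\ext\DLR_{ke_1}^{\beta,\w}=\ext\DLR_{ke_1}^{\beta,\wt\w}$. Hence $\Pi^B_{ke_1}(\w)$ and $\Pi^B_{ke_1}(\wt\w)$ are comparable by Theorem~\ref{thm:cltot}.
\item Suppose they differ with positive probability. Exchangeability of $(\w,\wt\w)$ gives both strict orders positive probability.
\item The ergodic theorem for the joint $e_1$-shift, which preserves the coupling, together with shift covariance of $B$, then yields $\ell>k\ge0$ with $\Pi^B_{ke_1}(\w)\precneq\Pi^B_{ke_1}(\wt\w)$ and $\Pi^B_{\ell e_1}(\w)\succneq\Pi^B_{\ell e_1}(\wt\w)$.
\item Since $\ell e_1\ge ke_1$, consistency and Theorem~\ref{thm:global-weakorder} in the common environment turn the first inequality into $\Pi^B_{\ell e_1}(\w)\preceq\Pi^B_{\ell e_1}(\wt\w)$. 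This is a contradiction.
\item Therefore $\Pi^B_0$ is almost surely a function of the upper half-plane weights. By symmetry it is also a function of the right half-plane weights.
\item The quadrant, upper-left and lower-right regions carry independent weights. So the intersection of the two completed half-plane $\sigma$-algebras is the completed quadrant $\sigma$-algebra.
\end{itemize}

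The i.i.d.\ hypothesis enters twice: in the exchangeability and ergodicity of the coupled pair, and in this intersection step.
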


\begin{proof}
Fix $B\in\cK^\beta$. Let $\Omega'\subset\Omega_0$ be a shift-invariant full-probability event on which $B$ is a recovering shift-covariant cocycle and, for every $u\in\bbZ^2$, the measure $\Pi_u^B$ generated by $B$ is fully supported, belongs to $\ext\DLR_u^{\beta,\w}$, and the family $\{\Pi_u^B:u\in\bbZ^2\}$ is consistent. Such an event exists by Lemma \ref{lem:Bhat-ext}, applied on the canonical space.

By shift covariance, it is enough to treat $u=0$. Couple two environments $(\w,\wt\w)$ so that $\wt\w_z=\w_z$ for $z\cdot e_2\ge0$ and the weights with $z\cdot e_2<0$ are resampled independently. Work on the full-probability event where $\w,\wt\w\in\Omega'$. Since the two environments agree on $\{z:z\cdot e_2\ge0\}$, we have that $\ext \DLR_{ke_1}^{\w} = \ext \DLR_{k e_1}^{\wt \w}$ for $k \geq 0$. Hence $\Pi_{ke_1}^B(\w)$ and $\Pi_{ke_1}^B(\wt\w)$ are comparable by Theorem \ref{thm:cltot}.

If either $\bbP(\Pi^B_0(\w)\precneq\Pi^B_0(\wt\w)) >0$ or $\bbP(\Pi^B_0(\w)\succneq\Pi^B_0(\wt\w)) >0$, then by exchangeability, both inequalities must hold.
This, shift covariance, and the ergodic theorem for the $e_1$-shift imply that if $\Pi_0^B(\w)\ne\Pi_0^B(\wt\w)$ with positive probability, then, $\P$-almost surely, there exist integers $\ell>k\ge 0$ such that 
\be
\Pi_{ke_1}^B(\w)\precneq\Pi_{ke_1}^B(\wt\w)
\qquad\text{and}\qquad
\Pi_{\ell e_1}^B(\w)\succneq\Pi_{\ell e_1}^B(\wt\w).
\ee
 Applying Theorem \ref{thm:global-weakorder} to the first strict inequality in the common environment ahead of $ke_1$ shows that the corresponding representatives at root $\ell e_1$ satisfy $\Pi_{\ell e_1}^B(\w)\preceq\Pi_{\ell e_1}^B(\wt\w)$, because the families generated by $B(\w)$ and $B(\wt\w)$ are, respectively, consistent. This contradicts the second strict inequality. Hence $\Pi_0^B(\w)=\Pi_0^B(\wt\w)$ almost surely under this coupling. Standard measure theory (for example, \cite[Lemma A.2]{Kur-07}) gives a $\sigma\{\w_z:z\cdot e_2\ge0\}$-measurable version of $\Pi_0^B$. Repeating the same argument with $e_1$ and $e_2$ interchanged gives a $\sigma\{\w_z:z\cdot e_1\ge0\}$-measurable version. Since the weights are independent, the intersection of these two completed product $\sigma$-algebras is the completed $\sigma\{\w_z:z\in\bbZ_{\geq0}^2\}$. Thus $\Pi_0^B$ is measurable with respect to the weights in $\bbZ_{\geq0}^2$. By \eqref{eq:DLR-transition} and the cocycle property, this implies that for $x,y\in\Z_{\ge0}^2$, $B(x,y)$ is $\sigma\{\w_z:z\in\Z_{\ge0}^2\}$-measurable.
 \end{proof}

\subsection{Convex Analysis}\label{app:convex}
The proof of Lemma \ref{lem:h-superdiff} relies on a number of convex analytic auxiliary results about functions $\fe:\R_{\ge0}^2\to\R$ that are concave and positively homogeneous of degree one. 

As a finite concave function, $\fe$ is continuous on the convex open set $\ri\Uset$ and lower semicontinuous on $\Uset$.
In particular, the function is bounded below on $\Uset$.
By Theorem 10.3 in \cite{Roc-70}, $\fe$ has a unique continuous extension $\bar\fe$ from $\ri\Uset$ to the whole $\Uset$. (It is not hard to see that this extension agrees with the upper semicontinuous regularization of $\fe$. See, e.g., page 726 in \cite{Ras-Sep-14}.) For $\xi \in \bbR_{\geq 0}^2$ with $\xi \neq 0$, set $\bar\fe(\xi)=|\xi|\bar\fe(\xi/|\xi|)$, where $|\xi|=|\xi\cdot e_1|+|\xi\cdot e_2|$, and $\bar\fe(0)=0$. This gives the continuous extension of $\fe$ from $\R_{>0}^2$ to $\R_{\ge0}^2$. This extension is concave and positively homogeneous of degree one.

For $m\in\bbR^2$, define the restricted Legendre transform
\[
\fe^{\star}(m) = \sup_{\xi\in\Uset}\{\fe(\xi)-m\cdot\xi\}.
\]
Lower semicontinuity of $\fe$ implies that $\fe(e_i)\le\bar\fe(e_i)$ for $i\in\{1,2\}$ and, consequently, $\fe^\star=\bar\fe^\star$.

\begin{lemma}\label{lem:superdiff-sufficient}
Suppose $\fe:\R_{\ge0}^2\to\R$ is concave and positively homogeneous of degree one. Then for $m\in\bbR^2$:
\begin{enumerate}[label={\rm(\alph*)}, ref={\rm\alph*}] \itemsep=3pt
\item\label{lem:superdiff-sufficient.a} If $m\in\partial\fe(\xi)$ for $\xi\in\R_{\ge0}^2$, then $m\cdot\xi=\fe(\xi)$ and $m\cdot\zeta\ge\fe(\zeta)$ for all $\zeta\in\R_{\ge0}^2$.
\item\label{lem:superdiff-sufficient.b} 
If $m\in\partial\fe(\Uset)$ and $m\cdot\zeta=\fe(\zeta)$ for some $\zeta\in\Uset$, then $m\in\partial\fe(\zeta)$.
\item\label{lem:superdiff-sufficient.c} If $m\in\partial\fe(\Uset)$, then $\fe^\star(m)=0$.
\item\label{lem:superdiff-sufficient.d} If $\fe^\star(m)=0$, then $m\in\partial\bar\fe(\Uset)$.
\end{enumerate}
\end{lemma}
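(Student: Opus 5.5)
Each of the four parts follows directly from the definition \eqref{eq:superdef} together with positive homogeneity, so I would check them one at a time.

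For \eqref{lem:superdiff-sufficient.a}, suppose $m\in\partial\fe(\xi)$. Put $\zeta=c\xi$ into \eqref{eq:superdef} with $c\ge0$. Homogeneity gives $(c-1)\fe(\xi)\le(c-1)m\cdot\xi$. Taking $c=2$ and then $c=0$ yields $m\cdot\xi=\fe(\xi)$. Substituting this back into \eqref{eq:superdef} gives $\fe(\zeta)\le m\cdot\zeta$ for all $\zeta$.

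For \eqref{lem:superdiff-sufficient.b}, take $m\in\partial\fe(\xi)$ with $\xi\in\Uset$. Part \eqref{lem:superdiff-sufficient.a} gives $m\cdot\eta\ge\fe(\eta)$ for every $\eta\in\R_{\ge0}^2$. If in addition $m\cdot\zeta=\fe(\zeta)$, then
\[
\fe(\eta)-\fe(\zeta)\le m\cdot\eta-m\cdot\zeta \quad\text{for all }\eta,
\]
which is exactly $m\in\partial\fe(\zeta)$.

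For \eqref{lem:superdiff-sufficient.c}, part \eqref{lem:superdiff-sufficient.a} gives $\fe(\zeta)-m\cdot\zeta\le0$ on $\Uset$. Equality holds at $\zeta=\xi$ once we normalize $\xi\in\Uset$, which is allowed since $\partial\fe(\xi)=\partial\fe(c\xi)$. Hence $\fe^\star(m)=0$.

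Part \eqref{lem:superdiff-sufficient.d} is the only step with real content, because a supremum over $\Uset$ need not be attained by $\fe$ itself, which may be discontinuous at $e_1,e_2$. To handle this I would switch to $\bar\fe$, using $\fe^\star=\bar\fe^\star$ as noted before the lemma. Since $\bar\fe$ is continuous on the compact set $\Uset$, the supremum defining $\bar\fe^\star(m)=0$ is attained at some $\zeta\in\Uset$. This gives $\bar\fe(\zeta)=m\cdot\zeta$ and $\bar\fe(\eta)\le m\cdot\eta$ for all $\eta\in\Uset$. Homogeneity of $\bar\fe$ extends the inequality to all $\eta\in\R_{\ge0}^2$, including $\eta=0$ because $\bar\fe(0)=0$. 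Combining the two relations as in \eqref{lem:superdiff-sufficient.b} gives $\bar\fe(\eta)-\bar\fe(\zeta)\le m\cdot(\eta-\zeta)$, so $m\in\partial\bar\fe(\zeta)\subset\partial\bar\fe(\Uset)$.
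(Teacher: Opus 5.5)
Your proposal is correct and matches the paper's proof essentially step for step: you use the scaling test $\zeta=c\xi$ in (a), derive (b) and (c) directly from (a), and prove (d) via $\fe^\star=\bar\fe^\star$, attainment of the supremum by continuity of $\bar\fe$ on $\Uset$, and extension to $\R_{\ge0}^2$ by homogeneity. The only cosmetic difference is your use of $c=0$ in (a), which relies on $\fe(0)=0$ (immediate from $\fe(0)=2\fe(0)$), whereas the paper uses only $c>0$.
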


\begin{proof}
By the supergradient inequality,
\[
\fe(\zeta)\le \fe(\xi)+m\cdot(\zeta-\xi)\qquad\text{for all }\zeta\in\bbR_{\geq 0}^2.
\]
Taking $\zeta=c\xi$ and using positive homogeneity gives
\[
c\fe(\xi)\le \fe(\xi)+(c-1)m\cdot\xi\qquad\text{for all }c>0,
\]
hence $m\cdot\xi=\fe(\xi)$. Substituting back yields $\fe(\zeta)\le m\cdot\zeta$ for all $\zeta\in\R_{\ge0}^2$. Part \eqref{lem:superdiff-sufficient.a} is proved. For \eqref{lem:superdiff-sufficient.b}, if $m\cdot\zeta=\fe(\zeta)$ for some $\zeta\in\Uset$, then \eqref{lem:superdiff-sufficient.a} implies
\[
\fe(\eta)\le m\cdot\eta=\fe(\zeta)+m\cdot(\eta-\zeta)\qquad\text{for all }\eta\in\R^2_{\ge0},
\]
so $m\in\partial\fe(\zeta)$. For \eqref{lem:superdiff-sufficient.c}, if $m\in\partial\fe(\xi)$ for $\xi\in\Uset$, then \eqref{lem:superdiff-sufficient.a} gives $\fe(\zeta)-m\cdot\zeta\le0$ for all $\zeta\in\Uset$ with equality at $\zeta=\xi$, so $\fe^\star(m)=0$. Finally, $\fe^\star(m)=0$ implies $\bar\fe^\star(m)=0$ and since $\bar\fe$ is continuous on $\Uset$, the supremum in the definition of $\bar\fe^\star(m)$ is attained at some $\xi\in\Uset$, and then
\[
\bar\fe(\zeta)-m\cdot\zeta\le 0=\bar\fe(\xi)-m\cdot\xi\qquad\text{for all }\zeta\in\Uset.
\]
Homogeneity extends the inequality to all $\zeta\in\R_{\ge0}^2$. It then follows that $m\in\partial\bar\fe(\xi)\subset\partial\bar\fe(\Uset)$.
\end{proof}

\begin{lemma}\label{lm:m'=m}
    Suppose $\fe:\R_{\ge0}^2\to\R$ is concave and positively homogeneous of degree one. Let $m\in\partial\fe(\xi)$ for $\xi\in\ri\Uset$ and take $m'$ in the convex hull of $\partial\fe(\Uset)$. If $m'\le m$ coordinatewise, then $m'=m$.
\end{lemma}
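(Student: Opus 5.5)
The plan is to compare $m'$ to $m$ by testing against the direction $\xi$ itself. Since $m'$ lies in the convex hull of $\partial\fe(\Uset)$, I will write $m'=\sum_j \lambda_j m_j$ as a finite convex combination with $m_j\in\partial\fe(\zeta_j)$ for some $\zeta_j\in\Uset$. By Lemma \ref{lem:superdiff-sufficient}\eqref{lem:superdiff-sufficient.a}, each $m_j$ satisfies $m_j\cdot\eta\ge\fe(\eta)$ for all $\eta\in\R_{\ge0}^2$. Taking the convex combination, I get $m'\cdot\xi\ge\fe(\xi)$.

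On the other hand, the same lemma applied to $m\in\partial\fe(\xi)$ gives $m\cdot\xi=\fe(\xi)$. Hence
\[
(m-m')\cdot\xi\le \fe(\xi)-\fe(\xi)=0.
\]
The hypothesis $m'\le m$ coordinatewise means that $m-m'$ has both coordinates nonnegative. Since $\xi\in\ri\Uset$, both coordinates of $\xi$ are strictly positive. A nonnegative vector whose inner product with a strictly positive vector is at most zero must vanish. Therefore $m'=m$.

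There is no real obstacle here. The one point to check is that the inequality $m_j\cdot\eta\ge\fe(\eta)$ holds for supergradients at boundary points $\zeta_j\in\{e_1,e_2\}$ as well as interior ones. Part \eqref{lem:superdiff-sufficient.a} of Lemma \ref{lem:superdiff-sufficient} is stated for any $\xi\in\R_{\ge0}^2$, so it covers this case. The interiority of $\xi$ is used only to make its coordinates strictly positive.
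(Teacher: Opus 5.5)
Your proof is correct and follows essentially the same argument as the paper. Both use Lemma \ref{lem:superdiff-sufficient}\eqref{lem:superdiff-sufficient.a} to get $m'\cdot\xi\ge\fe(\xi)=m\cdot\xi$, then combine this with $m-m'\ge0$ coordinatewise and the strict positivity of the coordinates of $\xi\in\ri\Uset$ to conclude $m'=m$.
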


\begin{proof}
By Lemma \ref{lem:superdiff-sufficient}\eqref{lem:superdiff-sufficient.a}, every $m''\in\partial\fe(\Uset)$ satisfies $m''\cdot\zeta\ge\fe(\zeta)$ for all $\zeta\in\R_{\ge0}^2$.
The same holds for every convex mixture of such vectors. Hence
\begin{align}\label{aux2415}
m'\cdot\zeta\ge\fe(\zeta)\qquad\text{ for all }\zeta\in\R_{\ge0}^2.
\end{align}
On the other hand, since $m\in\partial\fe(\xi)$, the same lemma implies that $m\cdot\zeta\ge\fe(\zeta)$ for all $\zeta\in\R_{\ge0}^2$, with equality at $\zeta=\xi$. 
Together with $m'\le m$ coordinatewise, this yields
$\fe(\xi)\le m'\cdot\xi\le m\cdot\xi=\fe(\xi)$,
so $m'\cdot\xi=m\cdot\xi$. Since $\xi\in\ri\Uset$ and $m'\le m$ coordinatewise, we conclude that $m'=m$.
\end{proof}

\begin{lemma}\label{lm:hext} 
Suppose $\fe:\R_{\ge0}^2\to\R$ is concave and positively homogeneous of degree one. Let $C$ be the convex hull of $\partial\fe(\Uset)$. Then, for each $\xi\in\Uset$, $\partial\fe(\xi)$ is an exposed face of $C$. Furthermore, 
\[\ext C=\bigcup_{\xi\in\Uset}\ext\partial\fe(\xi),\]
with the understanding that for $\xi\in\{e_1,e_2\}$, $\partial\fe(\xi)$ could be empty. 
\end{lemma}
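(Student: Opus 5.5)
The plan is to use Lemma \ref{lem:superdiff-sufficient}\eqref{lem:superdiff-sufficient.a} to expose each face by a linear functional. Fix $\xi\in\Uset$ with $\partial\fe(\xi)\neq\varnothing$ and consider the linear functional $L_\xi(m)=m\cdot\xi$ on $\R^2$. By Lemma \ref{lem:superdiff-sufficient}\eqref{lem:superdiff-sufficient.a}, every $m\in\partial\fe(\Uset)$ satisfies $m\cdot\xi\ge\fe(\xi)$. Since the set $\{m: m\cdot\xi\ge\fe(\xi)\}$ is a half-plane, this inequality passes to convex combinations, so $L_\xi\ge\fe(\xi)$ on all of $C$. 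I will then show that
\[
\{m\in C: m\cdot\xi=\fe(\xi)\}=\partial\fe(\xi).
\]
The inclusion ``$\supseteq$'' is again part \eqref{lem:superdiff-sufficient.a}. For ``$\subseteq$'', write $m=\sum_i\lambda_i m_i$ with $m_i\in\partial\fe(\Uset)$ and $\lambda_i>0$. Each term satisfies $m_i\cdot\xi\ge\fe(\xi)$, and the weighted sum equals $\fe(\xi)$, so $m_i\cdot\xi=\fe(\xi)$ for every $i$. Part \eqref{lem:superdiff-sufficient.b} then gives $m_i\in\partial\fe(\xi)$, and convexity of $\partial\fe(\xi)$ gives $m\in\partial\fe(\xi)$. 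This shows that $\partial\fe(\xi)$ is an exposed face of $C$.

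For the extreme-point identity, the inclusion ``$\supseteq$'' is the standard fact that extreme points of a face are extreme points of the ambient convex set. Indeed, if $m\in\ext\partial\fe(\xi)$ and $m=\tfrac12(a+b)$ with $a,b\in C$, then applying $L_\xi$ forces $a\cdot\xi=b\cdot\xi=\fe(\xi)$. Hence $a,b$ lie in the face $\partial\fe(\xi)$, so $a=b=m$.

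For ``$\subseteq$'', I will use the fact that every extreme point of the convex hull of a set $S$ lies in $S$ (Carathéodory's theorem suffices, since $C$ is a plain, non-closed convex hull). So any $m\in\ext C$ lies in $\partial\fe(\zeta)$ for some $\zeta\in\Uset$. Because $\partial\fe(\zeta)\subset C$ and $m$ is extreme in $C$, $m$ is a fortiori extreme in the smaller set $\partial\fe(\zeta)$, and the inclusion follows.

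I expect no serious obstacle. The only care point is the possibly empty subdifferentials at $e_1$ and $e_2$; these contribute nothing to either side, matching the stated convention. Closedness of $C$ is never needed, since every argument uses finite convex combinations only.
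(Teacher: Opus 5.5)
Your proposal is correct and takes essentially the paper's route. You expose $\partial\fe(\xi)$ with the functional $m\mapsto m\cdot\xi$ via Lemma \ref{lem:superdiff-sufficient}\eqref{lem:superdiff-sufficient.a}, get $\supseteq$ from ``extreme points of a face are extreme,'' and get $\subseteq$ from $\ext C\subset\partial\fe(\Uset)$. The differences are cosmetic: the paper passes the full inequality $m\cdot\zeta\ge\fe(\zeta)$ for all $\zeta\in\R_{\ge0}^2$ to all of $C$ and reads off $m\in\partial\fe(\xi)$ directly from the definition, whereas you decompose $m$ and use part \eqref{lem:superdiff-sufficient.b} plus convexity; you also make explicit the step $\ext C\subset\partial\fe(\Uset)$ that the paper leaves implicit, which needs only the definition of the convex hull as finite convex combinations, not Carath\'eodory.
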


\begin{proof}
For any $\xi\in\Uset$, $\partial\fe(\xi)\subset C$ and, by Lemma \ref{lem:superdiff-sufficient}\eqref{lem:superdiff-sufficient.a}, every $m\in\partial\fe(\xi)$ satisfies $m\cdot\xi=\fe(\xi)$. Conversely,
by \eqref{aux2415}, every $m\in C$ satisfies
$m\cdot \zeta \ge \fe(\zeta)$ for all $\zeta\in\R_{\ge0}^2$ and if also $m\cdot\xi=\fe(\xi)$ then, by definition, $m\in\partial\fe(\xi)$. Thus, we have shown that for every $\xi\in\Uset$,
\[\partial\fe(\xi)=\{m\in C:m\cdot\xi=\fe(\xi)\}
=C\cap\{m:m\cdot\xi=\fe(\xi)\}.\]
Consequently, $\partial\fe(\xi)$ is an exposed face of $C$.

Since an extreme point of a face is an extreme point of $C$, we have $\bigcup_{\xi\in\Uset}\ext\partial\fe(\xi)\subset \ext C$.
On the other hand, we have, by definition, $\partial\fe(\Uset)=\bigcup_{\xi\in\Uset}\partial\fe(\xi)$. Hence, 
$\ext C\subset\bigcup_{\xi\in\Uset}\ext\partial\fe(\xi)$.
\end{proof}

We now turn to the proof of Lemma \ref{lem:h-superdiff}. Recall from Remark \ref{rk:extension} that $\fe^\beta$ is continuous on $\R_{\ge0}^2$, as it is the continuous extension of the limit in Lemma \ref{lem:shape}.

\begin{proof}[Proof of Lemma \ref{lem:h-superdiff}] Recovery and the cocycle properties combine to imply that 
\be\begin{cases}
\displaystyle \max_{x \in n \Uset \cap \bbZ_{\geq 0}^2}\bigl\{\FE{0}{x}^{\infty} - \Bhat(0,x) \bigr\} =0,& \beta =\infty, \\
\log \displaystyle \sum_{x\in n \Uset \cap \bbZ_{\geq0}^2}e^{\beta(\FE{0}{x}^{\beta}-\Bhat(0,x))}=0,& \beta<\infty.
\end{cases}\label{eq:Bus-iteration}
\ee
Divide by $n$ and send $n\to\infty$. We claim that the expressions above converge to the identity
\be
0=\sup_{\xi\in\Uset}\{\fe^\beta(\xi)+\hhB(\Bhat)\cdot\xi\},\label{eq:suff-target}
\ee
which implies the claim in \eqref{lem:h-superdiff-general} by Lemma \ref{lem:superdiff-sufficient}\eqref{lem:superdiff-sufficient.d}. Then parts \eqref{lem:h-superdiff-mean} and \eqref{lem:h-superdiff-extreme} follow from Lemma \ref{lm:hext}. 

The $\geq$ direction in \eqref{eq:suff-target} follows by first restricting to sites $x_n$ with $x_n/n\cdot e_1 \in [\delta,1-\delta]$ for $\delta\in(0,1/2)$, applying Lemma \ref{lem:shape} and \eqref{B-shape}, and then removing the restriction using the continuity of $\fe^\beta(\xi)+\hhB(\Bhat)\cdot\xi$ on $\Uset$. 

For the $\leq$ direction, fix $\delta\in(0,1/2)$. On the middle region $\delta n\le x\cdot e_1\le (1-\delta)n$, Lemma \ref{lem:shape} and \eqref{B-shape} apply directly and we only need to prove the bound for the two endpoint strips. Consider $0\le x\cdot e_i\le\delta n$ and $x \cdot (e_1+e_2) = n$, and let $y=x+\lceil\delta n\rceil e_i$. Then $y/(n+\lceil\delta n\rceil)$ is bounded away from the endpoints $\{e_1,e_2\}$. By extending paths from $x$ to $y$,
\[
\FE{0}{x}^{\beta,\w}-\Bhat(0,x)\le \FE{0}{y}^{\beta,\w}-\Bhat(0,x)+\sum_{k\le\lceil\delta n\rceil}|\w_{x+ke_i}|.
\]
The class $\sL$ condition \eqref{eq:class-L} makes the last term negligible after dividing by $n$ and first sending $n\to\infty$ and then $\delta\searrow0$. Lemma \ref{lem:shape} applies to the first term on the right, and \eqref{B-shape} applies to the cocycle term.  Thus the contribution of this endpoint strip is bounded, up to an error that vanishes with $\delta$, by the value of $\fe^\beta(\xi)+\hhB(\Bhat)\cdot\xi$ near $e_{3-i}$.  Letting $\delta\searrow0$ and using continuity gives the $\leq$ direction in \eqref{eq:suff-target}.
%
%
%
%
\end{proof}

We end with another convex analysis lemma.

\begin{lemma}\label{lem:supergrad-converge}
	Let $\Lambda_n:\bbR_{>0}^2\to\bbR$, $n\in\bbN\cup\{\infty\}$, be concave and positively homogeneous of degree one, and assume that $\Lambda_n\to\Lambda_\infty$ pointwise on $\ri\Uset=]e_2,e_1[$. Fix $\xi\in\ri\Uset$ and vectors $h_n\in\bbR^2$ such that
\[
-h_n\in\partial\Lambda_n(\xi)
\qquad\text{for all }n\in\bbN.
\]
If $\Lambda_\infty$ is differentiable at $\xi$, then $h_n\to -\nabla\Lambda_\infty(\xi)$.
\end{lemma}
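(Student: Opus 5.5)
The plan is the classical argument that concave functions converging pointwise have supergradients that converge to the gradient of the limit at differentiability points (Rockafellar, Theorem 24.5). It reduces to the one-dimensional case using homogeneity.

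First, by positive homogeneity of degree one, the superdifferential inclusion $-h_n\in\partial\Lambda_n(\xi)$ forces $-h_n\cdot\xi=\Lambda_n(\xi)$ and $-h_n\cdot\zeta\ge\Lambda_n(\zeta)$ for all $\zeta\in\R_{>0}^2$. This is the argument of Lemma \ref{lem:superdiff-sufficient}\eqref{lem:superdiff-sufficient.a}, applied on the open quadrant. So $-h_n$ is determined by two scalars: its value $\Lambda_n(\xi)$ on $\xi$, and the slope of the restriction $g_n(t)=\Lambda_n(\xi+t(e_1-e_2))$ at $t=0$. Precisely, $-h_n\cdot(e_1-e_2)\in[g_n'(0+),g_n'(0-)]$. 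Since $\Lambda_n(\xi)\to\Lambda_\infty(\xi)$, it suffices to show that this slope converges to $g_\infty'(0)=\nabla\Lambda_\infty(\xi)\cdot(e_1-e_2)$. The vectors $\xi$ and $e_1-e_2$ span $\R^2$, so convergence of both scalar components gives $h_n\to-\nabla\Lambda_\infty(\xi)$.

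For the slope, use the one-dimensional chord inequalities for the concave $g_n$. Pick $t>0$ small enough that $\xi\pm t(e_1-e_2)\in\ri\Uset$. Then
\[
\frac{g_n(t)-g_n(0)}{t}\le g_n'(0+)\le -h_n\cdot(e_1-e_2)\le g_n'(0-)\le\frac{g_n(0)-g_n(-t)}{t}.
\]
Letting $n\to\infty$ with $t$ fixed, pointwise convergence on $\ri\Uset$ turns the outer bounds into the corresponding difference quotients of $g_\infty$. This bounds $\varliminf$ and $\varlimsup$ of the middle quantity. Then let $t\searrow0$: differentiability of $\Lambda_\infty$ at $\xi$ makes both one-sided quotients converge to $g_\infty'(0)$, which identifies the limit.

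The only delicate point is bookkeeping rather than a genuine obstacle. The convergence must be used only at the finitely many points $\xi$ and $\xi\pm t(e_1-e_2)$ of $\ri\Uset$, since convergence is only assumed there. The relation $-h_n\cdot\xi=\Lambda_n(\xi)$ from homogeneity must also be checked to hold on the open quadrant, which it does because the scaling $\zeta=c\xi$ stays in $\R^2_{>0}$.
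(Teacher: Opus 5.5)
Your proposal is correct and follows essentially the same argument as the paper. The paper also reduces via homogeneity to $h_n\cdot\xi=-\Lambda_n(\xi)$ and bounds $h_n\cdot(e_1-e_2)$ by chord quotients of $t\mapsto\Lambda_n(te_1+(1-t)e_2)$ at $\theta\pm\delta$. It then lets $n\to\infty$ before $\delta\searrow0$ and recovers $h_n$ from these two components, using the Euler identity $\nabla\Lambda_\infty(\xi)\cdot\xi=\Lambda_\infty(\xi)$, which your argument uses implicitly for the $\xi$-component.
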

\begin{proof}
Write $\theta=\xi\cdot e_1\in(0,1)$ and $g_n(t)=\Lambda_n(te_1+(1-t)e_2)$ for $t\in(0,1)$. Then $g_n$ is concave for each $n$, and $g_n\to g_\infty$ pointwise on $(0,1)$. The supergradient relation and homogeneity give $h_n\cdot\xi=-\Lambda_n(\xi)$.
Also, for $0<\delta<\theta\wedge(1-\theta)$, applying the supergradient inequality at
\[
\zeta_\pm=(\theta\pm\delta)e_1+(1-\theta\mp\delta)e_2
\]
gives
\[
\frac{g_n(\theta-\delta)-g_n(\theta)}{\delta}
\le h_n\cdot(e_1-e_2)
\le
\frac{g_n(\theta)-g_n(\theta+\delta)}{\delta}.
\]
Fix $\delta>0$ and let $n\to\infty$. Pointwise convergence yields 
\[
\frac{g_\infty(\theta-\delta)-g_\infty(\theta)}{\delta}
\le
\varliminf_{n\to\infty}h_n\cdot(e_1-e_2)
\le
\varlimsup_{n\to\infty}h_n\cdot(e_1-e_2)
\le
\frac{g_\infty(\theta)-g_\infty(\theta+\delta)}{\delta}.
\]
Since $\Lambda_\infty$ is differentiable at $\xi$, the function $g_\infty$ is differentiable at $\theta$, and letting $\delta\searrow0$ gives
\[
h_n\cdot(e_1-e_2)\to -\nabla\Lambda_\infty(\xi)\cdot(e_1-e_2).
\]
Also,
\[
h_n\cdot\xi=-\Lambda_n(\xi)\to-\Lambda_\infty(\xi)=-\nabla\Lambda_\infty(\xi)\cdot\xi.
\]
These two convergences determine the limits of the two coordinates of $h_n$, so $h_n\to-\nabla\Lambda_\infty(\xi)$.
\end{proof}

\section*{Statements and Declarations}

\textbf{Competing interests.}
The authors have no relevant financial or non-financial interests to disclose.

\textbf{Data availability.}
No datasets were generated or analyzed during the current study.

\textbf{Use of AI-assistance.}
The authors used OpenAI's ChatGPT and Codex large language models as writing tools during the preparation of this manuscript. This use consisted of assistance with proofreading; wording, proof organization, and notation suggestions; LaTeX editing and typesetting help; reference suggestions; and identifying points in proofs which required further careful manual review. The authors made all final decisions about the mathematical content and exposition and take full responsibility for the content of the manuscript.

\bibliographystyle{plain}
\bibliography{masterbib}

@article{Bas-Hof-Sly-22,
	author = {Basu, Riddhipratim and Hoffman, Christopher and Sly, Allan},
	doi = {10.1007/s00220-021-04246-0},
	fjournal = {Communications in Mathematical Physics},
	issn = {0010-3616},
	journal = {Comm. Math. Phys.},
	mrclass = {82C43},
	mrnumber = {4365136},
	number = {1},
	pages = {1--30},
	title = {Nonexistence of bigeodesics in planar exponential last passage percolation},
	url = {https://doi.org/10.1007/s00220-021-04246-0},
	volume = {389},
	year = {2022}}

@Article{Bus-25-,
  author   = {Ofer Busani},
  journal  = {Probab. Th. Rel. Fields},
  title    = {Non-existence of three non-coalescing infinite geodesics with the same direction in the directed landscape.},
  year     = {2025},
  note     = {\href{https://doi.org/10.1007/s00440-025-01451-z}{\tt Online first}},
}

@article{Bus-Sep-Sor-24,
	author = {Busani, Ofer and Sepp\"{a}l\"{a}inen, Timo and Sorensen, Evan},
	doi = {10.1214/23-aop1655},
	fjournal = {The Annals of Probability},
	issn = {0091-1798},
	journal = {Ann. Probab.},
	mrclass = {60K35 (60K37)},
	mrnumber = {4698024},
	number = {1},
	pages = {1--66},
	title = {The stationary horizon and semi-infinite geodesics in the directed landscape},
	url = {https://doi.org/10.1214/23-aop1655},
	volume = {52},
	year = {2024}}

@article{Jan-Ras-Sep-23,
	author = {Janjigian, Christopher and Rassoul-Agha, Firas and Sepp\"{a}l\"{a}inen, Timo},
	doi = {10.4171/jems/1246},
	fjournal = {Journal of the European Mathematical Society (JEMS)},
	issn = {1435-9855},
	journal = {J. Eur. Math. Soc. (JEMS)},
	mrclass = {60K35 (37H10 53C22 60K37 82B43)},
	mrnumber = {4612098},
	number = {7},
	pages = {2573--2639},
	title = {Geometry of geodesics through {B}usemann measures in directed last-passage percolation},
	url = {https://doi.org/10.4171/jems/1246},
	volume = {25},
	year = {2023}}

@article{Gou-07,
	author = {Gou\'{e}r\'{e}, Jean-Baptiste},
	doi = {10.1214/105051607000000113},
	fjournal = {The Annals of Applied Probability},
	issn = {1050-5164},
	journal = {Ann. Appl. Probab.},
	mrclass = {60K35 (92D25)},
	mrnumber = {2344307},
	mrreviewer = {Marina Vachkovskaia},
	number = {4},
	pages = {1273--1305},
	title = {Shape of territories in some competing growth models},
	url = {https://doi.org/10.1214/105051607000000113},
	volume = {17},
	year = {2007}}

@article{Bus-Sep-22-ejp,
	author = {Busani, Ofer and Sepp\"{a}l\"{a}inen, Timo},
	doi = {10.1214/21-ejp731},
	fjournal = {Electronic Journal of Probability},
	journal = {Electron. J. Probab.},
	mrclass = {60K35 (60K37)},
	mrnumber = {4372098},
	pages = {Paper No. 14, 40},
	title = {Non-existence of bi-infinite polymers},
	url = {https://doi.org/10.1214/21-ejp731},
	volume = {27},
	year = {2022}}

@book{Cas-Val-77,
	author = {Castaing, C. and Valadier, M.},
	mrclass = {46G99 (26A51 28A05 49A50 54C60)},
	mrnumber = {0467310},
	mrreviewer = {Vladimir L. Levin},
	pages = {vii+278},
	publisher = {Springer-Verlag, Berlin-New York},
	series = {Lecture Notes in Mathematics, Vol. 580},
	title = {Convex analysis and measurable multifunctions},
	url = {https://mathscinet.ams.org/mathscinet-getitem?mr=0467310},
	year = {1977}}

@article{Jan-Nur-Ras-22,
	author = {Janjigian, Christopher and Nurbavliyev, Sergazy and Rassoul-Agha, Firas},
	doi = {10.1214/21-aihp1200},
	fjournal = {Annales de l'Institut Henri Poincar\'{e} Probabilit\'{e}s et Statistiques},
	issn = {0246-0203},
	journal = {Ann. Inst. Henri Poincar\'{e} Probab. Stat.},
	mrclass = {60K35 (60K37)},
	mrnumber = {4421616},
	number = {2},
	pages = {1010--1040},
	title = {A shape theorem and a variational formula for the quenched {L}yapunov exponent of random walk in a random potential},
	url = {https://mathscinet.ams.org/mathscinet-getitem?mr=4421616},
	volume = {58},
	year = {2022}}

@Article{Jan-Ras-Sep-23-1F1S-,
  author        = {Christopher Janjigian and Firas Rassoul-Agha and Timo Sepp\"al\"ainen},
  title         = {Ergodicity and synchronization of the {K}ardar-{P}arisi-{Z}hang equation.},
  year          = {2023},
  note          = {Preprint (\href{https://arxiv.org/abs/2211.06779}{\tt arXiv:2211.06779})},
}

@book{Eth-Kur-86,
	author = {Ethier, Stewart N. and Kurtz, Thomas G.},
	doi = {10.1002/9780470316658},
	isbn = {0-471-08186-8},
	mrclass = {60J25 (60B10 60F05 60F17 60G44 60J80)},
	mrnumber = {838085},
	mrreviewer = {S. R. S. Varadhan},
	pages = {x+534},
	publisher = {John Wiley \& Sons, Inc., New York},
	series = {Wiley Series in Probability and Mathematical Statistics: Probability and Mathematical Statistics},
	title = {{M}arkov {P}rocesses: {C}haracterization and {C}onvergence},
	url = {https://mathscinet.ams.org/mathscinet-getitem?mr=838085},
	year = {1986}}

@article{Fan-Sep-20,
	author = {Wai-Tong (Louis) Fan and Timo Sepp\"al\"ainen},
	doi = {10.2140/pmp.2020.1.55},
	journal = {Prob. Math. Phys.},
	number = {1},
	pages = {55--100},
	publisher = {Mathematical Sciences Publishers},
	title = {Joint distribution of {B}usemann functions in the exactly solvable corner growth model},
	url = {https://doi.org/10.2140%2Fpmp.2020.1.55},
	volume = {1},
	year = 2020}

@article{Bal-Bus-Sep-20,
	author = {Bal\'{a}zs, M\'{a}rton and Busani, Ofer and Sepp\"{a}l\"{a}inen, Timo},
	doi = {10.1017/fms.2020.31},
	fjournal = {Forum of Mathematics. Sigma},
	journal = {Forum Math.\ Sigma},
	mrclass = {60K35 (60K37)},
	mrnumber = {4176750},
	pages = {Paper No. e46, 34},
	title = {Non-existence of bi-infinite geodesics in the exponential corner growth model},
	url = {https://mathscinet.ams.org/mathscinet-getitem?mr=4176750},
	volume = {8},
	year = {2020}}

@article{Jan-Ras-20-aop,
	author = {Janjigian, Christopher and Rassoul-Agha, Firas},
	doi = {10.1214/19-AOP1375},
	fjournal = {The Annals of Probability},
	issn = {0091-1798},
	journal = {Ann. Probab.},
	mrclass = {60K35 (60K37)},
	mrnumber = {4089495},
	number = {2},
	pages = {778--816},
	title = {Busemann functions and {G}ibbs measures in directed polymer models on {$\mathbb Z^2$}},
	url = {https://mathscinet.ams.org/mathscinet-getitem?mr=4089495},
	volume = {48},
	year = {2020}}

@article{Jan-Ras-20-jsp,
	author = {Janjigian, Christopher and Rassoul-Agha, Firas},
	doi = {10.1007/s10955-020-02541-z},
	fjournal = {Journal of Statistical Physics},
	issn = {0022-4715},
	journal = {J. Stat. Phys.},
	mrclass = {60 (82D60)},
	mrnumber = {4099996},
	number = {3},
	pages = {672--689},
	title = {Uniqueness and {E}rgodicity of {S}tationary {D}irected {P}olymers on {$\mathbb{Z}^2$}},
	url = {https://mathscinet.ams.org/mathscinet-getitem?mr=4099996},
	volume = {179},
	year = {2020}}

@article{Bak-Li-19,
	author = {Bakhtin, Yuri and Li, Liying},
	doi = {10.1002/cpa.21779},
	fjournal = {Communications on Pure and Applied Mathematics},
	issn = {0010-3640},
	journal = {Comm. Pure Appl. Math.},
	mrclass = {82D60 (35K59 35Q53 60Fxx 60K37)},
	mrnumber = {3911894},
	number = {3},
	pages = {536--619},
	title = {Thermodynamic limit for directed polymers and stationary solutions of the {B}urgers equation},
	url = {https://mathscinet.ams.org/mathscinet-getitem?mr=3911894},
	volume = {72},
	year = {2019}}

@article{Car-Sou-17,
	author = {Cardaliaguet, Pierre and Souganidis, Panagiotis E.},
	doi = {10.1016/j.crma.2017.06.001},
	fjournal = {Comptes Rendus Math\'{e}matique. Acad\'{e}mie des Sciences. Paris},
	issn = {1631-073X},
	journal = {C. R. Math. Acad. Sci. Paris},
	mrclass = {35B27 (35K55 35R60 49L20)},
	mrnumber = {3673054},
	number = {7},
	pages = {786--794},
	title = {On the existence of correctors for the stochastic homogenization of viscous {H}amilton-{J}acobi equations},
	url = {https://mathscinet.ams.org/mathscinet-getitem?mr=3673054},
	volume = {355},
	year = {2017}}

@Article{Jan-Ras-18-arxiv,
  author        = {Janjigian, Christopher and Rassoul-Agha, Firas},
  title         = {{B}usemann functions and {G}ibbs measures in directed polymer models on $\mathbb{Z}^2$.},
  year          = {2018},
  note          = {Extended version (\href{https://arxiv.org/abs/1810.03580v2}{\tt arXiv:1810.03580v2})},
}

@book{Com-17,
	author = {Comets, Francis},
	doi = {10.1007/978-3-319-50487-2},
	isbn = {978-3-319-50486-5; 978-3-319-50487-2},
	mrclass = {60K37 (60F10 60H05 60J10 82-01 82B41 82D60)},
	mrnumber = {3444835},
	mrreviewer = {Julien Poisat},
	note = {Lecture notes from the 46th Probability Summer School held in Saint-Flour, 2016},
	pages = {xv+199},
	publisher = {Springer, Cham},
	series = {Lecture Notes in Mathematics},
	title = {Directed polymers in random environments},
	url = {https://doi.org/10.1007/978-3-319-50487-2},
	volume = {2175},
	year = {2017}}

@book{New-97,
	author = {Newman, Charles M.},
	doi = {10.1007/978-3-0348-8912-4},
	isbn = {3-7643-5777-0},
	mrclass = {82B44 (60-02 60K35 82-02 82D30)},
	mrnumber = {1480664},
	mrreviewer = {Massimo Campanino},
	pages = {viii+88},
	publisher = {Birkh{\"a}user Verlag, Basel},
	series = {Lectures in Mathematics ETH Z{\"u}rich},
	title = {Topics in disordered systems},
	url = {https://doi.org/10.1007/978-3-0348-8912-4},
	year = {1997}}

@article{Aiz-Weh-90,
	author = {Aizenman, Michael and Wehr, Jan},
	fjournal = {Communications in Mathematical Physics},
	issn = {0010-3616},
	journal = {Comm. Math. Phys.},
	mrclass = {82B05 (82B26 82B44)},
	mrnumber = {1060388},
	mrreviewer = {Flora Koukiou},
	number = {3},
	pages = {489--528},
	title = {Rounding effects of quenched randomness on first-order phase transitions},
	url = {http://projecteuclid.org/euclid.cmp/1104200601},
	volume = {130},
	year = {1990}}

@book{Phe-01,
	author = {Phelps, Robert R.},
	doi = {10.1007/b76887},
	edition = {Second},
	isbn = {3-540-41834-2},
	mrclass = {46-01 (46A55 46J10)},
	mrnumber = {1835574},
	mrreviewer = {T. S. S. R. K. Rao},
	pages = {viii+124},
	publisher = {Springer-Verlag, Berlin},
	series = {Lecture Notes in Mathematics},
	title = {Lectures on {C}hoquet's theorem},
	url = {https://doi.org/10.1007/b76887},
	volume = {1757},
	year = {2001}}

@book{Bog-07,
	author = {Bogachev, V. I.},
	doi = {10.1007/978-3-540-34514-5},
	isbn = {978-3-540-34513-8; 3-540-34513-2},
	mrclass = {28-02 (28Axx 28Cxx 46G12 60G42 60G44)},
	mrnumber = {2267655},
	mrreviewer = {Ren{\'e} L. Schilling},
	pages = {Vol. I: xviii+500 pp., Vol. II: xiv+575},
	publisher = {Springer-Verlag, Berlin},
	title = {Measure theory. {V}ol. {I}, {II}},
	url = {https://doi.org/10.1007/978-3-540-34514-5},
	year = {2007}}

@article{Geo-Ras-Sep-17-ptrf-1,
	author = {Georgiou, Nicos and Rassoul-Agha, Firas and Sepp{\"a}l{\"a}inen, Timo},
	doi = {10.1007/s00440-016-0729-x},
	fjournal = {Probability Theory and Related Fields},
	issn = {0178-8051},
	journal = {Probab. Theory Related Fields},
	mrclass = {60K35},
	mrnumber = {3704768},
	number = {1-2},
	pages = {177--222},
	title = {Stationary cocycles and {B}usemann functions for the corner growth model},
	url = {http://dx.doi.org/10.1007/s00440-016-0729-x},
	volume = {169},
	year = {2017}}

@article{Geo-Ras-Sep-17-ptrf-2,
	author = {Georgiou, Nicos and Rassoul-Agha, Firas and Sepp{\"a}l{\"a}inen, Timo},
	doi = {10.1007/s00440-016-0734-0},
	fjournal = {Probability Theory and Related Fields},
	issn = {0178-8051},
	journal = {Probab. Theory Related Fields},
	mrclass = {60K35},
	mrnumber = {3704769},
	number = {1-2},
	pages = {223--255},
	title = {Geodesics and the competition interface for the corner growth model},
	url = {http://dx.doi.org/10.1007/s00440-016-0734-0},
	volume = {169},
	year = {2017}}

@Article{Ahl-Hof-16-,
  author        = {Daniel Ahlberg and Christopher Hoffman},
  title         = {Random coalescing geodesics in first-passage percolation.},
  year          = {2016},
  note          = {Preprint (\href{https://arxiv.org/abs/1609.02447}{\tt arXiv:1609.02447})},
}

@article{Bak-16,
	author = {Bakhtin, Yuri},
	doi = {10.1214/16-EJP4413},
	fjournal = {Electronic Journal of Probability},
	issn = {1083-6489},
	journal = {Electron. J. Probab.},
	mrclass = {37L30 (35Q53 35R60 37L55 60G55 60K35)},
	mrnumber = {3508684},
	pages = {Paper No. 37, 50},
	title = {Inviscid {B}urgers equation with random kick forcing in noncompact setting},
	url = {http://dx.doi.org/10.1214/16-EJP4413},
	volume = {21},
	year = {2016}}

@article{Geo-etal-15,
	author = {Georgiou, Nicos and Rassoul-Agha, Firas and Sepp{{\"a}}l{{\"a}}inen, Timo and Yilmaz, Atilla},
	doi = {10.1214/14-AOP933},
	fjournal = {The Annals of Probability},
	issn = {0091-1798},
	journal = {Ann. Probab.},
	mrclass = {60K35 (60K37)},
	mrnumber = {3395462},
	mrreviewer = {Dimitri Petritis},
	number = {5},
	pages = {2282--2331},
	title = {Ratios of partition functions for the log-gamma polymer},
	url = {http://dx.doi.org/10.1214/14-AOP933},
	volume = {43},
	year = {2015}}

@article{Ras-Sep-Yil-13,
	author = {Rassoul-Agha, Firas and Sepp{{\"a}}l{{\"a}}inen, Timo and Yilmaz, Atilla},
	coden = {CPAMA},
	doi = {10.1002/cpa.21417},
	fjournal = {Communications on Pure and Applied Mathematics},
	issn = {0010-3640},
	journal = {Comm. Pure Appl. Math.},
	mrclass = {60K37 (60F10 82B41)},
	mrnumber = {2999296},
	mrreviewer = {Dimitris Cheliotis},
	number = {2},
	pages = {202--244},
	title = {Quenched free energy and large deviations for random walks in random potentials},
	url = {http://dx.doi.org/10.1002/cpa.21417},
	volume = {66},
	year = {2013}}

@book{Auf-Dam-Han-17,
	author = {Antonio Auffinger and Jack Hanson and Michael Damron},
	isbn = {978-1-4704-4183-8},
	pages = {161},
	publisher = {American Mathematical Society, Providence, RI},
	series = {University Lecture Series},
	title = {50 years of first passage percolation},
	volume = {68},
	year = {2017}}

@article{Bak-13,
	author = {Bakhtin, Yuri},
	fjournal = {The Annals of Probability},
	issn = {0091-1798},
	journal = {Ann. Probab.},
	mrclass = {60H15 (37L55 60G55 60K37)},
	mrnumber = {3112935},
	mrreviewer = {Meihua Yang},
	number = {4},
	pages = {2961--2989},
	title = {The {B}urgers equation with {P}oisson random forcing},
	volume = {41},
	year = {2013}}

@article{Gar-Mar-05,
	author = {Garet, Olivier and Marchand, R{{\'e}}gine},
	doi = {10.1214/105051604000000503},
	fjournal = {The Annals of Applied Probability},
	issn = {1050-5164},
	journal = {Ann. Appl. Probab.},
	mrclass = {60K35 (82B43)},
	mrnumber = {2115045 (2006j:60106)},
	mrreviewer = {Marina Vachkovskaia},
	number = {1A},
	pages = {298--330},
	title = {Coexistence in two-type first-passage percolation models},
	url = {http://dx.doi.org/10.1214/105051604000000503},
	volume = {15},
	year = {2005}}

@article{Sep-12-corr,
	author = {Sepp{{\"a}}l{{\"a}}inen, Timo},
	coden = {APBYAE},
	doi = {10.1214/10-AOP617},
	fjournal = {The Annals of Probability},
	issn = {0091-1798},
	journal = {Ann. Probab.},
	mrclass = {60K35 (60K37 82B41 82D60)},
	mrnumber = {2917766},
	mrreviewer = {Jonathon R. Peterson},
	note = {Corrected version available at \href{https://arxiv.org/abs/0911.2446}{\tt arXiv:0911.2446}},
	number = {1},
	pages = {19--73},
	title = {Scaling for a one-dimensional directed polymer with boundary conditions},
	volume = {40},
	year = {2012}}

@article{Hof-08,
	author = {Hoffman, Christopher},
	doi = {10.1214/07-AAP510},
	fjournal = {The Annals of Applied Probability},
	issn = {1050-5164},
	journal = {Ann. Appl. Probab.},
	mrclass = {60K35 (82B43)},
	mrnumber = {2462555 (2010c:60295)},
	number = {5},
	pages = {1944--1969},
	title = {Geodesics in first passage percolation},
	url = {http://dx.doi.org/10.1214/07-AAP510},
	volume = {18},
	year = {2008}}

@book{Ras-Sep-15-ldp,
	author = {Rassoul-Agha, Firas and Sepp{{\"a}}l{{\"a}}inen, Timo},
	isbn = {978-0-8218-7578-0},
	mrclass = {60-01 (60F10 60J10 60K35 60K37 82B20)},
	mrnumber = {3309619},
	pages = {xiv+318},
	publisher = {American Mathematical Society, Providence, RI},
	series = {Graduate Studies in Mathematics},
	title = {A course on large deviations with an introduction to {G}ibbs measures},
	volume = {162},
	year = {2015}}

@article{Bur-Kea-89,
	author = {Burton, R. M. and Keane, M.},
	coden = {CMPHAY},
	fjournal = {Communications in Mathematical Physics},
	issn = {0010-3616},
	journal = {Comm. Math. Phys.},
	mrclass = {60K35 (82A43)},
	mrnumber = {990777 (90g:60090)},
	mrreviewer = {G. R. Grimmett},
	number = {3},
	pages = {501--505},
	title = {Density and uniqueness in percolation},
	url = {http://projecteuclid.org/euclid.cmp/1104178143},
	volume = {121},
	year = {1989}}

@article{Ras-Sep-14,
	author = {Rassoul-Agha, Firas and Sepp{{\"a}}l{{\"a}}inen, Timo},
	doi = {10.1007/s00440-013-0494-z},
	fjournal = {Probability Theory and Related Fields},
	issn = {0178-8051},
	journal = {Probab. Theory Related Fields},
	mrclass = {Preliminary Data},
	mrnumber = {3176363},
	number = {3-4},
	pages = {711--750},
	title = {Quenched point-to-point free energy for random walks in random potentials},
	url = {http://dx.doi.org/10.1007/s00440-013-0494-z},
	volume = {158},
	year = {2014}}

@article{Dam-Han-14,
	author = {Damron, Michael and Hanson, Jack},
	doi = {10.1007/s00220-013-1875-y},
	fjournal = {Communications in Mathematical Physics},
	issn = {0010-3616},
	journal = {Comm. Math. Phys.},
	mrclass = {94A17 (05Cxx 81P16 81P45)},
	mrnumber = {3152744},
	number = {3},
	pages = {917--963},
	title = {Busemann functions and infinite geodesics in two-dimensional first-passage percolation},
	url = {http://dx.doi.org/10.1007/s00220-013-1875-y},
	volume = {325},
	year = {2014}}

@article{Cha-94,
	author = {Chang, Cheng Shang},
	coden = {JPRBAM},
	fjournal = {Journal of Applied Probability},
	issn = {0021-9002},
	journal = {J. Appl. Probab.},
	mrclass = {60K25 (68M20 90B22)},
	mrnumber = {1303943 (95m:60136)},
	mrreviewer = {R. Subramanian},
	number = {4},
	pages = {1128--1133},
	title = {On the input-output map of a {$G/G/1$} queue},
	volume = {31},
	year = {1994}}

@inproceedings{New-95,
	address = {Basel},
	author = {Newman, Charles M.},
	booktitle = {Proceedings of the {I}nternational {C}ongress of {M}athematicians, {V}ol.\ 1, 2 ({Z}{\"u}rich, 1994)},
	mrclass = {60K35 (82B43)},
	mrnumber = {1404001},
	mrreviewer = {Yu Zhang},
	pages = {1017--1023},
	publisher = {Birkh{\"a}user},
	title = {A surface view of first-passage percolation},
	year = {1995}}

@article{Bak-Cat-Kha-14,
	author = {Bakhtin, Yuri and Cator, Eric and Khanin, Konstantin},
	fjournal = {Journal of the American Mathematical Society},
	issn = {0894-0347},
	journal = {J. Amer. Math. Soc.},
	mrclass = {Preliminary Data},
	mrnumber = {3110798},
	number = {1},
	pages = {193--238},
	title = {Space-time stationary solutions for the {B}urgers equation},
	volume = {27},
	year = {2014}}

@article{Lic-New-96,
	author = {Licea, Cristina and Newman, Charles M.},
	coden = {APBYAE},
	doi = {10.1214/aop/1042644722},
	fjournal = {The Annals of Probability},
	issn = {0091-1798},
	journal = {Ann. Probab.},
	mrclass = {60K35 (60D05 82B44)},
	mrnumber = {1387641},
	mrreviewer = {Olle H{{\"a}}ggstr{{\"o}}m},
	number = {1},
	pages = {399--410},
	title = {Geodesics in two-dimensional first-passage percolation},
	volume = {24},
	year = {1996}}

@article{Cou-11,
	author = {Coupier, David},
	doi = {10.1214/ECP.v16-1656},
	fjournal = {Electronic Communications in Probability},
	issn = {1083-589X},
	journal = {Electron. Commun. Probab.},
	mrclass = {60K35},
	mrnumber = {2836758},
	pages = {517--527},
	title = {Multiple geodesics with the same direction},
	volume = {16},
	year = {2011}}

@article{Com-Yos-06,
	author = {Comets, Francis and Yoshida, Nobuo},
	coden = {APBYAE},
	fjournal = {The Annals of Probability},
	issn = {0091-1798},
	journal = {Ann. Probab.},
	mrclass = {60K37 (60J60 60K35 82C44 82D60)},
	mrnumber = {2271480 (2007m:60305)},
	mrreviewer = {Marina Vachkovskaia},
	number = {5},
	pages = {1746--1770},
	title = {Directed polymers in random environment are diffusive at weak disorder},
	volume = {34},
	year = {2006}}

@article{Mar-04,
	author = {Martin, James B.},
	coden = {APBYAE},
	fjournal = {The Annals of Probability},
	issn = {0091-1798},
	journal = {Ann. Probab.},
	mrclass = {60K35},
	mrnumber = {2094434},
	mrreviewer = {Luis G. Gorostiza},
	number = {4},
	pages = {2908--2937},
	title = {Limiting shape for directed percolation models},
	volume = {32},
	year = {2004}}

@article{Hus-Hen-85,
	author = {Huse, David A. and Henley, Christopher L.},
	journal = {Phys. Rev. Lett.},
	month = {Jun},
	number = {25},
	numpages = {3},
	pages = {2708--2711},
	publisher = {American Physical Society},
	title = {Pinning and Roughening of Domain Walls in Ising Systems Due to Random Impurities},
	volume = {54},
	year = {1985}}

@book{Roc-70,
	address = {Princeton, N.J.},
	author = {Rockafellar, R. Tyrrell},
	mrclass = {26.52 (46.00)},
	mrnumber = {0274683 (43 \#445)},
	mrreviewer = {Ky Fan},
	pages = {xviii+451},
	publisher = {Princeton University Press},
	series = {Princeton Mathematical Series, No. 28},
	title = {Convex analysis},
	year = {1970}}

@book{Geo-88,
	address = {Berlin},
	author = {Georgii, Hans-Otto},
	isbn = {0-89925-462-4},
	mrclass = {82A05 (60K35 82-01 82A25 82A68)},
	mrnumber = {956646 (89k:82010)},
	mrreviewer = {Nicolae Angelescu},
	pages = {xiv+525},
	publisher = {Walter de Gruyter \& Co.},
	series = {de Gruyter Studies in Mathematics},
	title = {Gibbs measures and phase transitions},
	volume = {9},
	year = {1988}}

@book{Dem-Zei-98,
	address = {New York},
	author = {Dembo, Amir and Zeitouni, Ofer},
	edition = {Second},
	isbn = {0-387-98406-2},
	mrclass = {60F10},
	mrnumber = {1619036 (99d:60030)},
	pages = {xvi+396},
	publisher = {Springer-Verlag},
	series = {Applications of Mathematics (New York)},
	title = {Large deviations techniques and applications},
	volume = {38},
	year = {1998}}

@Article{Ale-23,
  author        = {Alexander, Kenneth S.},
  journal       = {Electron. J. Probab.},
  title         = {Geodesics, bigeodesics, and coalescence in first passage percolation in general dimension},
  year          = {2023},
  issn          = {1083-6489},
  pages         = {Paper No. 160, 83},
  volume        = {28},
  doi           = {10.1214/23-ejp1011},
  fjournal      = {Electronic Journal of Probability},
  mrclass       = {60K35 (82B43)},
  mrnumber      = {4673327},
  mrreviewer    = {Elcio\ Lebensztayn},
  url           = {https://doi.org/10.1214/23-ejp1011},
}

@Article{Gro-Jan-Ras-25-jsp,
  author   = {Groathouse, Sean and Janjigian, Christopher and Rassoul-Agha, Firas},
  journal  = {J. Stat. Phys.},
  title    = {Non-existence of non-trivial bi-infinite geodesics in {G}eometric {L}ast {P}assage {P}ercolation},
  year     = {2025},
  issn     = {0022-4715,1572-9613},
  number   = {6},
  pages    = {Paper No. 81},
  volume   = {192},
  doi      = {10.1007/s10955-025-03462-5},
  fjournal = {Journal of Statistical Physics},
  mrclass  = {60K35 (60K37)},
  mrnumber = {4917163},
  url      = {https://doi.org/10.1007/s10955-025-03462-5},
}

@article{Kur-07,
	author = {Kurtz, Thomas G.},
	doi = {10.1214/EJP.v12-431},
	fjournal = {Electronic Journal of Probability},
	issn = {1083-6489},
	journal = {Electron. J. Probab.},
	mrclass = {60H99 (60H10 60H15 60H20 60H25)},
	mrnumber = {2336594},
	mrreviewer = {Rainer\ Buckdahn},
	pages = {951--965},
	title = {The {Y}amada-{W}atanabe-{E}ngelbert theorem for general stochastic equations and inequalities},
	url = {https://doi.org/10.1214/EJP.v12-431},
	volume = {12},
	year = {2007}}

@Article{Jan-Ras-Sep-25-strong-,
  author = {Christopher Janjigian and Firas Rassoul-Agha and Timo Sepp\"al\"ainen},
  title  = {Strong existence and uniqueness of the tilt-indexed {B}usemann process in the planar corner growth model},
  year   = {2025},
  note   = {Preprint (\href{https://arxiv.org/abs/2507.08757}{\tt arXiv:2507.08757})},
}

@Article{Bat-Fan-Sep-25,
  author   = {Bates, Erik and Fan, Wai-Tong and Sepp\"al\"ainen, Timo},
  journal  = {Electron. J. Probab.},
  title    = {Intertwining the {B}usemann process of the directed polymer model},
  year     = {2025},
  issn     = {1083-6489},
  pages    = {Paper No. 50, 80},
  volume   = {30},
  doi      = {10.1214/25-ejp1310},
  fjournal = {Electronic Journal of Probability},
  mrclass  = {60K35 (60K37)},
  mrnumber = {4888159},
  url      = {https://doi.org/10.1214/25-ejp1310},
}

@InCollection{Jac-Mem-81,
  author     = {Jacod, Jean and M\'emin, Jean},
  booktitle  = {Seminar on {P}robability, {XV} ({U}niv. {S}trasbourg, {S}trasbourg, 1979/1980) ({F}rench)},
  publisher  = {Springer, Berlin},
  title      = {Sur un type de convergence interm\'ediaire entre la convergence en loi et la convergence en probabilit\'e},
  year       = {1981},
  isbn       = {3-540-10689-8},
  pages      = {529--546},
  series     = {Lecture Notes in Math.},
  volume     = {850},
  mrclass    = {60B10},
  mrnumber   = {622586},
  mrreviewer = {P.\ G\'erard},
}

@Article{Jac-Mem-81-Stoch,
  author     = {Jacod, Jean and M\'emin, Jean},
  journal    = {Stochastics},
  title      = {Existence of weak solutions for stochastic differential equations with driving semimartingales},
  year       = {1980/81},
  issn       = {0090-9491},
  number     = {4},
  pages      = {317--337},
  volume     = {4},
  doi        = {10.1080/17442508108833169},
  fjournal   = {Stochastics},
  mrclass    = {60H10 (60G48)},
  mrnumber   = {609691},
  mrreviewer = {J.\ M.\ Stoyanov},
  url        = {https://doi.org/10.1080/17442508108833169},
}

@Book{Kec-95,
  author     = {Kechris, Alexander S.},
  publisher  = {Springer-Verlag, New York},
  title      = {Classical descriptive set theory},
  year       = {1995},
  isbn       = {0-387-94374-9},
  series     = {Graduate Texts in Mathematics},
  volume     = {156},
  doi        = {10.1007/978-1-4612-4190-4},
  mrclass    = {03E15 (03-01 03-02 04A15 28A05 54H05 90D44)},
  mrnumber   = {1321597},
  mrreviewer = {Jakub\ Jasi\'nski},
  pages      = {xviii+402},
  url        = {https://doi.org/10.1007/978-1-4612-4190-4},
}

@book{Ali-Bor-06,
	author = {Aliprantis, Charalambos D. and Border, Kim C.},
	edition = {Third},
	isbn = {978-3-540-32696-0; 3-540-32696-0},
	mrclass = {46-01 (28D05 46N10 47-01 54-01 60B10 60J05)},
	mrnumber = {2378491},
	note = {A hitchhiker's guide},
	pages = {xxii+703},
	publisher = {Springer, Berlin},
	title = {Infinite dimensional analysis},
	year = {2006}}

@Book{Bee-93,
  author     = {Beer, Gerald},
  publisher  = {Kluwer Academic Publishers Group, Dordrecht},
  title      = {Topologies on closed and closed convex sets},
  year       = {1993},
  isbn       = {0-7923-2531-1},
  series     = {Mathematics and its Applications},
  volume     = {268},
  doi        = {10.1007/978-94-015-8149-3},
  mrclass    = {49-02 (46A99 46B99 46N10 49J45 54-01 90C31)},
  mrnumber   = {1269778},
  mrreviewer = {P.\ S.\ Kenderov},
  pages      = {xii+340},
  url        = {https://doi.org/10.1007/978-94-015-8149-3},
}

@Article{Gol-79,
  author     = {Goldstein, Sheldon},
  journal    = {Z. Wahrsch. Verw. Gebiete},
  title      = {Maximal coupling},
  year       = {1978/79},
  issn       = {0044-3719},
  number     = {2},
  pages      = {193--204},
  volume     = {46},
  doi        = {10.1007/BF00533259},
  fjournal   = {Zeitschrift f\"ur Wahrscheinlichkeitstheorie und Verwandte Gebiete},
  mrclass    = {60G05 (60J05)},
  mrnumber   = {516740},
  mrreviewer = {H.\ Kesten},
  url        = {https://doi.org/10.1007/BF00533259},
}

@Article{Str-65,
  author        = {Strassen, V.},
  journal       = {Ann. Math. Statist.},
  title         = {The existence of probability measures with given marginals},
  year          = {1965},
  issn          = {0003-4851},
  pages         = {423--439},
  volume        = {36},
  doi           = {10.1214/aoms/1177700153},
  fjournal      = {Annals of Mathematical Statistics},
  mrclass       = {60.05 (60.20)},
  mrnumber      = {177430},
  mrreviewer    = {J.\ Wolfowitz},
  url           = {https://doi.org/10.1214/aoms/1177700153},
}

@Article{Emr-Jan-Sep-25,
  author   = {Emrah, Elnur and Janjigian, Christopher and Sepp\"al\"ainen, Timo},
  journal  = {Comm. Math. Phys.},
  title    = {Anomalous geodesics in the inhomogeneous corner growth model},
  year     = {2025},
  issn     = {0010-3616,1432-0916},
  number   = {12},
  pages    = {Paper No. 316, 65},
  volume   = {406},
  doi      = {10.1007/s00220-025-05470-8},
  fjournal = {Communications in Mathematical Physics},
  mrclass  = {60K35 (60K25 60K37)},
  mrnumber = {4991160},
  url      = {https://doi.org/10.1007/s00220-025-05470-8},
}

@Article{Bas-Mj-26,
  author   = {Basu, Riddhipratim and Mj, Mahan},
  journal  = {Adv. Math.},
  title    = {Geodesic trees and exceptional directions in {FPP} on hyperbolic groups},
  year     = {2026},
  issn     = {0001-8708,1090-2082},
  pages    = {Paper No. 111030, 52},
  volume   = {498},
  doi      = {10.1016/j.aim.2026.111030},
  fjournal = {Advances in Mathematics},
  mrclass  = {60K35 (20F65 20F67 51F99 60J50 82B43)},
  mrnumber = {5074842},
  url      = {https://doi.org/10.1016/j.aim.2026.111030},
}

@Article{Gro-Jan-Ras-25-tams,
  author     = {Groathouse, Sean and Janjigian, Christopher and Rassoul-Agha, Firas},
  journal    = {Trans. Amer. Math. Soc.},
  title      = {Existence of generalized {B}usemann functions and {G}ibbs measures for random walks in random potentials},
  year       = {2025},
  issn       = {0002-9947,1088-6850},
  number     = {12},
  pages      = {8487--8563},
  volume     = {378},
  doi        = {10.1090/tran/9452},
  fjournal   = {Transactions of the American Mathematical Society},
  mrclass    = {60K35 (60K37)},
  mrnumber   = {4982325},
  mrreviewer = {Thomas\ Polaski},
  url        = {https://doi.org/10.1090/tran/9452},
}

\end{document}